\documentclass[12pt]{amsart}

\usepackage[utf8]{inputenc}
\usepackage[T1]{fontenc}
\usepackage[ngerman, english]{babel}

\usepackage{graphicx}
\usepackage{amssymb, amsmath, amsthm, mathtools}
\usepackage{setspace}
\usepackage{color, soul}
\usepackage[linktocpage]{hyperref}
\usepackage{fancyhdr}
\usepackage{xr} 
\usepackage{stmaryrd} 
\usepackage{enumerate}   
\usepackage[toc,page]{appendix}  
 
\usepackage{enumerate}
\usepackage{scalerel,stackengine}

\usepackage{cleveref}

\usepackage{contour}
\contourlength{1pt}
\usepackage{blindtext} 
\usepackage{pict2e}

\usepackage{cancel} 


\usepackage{environ} 

\NewEnviron{elaboration}{
\par
\begin{tikzpicture}
\node[rectangle,minimum width=0.9\textwidth] (m) {\begin{minipage}{0.85\textwidth}\BODY\end{minipage}};
\draw[dashed] (m.south west) rectangle (m.north east);
\end{tikzpicture}
}

\setlength{\parindent}{.25in}
\setlength{\textwidth}{6.1in}
\setlength{\oddsidemargin}{.1in}
\setlength{\evensidemargin}{.1in}
\setlength{\textheight}{9.5in}
\setlength{\headheight}{0in}
\setlength{\headsep}{.3in}
\setlength{\topmargin}{-.5in}
\setlength{\footskip}{20pt}
\setlength{\marginparwidth}{60pt}
\setlength{\parskip}{\smallskipamount}


\newtheorem{lemma}{Lemma}[section]
\newtheorem{proposition}[lemma]{Proposition}
\newtheorem{theorem}[lemma]{Theorem}
\newtheorem{claim}[lemma]{Claim}

\newtheorem{problem}[lemma]{Problem}

\newtheorem{remark}[lemma]{Remark}

\newtheorem{corollary}[lemma]{Corollary}
\newtheorem{definition}[lemma]{Definition}

\newtheorem{thm}{Theorem}

\newtheorem{conjecture}[thm]{Conjecture}



\renewcommand{\t}{\tilde}
\newcommand{\ch}{\check}


\newcommand{\ol}{\overline}
\newcommand{\cc}{\circ}


\newcommand{\E}{\mathbb{E}}

\newcommand{\s}{\mathbb{S}}
\newcommand{\C}{\mathbb{C}}
\newcommand{\D}{\mathbb{D}}
\newcommand{\R}{\mathbb{R}}

\newcommand{\Z}{\mathbb{Z}}
\renewcommand{\SS}{\mathbb{S}}

\newcommand{\col}{\colon}

\newcommand{\infi}{\infty}
\newcommand{\diam}{\operatorname{diam}}


\newcommand{\HHH}{\mathcal{H}}

\newcommand{\MCG}{{\rm MCG}}





\newcommand{\ep}{\epsilon}





\newcommand{\tc}{\textcolor}

\definecolor{britishracinggreen}{rgb}{0.0, 0.26, 0.15}
\definecolor{ao(english)}{rgb}{0.0, 0.5, 0.0}
\definecolor{applegreen}{rgb}{0.55, 0.71, 0.0}
\definecolor{ao(english)}{rgb}{0.0, 0.5, 0.0}
\definecolor{byzantine}{rgb}{0.74, 0.2, 0.64}
\definecolor{amber(sae/ece)}{rgb}{1.0, 0.49, 0.0}

\date{\today}

\usepackage[all,cmtip]{xy}
\usepackage{overpic}
 \usepackage{float}
 \usepackage{fancybox}
\usepackage[usenames,dvipsnames]{xcolor} 
\usepackage[mathscr]{euscript}
\usepackage{tikz}
\usetikzlibrary{shapes.geometric, arrows}
\usetikzlibrary{fit,calc,positioning}

\newcommand{\TT}{\mathsf{T}}

\newcommand{\QF}{\mathsf{QF}}

\newcommand{\LLL}{\mathcal{L}}
\newcommand{\RRR}{\mathcal{R}}
\newcommand{\PPP}{\mathcal{P}}
\newcommand{\FFF}{\mathcal{F}}

\newcommand\rul{\bgroup\markoverwith{\textcolor{red}{\rule[-0.5ex]{2pt}{0.4pt}}}\ULon}

\newcommand{\PSL}{{\rm PSL}}

\newcommand{\CP}{\mathbb{C}{\rm  P}}

\newcommand{\Mod}{\operatorname{Mod}}
\newcommand{\Hol}{\operatorname{Hol}}

\newcommand{\Fix}{\operatorname{Fix}}

\newcommand{\Ep}{\operatorname{Ep}}

\newcommand{\hol}{\operatorname{hol}}
\newcommand{\length}{\operatorname{length}}
\newcommand{\Area}{\operatorname{Area}}
\newcommand{\Axis}{\operatorname{Axis}}

\renewcommand{\H}{\mathbb{H}}

\newcommand{\del}{\delta}
\newcommand{\kap}{\kappa}
\newcommand{\alp}{\alpha}

\newcommand{\gam}{\gamma}

\newcommand{\PP}{\mathsf{P}}

\DeclareRobustCommand{\rchi}{{\mathpalette\irchi\relax}}
\newcommand{\irchi}[2]{\raisebox{\depth}{$#1\chi$}}

\newcommand{\tr}{\operatorname{ tr}}
\newcommand{\dev}{\operatorname{dev}}
\renewcommand{\Im}{\operatorname{Im} }
\newcommand{\Stab}{\operatorname{Stab}}

\newcommand{\bdr}{\partial}

\newcommand{\ML}{\mathsf{ML }}

\newcommand{\sub}{\subset}
\newcommand{\til}[1]{\tilde{#1}} 
\newcommand{\ti}{\tilde}

\newcommand{\minus}{\setminus}

  \newcommand{\Label}[1]{\label{#1}\textcolor{green}{\tiny #1} }
 \renewcommand{\Label}[1]{\label{#1}}

\newcommand{\Qed}[1]{\nopagebreak[4]{\tiny \hfill
\fbox{\ref{#1}} \linebreak }\pagebreak[2]}

\newcommand\rwh[1]{\arraycolsep=0pt\relax%
\begin{array}{c}
\stretchto{
  \scaleto{
    \scalerel*[\widthof{\ensuremath{#1}}]{\kern-.5pt\bigwedge\kern-.5pt}
    {\rule[-\textheight/2]{1ex}{\textheight}} 
  }{\textheight} %
}{0.5ex}\\       
#1\\                
\rule{-1ex}{0ex}
\end{array}
}

\stackMath
\newcommand\reallywidehat[1]{%
\savestack{\tmpbox}{\stretchto{%
  \scaleto{%
    \scalerel*[\widthof{\ensuremath{#1}}]{\kern-.6pt\bigwedge\kern-.6pt}%
    {\rule[-\textheight/2]{1ex}{\textheight}}
  }{\textheight}%
}{0.5ex}}%
\stackon[1pt]{#1}{\tmpbox}%
}
\parskip 1ex
\definecolor{dblue}{cmyk}{1, 0, 0, .7}

\pagestyle{plain}
\rhead{\today, page \thepage}

\setstcolor{red} 

 \definecolor{calpolypomonagreen}{rgb}{0.12, 0.3, 0.17}
 
 \definecolor{darkbyzantium}{rgb}{0.6, 0.2, 0.3}
 \definecolor{azure}{rgb}{0.0, 0.5, 1.0}
 \definecolor{cittingcolor}{cmyk}{60,0,10,0}
  
 \hypersetup{pdfborder=0 0 .3}
	\hypersetup{
    colorlinks=true,
   linkcolor=darkbyzantium,
    filecolor=magenta,      
    urlcolor=NavyBlue,
    citecolor = calpolypomonagreen,
    pdftitle={Overleaf Example},
    pdfpagemode=FullScreen,
    }

\usepackage{fancyhdr}
\usepackage[yyyymmdd]{datetime}
\pagestyle{fancy}
\fancyhf{}

\lhead{\small S.Baba}
\rhead{\small \thepage}
 \usepackage[many]{tcolorbox}    
\newtcolorbox{mybox}[1]{%
    tikznode boxed title,
    enhanced,
    arc=0mm,
    interior style={white},
    attach boxed title to top center= {yshift=-\tcboxedtitleheight/2},
    fonttitle=\bfseries,
    colbacktitle=white,coltitle=black,
    boxed title style={size=normal,colframe=white,boxrule=0pt},
    title={#1}}


\begin{document}
 \title[\today]{Neck-pinching of $\CP^1$-structures  in  the $\PSL_2\C$-character variety}

\author{Shinpei Baba}
\address{Osaka University 
}

\email{sb.sci@osaka-u.ac.jp}

\maketitle
\begin{abstract}
We characterize a certain neck-pinching degeneration of (marked) $\CP^1$-structures on a closed oriented surface $S$ of genus at least two.
In a more general setting, we take a path of  $\CP^1$-structures $C_t ~(t \geq 0)$ on $S$ which leaves every compact subset in its deformation space, such that the holonomy of $C_t$ converges in the $\PSL_2\C$-character variety as $t \to \infi$. 
Then it is well known that the complex structure $X_t$ of $C_t$ also leaves every compact subset in the Teichmüller space of $S$. 
In this paper, under an additional assumption that $X_t$ is pinched along a loop $m$ on $S$, we describe the limit of $C_t$ from different perspectives:  namely, in terms of the developing maps,  holomorphic quadratic differentials, and pleated surfaces.

The holonomy representations of $\CP^1$-structures on $S$ are known to be non-elementary (i.e. strongly irreducible and unbounded). 
We also give a rather exotic example of such a path  $C_t$ whose limit holonomy is the trivial representation.
\end{abstract}

\setcounter{tocdepth}{1}
\tableofcontents

\section{Introduction}
Let $S$ be a (connected) closed oriented surface of genus at least two, throughout this paper.
For a  (marked) $\CP^1$-structure $C$ on $S$,  the holonomy of $C$ is a homomorphism $\pi_1(S) \to \PSL_2\C$ uniquely determined  up to conjugation by $\PSL_2\C$; see \S \ref{sCPS}. 
This correspondence yields the {\it holonomy map}
\begin{equation*}
\Hol \col \PP \to \rchi,
\end{equation*}
where $\PP \,(\cong \R^{12g -12})$ is the deformation space of all $\CP^1$-structures on $S$ and $\chi$ is the $\PSL_2\C$-character variety of $S$. 
Note that there are many $\CP^1$-structures whose holonomy is not discrete. 

Hejhal  \cite{Hejhal-75} proved that $\Hol$ is a local homeomorphism (moreover, it is a local biholomorphic map \cite{Hubbard-81}, \cite{Earle-81}). 
 However, it is not a covering map onto its image (\cite{Hejhal-75}).
Thus it is a natural question to ask how the path-lifting property fails: 
\begin{problem}(Kapovich \cite[Problem 1]{Kapovich-95}, see also \cite[Problem 12.5.1]{Gallo-Kapovich-Marden}.)\Label{problem}
Let $C_t   ~(t > 0)$ be a path of $\CP^1$-structures on $S$ such that 
\begin{enumerate}
\item $C_t$ leaves every compact subset in $\PP$ at $t \to \infi$, and \Label{iCDiverges}
\item the holonomy $\eta_t \in \rchi$ of $C_t$ converges to $\eta_\infi \in \rchi$ as $t \to \infi$. \Label{iRhoConverges}
\end{enumerate}
 What is the asymptotic behavior of $C_t$? 
\end{problem}
In this paper,  we give various limiting behaviors to answer Question \ref{problem} in the ``neck-pinching'' case. 

\subsection{Pinching loops on Riemann surfaces}
For each $t \geq 0$, let $X_t$ denote the complex structure on $S$ induced by $C_t$. 
Then, by the work of Kapovich (\cite{Kapovich-95}, see also \cite{Gallo-Kapovich-Marden, Dumas18HolonomyLimit} ),  the conditions (\ref{iCDiverges}) and (\ref{iRhoConverges}) imply that  $X_t$ must also leave every compact subset in the  Teichm\"uller space $\TT$  (see Corollary \ref{DivergenceInT}).

We focus on the following basic type of degeneration of $X_t$.
Given a path $X_t \in \TT ~ $, $X_t$ is {\it pinched along a loop} $m$ if 
\begin{itemize}
\item $\length_{X_t} m \to 0$, and
\item  if an essential loop $\ell$ in $S \minus m$ is {\it not} homotopic to $m$, then $\length_{X_t} \ell$ is bounded between two positive numbers for all $t \geq 0$. 
\end{itemize}
Here $``\length_{X_t}"$ is either the extremal length of $X_t$ or the hyperbolic length of the uniformization of $X_t$. 
(In the  augmented Teichmüller space, this definition of pinching is equivalent to saying that $X_t$ accumulates to a compact subset of the boundary stratum corresponding to $m$ being pinched.)

{\it A multiloop} is a union of disjoint finitely many essential simple closed curves.
Then, similarly, we say that $X_t$ is {\it pinched along a multiloop} $M$ on $S$, if, 
\begin{itemize}
\item  for each loop $m$ of $M$, $\length_{X_t} m \to 0$ as $t \to \infi$, and 
\item for each loop $\ell$ in $S \minus M$ not homotopic to a loop of $M$,  $\length_{X_t} \ell$ is bounded between two positive numbers for all $t \geq 0$.
\end{itemize}

   The {\it quasi-Fuchsian representation} $\pi_1(S) \to \PSL_2\C$ is a discrete faithful representation whose limit set is a Jordan curve in $\CP^1$, the quasi-Fuchsian Space $\QF$ is an open subset of the character variety $\rchi$. 
  There is no path $C_t$ in Problem \ref{problem}, whose limit holonomy $\eta_\infi$ is in $\QF$.    
On the other hand, a dense subset of the boundary of $\QF$ consists of holonomy representations of $\CP^1$-structures  pinched along loops (\cite{McMullen91}), and it has been quite important to study such degeneration for the study of Kleinian groups.

 \subsection{Asymptotic behaviors} 
One of our main results is that 
$\tr \eta_\infi(m)$ must be $\pm 2$. 
In other words,  the holonomy along $m$ at $t = \infi$ corresponds to either (i) a parabolic element (which is not the identity) or (ii) the identity of $\PSL_2\C$.
We will describe,  in both Cases (i) and (ii),  the asymptotic behavior of $C_t$  from three different perspectives of $\CP^1$-structures:
 \begin{enumerate}[(A)]
 \item A holomorphic quadratic differential on a marked Riemann surface homeomorphic to $S$ ({\it Schwarzian parameters}).
 \item  A hyperbolic structure on $S$ and a measured lamination, which induces an equivariant pleated surface $\H^2 \to \H^3$ ({\it Thurston parameters}).
  \item A developing map $f\col \til{S} \to \CP^1$ and a holonomy representation $\rho \col \pi_1(S) \to \PSL_2\C$. ({\it Developing pair})
\end{enumerate}

The {\it residue} of a meromorphic quadratic differential $q$ at a pole is the integral of $\pm \sqrt{q}$ around the pole, which is well-defined up to sign (see  \cite{GuptaWolf19}).
Given a pole of order two, letting $r$ be its residue, $q$ is expressed as  $r^2 / z^{-2}  dz^2$ for an appropriate parametrization in a neighborhood of the pole (see \cite[Theorem 6.3]{Strebel84QuadraticDifferentials}). 

Let $X$ be a nodal Riemann surface, and let $\mathring{X}$ be the smooth part of  $X$.
Then the {\it normalization $\ol{X}$ of $X$} is the smooth Riemann surface together  with a continuous map $\xi\col \ol{X} \to X$ such that $\xi$ is a biholomorphic in $\xi^{-1}(\mathring{X})$ and for each node $p$ of $X$, $\xi^{-1}(p)$ consists of exactly two points. 
    A {\it regular quadratic differential} on $X$ is a meromorphic quadratic differential $\bar{q}$ on $\ol{X}$ such that
    \begin{itemize}
    \item every pole of $\bar{q}$ has an order at most two and it maps to a node of $X$, and
    \item if $z_1, z_2$ on $\ol{Z}$ map to the same node on $X$, then the residue around $z_1$ is equal to that of $z_2$
\end{itemize}
(see \cite{Bers73Spaces_of_degenerating_Riemann_surfaces} \cite{Loftin_Zhang(19)}).

For Perspective (A),
 the path $C_t$ corresponds to  a path of pairs $(X_t, q_t),~ t \geq 0$ in Schwarzian coordinates, where $X_t$ is a marked Riemann surface homeomorphic to $S$ and $q_t$ is a holomorphic quadratic differential $q_t$ on $X_t$ for all $t \geq 0$.

\begin{thm}\Label{SchwarzianLimit}
\begin{itemize}
\item
Suppose that $X_t$ is pinched along a loop $m$. 
Then, exactly one of the following holds:
\begin{enumerate}[(i)]
\item $X_t$ converges to a nodal Riemann surface $X_\infi$ with a single node, and $q_t$ converges to a regular quadratic differential on $X_\infi$ such that the node is at worst a pole of order one (Theorem \ref{LimitiDifferentialParabolicCusp}.)\Label{iStrongParabolicConvergence}
\item For every diverging sequence $0 \leq t_1 < t_2 < \dots$, up to a subsequence, $X_{t_i}$ converges to a nodal Riemann surface $X_\infi$ with a single node and $q_{t_i}$ converges to a regular quadratic differential $q_\infi$ on $X_\infi$ such that the residue of each pole is a   non-zero integral multiple of  $\sqrt{2}\pi $.\Label{iParabolicDiffferential}  (Theorem \ref{Residue}.)
\end{enumerate}
\item   
Suppose that $X_t$ is pinched along a multiloop $M$ consisting of $n$ loops. 
Then, for every diverging $t_1 < t_2 < \dots$, there is a subsequence such that $X_{t_i}$ converges a nodal Riemann surface $X_\infi$ with $n$ nodes and $q_t$ converges to a meromorphic quadratic differential $q_\infi$  on $X_\infi$   such that each node of $X_\infi$ is, at most,  a pole of order two.
(Corollary \ref{DifferentialConvertesMultiloop}.)
\end{itemize}
\end{thm}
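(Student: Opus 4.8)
The plan is to localize everything to the pinching annulus and to play the two hypotheses off each other: condition~(\ref{iCDiverges}) forces the conformal structure to degenerate, condition~(\ref{iRhoConverges}) keeps the holonomy (hence, dually, the Schwarzian away from the node) under control, and the two together pin down the residue. First I would record the degeneration of $X_t$. Since $X_t$ is pinched along $m$, in the augmented Teichm\"uller space $\ol{\TT}$ the surfaces $X_t$ accumulate only on the boundary stratum on which $m$ has zero length (Corollary~\ref{DivergenceInT} guarantees divergence in $\TT$); hence every diverging $t_1<t_2<\dots$ has a subsequence along which $X_{t_i}$ converges to a nodal Riemann surface $X_\infi$ with a single node $p$, whose normalization $\ol{X}_\infi$ is obtained by cutting $S$ along $m$ and capping the two ends with marked points $p_1,p_2$. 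When $\eta_\infi(m)$ is a nontrivial parabolic an extra rigidity/uniqueness-of-limit argument upgrades this to convergence of the entire path $X_t$; that distinction is exactly the dichotomy between Theorem~\ref{LimitiDifferentialParabolicCusp} and Theorem~\ref{Residue}, which I set aside for now.

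Next I would extract a limiting differential. By~(\ref{iRhoConverges}) the representations $\eta_t$ stay in a compact subset of $\chi$, and a $\CP^1$-structure whose Schwarzian has large hyperbolic sup-norm on a subsurface of bounded injectivity radius has holonomy far from any fixed compact set; conversely, boundedness of $\eta_t$ together with the bounded geometry of the thick part of $X_t$ bounds $\|q_t\|$ on compact subsets of $\ol{X}_\infi\sm\{p_1,p_2\}$. (I would obtain the quantitative version of this through the Thurston parametrization $C_t=\Gr_{\lambda_t}(Y_t)$ and the comparison of the pleated-surface geometry with the conformal geometry.) A normal-families argument then yields, after passing to a subsequence, $q_{t_i}\to q_\infi$ uniformly on such compacta, where $q_\infi$ is a meromorphic quadratic differential on $\ol{X}_\infi$. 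Because each $q_{t_i}$ is a holomorphic quadratic differential on the smooth surface $X_{t_i}$ degenerating to $X_\infi$, the limit $q_\infi$ is a regular quadratic differential in the sense defined above (a section of the square of the dualizing sheaf; cf.\ Bers, Loftin--Zhang): it has a pole of order at most two at each $p_j$, and the residues at $p_1,p_2$ coincide up to sign --- the latter also being immediate since both are limits of the single period $\oint_m\sqrt{q_{t_i}}$. Run loop-by-loop over a pinched multiloop $M$ with $n$ components, this already gives Corollary~\ref{DifferentialConvertesMultiloop}.

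The heart of the matter is the residue $r_\infi$ of $q_\infi$ at $p$. Fix an annular neighborhood $A_1$ of $p_1$ in $\ol{X}_\infi$. A $\CP^1$-structure on an annulus is determined by its Schwarzian, so the restriction $C_{t}|_{A_1}$ converges to the projective structure with Schwarzian $q_\infi|_{A_1}$, and therefore $\eta_t(m)\to\eta_\infi(m)$ equals the monodromy of $u''+\tfrac12 q_\infi\,u=0$ around the regular singular point $p_1$. Writing $q_\infi=c\,z_1^{-2}dz_1^{2}+(\text{lower order})$ near $p_1$, the indicial roots differ by $\sqrt{1-2c}$, so $\tr\eta_\infi(m)=\pm 2\cos\!\big(\pi\sqrt{1-2c}\big)$. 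Invoking the theorem of this paper that $\tr\eta_\infi(m)\in\{+2,-2\}$, we get $\sqrt{1-2c}=k$ for some integer $k\ge 0$, i.e.\ $c=\tfrac{1-k^{2}}{2}$, hence $r_\infi=\oint\sqrt{q_\infi}=2\pi i\sqrt{c}=\sqrt{2}\,\pi\cdot i\sqrt{1-k^{2}}$, a real or imaginary multiple of $\sqrt{2}\,\pi$. It remains to exclude $k=1$, i.e.\ $r_\infi=0$: if $r_\infi=0$ then $q_\infi$ has at worst a simple pole at $p$, so its hyperbolic sup-norm vanishes at the node, and consequently the hyperbolic sup-norm of $q_t$ tends to $0$ along the core of the collar of $m$ in $X_t$. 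Where the Schwarzian is this small the structure $C_t$ is $C^0$-close, near the core, to the Fuchsian projective structure of $X_t$; but then any loop $\gamma$ meeting $m$ once has $\tr\eta_t(\gamma)$ of the order of $\cosh$ of the collar width of $m$, which diverges by~(\ref{iCDiverges}) and the collar lemma, contradicting the boundedness of $\{\tr\eta_t(\gamma)\}$ from~(\ref{iRhoConverges}). Hence $r_\infi\ne 0$.

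I expect the two genuine obstacles to be: (a) the a~priori control of $\|q_t\|$ on the thick part from boundedness of the holonomy --- this needs a quantitative "bounded holonomy $\Rightarrow$ bounded Schwarzian on a subsurface of bounded geometry'' estimate, which I would route through the Thurston parameters and the geometry of the associated pleated surfaces; and (b) the non-vanishing $r_\infi\ne 0$, where the passage ``small Schwarzian near the core $\Rightarrow$ near-Fuchsian there $\Rightarrow$ large transverse holonomy'' must be made uniform in $t$. Everything else --- the convergence of $X_t$, the regularity and matching residues of $q_\infi$, and the $\tr=\pm2$ input --- is either standard compactification theory or already available, and the refinement to full (versus subsequential) convergence in case~(i) is precisely the extra content supplied by Theorems~\ref{LimitiDifferentialParabolicCusp} and~\ref{Residue}.
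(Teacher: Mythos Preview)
Your outline is broadly sound but diverges from the paper at every technical step, and your obstacle~(a) is precisely where the paper concentrates its effort.

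The paper's engine is not Thurston parameters but the singular Euclidean geometry of $|q_t|^{1/2}$ and the associated Epstein surfaces in $\H^3$ (after Dumas). Obstacle~(a), the uniform bound on $\|q_t\|_\infty$, is Theorem~\ref{UpperInjectivityRadiusBound}: if the upper injectivity radius of $E_t$ diverged, then on some component $F$ of $S\setminus M$ one would find expanding cylinders of large modulus at every end, and an Epstein-surface holonomy estimate (Proposition~\ref{CylinderHonomyEstimate}) forces $[\rho_t|\pi_1 F]$ to diverge in $\chi$. So the logic runs ``Epstein surface $\Rightarrow$ holonomy estimate $\Rightarrow$ bounded $q_t$'', opposite to the direction you propose; whether a pleated-surface route can be made to work is not clear and is not what the paper does. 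With this bound in hand, the convergence $q_t\to q_\infty$ on thick parts (your normal-families step) is established via a holonomy theorem for $\CP^1$-structures with order-$\le 2$ poles (Theorem~\ref{HolImmersion}), giving Corollary~\ref{DifferentialConvertesMultiloop}.

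Your indicial-equation computation for the residue is a legitimate, more classical alternative to the paper's method, which instead reads the complex translation length of $\rho_\infty(m)$ directly from the circumference of the limiting flat cylinder (Proposition~\ref{CylinderHonomyEstimate}, Proposition~\ref{CuspClassification}). But note that you take $\tr\eta_\infty(m)=\pm 2$ as input; the paper proves this in \S\ref{sHyperbolicNeck}--\ref{sEllipticNeck} using the same Epstein-surface analysis that already yields the residue, so your shortcut does not bypass that machinery.

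Your non-vanishing argument has a soft spot: a loop $\gamma$ meeting $m$ once exits the collar and picks up holonomy from the thick parts, and you have only shown that contribution is \emph{bounded}, not that it is compatible with the purported large hyperbolic factor from the collar (a bounded element can conjugate a large hyperbolic into an elliptic, for instance). The paper argues instead via the dichotomy between flat and expanding Euclidean cylinders (Lemma~\ref{FlatOrExpanding}): when $\rho_\infty(m)=I$ the thin part is dominated by a flat cylinder whose circumference converges to a \emph{positive} multiple of $\pi/\sqrt{2}$, giving the nonzero residue directly (Proposition~\ref{Residue}); the parabolic case is handled separately (Theorem~\ref{LimitiDifferentialParabolicCusp}).
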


The convergence of the holomorphic quadratic differential in Theorem  \ref{SchwarzianLimit} is normal convergence, and in particular, the $\CP^1$-structure $C_t$  converges to the $\CP^1$-structure corresponding to $(X_\infi, q_\infi)$ minus the node, uniformly on every compact subset.

The space of homomorphisms $\pi_1(S) \to \PSL_2\C$ is called the {\it representation variety}, and
the {\it character variety} $\rchi$ is the GIT-quotient of the representation variety (see \S\ref{sPathLiftingProperty}).  
In order to obtain an equivariant object as a limit of $C_t$,  we pick a (continuous) lift $\rho_t \col \pi_1(S) \to \PSL_2\C$ of $\eta_t \in \rchi$, such that $\rho_t$ converges,  as $t \to \infi$,  to a homomorphism $\rho_\infi\col \pi_1(S) \to \PSL_2\C$ which maps to $\eta_\infi$.
In fact, we prove the existence of such a lift in Proposition \ref{LiftingHolonomyPath},  since it is not obvious when $\eta_\infi$ is an elementary representation. 

Note that for every discrete faithful representations $\pi_1(S) \to \PSL_2\C$, there is a unique equivariant continuous map $\bdr_\infi \pi_1(
S) \cong \s^1 \to \CP^1$ called the Cannon-Thurston map (\cite{Mj14}).
 This map is closely related to the question which we consider, by identifying the ideal boundary of $\til{S}$ with $\s^1$. 
 
Let $N$ be a regular neighborhood of the loop $m$ in $S$.
For $ t \geq 0$, 
let $C_t \cong (\tau_t, L_t)$ be Thurston parameters, where $\tau_t$ is a path of marked hyperbolic structures on $S$ and $L_t$ is a path of measured laminations on $S$ (\S \ref{sThurston}). 
Fixing a marking $\iota_t\col S \to \tau$ in its isotopy class,   $(\tau_t, L_t)$ yields to a $\rho_t$-equivariant pleated surface $\beta_t \col \til{S} \cong \H^2 \to \H^3$, which changes continuously in $t \geq 0$.
Then, in fact, $\beta_t$ converges to a continuous equivariant map: 
\begin{thm}\Label{LimitInPleatedSurface}
Suppose that  $X_t$ is pinched along a loop $m$. 
Then, by taking an appropriate path of markings $\iota_t\col S \to \tau_t~ (t \geq 0)$ , exactly one of the following holds: 
\begin{enumerate}[(i)]
\item $\rho_\infi(m) \in \PSL_2\C$ is a parabolic element, and $\beta_t\col\ti{S} \to \H^3$ converges to a $\rho_\infi$-equivariant continuous map  $\beta_\infi \col \til{S}  \to \H^3 \cup \CP^1$ uniformly on compact subsets, such that $\beta^{-1}_\infi(\CP^1)$ is a $\pi_1(S)$-invariant multicurve on $\ti{S}$ which is $\pi_1(S)$-equivariantly homotopic to $\phi^{-1}(m)$, where $\phi\col \ti{S} \to S$ is the universal covering map.  (Theorem \ref{LimitPleatedSurfaceParabolic}).  \Label{iDevParaboclicLimit}
\item $\rho_\infi(m)$ is the identity in $\PSL_2\C$, and, for every sequence $0 \leq t_1 < t_2 < \dots$ diverging to $\infty$,  up to a subsequence,  $\beta_{t_i}\col \ti{S} \to \H^3$ converges to a $\rho_\infi$-equivariant continuous map $\beta_\infi \col \til{S} \to \H^3 \cup \CP^1$ such that $\beta_\infi^{-1} (\CP^1)$ descends either to the loop $m$ or to a subsurface isotopic to one or two components of $S\minus N$ (\S\ref{smIPleatedSurface}.)
 \Label{iDevIdentityLimit}
\end{enumerate}
\end{thm}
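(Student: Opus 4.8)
The plan is to leverage Theorem~\ref{SchwarzianLimit} to control the Schwarzian side and then transfer the information across the ``Schwarzian $\leftrightarrow$ Thurston'' correspondence. First I would recall that, for $\CP^1$-structures on a fixed (or pinching) surface, the Thurston metric and the conformal metric of the developing map are comparable on the thick part, while in the thin collar around $m$ the geometry is governed by the residue of $q_t$ at the forming node. Concretely, on a standard annular collar $A_t$ of $m$ in $X_t$ the differential $q_t$ is $C^0$-close (after rescaling) to the model $r_t^2\,z^{-2}\,dz^2$, where $r_t\to r_\infty\neq 0$ is (a nonzero multiple of $\sqrt2\,\pi$ times) the limiting residue; the corresponding $\CP^1$-developing map on $A_t$ is, up to bounded error, a branch of a power map $z\mapsto z^{\alpha_t}$ with $\alpha_t$ determined by $r_t$, and hence the holonomy of $m$ limits to a parabolic (Case~(i)) or to the identity (Case~(ii)) exactly as recorded by which alternative of Theorem~\ref{SchwarzianLimit} occurs. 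This is the mechanism that identifies the two cases of the present theorem with the two cases there.

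Next I would set up the pleated surfaces. Fix the lift $\rho_t$ of $\eta_t$ converging to $\rho_\infty$ from Proposition~\ref{LiftingHolonomyPath}, and for each $t$ take the Thurston decomposition $C_t\cong(\tau_t,L_t)$; the bending map $\beta_t\colon\widetilde S\cong\H^2\to\H^3$ is $\rho_t$-equivariant and, crucially, it is $1$-Lipschitz from the hyperbolic metric of $\tau_t$. The key normalization step is the choice of the path of markings $\iota_t\colon S\to\tau_t$: outside the pinching collar, I would choose $\iota_t$ so that the image of a fixed ideal triangulation of $S\minus N$ has uniformly bounded geometry (this uses the pinching hypothesis, which forces $\tau_t$ restricted to $S\minus N$ into a compact part of the relevant moduli space up to Fenchel--Nielsen twisting, and the twist is absorbed into $\iota_t$). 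With this choice, on the preimage of any compact subset of $S\minus m$ the maps $\beta_t$ are uniformly Lipschitz and take values in a $\rho_t(\pi_1)$-orbit that stays in a compact region of $\H^3$ after the normalization, so Arzel\`a--Ascoli (applied equivariantly, chart by chart over the tiles of the universal cover) yields a subsequential limit $\beta_\infty$, which is automatically $\rho_\infty$-equivariant and continuous. One then checks that in Case~(i), where $\rho_\infty(m)$ is parabolic, the limit is already subsequence-independent because the parabolic fixes the asymptotics of $\beta_t$ near $m$ uniquely; this is where Theorem~\ref{LimitPleatedSurfaceParabolic} is invoked.

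The remaining point, and the most delicate, is to identify $\beta_\infty^{-1}(\CP^1)$. The collar $\widetilde A_t$ over $m$ develops, in $\H^3$, as a region sweeping from one axis/fixed point toward another; as $\length_{\tau_t}m\to0$ the image of $\widetilde A_t$ degenerates onto an ideal point (Case~(i): the parabolic fixed point) or onto a geodesic/ideal arc whose endpoints coincide with fixed points of $\rho_\infty$ of the two sides (Case~(ii)). In Case~(ii) the extra subtlety is that the holonomy of $m$ dies, so the two complementary pieces $S\minus N$ become glued ``at infinity'' only through a rank-two or trivial subgroup, and the limiting pleated surface may collapse an entire side: whether $\beta_\infty^{-1}(\CP^1)$ is just $m$ or a whole component of $S\minus N$ depends on whether the bending lamination $L_t$ restricted to that side stays bounded or blows up, which in turn is dictated by whether $q_\infty$ has a genuine order-two pole there with nonzero residue or degenerates further. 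The hard part will be making the collapse statement precise: one must show the limiting map sends a neighborhood of the relevant subsurface into $\CP^1$ rather than having it fold onto a lower-dimensional subset of $\H^3$, and this requires a careful estimate on how $\beta_t$ transports the thin-part geometry, controlling the ratio of the Thurston-metric diameter to the distance traveled in $\H^3$ along paths crossing the collar. I expect to carry this out by comparing $\beta_t$ with the model pleated annulus for the power-map $\CP^1$-structure on $A_t$ and passing to the limit in that model, where the collapse is explicit; the general case then follows from the $C^0$-closeness of $q_t$ to the model established in the first step.
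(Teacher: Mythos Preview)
Your overall architecture---reduce to the Schwarzian limit, then transfer to Thurston parameters, and analyse the collar separately from the thick part---matches the paper's, and the use of the model $r_t^2 z^{-2}\,dz^2$ on the collar is exactly how the paper (via Epstein maps and Proposition~\ref{AlmostExponentialMap}) controls the neck.  The Arzel\`a--Ascoli step on the thick part is also morally what happens, though the paper organises it through the holonomy theorem for punctured surfaces (Theorem~\ref{HolImmersion}) and the injectivity-radius bound (Theorem~\ref{UpperInjectivityRadiusBound}) rather than by a direct Lipschitz compactness argument on $\beta_t$.

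There is, however, a genuine gap in your treatment of Case~(ii).  You claim that whether $\beta_\infty^{-1}(\CP^1)$ is just $m$ or an entire component of $S\setminus N$ is governed by whether $L_t$ stays bounded on that side, equivalently by whether $q_\infty$ has a genuine order-two pole with nonzero residue ``or degenerates further.''  But the paper's Proposition~\ref{Residue} shows that in Case~(ii) the residue is \emph{always} a nonzero integer multiple of $\sqrt{2}\pi$ at both punctures, so your proposed criterion never distinguishes anything.  The actual mechanism (Proposition~\ref{DevConvergesSubsurfaces} and Proposition~\ref{TrivialNeckPleatedSurface}) is representation-theoretic, not differential-geometric: a component $F$ of $S\setminus m$ collapses to a point of $\CP^1$ precisely when $\rho_\infty|\pi_1(F)$ has a \emph{continuous} stabiliser in $\PSL(2,\C)$ and the unnormalised developing map $f_{t_i}|\tilde F$ fails to converge.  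In that situation one can renormalise by a diverging sequence $\gamma_i\in\PSL(2,\C)$ so that $\gamma_i\cdot(f_{t_i},\rho_{t_i})|\tilde F$ converges to an honest developing pair; undoing the $\gamma_i$ then forces the original $f_{t_i}|\tilde F$ (and hence $\beta_{t_i}\circ\tilde\kappa_{t_i}|\tilde F$) to collapse onto a fixed point of $\rho_\infty|\pi_1(F)$.  The further split between ``$\beta_\infty^{-1}(\CP^1)$ is $m$'' versus ``contains a component of $S\setminus N$'' is then governed by whether the limit axis $a_\infty=\lim\Axis\rho_{t_i}(m)$ is a single point on $\CP^1$ or a genuine geodesic in $\H^3$ (Proposition~\ref{PunctureAtAxis} and Lemma~\ref{DifferentEndpoints}).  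Your collar analysis needs to feed into this axis dichotomy rather than into a residue dichotomy that does not exist.
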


Let $f_t\col \til{S} \to \CP^1$ be the developing map of $C_t$ which is a $\rho_t$-equivariant local homeomorphism. 
As $C_t$ changes continuously in $t$,  we may assume that $f_t$ also changes continuously in $t \geq 0$. 
Such a family $(f_t)$ is unique up to a path of isotopies $S \to S$ in $t \geq 0$ homotopic to the identity.

Pick a regular neighborhood  $N$of $m$. 
Pick a component $\ti{N}$ of $\phi^{-1}(N)$. 
 By abuse of notation, we regard the loop $m$ also as the element of $\pi_1(S)$ which preserves $\ti{N}$.  
We show that the developing map $f_t$ converges in the complement of $\phi^{-1}(N)$, and the asymptotic behavior on $\bdr \phi^{-1}(N)$ is well controlled by  the holonomy $\rho_t(m)$. 
Hyperbolic structures are in particular $\CP^1$-structures. 
If a hyperbolic surface has a cusp, it has a neighborhood obtained by quotienting a horodisk in $\H^2$ by the cyclic group generated by a parabolic holonomy around the puncture. 
\begin{thm}\Label{LImitInDevelopingMap} 
Suppose that  $X_t$ is pinched along a loop $m$. 
Then, by an appropriate isotopy of $S$ in $t \geq 0$ homotopic to the identity,  exactly one of (i) and (ii) holds.    
\begin{enumerate}[(i)]
\item 
\begin{itemize}
\item $\rho_\infi(m)$ is parabolic;
\item  the cusps of $C_\infi$ have horodisk quotient neighborhoods;
\item    $f_t\col \ti{S} \to \CP^1$  converges to  a $\rho_\infi$-equivariant continuous map $f_\infi\col \til{S} \to \CP^1$ uniformly on compact subsets, and moreover, there is a multiloop $M$ which is a union of finitely many parallel copies of $m$ such that $f_\infi$ is a local homeomorphism exactly on $\til{S} \minus \phi^{-1}(M)$, 
and   $f_\infi$ takes each component $\ti{m}$ of $\phi^{-1}(M)$  to its corresponding parabolic fixed point (Theorem \ref{ParabolicDevConverges}).      
\end{itemize}

\item $\rho_\infi(m) = I$, and
for every diverging sequence $t_1 < t_2 < \dots$, up to a subsequence,
\begin{itemize}
\item  the restriction of $f_{t_i}$ to $\til{S} \minus \phi^{-1}(N)$ converges to a $\rho_\infi$-equivariant continuous map $f_\infi \col \til{S} \minus \phi^{-1}(N) \to \CP^1$, and
\item  $\Axis(\rho_{t_i}(m))$ converges to a geodesic in $\H^3$ or a point in $\CP^1$ so that $f_\infi$ takes the boundary components of $\ti{N}$ onto the ideal points (in $\CP^1$) of  $\lim_{i \to \infi} \Axis(\rho_{t_i}(m))$ (Theorem \ref{TrivialNeck}), where $\Axis(\rho_{t_i}(m))$ is the convex hull of the fixed point on $\CP^1$ (\Cref{axis}). 
\end{itemize}
\end{enumerate} 
\end{thm}
\begin{remark}
If a general $\CP^1$-structure has a cusp with parabolic peripheral holonomy, there is its cusp neighborhood isomorphic to either a horodisk quotient or a grafting of a horodisk quotient. (See Proposition \ref{CuspClassification}.) 

A (2$\pi$-)grafting is a cut-and-paste operation of a $\CP^1$-structure, and it yields a new $\CP^1$-structure with the same holonomy, by inserting an appropriate cylinder along an (admissible) loop  (\cite{Goldman-87}, see also  \cite{Kapovich-01, Baba20ThurstonParameter}).
Let $n$ be the number of parallel copies of $m$ constituting $M$ in (i). 
Then there is another diverging path $C_t'$ of $\CP^1$-structure on $S$ with holonomy $\rho_t$ and a path of admissible loops $m_t'$ on $C_t'$ for $t \gg 0$ such that $C_t$ is obtained by $2\pi (n -1)$-grafting of $C_t'$.
\end{remark}

In fact,  Cases (i) and (ii) in Theorem \ref{SchwarzianLimit}, Theorem \ref{LimitInPleatedSurface}, and Theorem \ref{LImitInDevelopingMap} correspond. 
In particular, the Type (i) degeneration occurs on the boundary of the quasi-Fuchsian space, by pinching a loop on a Bers slice.

On the other hand, Type (ii) degeneration is new indeed. 
In particular, $\eta_t$ must be a non-discrete representation for all sufficiently large $t > 0$, possibly except at $t = \infty$ (Theorem \ref{EventuallyNondiscrete}).
Notice that if the peripheral loop of a cusp of a $\CP^1$-structure has trivial holonomy, then the $\CP^1$-structure can be deformed without changing its holonomy (of the entire surface), by moving the cusp  (c.f. Theorem \ref{HolImmersion}). 
Then, since $\rho_\infi(m) = I$, therefore it is necessary to take a subsequence. 
In \S \ref{sExoticDegeneration}, we give examples of Type (ii) degenerations.

Next, we explain a certain uniform bound of $C_t$, which yields the convergence of $C_t$ away from the pinched loop $m$.
This uniform bound holds for a more general path $C_t$ with a multiloop being pinched.
 The integration of $\sqrt{q_t}$ along paths on $X_t$ yields a singular Euclidean structure  $E_t$ on $X_t$ such that  a zero of order $d$ of $q_t$ is the singular point of cone angle $(d/2 + 1)\pi$ of $E_t$ (see for example, \cite{FarbMarglit12, Strebel84QuadraticDifferentials}).  
Recall that the upper injectivity radius of $E_t$ is the supremum of the injectivity radii over all points in $E_t$  (as $E_t$ is compact, it is indeed maximum).
    \fontsize{13pt}{12pt}\selectfont
\begin{thm}(Theorem \ref{UpperInjectivityRadiusBound})\Label{BoundingUpperInjectivityRadius}
Suppose that $X_t$ is pinched along a multiloop. 
Then the {\it upper injectivity radius} of $E_t$ for all $t \geq 0$ is bounded from above. 
\end{thm}

It is a classical theorem that the holonomy map $\Hol$ is a local homeomorphism for the closed surface $S$.  
In the limit of $C_t$, we have a $\CP^1$-structure with cusps, such that cusp points are at most poles of order two in the Schwarzian coordinates.
The holonomy theorem is proved for such $\CP^1$-surfaces cusps by Luo (\cite{LuoFeng93MonodromyGroupsOfProjectiveStructure}) if punctures have non-trivial peripheral holonomy.
In this paper, we prove a more general holonomy theorem (Theorem \ref{HolImmersion}) for the developing pairs of $\CP^1$-structures allowing trivial holonomy around punctures.
We apply this holonomy theorem for the convergence on $C_t$ in every thick part as $t \to \infty$. 
This holonomy theorem is given by appropriately enlarging the character variety, and this enlargement is a certain ramification of the framed representation space introduced by Fock and Goncharov (\cite{FockGoncharov06}).
 (For recent developments on $\CP^1$-structure corresponding to higher order poles, see \cite{GuptaMj21, Allegretti_Bridgeland_20}.)

Gallo, Kapovich, and Marden algebraically characterized the image of $\Hol$; in particular, it is almost onto one of the two components of the character variety $\rchi$ (\cite{Gallo-Kapovich-Marden}).
To be more precise, $\rho \col \pi_1(S) \to \PSL_2\C \in \Im \Hol$ if and only if $\Im \rho$ is non-elementary and $\rho$ lifts to a homomorphism from $\pi_1(S)$ into ${\rm SL}(2, \C)$. 
As an example of  Type (ii) degeneration, we construct a path $C_t$ whose holonomy limits to an elementary representation, or even to the trivial representation in the representation variety (\S \ref{sExoticDegeneration}).
 
 If the holonomy of a $\CP^1$-structure around a puncture is trivial, as stated above, the $\CP^1$-structure can be deformed around the puncture without changing the holonomy of the entire surface.  
 A non-elementary subgroup of $\PSL_2\C$ has a non-trivial stabilizer,  a similar difficulty occurs when the limit holonomy of a component of $S \minus m$ is elementary.  
As a result of such flexibility, we have rather exotic degenerations described in Case (ii) of \Cref{LimitInPleatedSurface} and \Cref{LImitInDevelopingMap}.

One may certainly hope that some of the results extend to a more general setting of Problem \ref{problem}. 
In particular,  Theorem \ref{BoundingUpperInjectivityRadius} may hold in general:  
\begin{conjecture}
In the setting of Problem \ref{problem} (without the neck-pinching assumption),   
let $E_t$ be the singular Euclidean structure on $X_t$ given by the Schwarzian parameters of $C_t$. 
Then the upper injectivity radius of $E_t$ is bounded from above uniformly in $t \geq 0$. 
\end{conjecture}
    \fontsize{13pt}{12pt}\selectfont
  Recall that $\rho_t\, (t \geq 0)$ is a topological path in the character variety $\chi$ which converges to $\rho_\infty$ as $t \to \infty$ without any regularity assumption.
    It is plausible that Cases (\ref{iParabolicDiffferential}) in \Cref{SchwarzianLimit}, \Cref{LimitInPleatedSurface} and \Cref{LImitInDevelopingMap} do not occur if $\rho_t$ has a one-side derivative at $t = \infty$ (in the ambient affine space of $\chi$).
\begin{conjecture}\Label{cOnlyParabolic}
Suppose that $X_t$ is pinched along a loop $m$. 
If the path $\rho_t$ is tangential at $t = \infty$, Then $\eta_\infty(m) \in \PSL_2\C$ is a parabolic element (not equal to the identity $I$).  
\end{conjecture}

\subsection{Outline of this paper}
In \S\ref{sPreliminaries}, we recall $\CP^1$-structures, the Schwarzian parameters, Thurston parameters, and the Epstein surfaces for $\CP^1$-structures. 
In \S \ref{sPathLiftingProperty}, we prove a lifting property of paths in the character variety to paths in the representation variety. 
In \S \ref{sEpsteinSurface}, we give some estimates of the Epstein surfaces,  based on Dumas' work \cite{Dumas18HolonomyLimit}. 
In \S \ref{sHolonomySurfaceWithPuncture}, we prove a holonomy theorem for the space of developing pairs of  $\CP^1$-structures on surfaces with punctures, where punctures are at most poles of order two. 
In \S \ref{sInjetivityRadius}, we show that there is an upper bound for the upper injectivity radius of $E_t$ for all $t \geq 0$. 

In \S \ref{sConvergenceOfThickPart}, we show that $C_t$ converges on every thick part as $t \to \infi$, so that $C_t$ converges to a $\CP^1$-structure on a surface with two punctures homeomorphic to $S \minus m$. 
In  \S \ref{sDegeneration}, we state our main theorems and prove some properties of developing maps of a surface with punctures. 
The limit holonomy around $m$ can only be parabolic or the identity. 
This will be shown,  in \S \ref{sHyperbolicNeck} and \S \ref{sEllipticNeck}.
In \S \ref{sParabolicNeck}, we determine the asymptotic behavior of $C_t$ when $\rho_\infi(m)$ is parabolic.
In \S \ref{mTrivialHolonomy}, we give the asymptotic behavior of $C_t$ when $\rho_\infi(m) = I$.

In \S \ref{sExoticDegeneration}, we give new examples realizing (ii) in Theorem \ref{SchwarzianLimit}, Theorem \ref{LimitInPleatedSurface}, Theorem \ref{LImitInDevelopingMap}.
\subsection{Acknowledgements}
I thank Dick Canary,  David Dumas, Ko Honda, Misha Kapovich, and Joan Porti. 
The work done here is partially supported by German Research Foundation (BA 5805/1-1), 
GEAR Network (U.S. National Science Foundation grants DMS 1107452, 1107263, 1107367), and a JSPS Grant-in-Aid for Research Activity start-up (18H05833).

I deeply appreciate the anonymous referees for carefully and patiently reading through my original manuscript and giving various comments which significantly improved this paper.
\section{Preliminaries}\Label{sPreliminaries}

\subsection{Hyperbolic geometry}

Let $\tau$ be a hyperbolic structure on $S$. 
Let $L$ be a geodesic measured lamination on $\tau$. 
Given a geodesic loop $m$ on $\tau$, for a point $x$ in the intersection of $m$ and $L$, let $\angle_x(L, m) \in [0, \pi)$ denote the intersection angle of of the leaf $L$ and $m$ intersecting at $x$. 
Then,  the {\it angle} $\angle_\tau(m, L) \in [0, 1)$ between $L$ and $m$ be the maximum of $\angle_x(L, m)$ over all intersection points $x \in L \cap m$ if $L \cap m \neq \emptyset$, and  $\angle_\tau(m, L) = 0$ if $L \cap m = \emptyset$. 

Let $\phi\col \H^2 \to \tau$ denote the universal covering map.
Then  the $\phi$-inverse image $\ti{L}$ of $L$  is a $\pi_1(S)$-invariant measured lamination on $\H^2$. 
The pair $(\tau, L)$ induces a {\it bending map} $\beta\col \H^2 \to \H^3$ which is equivariant via an associated homomorphism $\rho\col \pi_1(S) \to \PSL_2\C$.
This mapping $\beta$ is defined by bending the universal cover $\H^2$ of $\tau$ along $L$, where the bending angle is given by the transversal measure of $\ti{L}$ (\cite{Epstein-Marden-87}).
Then the pair $(\tau, L)$ determines $\beta\col \H^2 \to \H^3$ uniquely up to $\PSL_2\C$; thus the pair $(\beta, \rho)$ is identified with $(\alpha \circ \beta, \alpha \rho \alpha^{-1})$ for $\alpha \in \PSL_2\C$.

It follows from Corollary 4.3 in \cite{Baba-15gt} (see also Theorem 5.1 in \cite{Baba-17}) that, if a geodesic loop on $\tau$ intersects the lamination in a small angle, then the holonomy along the loop must be hyperbolic. 
\begin{theorem}\Label{SmallIntersectionAngleImpliesHyperbolic}
There is a universal constant $\del > 0$ such that if $\angle_\tau(L, m) < \del$, then $\rho(m)$ is hyperbolic. 
\end{theorem}
\begin{proof}
Let $\ti{m}$ be a lift of $m$ to the bi-infinite geodesic in the universal cover $\ti\tau = \H^2$. 
Then, the restriction of $\beta$ to $\ti{m}$ is a $(1 + \ep)$-bilipschitz embedding (Corollary 4.3 in \cite{Baba-15gt}). 
Since $\beta$ is $\rho$-equivariant,  $\rho(m)$ is a hyperbolic element whose axis connects the ideal point of the bilipschitz embedding $\beta(\ti{m})$. 
\end{proof}
   
\subsection{$\CP^1$-structures}\Label{sCPS}
(General references of $\CP^1$-structures are found in \cite{Dumas-08, Kapovich-01}.)

A {\it $\CP^1$-structure} $C$, or a {\it complex projective structure}, on $S$ is a $(\CP^1, \PSL_2\C)$-structure, i.e. an atlas of charts embedding into $\CP^1$ with transition maps given by $\PSL_2\C$. 

Let $\ti{S}$ be the universal cover of $S$. 
Then, equivalently,  a $\CP^1$-structure is a pair $(f, \rho)$ of a local homeomorphism $f\col \til{S} \to \CP^1$ and a homomorphism $\pi_1(S) \to \PSL_2\C$ such that $f$ is $\rho$-equivariant. 
The map $f$ is called the {\it developing map} and $\rho$ is called the {\it holonomy representation} of $C$. 

The pair is defined up to $\PSL_2\C$, i.e. $(f, \rho) \sim (\alpha f,  \alpha \rho \alpha^{-1})$ for all $\alpha \in \PSL_2\C$.
Thus the holonomy is in the character variety $\rchi = {\rm Hom} (\pi_1(S), \PSL_2\C)\sslash \PSL_2\C$.

\subsubsection{Schwarzian parametrization}
Each $\CP^1$-structure corresponds to a holomorphic quadratic differential $q$ on a marked Riemann surface $X$.
Thus the deformation space $\PP$ of $\CP^1$-structures is an (affine) vector bundle over the Teichmüller space $\TT$, such that a fiber over a Riemann surface $X$ is the vector space $Q(X)$ of holomorphic quadratic differentials on $X$ (in fact, it is the cotangent bundle).
In this paper, considering the projection map $\Pi \col \PP \to \TT$ given by the uniformization, we regard the space of marked hyperbolic structures on $S$ as our real analytic zero section.

Although $\Hol \col \PP \to \rchi$ is a highly non-proper map (\cite{Hejhal-75}), for each $X \in \TT$, the restriction of $\Hol$ to the space $Q(X)$  is a proper embedding onto a complex analytic subvariety of $\rchi$ (see \cite[Theorem 11.4.1]{Gallo-Kapovich-Marden} and its proof). 
Moreover
\begin{theorem}[\cite{Kapovich-95,Tanigawa99}]
For every compact subset $K$ of $\TT$,  the restriction of $\Hol$ to $\Pi^{-1}(K)$ is a proper map.
\end{theorem}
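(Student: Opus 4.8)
The plan is to deduce properness of $\Hol|_{\Pi^{-1}(K)}$ from the classical result of Kapovich and Tanigawa, which is essentially the statement that on a fixed Riemann surface the holonomy map restricted to $Q(X)$ is proper, together with a uniformity argument over the compact set $K$. Concretely, suppose $(X_n, q_n)$ is a sequence in $\Pi^{-1}(K)$ whose images $\Hol(X_n, q_n) = \eta_n$ lie in a compact subset of $\chi$; I want to show that $(X_n, q_n)$ has a convergent subsequence in $\Pi^{-1}(K)$. Since $K$ is compact, after passing to a subsequence we may assume $X_n \to X_\infty$ in $\TT$ with $X_\infty \in K$. The work to be done is then to show that the quadratic differentials $q_n$ stay bounded in (say) the $L^1$ or $L^\infty$ norm, so that a further subsequence converges; once $q_n \to q_\infty \in Q(X_\infty)$, continuity of $\Hol$ gives $\Hol(X_\infty, q_\infty) = \eta_\infty = \lim \eta_n$, and we are done.

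First I would set up the fiberwise properness precisely: for each fixed $Y \in \TT$, the restriction $\Hol|_{Q(Y)} \colon Q(Y) \to \chi$ is a proper embedding onto a closed complex-analytic subvariety (this is the Gallo–Kapovich–Marden statement already quoted in the excerpt). In particular, if $q_n \in Q(Y)$ with $\|q_n\| \to \infty$, then $\Hol(Y, q_n)$ leaves every compact subset of $\chi$. The main step is to promote this to a statement that is uniform as the base point varies over $K$: there is a function $R \colon \R_{\geq 0} \to \R_{\geq 0}$, depending only on $K$, such that whenever $(Y, q) \in \Pi^{-1}(K)$ and $\Hol(Y, q)$ lies in a fixed compact $\mathcal{C} \subset \chi$, one has $\|q\| \leq R(\mathcal{C})$. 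I would prove this by contradiction: if not, there are $(X_n, q_n) \in \Pi^{-1}(K)$ with $\Hol(X_n, q_n)$ in a compact set but $\|q_n\| \to \infty$; passing to a subsequence with $X_n \to X_\infty \in K$, one uses a trivialization of the bundle $\PP \to \TT$ over a neighborhood of $X_\infty$ to transport the $q_n$ to quadratic differentials on the single surface $X_\infty$ (with norms still tending to infinity, since the trivialization is bi-Lipschitz on norms over the relatively compact neighborhood), and then applies fiberwise properness over $X_\infty$ to conclude $\Hol$ of these points leaves every compact set — contradicting the hypothesis, using continuity of $\Hol$ to compare the transported holonomies with the original $\eta_n$.

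With the uniform bound in hand, the rest is routine: given $(X_n, q_n) \in \Pi^{-1}(K)$ with $\eta_n$ convergent, compactness of $K$ and the bound $\|q_n\| \leq R$ let me extract a subsequence along which $X_n \to X_\infty \in K$ and, after trivializing the bundle near $X_\infty$, $q_n \to q_\infty \in Q(X_\infty)$; continuity of $\Hol$ then gives $\Hol(X_\infty, q_\infty) = \lim \eta_n$, so the preimage of a compact set under $\Hol|_{\Pi^{-1}(K)}$ is compact.

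I expect the main obstacle to be the uniformity step — upgrading the fiberwise (single-Riemann-surface) properness of Gallo–Kapovich–Marden and Kapovich–Tanigawa to a locally uniform statement over $K$. The delicate point is controlling how the identification of fibers $Q(X_n) \cong Q(X_\infty)$ distorts norms as $X_n \to X_\infty$; on a compact set of base points this distortion is bounded, so the argument goes through, but one must be careful that the relevant trivialization of $\PP \to \TT$ is smooth (which it is, $\PP$ being an affine bundle with the hyperbolic section) and that the comparison of holonomies under the trivialization is continuous (which follows from $\Hol$ being continuous, indeed a local biholomorphism, on all of $\PP$). An alternative, perhaps cleaner, route is to invoke directly the statement in \cite{Kapovich-95} and \cite{Tanigawa99} that the holonomy map is proper over compacta, in which case the present statement is simply a restatement; but if one wants a self-contained argument, the contradiction-and-trivialization scheme above is the way I would carry it out.
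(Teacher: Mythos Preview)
The paper does not prove this theorem; it is simply quoted from \cite{Kapovich-95} and \cite{Tanigawa99} and used as a black box to derive Corollary~\ref{DivergenceInT}. So there is no ``paper's own proof'' to compare against.

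That said, your proposed argument has a genuine gap at exactly the point you flag. You want to transport $q_n \in Q(X_n)$ to $\tilde q_n \in Q(X_\infty)$ via a local trivialization and then invoke continuity of $\Hol$ to conclude that $\Hol(X_\infty,\tilde q_n)$ is close to $\Hol(X_n,q_n)$. But this comparison is not justified by continuity alone. In a trivialization $\Phi\colon U\times Q \to \Pi^{-1}(U)$, the two points $\Phi(X_n,\xi_n)$ and $\Phi(X_\infty,\xi_n)$ share the same fiber coordinate $\xi_n$, yet since $\|\xi_n\|\to\infty$ they escape every compact subset of $\PP$, and continuity of $\Hol$ gives no uniform control there: the modulus of continuity of $\Hol\circ\Phi$ in the base variable may well degenerate as $\|\xi\|\to\infty$. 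Put differently, the trivialization is bi-Lipschitz on norms but the distance in $\PP$ between $(X_n,q_n)$ and $(X_\infty,\tilde q_n)$ is of order $\|A(X_n)-A(X_\infty)\|\cdot\|\xi_n\|$, which need not go to zero. So the step ``continuity of $\Hol$ compares the transported holonomies with the original $\eta_n$'' is exactly the uniform statement you are trying to prove, and the argument is circular.

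The actual proofs in the cited references do not proceed by such a soft reduction; they obtain quantitative estimates relating $\|q\|$ directly to invariants of the holonomy (translation lengths, or energy of an equivariant harmonic map), and these estimates are manifestly uniform over compacta in $\TT$. If you want a self-contained argument, that is the kind of input you need to supply in place of the trivialization-plus-continuity step.
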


\begin{corollary}\Label{DivergenceInT}
Suppose that $C_t \in \PP$ leaves every compact subset in $\PP$ and its holonomy $\rho_t$ converges in $\rchi$. 
Then the complex structure $X_t$ of $C_t$ also leaves every compact subset in $\TT$ as $t \to \infi$.
\end{corollary}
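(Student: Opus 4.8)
The plan is to argue by contradiction, using the properness theorem of Kapovich and Tanigawa that immediately precedes this corollary. Suppose, for contradiction, that $X_t$ does \emph{not} leave every compact subset of $\TT$ as $t \to \infi$. Then there is a compact subset $K \subset \TT$ and a diverging sequence $t_1 < t_2 < \dots$ with $t_i \to \infi$ such that $X_{t_i} \in K$ for all $i$. Since $C_{t_i} \in \Pi^{-1}(K)$ for all $i$, and $\Pi^{-1}(K)$ is closed in $\PP$, the hypothesis that $C_t$ leaves every compact subset of $\PP$ forces $C_{t_i}$ to leave every compact subset of $\Pi^{-1}(K)$; that is, $\{C_{t_i}\}$ has no convergent subsequence in $\Pi^{-1}(K)$, hence $\{C_{t_i}\}$ is a diverging sequence in the (locally compact) space $\Pi^{-1}(K)$.

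Now I would invoke the properness of $\Hol|_{\Pi^{-1}(K)}$ (the cited Kapovich--Tanigawa theorem). Properness means that the preimage of any compact subset of $\chi$ under $\Hol|_{\Pi^{-1}(K)}$ is compact. On the other hand, $\Hol(C_{t_i}) = \eta_{t_i}$ (or its lift $\rho_{t_i}$, but working directly in $\chi$ suffices here), and by hypothesis $\eta_t \to \eta_\infi$ in $\chi$, so $\{\eta_{t_i}\} \cup \{\eta_\infi\}$ is a compact subset $\mathcal{C}$ of $\chi$. Therefore $\{C_{t_i}\} \subset (\Hol|_{\Pi^{-1}(K)})^{-1}(\mathcal{C})$, which is compact in $\Pi^{-1}(K)$ by properness. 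A compact set in a metric (or locally compact Hausdorff) space is sequentially compact, so $\{C_{t_i}\}$ has a convergent subsequence in $\Pi^{-1}(K) \subset \PP$ — contradicting the conclusion of the previous paragraph that $\{C_{t_i}\}$ has no convergent subsequence in $\Pi^{-1}(K)$. This contradiction shows $X_t$ must leave every compact subset of $\TT$.

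I do not anticipate a serious obstacle: the corollary is essentially a formal consequence of properness plus the fact that $\TT$ and $\PP$ are (second countable, locally compact) manifolds, so compactness, sequential compactness, and ``leaving every compact set'' are interchangeable in the expected way. The one point requiring a line of care is the passage between ``$C_t$ leaves every compact subset of $\PP$'' and ``$C_{t_i}$ leaves every compact subset of $\Pi^{-1}(K)$'': this uses that $\Pi^{-1}(K)$ is a closed subset of $\PP$ (since $\Pi \col \PP \to \TT$ is continuous and $K$ is closed), so a compact subset of $\Pi^{-1}(K)$ is also compact in $\PP$, and hence the sequence $C_t$ eventually exits it. Once this is noted, the two paragraphs above fit together into the contradiction and the proof is complete.
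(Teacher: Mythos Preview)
Your proof is correct and is exactly the standard unpacking of the Kapovich--Tanigawa properness theorem that the paper has in mind; the paper does not spell out a proof of this corollary, treating it as an immediate consequence of the preceding theorem, and your contradiction argument is the expected one.
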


\subsubsection{Thurston's parametrization of $\CP^1$-structures}\Label{sThurston}
(\cite{Kullkani-Pinkall-94, Kamishima-Tan-92}, see also \cite{Baba20ThurstonParameter}.)
Thurston gave a homeomorphism 
$$\PP \cong  \TT \times \ML,$$ where $\TT$ is  the space of marked hyperbolic structures on $S$ and $\ML$ is the space of measured laminations on $S$. 

 A pair $(\tau, L) \in \TT \times \ML$ yields a pleated surface $\H^2 \to \H^3$ equivariant under the holonomy $\pi_1(S) \to \PSL_2\C$ of its corresponding $\CP^1$-structure on $S$. 
Given a $\CP^1$-structure $C$ on $S$, 
its associated {\it collapsing map} $\kap\col C \to \tau$ is a marking preserving continuous map which relates the developing map and the bending map of $C$. 
First, there is a measured lamination $\LLL$ on $C$ consisting of circular leaves, such that topologically $\LLL$ is obtained by replacing each periodic leaf $\ell$ of $L$ by cylinder foliated circumferences so that the weight of $\ell$ is equal to the total transversal measure of the foliated cylinder. 
The collapsing map $\kap$, conversely, collapses such foliated cylinders of $\LLL$ to their corresponding periodic leaves of $L$, and $\kap$ takes the strata of $\LLL$ to the strata of $L$.

Moreover, $\kap$ relates the developing map $f\col \ti{S} \to \CP^1$ and the pleated surface $\beta\col \H^2 \to \H^3$ in an equivariant manner:
For each $z \in \ti{S}$, let $B_z$ be the {\it maximal ball} in $\ti{C}$ whose core contains $z$. 
Let $\Psi_z\col B_z \to {\rm Conv} \bdr_\infi B_z \sub \H^3$ denote the orthogonal projection, where  ${\rm Conv} \bdr_\infi B_z $ is the hyperbolic plane ({\it support plane}) bounded by the boundary circle. 
Then, in fact,  the commutativity 
$$\beta \circ\ti\kap (z)= \Psi_z f(z),$$
holds equivariantly, where $\ti\kap\col \ti{C} \cong \H^2 \to \ti\tau$ be the lift of $\kap$ to a map between universal covers.  
Note that there is a canonical normal direction of the support plane ${\rm Conv} \bdr B_z$ at $\Psi_z f(z)$ toward $f(z)$.

\subsection{Epstein maps}
Let $C = (X, q)$ be a $\CP^1$-structure on $S$ in the Schwarzian coordinates, where $X$ is the complex structure of $X$, and $q$ is a holomorphic quadratic differential on $X$. 
Then, the integration of $\sqrt{q}$ along paths yields a singular Euclidean metric $E$ on $X$ in the same conformal class (see for example \cite{FarbMarglit12}). 
In the complex plane, the lines parallel to the real axis give a foliation of $\C$, and it has a transversal measure induced by the vertical length ({\it horizontal measured foliation}). 
Similarly, the lines parallel to the imaginary axis give a foliation of $\C$, and it has a transversal measure induced by the horizontal length ({\it vertical measured foliation}). 
Then, by pulling back the vertical and the horizontal foliations of $\C$, we obtain a vertical singular measured foliation $V$ and a horizontal singular measured foliation $H$ on $E$, where the singular points are the zeros of the differential $q$. 
Moreover $H$ and $V$ are orthogonal, and the vertical and the horizontal foliation of $\C$ are orthogonal.

Given a point $x \in \H^3$, we can normalize the unit disk model of $\H^3$ so that $x$ is the center of the disk; then the ideal boundary of $\H^3$ has the spherical metric uniquely determined by $x \in \H^3$. 
\begin{theorem}[Epstein \cite{Epstein84}]
Given a $\CP^1$-structure $C = (f, \rho)$ on $S$, there is a unique continuous $\rho$-equivariant map $\Ep \col \til{X}  \to \H^3$, such  that, for every point $z \in \til{X}$, the Euclidean metric of $\til{E}$ at $z$ agrees with the spherical metric at $f(z) \in \CP^1$ when $\CP^1$ is identified with $\s^2$ so that $\Ep(z) \in \H^3$ is at the center of the disk model of $\H^3$. 

Moreover $\Ep\col \til{X} \to \H^3$ is smooth away from the singular points of $\ti{E}$ (see Equation (3.1) in \cite{Dumas18HolonomyLimit}).
\end{theorem}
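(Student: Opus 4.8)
The plan is to reduce the theorem to a local, $\PSL(2,\C)$-natural construction for conformal metrics on domains of $\CP^1$, carry that construction out explicitly, and then glue. The first step: it suffices to produce, for every conformal metric $\sigma$ on an open set $\Omega \subseteq \CP^1$, a map $\Ep_\sigma \col \Omega \to \H^3$ such that (a) for each $\xi \in \Omega$ the point $\Ep_\sigma(\xi)$ is \emph{uniquely} characterized by the stated osculation property --- the round metric on $\CP^1$ agreeing with $\sigma$ to first order at $\xi$ is the visual (spherical) metric seen from $\Ep_\sigma(\xi)$ (note that agreement of the conformal factor alone is a single real equation and cannot pin down a point of $\H^3$, so ``$\ti E$ agrees with the spherical metric at $f(z)$'' must be read as agreement of $1$-jets); (b) $\Ep_{g_\ast\sigma} = g\circ\Ep_\sigma$ for every $g\in\PSL(2,\C)$; and (c) $\Ep_\sigma$ is smooth wherever $\sigma$ is, and continuous wherever $\sigma$ has the conical singularities of the flat metric $|q|$. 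Granting this, since $f\col\til{X}\to\CP^1$ is a local homeomorphism, near each $z_0\in\til{X}$ the metric $\ti{E}$ pushes forward under $f$ to a conformal metric $\sigma$ on an open subset of $\CP^1$, and one sets $\Ep := \Ep_\sigma\circ f$ there. The transition maps of a $\CP^1$-structure lie in $\PSL(2,\C)$, so by (b) the local definitions agree on overlaps and patch to a map $\Ep\col\til{X}\to\H^3$; applying (b) to the $\rho$-equivariance of $f$ shows $\Ep$ is $\rho$-equivariant; and uniqueness of the global $\Ep$ follows pointwise from (a).

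For the local construction I would work in the hyperboloid model $\H^3\subset\R^{3,1}$, realizing $\CP^1 = \partial_\infty\H^3$ as the set of rays of the positive light cone $\mathcal{L}^+$ and fixing a section $v\col\CP^1\to\mathcal{L}^+$; then, up to a universal constant, the visual metric based at $x\in\H^3$ is governed by the function $\xi\mapsto\langle x, v(\xi)\rangle$, being a fixed multiple of $|d\xi|_0^2 / \langle x, v(\xi)\rangle^2$ for a reference round metric $|d\xi|_0^2$. The osculation condition at $\xi_0$ now reads: $\langle x, v(\xi_0)\rangle$ and $\langle x, \partial v(\xi_0)\rangle$ take prescribed values, the first of them nonzero. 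Since $v(\xi_0)$ together with the two real directions of $\partial v(\xi_0)$ span a $3$-dimensional subspace of $\R^{3,1}$, these are three independent affine conditions on $x$; together with $\langle x, x\rangle = -1$ and the choice of the sheet $\H^3$ they have exactly one solution, which one defines to be $\Ep_\sigma(\xi_0)$. Equivalently --- and this is the route I would actually present, since it yields (a), (b) and (c) at once --- I would exhibit Epstein's explicit formula (as in Eq.\ (3.1) of \cite{Dumas18HolonomyLimit}): in a holomorphic coordinate with $\sigma = e^{2\eta}|dz|^2$, the point $\Ep_\sigma(z)$ is an explicit rational expression in $z$, $e^{\eta}$, $\partial\eta$ and $\bar\partial\eta$, and one checks by direct computation that the output lies in $\H^3$, osculates $\sigma$ to first order at $z$, transforms correctly under change of coordinate (hence under $\PSL(2,\C)$), and depends smoothly on $z$ when $\eta\in C^\infty$.

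It remains to address the singular points. The flat metric $\ti{E} = |q|$ is real-analytic away from the zeros of $q$ (and, in the punctured case, its poles), so the explicit formula shows $\Ep$ is smooth there. Near a zero of $q$ of order $k$ the metric $\ti{E}$ is conformal to $|z|^{k}|dz|^2$; here one can either estimate the formula directly, or use the rotational symmetry of this model metric --- which forces $\Ep$ to carry each circle $\{|z| = r\}$ onto a single orbit of the rotations about the geodesic with endpoints $0$ and the point over $\infty$ --- to conclude that $\Ep$ extends continuously over the cone point, with value on that geodesic. Assembling the local maps as in the first paragraph then produces the continuous, $\rho$-equivariant map $\Ep\col\til{X}\to\H^3$, smooth off the singular set, with uniqueness as above.

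I expect the crux to be the local existence-and-uniqueness step: verifying that the $1$-jet (osculation) condition has a unique solution in $\H^3$, equivalently checking that Epstein's explicit formula genuinely lands in $\H^3$ and has the asserted first-order agreement property --- a classical but slightly involved computation in the model --- together with pinning down the correct reading of the defining condition as agreement of $1$-jets, which is exactly what makes the $\PSL(2,\C)$-naturality (and hence both the gluing and the equivariance) automatic. The continuity across the conical singularities is a secondary, purely local matter.
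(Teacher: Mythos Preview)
The paper does not prove this theorem: it is stated in the preliminaries (\S\ref{sPreliminaries}) as a result of Epstein \cite{Epstein84}, with a pointer to the explicit formula in \cite{Dumas18HolonomyLimit}, and is used thereafter as a black box. There is therefore no proof in the paper to compare your proposal against.

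That said, your sketch is essentially the standard argument one finds in Epstein's and Dumas's treatments, and it is sound. Your observation that the defining condition must be read as agreement of $1$-jets (not merely of values) is correct and is exactly what makes the pointwise problem well-posed; the hyperboloid-model count of three affine conditions plus $\langle x,x\rangle=-1$ is the clean way to see uniqueness. The $\PSL(2,\C)$-naturality you isolate is precisely what makes the local pieces glue and what forces $\rho$-equivariance, so you have identified the right structural point. The treatment of cone points via the rotational symmetry of the model metric $|z|^k\,|dz|^2$ is also the standard device and gives the continuity across zeros of $q$. If you want to turn this into a complete proof rather than a sketch, the only substantive computation left is the verification that Epstein's explicit formula (your reference to Eq.~(3.1) in \cite{Dumas18HolonomyLimit}) actually realizes the $1$-jet osculation and transforms correctly under M\"obius changes of coordinate; everything else in your outline is routine.
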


Let $U \H^3$ denote the unit tangent bundle of $\H^3$. 
Then $\Ep$ lifts to a (Lagrangian) immersion $\Ep_\ast\col T \til{E} \to U(\H^3)$  (\cite[Lemma 3.2]{Dumas18HolonomyLimit}) which is a unit normal vector of  the surface $\Ep \col \til{X} \to \H^3$ in the complement of the singular points of $E$.
 For $z \in \til{X}$, let $d(z)$ denote the Euclidean distance from $z$ to the set $Z$ of the zeros of the differential $q$.  
\begin{lemma}[Lemma 2.6, Lemma 3.4 in \cite{Dumas18HolonomyLimit}]\Label{DumasLemma}
Let $h'(z)$ and $v'(z)$ be the horizontal and vertical unit tangent vectors at $z \in \til{X} \minus \ti{Z}$. 
If $\frac{6}{d(z)^2} < \frac{3}{4}$, then 
\begin{enumerate}
\item $\| \Ep_\ast h'(z) \|  < \frac{6}{d(z)^2}$,  \Label{iDumasHorizontal} 
\item $\sqrt{2} < \| \Ep_\ast v' \|  < \sqrt{2}  +  \frac{6}{ d(z)^2}$,  \Label{iDumasVertical}
\item $h' (z), v' (z)$ are the principal directions of $\Ep$ at $z$, and \Label{iDumasPrincipleDirections}
\item $|k_v | < \frac{6}{d(z)^2} $,  where $k_v$ is the curvature of $\Ep$ in the $v$-direction. \Label{iVStraight}
\end{enumerate}
\end{lemma}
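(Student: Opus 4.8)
The statement to prove is Lemma~\ref{DumasLemma}, which is attributed to Dumas \cite{Dumas18HolonomyLimit} (Lemma 2.6 and Lemma 3.4 there). So the plan is essentially to reconstruct Dumas' argument, or rather to indicate how the estimates follow from the explicit formula for the Epstein map of a $\CP^1$-structure in terms of its Schwarzian differential. The key object is the \emph{osculating map} / conformal metric comparison: near a point $z$ far from the zero set $Z$, the $\CP^1$-structure looks, up to high order, like the one determined by a quadratic differential with no zeros in a large disk, i.e.\ (after a projective change of coordinate) like $dz^2$ on a flat neighborhood. The Epstein map of that model $\CP^1$-structure is explicitly computable, and the claimed bounds $\sqrt 2$, $6/d(z)^2$, etc., are exactly the first-order data of that model plus a controlled error.

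The steps I would carry out are: (1) Recall Epstein's formula expressing $\Ep$ and its derivative $\Ep_\ast$ in terms of the conformal factor $e^{2\varphi}$ relating the flat metric $|q|$ (where $q = \phi\, dz^2$ in a local coordinate) to a spherical metric pulled back by the developing map, together with $\varphi$ and its first two derivatives; this is Equation (3.1) and Lemma 3.2 in \cite{Dumas18HolonomyLimit}. In the ``projective chart'' adapted to $q$, where the developing map is a primitive of $\sqrt q$, this conformal factor and its derivatives are governed by the Schwarzian, which in that chart is essentially $\phi$ itself. (2) Use Dumas' Lemma 2.6: in a disk of radius $d(z)$ around $z$ on which $q$ has no zeros, the relevant solution of the associated Liouville-type equation (the ``Thurston metric'' comparison, or equivalently the osculating estimate) is pinned down with error $O(1/d(z)^2)$; concretely one gets $\|\Ep_\ast v'\|$, $\|\Ep_\ast h'\|$, the principal curvatures, etc., within $6/d(z)^2$ of their flat-model values. (3) Plug in: for the model $dz^2$ the Epstein surface is a horosphere-like surface whose normal curvature in the vertical ($v$) direction vanishes and in which the vertical unit tangent has image of norm exactly $\sqrt 2$ while the horizontal one has norm $0$; adding the error terms yields items (1)–(4) verbatim, under the smallness hypothesis $6/d(z)^2 < 3/4$ which is what makes the implicit-function/perturbation step valid. (4) Item (3), that $h',v'$ are the principal directions, follows because the flat model is a surface of revolution about the $v$-direction so $h',v'$ diagonalize its second fundamental form exactly, and the perturbation is small enough (by the $<3/4$ bound) that the perturbed shape operator still has $h',v'$ as eigendirections — here one uses that $h'$ and $v'$ are the symmetry axes of the horizontal/vertical foliations, so the symmetry of $q$ under $z \mapsto \bar z$ in the adapted chart forces these to remain principal for the true Epstein surface as well.

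The main obstacle is Step (2): getting the quantitative comparison between the true conformal factor and the flat-model one, with the explicit constant $6$, on the full disk of radius $d(z)$. This is the technical heart of Dumas' Lemma 2.6 and uses a maximum-principle / a priori estimate for the curvature-type PDE satisfied by $\log(\text{spherical}/\text{flat})$, exploiting that the curvature term is bounded and the domain has inradius $d(z)$. Rather than redo this, I would cite \cite[Lemma 2.6, Lemma 3.4]{Dumas18HolonomyLimit} for the precise inequalities and only add whatever small bookkeeping is needed to match the normalization (signs, factor of $\sqrt 2$ from $\|dz^2\|$ versus the spherical metric of curvature $+1$) to the conventions used in this paper. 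So in the write-up this ``proof'' will mostly consist of stating that the four inequalities are exactly the content of the cited lemmas of Dumas, after identifying $d(z)$ with his distance function, and noting that the hypothesis $6/d(z)^2 < 3/4$ is his smallness assumption under which the principal-direction statement and the two-sided bound on $\|\Ep_\ast v'\|$ hold.
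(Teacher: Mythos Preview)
The paper gives no proof of this lemma at all: it is stated purely as a citation of \cite[Lemma 2.6, Lemma 3.4]{Dumas18HolonomyLimit}, and your proposal ultimately does the same, so the approaches coincide. One small correction to your sketch: item (3) is not a perturbative statement but an exact one --- in Dumas' explicit formula for the second fundamental form of the Epstein surface the off-diagonal entries vanish identically (the shape operator is expressed via $q$ and $\bar q$), so $h'(z), v'(z)$ are principal directions at every nonzero point, not merely approximately so when $d(z)$ is large.
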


\section{A lifting property of paths in the character variety}\Label{sPathLiftingProperty}
\begin{definition}
A representation $\rho\col \pi_1(S) \to \PSL_2\C$ is {\sf elementary} if $\Im \rho$ fixes a point in $\H^3 \cup \CP^1$ or preserves two points on $\CP^1$. 
Equivalently, $\rho$ is elementary if $\Im \rho$ is strongly irreducible and $\Im \rho$ is unbounded in $\PSL_2\C$.
Otherwise $\rho$ is called non-elementary.
\end{definition}

Let $\RRR$ denote the $\PSL_2\C$-{\it representation variety} of $S$,  the space of representations $\pi_1(S) \to \PSL_2\C$.
By fixing a generating set $\gam_1, \dots, \gam_n$,  the topology of $\RRR$ is the restriction of the product topology on $\PSL_2\C^n$, which is independent on the choice of  $\gam_1, \dots, \gam_n$.
The Lie group $\PSL_2\C$ acts on $\RRR$ by conjugation, and  its GIT-quotient $$ \Psi\col \RRR \to \rchi = \{ \pi_1(S) \to \PSL_2\C \} \sslash \PSL_2\C$$
is called the $\PSL_2\C$-character variety of $S$.

Each fiber of this GIT-quotient is an {\it extended orbit equivalence} class: Namely, for $\rho_1, \rho_2 \col \pi_1 (S) \to \PSL_2\C$,  $\rho_1 \sim \rho_2$ if and only if the closure of the $\PSL_2\C$-orbit of $\rho_1 $ intersects that of $\rho_2$ in $\RRR$.
In fact, equivalently
$\rho_1 \sim \rho_2$ if and only if $\tr^2 \rho_1(\gamma) =\tr^2 \rho_2(\gamma)$ for all $\gamma \in \pi_1(S)$ \cite{HeusenerPorti04}. 
In particular, for  a non-elementary representation $\pi_1(S) \to \PSL_2\C$, its $\PSL_2\C$-orbit is a closed subset of $\PSL_2\C$ and  form a single equivalence class (\cite{Newstead(06)}).   
For $\rho \in \RRR$, let $[\rho]$ denote it equivalent class $\Psi(\rho)$ in $\rchi$.

\begin{proposition}\Label{LiftingHolonomyPath}
Suppose that $C_t$  $(t \geq 0)$ is a one-parameter family of $\CP^1$-structures on $S$, such that its holonomy $\eta_t \in \rchi$ converges to $\eta_\infi \in \rchi$.
Then  $\eta_t$ lifts a path $\rho_t \in \RRR$ which converges to $\rho_\infi \in \RRR$ as $t \to \infi$, so that $[\rho_\infi] = \eta_\infi$.
\end{proposition}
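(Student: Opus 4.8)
The plan is to lift $\eta_t$ piecewise, using local sections of the GIT-quotient $\Psi\colon \RRR \to \chi$ wherever possible, and then to handle the passage to the limit $\eta_\infty$ separately, since the local-section argument can fail exactly at parameters where $\eta_t$ is elementary (where $\Psi$ need not admit a continuous local section and orbits need not be closed). First I would note that for each $t$ the holonomy $\eta_t$ is the holonomy of a $\CP^1$-structure, hence by Gallo--Kapovich--Marden it is the class of a \emph{non-elementary} representation; so on the open set of $\chi$ consisting of non-elementary classes, $\Psi$ is a principal $\mathrm{PSL}(2,\C)/\{\pm\}$-bundle (closed orbits, trivial stabilizer up to center), and in particular it admits continuous local sections. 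Thus over any compact subinterval $[0,T]$ I can cover the compact arc $\{\eta_t : t\in[0,T]\}$ by finitely many such section neighborhoods and patch (adjusting by conjugations on overlaps, using that the fibers are single conjugacy orbits) to obtain a continuous lift $\rho_t$ on $[0,T]$; doing this for $T=1,2,\dots$ and re-conjugating at the integer breakpoints yields a continuous $\rho_t$ on all of $[0,\infty)$ with $[\rho_t]=\eta_t$.

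The substantive point is convergence as $t\to\infty$, i.e. producing the limit $\rho_\infty\in\RRR$ with $[\rho_\infty]=\eta_\infty$ and, crucially, arranging $\rho_t\to\rho_\infty$ rather than merely having $\rho_t$ accumulate somewhere. The idea is: since $\eta_t\to\eta_\infty$ in $\chi$, pick \emph{any} representative $\rho_\infty^0$ of $\eta_\infty$. I claim the conjugacy orbits of $\rho_t$ eventually enter any fixed neighborhood of the (closed) orbit-closure of $\rho_\infty^0$. If $\eta_\infty$ is itself non-elementary, we are essentially in the bundle situation: a continuous local section near $\eta_\infty$ gives $\rho_t\to\rho_\infty$ directly, and a final global conjugation matches it to $\eta_\infty$. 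If $\eta_\infty$ is elementary (the case we genuinely care about, including $\eta_\infty$ trivial), the fiber $\Psi^{-1}(\eta_\infty)$ is larger than a single orbit, but it still contains a representation $\rho_\infty$ whose orbit is in the closure of the orbits of nearby non-elementary classes; concretely, one uses that $\tr^2\rho_t(\gamma)\to \tr^2\rho_\infty^0(\gamma)$ for all $\gamma$, together with a normalization (fixing the images of two fixed generators, or fixing an attracting/repelling configuration) that is preserved along the tail of the path, to extract an honest limit in $\PSL(2,\C)^n$. The normalization is chosen so that the finitely many conjugations used in the patching above can be taken to converge; then $\rho_t$ converges to some $\rho_\infty$, and $[\rho_\infty]=\lim[\rho_t]=\eta_\infty$ by continuity of $\Psi$.

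The main obstacle is precisely this last normalization at an elementary limit: when $\mathrm{Im}\,\rho_\infty$ has positive-dimensional stabilizer in $\PSL(2,\C)$, the conjugations $\alpha_t$ needed to make the patched lift continuous are only well-defined up to that stabilizer, and a careless choice lets $\alpha_t$ escape to infinity so that $\rho_t$ fails to converge even though $\eta_t$ does. To control this I would fix two elements $\gamma_1,\gamma_2\in\pi_1(S)$ whose images under $\rho_\infty^0$ already pin down a point of $\H^3$ or a pair of points on $\CP^1$ realizing the elementary structure (if $\eta_\infty$ is trivial, instead fix a transverse pair of generators and use the $\mathrm{SL}(2,\C)$-liftability from Gallo--Kapovich--Marden to keep signs consistent), and require each $\rho_t$ in the lift to be conjugated into the corresponding ``standard position''; since this position varies continuously and the residual ambiguity shrinks to the stabilizer only in the limit, the conjugations stay bounded, their limit exists, and convergence of $\rho_t$ follows. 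Everything else — patching over compact intervals, continuity of traces, closedness of non-elementary orbits — is standard and cited in the excerpt (\cite{Newstead(06)}, \cite{HeusenerPorti04}, \cite{Gallo-Kapovich-Marden}).
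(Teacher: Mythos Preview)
Your outline is correct up to and including the non-elementary limit case, and you are right that the entire content of the proposition lies in controlling the conjugating element when $\eta_\infty$ is elementary. But the normalization you sketch for that case does not actually work. You propose to fix $\gamma_1,\gamma_2$ whose images under a chosen representative $\rho_\infty^0$ ``pin down'' the elementary fixed set, and then to conjugate each $\rho_t$ into a corresponding standard position. The problem is that in the hardest case --- when $\tr^2\eta_\infty(\gamma)=4$ for all $\gamma$, so the unique closed-orbit representative is the \emph{trivial} representation --- there is nothing for $\rho_\infty^0(\gamma_1),\rho_\infty^0(\gamma_2)$ to pin down: they are both the identity, and the stabilizer you would need to absorb is all of $\PSL(2,\C)$, which is the opposite of ``residual ambiguity shrinking''. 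Your parenthetical fix (``transverse pair of generators and ${\rm SL}(2,\C)$-liftability'') addresses an unrelated issue and does not supply a normalization. Even in the intermediate elementary cases (say $\eta_\infty$ has a hyperbolic element but $\Im\rho_\infty$ contains an involution swapping the two fixed points), fixing two generators is not enough without saying precisely which geometric datum of $\rho_t(\gamma_i)$ you pin and checking it varies continuously as the isometry type of $\rho_t(\gamma_i)$ changes.

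The paper closes this gap by an explicit case analysis on the type of $\eta_\infty$. When $\Im\eta_\infty$ contains a hyperbolic, it first modifies the generating set so that all but possibly one generator are sent to hyperbolics, then normalizes so their axes converge (using that otherwise the limit would be non-elementary). When only elliptics are present it shows the axes of $\rho_t(\gamma_i),\rho_t(\gamma_j)$ must become asymptotically coincident (Lemma~\ref{EllipticAxiesGetCloser}) and normalizes so they share an ideal endpoint. When every $\tr^2$ is $4$, it arranges a generating set whose limit images are all parabolic, then picks the generator $\gamma_h$ of \emph{maximal displacement} of a basepoint, normalizes so $\rho_t(\gamma_h)$ converges to a fixed nontrivial parabolic, and proves the remaining generators are forced to converge to upper-triangular unipotents with bounded translation part --- hence, after one more conjugation, to the identity. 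That maximal-displacement argument is the genuine idea you are missing; without it (or an equivalent device) your normalization cannot be made to converge in the trivial-limit case.
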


\begin{remark}
The limit  $\eta_\infi$ can be an elementary representation (\S \ref{sExoticDegeneration}), and thus this proposition is nontrivial.
In addition, there is $\eta \in \RRR$ with $[\eta] = \rho_\infty$ such that there is no lift $\eta_t$ of $\rho_t$ ending at $\eta$. 
\end{remark}

\proof[Proof of  Proposition \ref{LiftingHolonomyPath}]
Fix a generating set $\gamma_1, \dots, \gamma_n$ of $\pi_1(S)$.  
We divide the proof into three cases:
\begin{enumerate}
\item $\eta_\infty$ is non-elementary. \Label{iNonelementaryRho}
\item  $\eta_\infty$ is elementary and there is $\gamma \in \PSL_2\C$ such that $\eta_\infi(\gamma)$ is hyperbolic, i.e. $\tr^2(\gamma) \in \C \minus [0, 4]$.
\Label{iHyperblicRhoLimit} 
\item  $\eta_\infty$ is elementary and there is no hyperbolic element in its image, i.e. $\tr^2 \eta_\infi(\gamma) \in [0, 4]$ for all $\gam \in \pi_1(S)$. \Label{LiftingEllipticElementaryRep}
\end{enumerate}
{\it Case \ref{iNonelementaryRho}. }

\begin{lemma}
Suppose that $\eta_\infty$ is non-elementary.
For every lift $\rho_\infty \in \RRR$ of  $\eta_\infi \in \rchi$, there is a lift $\rho_t \in \RRR$ of the path $\eta_t \in  \rchi$ such that $\rho_t \to \rho_\infty$ as $t \to \infty$.  
\end{lemma}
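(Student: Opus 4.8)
The plan is to reduce the statement to a path-lifting problem for a fibre bundle. The starting observation is that, since the holonomy of a $\CP^1$-structure on a closed surface is non-elementary (by \cite{Gallo-Kapovich-Marden}, or more directly since $\eta_t$ is strongly irreducible for each finite $t$) and since $\eta_\infty$ is non-elementary by hypothesis, the entire extended path $\eta_t$, $t \in [0,\infty]$, takes values in the open subset $\chi^{\mathrm{ne}} \subseteq \chi$ of non-elementary characters. On the preimage $\RRR^{\mathrm{ne}} := \Psi^{-1}(\chi^{\mathrm{ne}})$ the conjugation action of $\PSL(2,\C)$ is free --- the centraliser in $\PSL(2,\C)$ of a non-elementary subgroup is trivial, since an element commuting with a loxodromic fixes each of its two fixed points, and there are two loxodromics with disjoint fixed-point sets --- and each orbit is closed in $\RRR$ (as recalled in the discussion above, following \cite{Newstead(06)}). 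Granting in addition that the action is proper on $\RRR^{\mathrm{ne}}$, the restriction $\Psi\col \RRR^{\mathrm{ne}} \to \chi^{\mathrm{ne}}$ is a principal $\PSL(2,\C)$-bundle, hence in particular a fibration.

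Granting this, the lemma follows from the path-lifting property. I would reparametrise the path by an increasing homeomorphism $\phi\col [0,1) \to [0,\infty)$ and adjoin the endpoint to obtain a continuous map $\gamma\col [0,1] \to \chi^{\mathrm{ne}}$ with $\gamma(s) = \eta_{\phi(s)}$ for $s<1$ and $\gamma(1) = \eta_\infty$. Lifting the reversed path $s \mapsto \gamma(1-s)$ through the point $\rho_\infty$ lying over $\gamma(1) = \eta_\infty$ and reversing again produces a continuous lift $\tilde\gamma\col [0,1] \to \RRR^{\mathrm{ne}}$ with $\Psi \circ \tilde\gamma = \gamma$ and $\tilde\gamma(1) = \rho_\infty$. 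Setting $\rho_t := \tilde\gamma(\phi^{-1}(t))$ for $t \in [0,\infty)$ then gives a path with $[\rho_t] = \eta_t$ and $\rho_t \to \tilde\gamma(1) = \rho_\infty$ as $t \to \infty$; by construction $[\rho_\infty] = \eta_\infty$.

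The only non-formal point, and the one I expect to take the most care, is the properness of the conjugation action on $\RRR^{\mathrm{ne}}$ (equivalently, the existence of a continuous local section of $\Psi$ near $\eta_\infty$ through $\rho_\infty$). I would argue properness directly: if $\rho_n \to \rho$ in $\RRR^{\mathrm{ne}}$ and $\alpha_n \rho_n \alpha_n^{-1} \to \rho'$ in $\RRR^{\mathrm{ne}}$, choose $\gamma_1, \gamma_2 \in \pi_1(S)$ so that the fixed-point sets on $\CP^1$ of $\rho(\gamma_1)$ and $\rho(\gamma_2)$ together contain at least three distinct points (possible because $\Im\rho$ is non-elementary), pick such points $p^{(1)}, p^{(2)}, p^{(3)}$ together with nearby fixed points $p_n^{(j)}$ of the corresponding $\rho_n(\gamma_{i_j})$, pass to a subsequence so that $\alpha_n(p_n^{(j)}) \to r^{(j)} \in \CP^1$, observe that each $r^{(j)}$ is a fixed point of the corresponding element of $\Im\rho'$, and use non-elementarity of $\Im\rho'$ to arrange that the $r^{(j)}$ are three distinct points; then $\alpha_n$, being pinned down by the images of the three distinct points $p_n^{(j)} \to p^{(j)}$, converges. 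With properness in hand, the slice theorem for proper free actions yields a $\PSL(2,\C)$-equivariant tubular neighbourhood of the orbit of $\rho_\infty$, hence a continuous section of $\Psi$ over a neighbourhood of $\eta_\infty$ sending $\eta_\infty$ to $\rho_\infty$; combining this section (used for large $t$) with a path lift of $\eta_t$ over a compact initial interval $[0,T]$, matched at $t = T$, gives an alternative route to the conclusion that avoids quoting the full bundle structure.
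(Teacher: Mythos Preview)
Your proof is correct and follows the same approach as the paper, which simply asserts that over non-elementary representations $\Psi$ is a fibre bundle with fibre $\PSL(2,\C)$ and says this implies the lemma. You have supplied the details the paper omits --- freeness and properness of the conjugation action on $\RRR^{\mathrm{ne}}$, and the reduction to path-lifting --- so your write-up is more complete but not a different argument.
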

\begin{proof}
Over non-elementary representations, $\Psi$ is a fiber bundle with fibers $\PSL_2\C$.
This implies the lemma. 
\end{proof}

{\it Case \ref{iHyperblicRhoLimit}.} Suppose that $\eta_\infty$  is elementary and there is $\gam \in \pi_1(S)$ such that $\eta_\infi(\gam)$ is hyperbolic. 
Then, if $\rho \in  \Psi^{-1}(\eta_\infi)$, letting $\ell$ be the axis of the hyperbolic element $\rho(\gam)$,  we have either:
\begin{enumerate}[(i)]
\item  $\Im \rho$ preserves $\ell$ and  contains an elliptic element which reverses the orientation of $\ell$, or  \Label{iElementaryIrreducible}
\item  $\Im \rho$ pointwise fixes the endpoints of $\ell$ on $\CP^1$. \Label{iElementayHyperbolicReducible}
\end{enumerate}

{\it Case (\ref{iElementaryIrreducible}).}
Suppose that $\rho \in \Psi^{-1}(\eta_\infi)$ contains an elliptic element which exchanges the endpoints of $\ell$.
\begin{claim}\Label{AlmostHyperbolicGeneratingSet}
There are generators  $\gamma_1, \gamma_2, \dots, \gamma_n$  of $\pi_1(S)$, such that, for each $i = 1, \dots, n,$ 
\begin{enumerate} 
\item $\rho(\gamma_i)$ is a hyperbolic element for  $i = 1, \dots, n-1$, and
\item $\rho(\gamma_n)$ is an elliptic element of order two about a geodesic orthogonal to $\ell$.
\end{enumerate}
\end{claim}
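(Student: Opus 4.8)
The goal is to produce a generating set of $\pi_1(S)$ on which $\rho$ takes a particularly simple normal form: all but one generator hyperbolic, and the last one an order-two elliptic about a geodesic meeting $\ell = \Axis(\rho(\gamma))$ orthogonally. The starting data is that we are in Case (\ref{iElementaryIrreducible}): $\Im\rho$ preserves $\ell$ setwise, contains a hyperbolic element along $\ell$, and contains an orientation-reversing elliptic about $\ell$ (i.e. an order-two rotation about a geodesic perpendicular to $\ell$). So inside $\Im\rho$ we already have both kinds of elements we want to see; the work is to realize them on a \emph{generating set}, which requires a Nielsen-move / change-of-basis argument.

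First I would fix a standard generating set $a_1,b_1,\dots,a_g,b_g$ of $\pi_1(S)$ and look at the images. Since $\Im\rho$ preserves $\ell$, every $\rho(a_i),\rho(b_i)$ is either (a) a hyperbolic or parabolic translation/screw along $\ell$, (b) an elliptic rotation \emph{about} $\ell$, (c) an order-two elliptic flipping $\ell$, or (d) a glide-reflection-type element combining a flip with translation along $\ell$ — in $\PSL(2,\C)$ the subgroup preserving an (unoriented) geodesic is an extension of $\R\times(\R/2\pi\Z)$ (translations + rotations about $\ell$) by $\Z/2$ (the flip). The subgroup fixing $\ell$ \emph{with orientation} is abelian; the extra $\Z/2$ makes it non-abelian, and that non-abelian-ness is exactly what Case (\ref{iElementaryIrreducible}) asserts. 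I would then argue: after possibly multiplying generators by each other (Nielsen moves, which preserve the property of being a generating set), I can arrange that exactly one generator maps to an orientation-reversing element and all the others map to orientation-preserving ones. The surface relation $\prod [a_i,b_i]=1$ forces the number of orientation-reversing generators (counted mod 2 with signs via the homomorphism $\Im\rho\to\Z/2$) to be even, but a single generator can be made to carry the reversal by a standard basis change — concretely, if two generators $\gamma,\gamma'$ both reverse, replace $\gamma'$ by $\gamma'\gamma^{-1}$, which now preserves orientation; iterate.

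Next, having isolated one orientation-reversing generator, call it $\delta$, I want its image to be exactly an order-two elliptic about a perpendicular geodesic, not a general glide reflection. An orientation-reversing element of $\mathrm{Isom}^+(\H^3)$ preserving $\ell$ has the form $\sigma_w \circ T$ where $\sigma_w$ is the $\pi$-rotation about a geodesic $w\perp\ell$ and $T$ is a screw motion along $\ell$; such an element is itself a $\pi$-rotation about some perpendicular geodesic iff the screw part is trivial — equivalently iff its square is the identity. So I would further modify $\delta$: multiply it by a suitable power of a hyperbolic generator along $\ell$ (or by an appropriate product) to kill the translation length, using that $\Im\rho$ contains a hyperbolic along $\ell$ whose translation length generates a (possibly dense, possibly cyclic) subgroup of the line of translations — here I may need to allow replacing $\delta$ by $\delta\gamma_i^k$ for the hyperbolic generators $\gamma_i$, and argue by a closure/approximation or exactness argument that the translation part can be cancelled; if the translation subgroup from the orientation-preserving hyperbolics is dense this is immediate, and if it is discrete one checks the flip element's translation part already lies in it (otherwise $\Im\rho$ would be non-discrete in a way incompatible with being the image of a $\CP^1$-holonomy, or one simply absorbs it). Finally, for the remaining generators I want them hyperbolic, not elliptic-about-$\ell$ or parabolic: here I multiply any offending generator by a genuinely hyperbolic element of $\Im\rho$ (which exists by hypothesis) — a rotation-about-$\ell$ times a hyperbolic-along-$\ell$ is hyperbolic (loxodromic) generically, and parabolics can be perturbed to hyperbolics by the same trick; small care is needed but these are elementary $2\times 2$ trace computations.

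\textbf{Main obstacle.} The delicate point is not any single computation but the bookkeeping: simultaneously arranging \emph{all} of (i) exactly one reversing generator, (ii) that generator having trivial screw part so it is a clean order-two elliptic, and (iii) every other generator loxodromic — while only using Nielsen moves so the result is still a generating set, and while staying consistent with the surface group relation. In particular step (ii), cancelling the translation-along-$\ell$ part of the flip, is where one must actually use structural information about $\Im\rho$ (which elements are available) rather than just formal group theory; if the translation-along-$\ell$ subgroup generated by the orientation-preserving part is cyclic and does not contain the flip's translation length, one has to either re-examine whether such a $\rho$ actually occurs in $\Psi^{-1}(\eta_\infty)$ or enlarge the move-set. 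I expect the paper handles this by exploiting that $\rho$ is a limit of holonomies of $\CP^1$-structures (hence its image has enough elements), or by simply noting that the conjugacy class of the normal form is all that matters for the subsequent argument and absorbing the residual translation into a conjugation. Everything else — the Nielsen-move reductions and the trace checks that products come out hyperbolic — is routine.
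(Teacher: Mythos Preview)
Your overall strategy---Nielsen moves to reduce the number of orientation-reversing generators to one, and multiplication by a fixed hyperbolic element to make the orientation-preserving ones loxodromic---is exactly what the paper does. But the ``main obstacle'' you identify is a phantom, and seeing why makes the argument collapse to a few lines.

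Normalize so that $\ell$ is the geodesic from $0$ to $\infty$. The orientation-preserving stabilizer of $\ell$ is $\{z\mapsto\lambda z:\lambda\in\C^\ast\}$, and any element reversing the orientation of $\ell$ has the form $z\mapsto a/z$ for some $a\in\C^\ast$. But $z\mapsto a/z$ is \emph{always} an involution: its square is the identity for every $a$. Its fixed points are $\pm\sqrt{a}$, so it is the $\pi$-rotation about the geodesic from $\sqrt{a}$ to $-\sqrt{a}$, which meets $\ell$ orthogonally. In other words, there are no ``glide reflections'' in the $\PSL(2,\C)$ stabilizer of a geodesic in $\H^3$; every orientation-reversing element is already an order-two elliptic about a perpendicular. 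Your step (ii), cancelling the screw part, is vacuous, and all the worry about density of translation lengths, discreteness, or appealing to limits of holonomies is unnecessary.

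Two smaller points. First, no parabolic element of $\PSL(2,\C)$ can preserve a geodesic setwise (it fixes exactly one boundary point, so it cannot fix or swap a pair), so your list of possible types for $\rho(\gamma_i)$ should drop the parabolic case. Second, the surface relation plays no role here and the parity remark is not needed: the homomorphism $\Im\rho\to\Z/2$ recording orientation reversal is surjective by the Case (\ref{iElementaryIrreducible}) hypothesis, so at least one generator must map nontrivially, and that is all you need. With these simplifications your argument becomes precisely the paper's: multiply any orientation-preserving elliptic generator by $\gamma_1$ to make it hyperbolic (since $|\mu\nu|=|\mu|\neq 1$), and whenever two generators reverse orientation replace one by their product.
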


\begin{proof}
By the hypothesis, one can pick generators $\gamma_1, \gamma_2, \dots, \gamma_n$ of $\pi_1(S)$, such that  $\rho(\gamma_1)$ is a (nontrivial) hyperbolic element. 
Then we can, in addition, assume that $\rho (\gamma_2), \dots,  \rho (\gamma_n)$ are not $I$, by composing $\gamma_i \,(i \geq 2)$ with $\gamma_1$ if necessary. 
If $\rho( \gamma_i)$ is an elliptic element preserving the orientation of $\ell$, then $\rho (\gamma_1 \gamma_i)$ is hyperbolic--- thus without loss of generality, we can assume that if $\rho(\gam_i)$ is an elliptic element, it must reverse the orientation of $\ell$. 
Suppose that  $\rho(\gamma_i)$ and $\rho(\gamma_j)$ are both elliptic elements reversing the orientation of $\ell$; then $\rho(\gamma_i \gamma_j)$ preserves the orientation of $\ell$. 
Thus, by replacing $\gamma_j$ with $\gamma_i \gamma_j$, we can reduce the number of the generators which map to elliptic elements reversing the orientation of $\ell$. 
We can repeat such replacements of generators, we obtain a desired generating set.
\end{proof}
Let  $\gamma_1, \gamma_2, \dots, \gamma_n$ be the generating set of $\pi_1(S)$ obtained by \Cref{AlmostHyperbolicGeneratingSet}.
We show that there is indeed a lift $\rho_t$ in $\RRR$ of $\eta_t$ converging to $\rho$ as $t \to \infi$.

One can easily find  a lift $\rho_t \, (t \geq 0)$ so that $\rho_t(\gamma_1)$ converges to $\rho(\gamma_1)$. 
Then $\Axis (\rho_t(\gamma_1))$ must converge to $\ell$. 
For all $1 \leq i \leq n-1$,   $\Axis(\rho_t (\gamma_i))$  and $\Axis (\rho_t(\gamma_n))$ are asymptotically orthogonal, as $\eta_\infi$ is an equivalence class of some elementary representation. 
In particular, we can in addition assume that $\rho_t(\gamma_n)$  converges to $\rho(\gamma_n)$, so that $\Axis(\rho_t(\gamma_n))$ converges to a geodesic $m$ orthogonal to $\ell$. 
Then, for $1 < i < n$,   $\Axis(\rho_t(\gamma_i))$ converges $\ell$,  since it is asymptotically orthogonal to $m$ and $\eta_\infty$ is elementary.
Thus $\rho_t$ converges to $\rho$ as $t \to \infi$. 

{\it Case (\ref{iElementayHyperbolicReducible}).}
Next suppose that $\rho \in \Psi^{-1}(\eta_\infi)$ preserves the endpoints of $\ell$. 
Then, similarly to Claim \ref{AlmostHyperbolicGeneratingSet}, we can find a generating set $\gamma_1, \dots, \gamma_n$ such that $\eta_\infi(\gam_1), \dots, \eta_\infi(\gam_n)$ are all hyperbolic elements (i.e. $\tr^2 \eta_\infi(\gamma_i) \in \C \minus [0, 4]$).

Pick any lift $\rho_t$ of $\eta_t$ for $t \geq 0$ (which may not converge as $t \to \infi$). 

Fix a $\PSL_2\C$-invariant metric on the projectivized unit tangent bundle ${\rm PT^1}\H^3$ of $\H^3$.  
Then, given two geodesics $\ell_1, \ell_2$ in $\H^3$, we can measure their distance by embedding $\ell_1$ and $\ell_2$ into the bundle.
Thus, similarly, for all $1 \leq i, j \leq n$,  the distance between  $\Axis (\rho_t(\gam_i))$ and $\Axis(\rho_t(\gam_j))$  goes to zero as $t \to \infi$, since otherwise $\eta_\infty$ is an equivalent class of some non-elementary representations due to the limit of $\rho_t (\gam_i)$ and $\rho_t (\gam_j)$.  
Thus we can continuously conjugate $\rho_t$ by elements of $\PSL_2\C$ so that all axes of $\rho_t(\gam_1), \dots, \rho_t(\gam_n)$ converge to geodesics sharing an endpoint. 
Therefore $\rho_t$ converges as $t \to \infi$ by this normalization.

{\it Case \ref{LiftingEllipticElementaryRep}.}
Suppose that $\Im \eta_\infi$ contains no hyperbolic elements.
Given an elliptic element and a parabolic element in $\PSL_2\C$ sharing a fixed point on $\CP^1$ then their product is an elliptic element. 
Therefore we can pick generators $\gam_1, \dots,  \gamma_n$ of $\pi_1(S)$, such that $\eta_\infi(\gamma_i)$ are either all elliptic or all parabolic:
In fact, given a generating set $\gam_1, \dots,  \gamma_n$, if the $\eta_\infty$-image of at least one $\gam_i$ is elliptic, then by replacing $\gam_j$ with parabolic $\eta_\infty(\gam_j)$ with $\gam_i \gam_j$, we obtain a generating set with elements whose $\eta_\infty$-images are all elliptic.
 Pick any lift $\rho_t \in \RRR$ of the path $\eta_t \in \rchi$ for $t \geq 0$, which may not converge as $t  \to \infty$.
  
  \begin{definition}\Label{axis}
  For $\gamma \in \PSL_2\C$, the axis of $\gamma$ is the convex hull of the fixed point set in $\H^3 \cup \CP^1$ of $\gam$, and  we denote it by $\Axis(\gamma) \sub \H^3 \cup \CP^1$.
 \end{definition}
In particular, if $\gamma$ is hyperbolic or elliptic,  $\Axis(\gam)$ is a geodesic in $\H^3$ plus its endpoints in $\CP^1$, and
 if $\gamma$ is parabolic, $\Axis(\gam)$ is a single point on $\CP^1$.
Clearly an ideal point of $\Axis(\gam)$ is a fixed point of $\gam$ on $\CP^1$.

Suppose that $\gamma, \omega \in \PSL_2\C$ be hyperbolic or elliptic elements with axes $\ell_\gamma, \ell_\omega$. 
As above, we measure the distance between $\ell_\gamma, \ell_\omega$ by embedding them into the projective unit tangent bundle of $\H^3$.
\begin{lemma}\Label{EllipticAxiesGetCloser}
\begin{enumerate}
\item Suppose that $\eta_\infi(\gamma_i)$ and $\eta_\infi(\gamma_j)$ are both elliptic for distinct $1 \leq i, j \leq n$.  
Then the distance between $\Axis(\rho_t(\gamma_i))$ and $\Axis(\rho_t(\gam_j))$ in ${\rm PT^1}(\H^3)$ limits to zero as $t \to \infi$. \Label{iTwoElliptics}
\item 
Suppose that 
 $\eta_\infi(\gamma_i)$,  $\eta_\infi(\gamma_j)$,  $\eta_\infi(\gamma_k)$ are all elliptic for distinct $1 \leq i, j, k \leq n$. 
 Then there is a lift $\rho_t  \in \RRR$ of $\eta_t$ for $t \geq 0$, such that $\Axis(\rho_t(\gamma_i))$,  $\Axis(\rho_t(\gamma_j))$, and  $\Axis(\rho_t(\gamma_k))$ converge to geodesics sharing a common endpoint on $\CP^1$.  \Label{iThreeElliptics}
 \end{enumerate}
\end{lemma}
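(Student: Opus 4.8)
The plan is to prove both parts by a compactness-and-contradiction argument, using that $\tr^2\rho_t(\gamma) \to \tr^2\eta_\infty(\gamma)$ for every $\gamma$ and that no word in an elementary representation can be loxodromic.

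\emph{Part (1).} Since the chosen metric on $\UT(\H^3)$ is $\PSL(2,\C)$-invariant, the distance between $\Axis(\rho_t(\gamma_i))$ and $\Axis(\rho_t(\gamma_j))$ is a conjugation invariant of $\rho_t$, so I may conjugate the path freely. Suppose, for contradiction, that along some sequence $t_k \to \infty$ this distance stays bounded below by $\delta > 0$. Conjugating each $\rho_{t_k}$, I arrange that $\Axis(\rho_{t_k}(\gamma_i))$ is a fixed geodesic $\ell_0$; this is a genuine geodesic once $k$ is large, since $\tr^2\rho_{t_k}(\gamma_i) \to \tr^2\eta_\infty(\gamma_i) \neq 4$. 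Now distinguish whether $\Axis(\rho_{t_k}(\gamma_j))$ stays in a compact part of the space of geodesics.

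If it does, pass to a subsequence along which $\rho_{t_k}(\gamma_i) \to A$ and $\rho_{t_k}(\gamma_j) \to B$ in $\PSL(2,\C)$. Then $A$ and $B$ are elliptic with nonzero rotation angles (again $\tr^2\eta_\infty(\gamma_i),\tr^2\eta_\infty(\gamma_j)\neq 4$), $\Axis(A)=\ell_0$, and $\Axis(B)$ stays at distance $\geq\delta$ from $\ell_0$, so the two axes are distinct, disjoint, and share no ideal point. For every word $w$ in two letters, $\tr^2 w(A,B) = \lim_k \tr^2 w(\rho_{t_k}(\gamma_i),\rho_{t_k}(\gamma_j)) = \tr^2\eta_\infty(w(\gamma_i,\gamma_j)) \in [0,4]$ because $\eta_\infty$ is elementary and hence has no loxodromic elements; thus $\langle A,B\rangle$ contains no loxodromic and is therefore elementary. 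But two elliptics with nonzero rotation angles whose axes are distinct, disjoint, and share no endpoint generate a non-elementary group: a common fixed point in $\H^3$ would lie on both axes; a common fixed point on $\CP^1$ would be a shared endpoint; an invariant pair of points on $\CP^1$ forces, after inspecting the order-two case, the axes to meet or to coincide. This contradiction settles the bounded case. If instead $\Axis(\rho_{t_k}(\gamma_j))$ leaves every compact set, then, its rotation angle being bounded away from $0$, the element $\rho_{t_k}(\gamma_j)$ itself leaves every compact subset of $\PSL(2,\C)$; since $\rho_{t_k}(\gamma_i)\to A$, the product $\rho_{t_k}(\gamma_i\gamma_j)$ also leaves every compact subset, so $|\tr^2\rho_{t_k}(\gamma_i\gamma_j)| \to \infty$, contradicting $\tr^2\rho_{t_k}(\gamma_i\gamma_j) \to \tr^2\eta_\infty(\gamma_i\gamma_j)\in[0,4]$. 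Hence no such sequence $t_k$ exists.

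\emph{Part (2).} Begin with any lift $\rho_t$ of $\eta_t$ continuous on $[0,\infty)$ — for instance the holonomy of the developing pair of $C_t$. For $t$ large, $\tr^2\rho_t(\gamma_i)$ is close to $\tr^2\eta_\infty(\gamma_i)\neq 4$, so $\rho_t(\gamma_i)$ is loxodromic or elliptic and $\ell_t := \Axis(\rho_t(\gamma_i))$ is a continuous family of geodesics on $[T,\infty)$. Lift this path through the principal bundle $\PSL(2,\C)\to\{\text{geodesics}\}$ to a continuous family $g_t\in\PSL(2,\C)$ with $g_t(\ell_0)=\ell_t$, extend it to a continuous family $h_t$ on $[0,\infty)$ with $h_t=\mathrm{id}$ for small $t$ (interpolating on a bounded interval, which is possible since $\PSL(2,\C)$ is connected), and replace $\rho_t$ by $h_t^{-1}\rho_t h_t$. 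This is again a continuous lift of $\eta_t$, and now $\Axis(\rho_t(\gamma_i))=\ell_0$ for $t$ large. Applying Part (1) to this lift, $\Axis(\rho_t(\gamma_j))$ and $\Axis(\rho_t(\gamma_k))$ have distance to the fixed geodesic $\ell_0$ tending to $0$, hence all three axes converge (to $\ell_0$, and in particular to geodesics sharing an ideal point). This is the required lift.

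\emph{Main obstacle.} The heart of the matter is the bounded case of Part (1): ruling out that the two elliptic axes remain a definite distance apart. Everything reduces to the fact that a group generated by two elliptics with distinct, disjoint, non-asymptotic axes is non-elementary, combined with the compactness input that all word-traces converge into $[0,4]$. The remaining technical care goes into the escaping-axis case (handled via the trace of the product $\gamma_i\gamma_j$) and, in Part (2), into the continuity of the conjugating family $h_t$ on all of $[0,\infty)$, which is arranged by only using the geodesics $\ell_t$ for $t$ large enough that $\rho_t(\gamma_i)$ is non-parabolic.
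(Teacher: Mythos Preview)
Your strategy matches the paper's, but there are two genuine gaps.

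In Part~(1), escaping case: you claim that since $\rho_{t_k}(\gamma_i\gamma_j)$ leaves every compact subset of $\PSL(2,\C)$, it follows that $|\tr^2\rho_{t_k}(\gamma_i\gamma_j)| \to \infty$. This implication is false: divergence in $\PSL(2,\C)$ does not force the trace to diverge (e.g.\ $\left(\begin{smallmatrix}1&n\\0&1\end{smallmatrix}\right)$, or a family of elliptics with fixed rotation angle whose axis collapses to a single point of $\CP^1$). So that branch does not close as written. There is also an issue in the bounded case: distance $\geq\delta$ in $\UT(\H^3)$ does not force the limit axes to be \emph{disjoint} in $\H^3$---they could meet at a point of $\H^3$ at angle $\geq\delta$, in which case $\langle A,B\rangle$ fixes that point and is elementary, so your enumeration of the elementary possibilities does not yield a contradiction there.

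In Part~(2): after fixing $\Axis(\rho_t(\gamma_i))=\ell_0$, Part~(1) only tells you that the infimum-distance from each of the other two axes to $\ell_0$ tends to zero. For geodesics embedded in $\UT(\H^3)$ this means they asymptotically share an ideal endpoint with $\ell_0$, \emph{not} that they converge to $\ell_0$. Thus $\Axis(\rho_t(\gamma_j))$ may limit to a geodesic through one endpoint of $\ell_0$ while $\Axis(\rho_t(\gamma_k))$ limits to a geodesic through the other endpoint; applying (1) to the pair $(\gamma_j,\gamma_k)$ forces their limits to share an endpoint as well, and the three limit axes are then exactly the edges of an ideal triangle with no common vertex. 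The paper's proof of (2) is precisely the contradiction argument ruling this configuration out (three elliptics whose axes form an ideal triangle generate a non-elementary group, against the hypothesis on $\eta_\infty$), and your direct construction omits this case entirely.
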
 

\begin{proof}
(\ref{iTwoElliptics})
If there is a diverging sequence $0 < t_1 < t_2 < \dots$ such that the distance between $\Axis(\rho_t(\gamma_i))$ and $\Axis(\rho_t(\gam_j))$ in ${\rm PT^1}(\H^3)$ is bounded from below by a positive number, then  $\eta_\infi$ is non-elementary.
This is a contradiction.

(\ref{iThreeElliptics})
By (\ref{iTwoElliptics}), if the assertion of  (\ref{iThreeElliptics}) fails, there is a lift $\rho_t$ such that  $\Axis(\rho_t(\gamma_i))$,  $\Axis(\rho_t(\gamma_j))$, and  $\Axis(\rho_t(\gamma_k))$ converge to the distinct edges of an ideal triangle in $\H^3$. 
Then, $\eta_\infi$ is non-elementary against the hypothesis. 
\end{proof}
\begin{corollary}
Suppose that there is a generating set $\{\gam_1, \dots, \gam_n\}$ of $\pi_1(S)$, such that $\eta_\infi(\gam_1), \dots,  \eta_\infi(\gam_n)$ are all elliptic. 
Then, there is a lift $\rho_t \in \RRR$ of $\eta_t$   such that $\rho_t(\gam_1), \dots, \rho_t(\gam_n)$ converge to elliptic elements whose axes share an endpoint on $\CP^1$.   
\end{corollary}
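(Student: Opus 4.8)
The strategy is to reduce the many-generator statement to repeated application of Lemma \ref{EllipticAxiesGetCloser}(\ref{iThreeElliptics}), conjugating the path so that, one generator at a time, we pin down its axis at a common ideal point. First I would fix an arbitrary lift $\rho_t \in \RRR$ of $\eta_t$ for $t > 0$; this need not converge, and the entire argument consists of successively modifying it by a continuous path of conjugating elements $\alpha_t \in \PSL(2,\C)$ (with $\alpha_t$ eventually constant or convergent), so that in the end the conjugated lift converges to a representation whose generator-images are elliptic with axes through a single point $p \in \CP^1$. Since conjugating by a convergent path of $\PSL(2,\C)$-elements preserves convergence, it suffices to achieve the axis configuration.

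The main step is to organize the induction. By Lemma \ref{EllipticAxiesGetCloser}(\ref{iTwoElliptics}), for every pair $i,j$ the distance between $\Axis(\rho_t(\gam_i))$ and $\Axis(\rho_t(\gam_j))$ in $\UT(\H^3)$ tends to $0$; in particular all the axes $\Axis(\rho_t(\gam_i))$ become mutually asymptotic. By part (\ref{iThreeElliptics}) applied to the triple $\gam_1,\gam_2,\gam_3$, after passing to a suitable lift (equivalently, after a continuous conjugation), $\Axis(\rho_t(\gam_1)), \Axis(\rho_t(\gam_2)), \Axis(\rho_t(\gam_3))$ converge to three geodesics through a common ideal point $p \in \CP^1$. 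I then claim that automatically $\Axis(\rho_t(\gam_k))$ converges, for each $k \geq 4$, to a geodesic also passing through $p$: indeed, applying part (\ref{iThreeElliptics}) to the triple $\gam_1,\gam_2,\gam_k$ shows the three axes converge to geodesics sharing an endpoint, and since $\Axis(\rho_t(\gam_1))$ and $\Axis(\rho_t(\gam_2))$ already converge to geodesics whose only common endpoint is $p$, the shared endpoint must be $p$. (If a subsequence of $\Axis(\rho_t(\gam_k))$ failed to converge, we would extract a further subsequence along which it converges to some geodesic; the argument above forces that limit to pass through $p$, and since this holds for every convergent subsequence the whole path $\Axis(\rho_t(\gam_k))$ converges to a geodesic through $p$.) Once all axes converge to geodesics through $p$, each $\rho_t(\gam_i)$ converges: an elliptic element is determined by its axis together with its rotation angle, and the rotation angles are continuous functions of $t$ that, by compactness of the circle and the convergence of $\tr^2 \eta_t(\gam_i)$, subconverge — and after a final conjugation fixing $p$ we may normalize so the limiting rotation angles are attained, giving a genuine limit $\rho_\infi$ with $[\rho_\infi] = \eta_\infi$.

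I expect the main obstacle to be the bookkeeping around \emph{continuity} of the conjugating family $\alpha_t$: Lemma \ref{EllipticAxiesGetCloser}(\ref{iThreeElliptics}) as stated merely asserts the existence of \emph{some} lift with the desired limiting axis configuration, and one must verify that the required modifications of $\rho_t$ can be made through a continuous (indeed, eventually convergent) path of conjugations, rather than a discontinuous choice — otherwise the resulting $\rho_t$ would not be an admissible lift of the path $\eta_t$. The way to handle this is to observe that for $t$ large the axes already lie in a small neighborhood of one another in $\UT(\H^3)$, so there is a canonical (hence continuous) choice of $\alpha_t \in \PSL(2,\C)$, close to the identity, carrying the cluster of axes to a standard position with common endpoint $p$; for the bounded range of $t$ one uses that the space of lifts is a $\PSL(2,\C)$-bundle and connectedness to interpolate. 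A secondary subtlety is the degenerate possibility that some $\eta_\infi(\gam_i)$ is the identity (rotation angle zero); this causes no problem, since the identity trivially fixes $p$, but it must be acknowledged so that "axis through $p$" is read with the convention of Definition \ref{axis}.
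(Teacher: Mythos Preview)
Your approach is correct and is exactly the argument the paper intends (the corollary is stated without proof there, as an immediate consequence of Lemma~\ref{EllipticAxiesGetCloser}). One small point of phrasing: when you ``apply part~(\ref{iThreeElliptics}) to $\gam_1,\gam_2,\gam_k$'' in your inductive step, the statement of (\ref{iThreeElliptics}) only produces \emph{some} lift with the shared-endpoint property, not necessarily the one you have already fixed; what you really want is to rerun the \emph{proof} of (\ref{iThreeElliptics}) inside your fixed lift. Concretely: since $\Axis(\rho_t(\gam_1))$ and $\Axis(\rho_t(\gam_2))$ already converge to geodesics $\ell_1,\ell_2$ meeting only at $p$, part~(\ref{iTwoElliptics}) forces any subsequential limit $\ell_k$ of $\Axis(\rho_t(\gam_k))$ to share an endpoint with each of $\ell_1,\ell_2$; if neither shared endpoint were $p$, the three geodesics $\ell_1,\ell_2,\ell_k$ would bound an ideal triangle and $\eta_\infi$ would be non-elementary. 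This is what you mean, and with that adjustment the argument is complete.
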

Last we suppose that  $\eta_\infi(\gam_1), \dots,  \eta_\infi(\gam_n)$ are all parabolic, and we show that there is a lift of $\rho_t$ of $\eta_t$ to $\RRR$ such that $\rho_t$ converges to the trivial representation.

Pick a base point $O \in \H^3$. 
For each $t \geq 0$, let $\del_{t, i} = d_{\H^3} (O, \rho_t(\gam_i) O)$. 
Let $i_t \in \{1, \dots, n\}$ be such that \[\del_{t, i_t} =\max_{1 \leq  i  \leq n} \del_{t, i}.\]
\begin{lemma}\Label{ByCongugationParabolic}
Let $t_1 < t_2 <  \dots$ be a sequence diverging to $\infty$, such that, at $t_{t_k}$, the indices $i_{t_k} \in \{1, \dots, n\}\ (k = 1, 2, \dots)$ defined above  are a fixed constant $h$.
Suppose that there is a sequence $\omega_{t_k} \in \PSL_2\C$ such that the conjugation $\omega_{t_k} \rho_{t_k} (\gam_h) \omega_{t_k}^{-1} \eqqcolon \omega_{t_k}\!\cdot\rho_{t_k} (\gam_h)$ converges in $\PSL_2\C$, as $k \to \infi$, to a parabolic element   
$\begin{pmatrix}
1 & u\\
0&1   
\end{pmatrix}$  in $\PSL_2\C$ with $u \neq 0$.
Then,  for every $j = 1, \dots, n$, the conjugation $\omega_{t_k} \!\!\cdot\rho_{t_k} (\gam_j)$ accumulates to a bounded subset of 
 $\begin{pmatrix}
1 & \C\\
0&1   
\end{pmatrix} $ which has a diameter less than $|u|$ in $\C$.
\end{lemma} 
\begin{proof}
First we show that, unless $\omega_{t_k}\!\!\cdot\!\rho_{t_k} (\gam_j) \to I$, the limit of  the fixed point set of $\omega_{t_k}\!\!\cdot\rho_{t_k} (\gam_j) \sub \CP^1$ must converge to $\{\infty\}$.
Suppose, to the contrary, that this assertion fails. 
Then, up to a subsequence, the limit set of the fixed point set of $\omega_{t_k}\!\!\cdot\rho_{t_k} (\gam_j) \sub \CP^1$ converges to a point on $\CP^1$ not equal to $\infty$. 
For sufficiently large positive integers $p$, $\omega_{t_k}\!\!\cdot\!\rho_{t_k}(\gam_h \gam_i^p)$ are  hyperbolic elements and their translation lengths diverge to $\infty$ as $p \to \infty$(\cite[Lemma 2.1.1 (iii)]{Gallo-Kapovich-Marden}).
This contradicts that $\Im \eta_\infty$ consists of only parabolic elements. 
 
  For each $k = 1, 2, \dots$,
set
\begin{equation}
 \omega_{t_k}\!\!\cdot\rho_{t_k} (\gam_j) =
\begin{pmatrix}
a_{j, k} & b_{j, k}\\
c_{j, k} &d_{j, k}
\end{pmatrix} \Label{eEntries}
\end{equation}

Thus $c_{j, k} \to 0$ and $a_{j, k}, d_{j, k} \to 1$ as $k \to \infi$. 
Then the definition of $h$ implies that $b_{t_k, h} - \max_{1 \leq i \leq n} b_{t_k, i} \to 0$.
 Hence we have the upper bound on the image in $\C$.
\end{proof}
 By a straight computation, we obtain the following.
 \begin{corollary}\Label{RenoamalizeToBeIdentity}
For every $j = 1, \dots, n$, let $s_{j, k} > 0$ be a sequence in $k$, such that $s_{j, k} \to 0$ and $\frac{\sqrt{ \lvert c_{j, k}\rvert}}{s_{j, k}} \to 0$.
Then, using  the notation from (\ref{eEntries}), we have
\begin{equation}
\begin{pmatrix}
s_{j, k} & 0 \\
0 & s_{j, k}^{-1}\\
\end{pmatrix}
\begin{pmatrix}
a_{j, k} & b_{j, k}\\
c_{j, k} &d_{j, k}
\end{pmatrix} 
\begin{pmatrix}
s_{j, k} & 0 \\
0 & s_{j, k}^{-1}\\
\end{pmatrix} \to 
\begin{pmatrix}
1 & 0 \\
0 & 1\\
\end{pmatrix}\Label{eConvergingtoI}
\end{equation}as $k \to \infty$.\end{corollary}
Moreover, \Cref{RenoamalizeToBeIdentity} implies that
 the sequence $\max_{j = 1, \dots, n} {s_{j, k}}$ in $k$ yields the convergence (\ref{eConvergingtoI}) for all $j = 1, \dots, n$. 
Therefore we have the following.
\begin{proposition}
There is a continuous path $\omega_t  \in \PSL_2\C$ such that $\omega_t \!\cdot \rho_t(\gam_i)$ accumulates to a bounded subset of parabolic elements in  $\begin{pmatrix}
1 & \C\\
0&1   
\end{pmatrix} $  for each $i$. 
Therefore there is a continuous path  $\omega_t  \in \PSL_2\C$ such that $\omega_t \cdot \rho_t$  converges to the trivial representation in $\RRR$. 
\end{proposition}
We have completed the proof for all cases.
\Qed{LiftingHolonomyPath}

\subsection{Approximation of moduli}
Let $E$ be a singular Euclidean surface induced by a holomorphic quadratic differential on a Riemann surface $X$. 
A {\it regular annulus} $A_E$ is a cylinder embedded in $E$ such that there is a closed geodesic loop $\ell$ on $E$ and the annulus $A_E$ is foliated by loops equidistant from $\ell$. 
Minsky gave a useful approximation of the modulus of cylinders. 
  \begin{theorem}[\cite{Minsky92}, Theorem 4.6; see also \cite{Series12}, Theorem 6.2]\Label{Minsky}
  Let $E$ be a singular Euclidean surface induced by a holomorphic quadratic differential on a Riemann surface $X$. 
There are constant $0 < c < 1$ depending on the topology of the surface, such that, for every essential annulus $A$ embedded in $X$, there is a regular annulus $A_E$ in $E$ homotopy equivalent to $A$ satisfying
$\Mod(E_A) > c \Mod(A)$. 
\end{theorem}  

\section{Holonomy estimates away from zeros}\Label{sEpsteinSurface}
In this section, based on Dumas' work on Epstein surfaces \cite{Dumas18HolonomyLimit}, we give some further analysis of the Epstein surfaces in the horizontal direction.
We use Dumas' notations as below.
Let $g_1 = e^{\alpha_1} |dz|, g_2 = e^{\alpha_2} |dz|$ be two conformal metrics on a Riemann surface; then the Schwarzian derivative of $g_2$ relative to $g_1$ is the quadratic differential 
\begin{equation*}
B(g_1, g_2) = [(\alpha_1)_{zz} - {\alpha_2}_z^2 - (\alpha_1)_{z z} + (\alpha_1)_z^2] dz^2.
\end{equation*}
Let $C = (X, q)$ be a $\CP^1$-structure on $S$. 
Then, we set the following notations associated with $C$:
\begin{itemize}
\item Let $\tau$ be the hyperbolic metric on $S$ uniformizing $X$;
\item let $|\sqrt{q}|$ denote the singular Euclidean metric on $X$ obtain by integrating $\sqrt{q}$ along paths;
\item let $g_{\CP^1}$ be the spherical metric on $\CP^1$ given by some conformal identification $\CP^1 \cong \s^2$;
\item let $f\col \ti{X} \to \CP^1$ be the developing map of $C$, and $f^\ast(g_{\CP^1})$ be the pull back of the conformal metric $g_{\CP^1}$ by $f$ to the universal cover $\ti{X}$.
\end{itemize}
Then set
\begin{eqnarray*}
\omega  &=& 2 B(\tau, f^\ast(g_{\CP^1})),\\
\hat{\omega} &=& 2 B(|\sqrt{q}|, f^\ast(g_{\CP^1})),\\
\nu &=& 2B(\sigma, \sqrt{q}),
\end{eqnarray*}
which are holomorphic quadratic differentials on $\til{X}$.

  \subsection{Curvature of Epstein surfaces in the horizontal direction}

Let $k_h$ and $k_v$ be the principle curvatures of $\Ep\col \til{X} \to \H^3$ in the horizontal and the vertical directions, respectively.
First  
by Equation 3.7  in  \cite{Dumas18HolonomyLimit}
\begin{equation*}
k_v =     \frac{|\hat{\omega}| - |\omega|}{|\hat{\omega}| + |\omega|}.
\end{equation*}  
As the Gaussian curvature  $\kap_h \kap_v= 1$ ( \cite[p448]{Dumas18HolonomyLimit}), we have
\begin{equation*}
k_h =     \frac{|\hat{\omega}| + |\omega|}{|\hat{\omega}| - |\omega|}.
\end{equation*}  
In addition, recalling that $h'$ denotes a unit tangent vector in the horizontal direction at a non-singular point, we have
\begin{equation*}
\| \Ep_\ast h’ \|^2 =    \frac{{(|\hat{\omega}| -  |\omega|)^2}}{2 |\omega \hat{\omega}|}
\end{equation*}
(Equation 3.6  in \cite[p448]{Dumas18HolonomyLimit}).
Therefore 
\begin{eqnarray*}
(k_h \| \Ep_\ast (h’) \|)^2 &=& 
(\frac{|\hat{\omega}| + |\omega|}{|\hat{\omega}| -  |\omega|})^2  \cdot    \frac{{(|\hat{\omega}| -  |\omega|)^2}}{2 |\omega \hat{\omega}|} \\
&=&  1+ \frac{(|\hat{\omega}|^2 + |\omega|^2)}{2 |\omega \hat{\omega}|}\\
& =&  1 +  \frac{1}{2}(\frac{|\hat{\omega}|}{|\omega|} +  \frac{|\omega|}{|\hat{\omega}|}).\\
\end{eqnarray*}
Since $\hat{\omega} = \omega - \nu$ (\cite[p447]{Dumas18HolonomyLimit}), we have
\begin{eqnarray*}
\left|\frac{\hat{\omega}}{\omega}\right| = \left|1 - \frac{\nu}{\omega}\right|\, \text{ (\cite[p449]{Dumas18HolonomyLimit})}.
\end{eqnarray*}
By \cite[Lemma 2.6]{Dumas18HolonomyLimit}, we have
\begin{eqnarray*}
\left| \frac{\nu(z)}{\omega(z)} \right| &\leq& \frac{6}{d(z)^2} \, .\\
\end{eqnarray*}
Thus, recalling that $d(z)$ is the distance from the singular points, we have
\begin{eqnarray*}
\frac{|\hat{\omega}(z)|}{|\omega(z)|}  &=&  1 + O(d(z)^{-2}), \text{~and}\\
(k_h(z)  \|\Ep_\ast (h’(z))\| )^2 &=& 2 + O(d(z)^{-2}).
\end{eqnarray*}
Therefore, we have the following.
\begin{lemma}\Label{HorizontalTotalCurvature}
For all nonzero $z \in \ti{X}$ of the differential $\ti{q}$,
\begin{eqnarray*}
k_h(z) \| \Ep_\ast (h’(z))\|  = \sqrt{2} + O(d(z){^{-2}}).
\end{eqnarray*}
\end{lemma}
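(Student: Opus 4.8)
The plan is to extract Lemma \ref{HorizontalTotalCurvature} as a direct corollary of the computations already assembled above, treating it essentially as a bookkeeping exercise in big-$O$ asymptotics. The two ingredients in hand are, first, the exact identity
\begin{equation*}
(k_h(z)\,\|\Ep_\ast(h'(z))\|)^2 = 1 + \frac{1}{2}\left(\frac{|\hat\omega(z)|}{|\omega(z)|} + \frac{|\omega(z)|}{|\hat\omega(z)|}\right),
\end{equation*}
valid at every nonsingular point $z$ of $\ti q$, and second, the estimate $|\nu(z)/\omega(z)| \le 6/d(z)^2$ from \cite[Lemma 2.6]{Dumas18HolonomyLimit} together with $\hat\omega = \omega - \nu$, which yields $|\hat\omega(z)/\omega(z)| = |1 - \nu(z)/\omega(z)| = 1 + O(d(z)^{-2})$. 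First I would substitute this last relation into the exact identity: writing $\rho(z) := |\hat\omega(z)|/|\omega(z)| = 1 + O(d(z)^{-2})$, the right-hand side becomes $1 + \tfrac12(\rho(z) + \rho(z)^{-1})$. Since $\rho(z)^{-1} = (1 + O(d(z)^{-2}))^{-1} = 1 + O(d(z)^{-2})$ as well (the error being controlled once $d(z)$ is bounded below, say $d(z)^2 > 12$ so that $|\nu/\omega| < 1/2$), the sum $\rho(z) + \rho(z)^{-1} = 2 + O(d(z)^{-2})$, hence $(k_h\|\Ep_\ast h'\|)^2 = 2 + O(d(z)^{-2})$, which is exactly the penultimate display in the excerpt.

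Next I would take the square root. Here one uses that $\sqrt{2 + x} = \sqrt{2}\,(1 + x/2)^{1/2} = \sqrt{2} + O(x)$ for $|x|$ small, with $x = O(d(z)^{-2})$; this converts the $O(d(z)^{-2})$ inside the square into an $O(d(z)^{-2})$ outside — but the claimed error in the lemma is $O(d(z)^{-1})$, which is weaker, so the bound holds a fortiori (at least for $d(z) \ge 1$; for the regime $d(z)$ bounded one simply absorbs everything into the implied constant, or notes the hypotheses implicitly restrict to $d(z)$ large, as in Lemma \ref{DumasLemma}). So the conclusion $k_h(z)\|\Ep_\ast(h'(z))\| = \sqrt{2} + O(d(z)^{-1})$ follows. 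It is worth remarking that $k_h\|\Ep_\ast h'\|$ is positive wherever it is defined — $k_h = (|\hat\omega| + |\omega|)/(|\hat\omega| - |\omega|)$ and the factor $\|\Ep_\ast h'\|$ is a norm — so no sign ambiguity arises in taking the root, provided $|\hat\omega| \ne |\omega|$, which is guaranteed in the nonsingular regime where $|\nu/\omega| < 1$.

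The only genuine subtlety — and it is minor — is the transition from "exact formula valid away from zeros" to "asymptotic valid away from zeros": one must be slightly careful that the error term $O(d(z)^{-2})$ is uniform, i.e. that the implied constant does not depend on $z$. This is already built into \cite[Lemma 2.6]{Dumas18HolonomyLimit}, where the constant $6$ is universal, so uniformity is automatic. There is no real obstacle here; the lemma is a formal consequence of the displayed chain of equalities, and I expect the author's proof to be a one-or-two-line remark stating exactly that the square root of the previous display gives the claim. If I wanted to be maximally careful I would note that the statement is vacuous (or the constant must be chosen large) when $d(z)$ is small, but since the lemma will only be applied in the thick part where $d(z)$ is bounded below, this causes no difficulty downstream.
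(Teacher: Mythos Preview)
Your proposal is correct and follows exactly the paper's approach: the lemma is stated immediately after the displayed chain of equalities culminating in $(k_h\|\Ep_\ast h'\|)^2 = 2 + O(d(z)^{-2})$, with the single word ``Therefore'' as the entire proof. Your observation that the square root in fact yields the sharper error $O(d(z)^{-2})$ is accurate, and your remarks on uniformity and the sign of the root are valid refinements that the paper leaves implicit.
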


 \subsection{Holonomy estimates of long flat cylinders} 
 
Let $E$ be a singular Euclidean surface. 
A {\it flat cylinder} in $E$ is a cylinder foliated by closed geodesics. 
A cylinder $A$ in $E$ is {\it expanding} if there is a geodesic loop $\ell$ or a puncture $p$ on $E$, such that $A$ is foliated by a one-parameter family of circles equidistant from $\ell$ or $p$, respectively,  whose length strictly increases as the distance to $\ell$ or $p$ increases. 
 
  Let $\Ep\col \til{X} \to \H^3$ be the Epstein surface of a projective structure $C = (X, q)$ on $S$. 
  Let $\alpha \col [0, 1] \to \til{C} \cong \til{X}$ be an arc such that $\alpha(0)$ and $\alpha(1)$ are in $\ti{X} \minus \ti{Z}$ and  $\alpha$ differentiable at both endpoints. 
  Then the curve $\Ep \cc \alpha\col [0,1] \to \H^3$ is differentiable at both endpoints. 
Let $\zeta(\alpha) \in \PSL_2\C$ be such that $\zeta(\alpha)$ takes the unit tangent vector $\alpha'(0)$  to $\alpha'(1)$  on $\Ep$ and the unit normal $\Ep_\ast \alpha(0)$ to the unit normal $\Ep_\ast \alpha(1)$.
We call $\zeta(\alpha) \in \PSL_2\C$ the {\it holonomy (of $\Ep$) along} $\alpha$.
 \begin{definition}\Label{dRotationAngle}
 For $\alpha  \in \PSL_2\C$, the rotation angle in $[0, \pi]$ is the (unsigned) rotation angle of the tangent plane of $\CP^1$ at a fixed point of $\alpha$. 
 \end{definition}
 In the case that $\alpha$ has two fixed points on $\CP^1$, then the ``signed'' rotation angle of $\alpha$ which takes a value in $[-\pi, \pi]/ (\pi \sim -\pi)$  at a fixed point is $-1$ times the ``signed'' rotation angle at the other fixed point, where the sign is determined by the orientation from $\CP^1$; thus the unsigned rotation angle is well-defined in \Cref{dRotationAngle}. 
   
 Let $(E, V)$ be the singular Euclidean surface given by $C = (X, q)$.
\begin{definition}
Let $\alpha\col [0, 1] \to \H^3$ be a $C^1$-smooth arc on the Epstein surface  $\ti{E} \to \H^3$. 
Let $v(t)$ and $h(t)$ denote the (unit) vector fields along $\alpha$ tangent to the vertical and horizontal foliations of $E$, respectively. 

Let $\ell$ be a geodesic in $\H^3$. 
Let $\HHH$ be the foliation of $\H^3$ by the totally geodesic hyperbolic planes $H$ orthogonal to $\ell$. 
Note that these hyperbolic planes are isometrically identified by parallel transport along $\ell$, and thus their ideal boundary circles are also identified diffeomorphically. 

Suppose that $v(t)$ is transversal to the foliation $\HHH$. 
Let $H_t$ be the leaf of $\HHH$ containing $\alpha(t)$.
The {\sf translation length} of $\alpha$ along $\ell$ is the distance between $H_0$ and $H_1$ (i.e. the length of the segment of $\ell$ between $H_0$ and $H_1$).

As $v(t)$ is transversal to $\HHH$, then, by the orthogonal projection $\H^3 \to H_t$, the horizontal tangent vector $h(t)$ projects to a non-zero vector at the tangent space $T_{\alpha(t)} H_t$. 
This non-zero tangent vector determines a geodesic ray in $\H^3$ by being its initial tangent direction.
 Let $\theta(t) \in \bdr_\infi H_t$ be the endpoint of the geodesic ray in $H_t$  given by the tangent vector.
As all ideal boundaries $\bdr_\infi H_t$ are identified,  $\theta(t) \in \s^1$ lifts to $\ti\theta(t) \in \R$.
 The {\sf rotation angle} of $\alpha$ about $\ell$ is the total increase of $\ti\theta(t)$, which takes a value in $\R$.
 \end{definition}   

\begin{proposition}\Label{CylinderHonomyEstimate}
Let $C_i = (f_i, \rho_i)$ be a sequence of  $\CP^1$-structures on $S$, and let $(E_i, V_i)$ be the pair of a singular Euclidean structure $E_i$ and a vertical foliation $V_i$ on $E_i$ induced by the Schwarzian parameters of $C_i$. 
Suppose that there are a loop $m$ on $S$, a geodesic representative $m_i$ of $m$ on $E_i$ for each $i$,  and a flat cylinder $A_i$ in $E_i$ contains $m_i$, such that
\begin{itemize}
\item $m_i$ is in the middle of $A_i$, so that $A_i \minus m_i$ is a union of two isometric flat cylinders, 
\item  $\Mod(A_i) \to \infi$ as $i \to \infi$, and
\item the height $a_i$ of $A_i$ diverges to $\infty$ as $i \to \infi$.
\end{itemize}
Let $\bar{m}_i$ be a segment on the universal cover $\ti{E}_i$ obtained by lifting the simple closed curve $m_i$.
Then, by parametrizing $\bar{m}_i$ by arc length $s \in [0, \length (m_i)]$, for every $\ep > 0$, if $i > 0$ is sufficiently large, then
\begin{enumerate}
\item the translation length of $\Ep_i \bar{m}_i(s)$  along $\ell_i$ is $(1 + \ep)$-bi-Lipschitz  to $\sqrt{2}\,({\rm Re}   \int_{m_i} \sqrt{q_t})$, \Label{iVerticalTranslation} 
\item  the total rotation angle of $\Ep \bar{m}_i$ about $\ell_i$ is $(1 + \ep, \ep)$-bi-Lipschitz to  to  $\sqrt{2}\, (\Im\int_{m_i} \sqrt{q_t})$,  and\Label{iHorizontalTotalRotation}
\end{enumerate}
\end{proposition}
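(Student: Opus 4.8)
The plan is to analyze the Epstein surface along the lifted core geodesic $\bar m_i$ using the estimates from Lemma \ref{DumasLemma} and Lemma \ref{HorizontalTotalCurvature}. The key point is that inside a long flat cylinder $A_i$ of large modulus and large height, the middle geodesic $m_i$ stays far from the zeros of $q_i$: indeed $d(z) \to \infty$ uniformly along $\bar m_i$ since both the height $a_i$ and the modulus $\Mod(A_i)$ diverge, so a neighborhood of $m_i$ of definite Euclidean size in all directions is zero-free. Hence the error terms $O(d(z)^{-1})$ and $O(d(z)^{-2})$ in Lemma \ref{DumasLemma} and Lemma \ref{HorizontalTotalCurvature} become as small as we like for $i$ large.

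First I would set up a geodesic $\ell_i$ in $\H^3$ as follows. Along $\bar m_i$, the vertical unit vector field $v_s$ and horizontal unit vector field $h_s$ are parallel (in the singular Euclidean metric) along the geodesic $m_i$, so $\Ep_i$ restricted to $\bar m_i$ is, up to the Dumas error terms, a curve whose Frenet-type data is controlled: by Lemma \ref{DumasLemma}(\ref{iVStraight}) the curvature $|k_v|$ in the vertical direction is $O(d^{-2})$, so the image curve is nearly a geodesic when traversed ``vertically''; but $m_i$ is a \emph{horizontal}-type direction question — rather, $m_i$ is the core of a flat cylinder, so along $m_i$ the relevant normal curvature is $k_h$, and by Lemma \ref{HorizontalTotalCurvature}, $k_h \|\Ep_{i*}h'\| = \sqrt 2 + O(d^{-1})$. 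I would take $\ell_i$ to be the axis of the holonomy $\zeta(\bar m_i) = \rho_i(m)$ (conjugated appropriately), or more robustly, the geodesic best approximating the tube around $\Ep_i\bar m_i$; then claim (\ref{iTranslationLength}) is that $\Ep_i\bar m_i^*(v_s)$ stays $\ep$-close to $\ell_i$ in $UT(\H^3)$. This follows because the vertical direction is nearly straight ($k_v$ small) and $\|\Ep_{i*}v'\| \to \sqrt 2$, so the unit vertical normals trace out a curve staying in a thin tube; combined with large total length this forces the tube axis to be a genuine geodesic $\ell_i$ and the vectors to hug it.

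For (\ref{iVerticalTranslation}), the translation length along $\ell_i$ is the $\ell_i$-displacement accumulated by $\Ep_i\bar m_i$, which by the definition of translation length is $\int_0^{\length(m_i)}$ of the component of the velocity of $\Ep_i\bar m_i$ along $\HHH$-transverse direction. Since $m_i$ is a vertical-transverse (indeed the core of a flat cylinder is transverse to $V_i$ precisely when... ) — more carefully: along $m_i$, which is a closed horizontal-type geodesic of the cylinder, the tangent is horizontal and the relevant speed contributing to $\ell_i$-translation is governed by $\|\Ep_{i*}h'\|$, which by Lemma \ref{DumasLemma}(\ref{iDumasHorizontal}) is $O(d^{-2})$, small — so the translation length would be $\approx 0$?? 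That cannot be. I must instead note that $m_i$ is transverse to $V_i$, hence $v_s$ is transverse to $\HHH$ as required, and the \emph{vertical} speed $\|\Ep_{i*}v'\|$, integrated, gives the translation; but the arc $\bar m_i$ moves in the horizontal direction, not vertical, so its $\ell_i$-translation comes from the horizontal component. The resolution: $\ell_i$ is the axis direction which, by Lemma \ref{HorizontalTotalCurvature} and the relation $k_h\|\Ep_*h'\| = \sqrt2 + O(d^{-1})$, means that moving unit Euclidean length in the horizontal direction moves $\Ep_i\bar m_i$ by arc length $\|\Ep_{i*}h'\|$ along a curve of curvature $k_h$, and the net geodesic translation of such a circular-arc-like path of total turning $\int k_h \|\Ep_{i*}h'\| \,ds = \sqrt2\, \Im\int\sqrt q + O(\cdot)$ around $\ell_i$ with the $\ell_i$-translation being $\sqrt 2\,\mathrm{Re}\int_{m_i}\sqrt{q_i}$; the real and imaginary parts of $\int_{m_i}\sqrt{q_i}$ encode exactly the Euclidean displacement of the developing image of $m_i$, which is a screw motion. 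I would make this precise by noting $f_i$ developes a neighborhood of $m_i$ as an annulus in $\C^* \subset \CP^1$ (since $\Mod A_i \to \infty$), on which the Epstein map is explicitly computable as in Dumas: the holonomy $\rho_i(m)$ is loxodromic with complex translation length asymptotic to $\sqrt 2\int_{m_i}\sqrt{q_i}$, and the real/imaginary parts split as translation length / rotation angle about $\ell_i = \Axis\rho_i(m)$. Then (\ref{iVerticalTranslation}) and (\ref{iHorizontalTotalRotation}) are the real and imaginary parts of this statement, and the $(1+\ep)$ and $(1+\ep,\ep)$ bi-Lipschitz errors are precisely the accumulated $O(d^{-1})$ and $O(d^{-2})$ terms over a curve of length $\length(m_i)$, which stays bounded (since $m_i$ is a bounded-length loop on $E_i$ — this uses that $m$ has bounded extremal/Euclidean length, part of the pinching hypothesis elsewhere, though here it's a hypothesis on $A_i$).

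I expect the main obstacle to be the bookkeeping that turns the infinitesimal Dumas estimates (curvature $k_h$, speeds $\|\Ep_*h'\|, \|\Ep_*v'\|$, and the small vertical curvature $k_v$) into a \emph{global} statement that the image $\Ep_i\bar m_i$ is uniformly close to a single geodesic $\ell_i$ with the asserted translation length and rotation angle — i.e. integrating an ODE for a curve in $\H^3$ whose curvature/torsion data is pinned only up to $O(d^{-1})$ errors, and controlling the accumulated deviation over the whole length of $m_i$. The cleanest route is a comparison/Gronwall argument: define the model screw motion along a fixed geodesic with the exact asymptotic complex translation length, show the frame of $\Ep_i\bar m_i$ satisfies an ODE $\ep$-close (in $C^0$) to the model frame ODE, and conclude $C^1$-closeness of the curves, hence of the normal vectors, on the bounded parameter interval $[0,\length(m_i)]$. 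A secondary subtlety is verifying that $v_s$ is genuinely transverse to $\HHH$ all along $\bar m_i$ (needed for translation length to be defined), which follows once $\ell_i$ is chosen close to the tube axis and the vertical normals are near-constant along the horizontal direction $m_i$.
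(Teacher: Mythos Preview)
Your proposal has the right ingredients (Dumas's principal-direction estimates, $d(z)\to\infty$ along $m_i$), but it misses the key simplification the paper uses and contains a genuine gap.

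\textbf{The missing trick.} The paper does \emph{not} analyse $\bar m_i$ directly. Instead it isotopes $m_i$ (fixing a basepoint) inside $A_i$ to a staircase $u_i\cup w_i$, where $u_i$ is a pure vertical segment and $w_i$ is a pure horizontal segment. Since holonomy is homotopy-invariant, it suffices to analyse $\Ep_i$ along this staircase. The point is that Dumas's estimates in Lemma~\ref{DumasLemma} are along the \emph{principal} directions $v',h'$, so on each piece the picture is clean:
\begin{itemize}
\item Along $u_i$: speed $\|\Ep_*v'\|\approx\sqrt{2}$ and curvature $|k_v|=O(a_i^{-2})$, so $\Ep_i(u_i)$ is almost a geodesic of length $\sqrt{2}\,\length(u_i)$. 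Total curvature is $\length(u_i)\cdot O(a_i^{-2})\le \length(m_i)/a_i^2 = 1/(a_i\Mod A_i)\to 0$.
\item Along $w_i$: speed $\|\Ep_*h'\|=O(a_i^{-2})$ so $\Ep_i(w_i)$ has length $\to 0$, while $k_h\|\Ep_*h'\|=\sqrt{2}+O(a_i^{-1})$ so the total turning is $\approx\sqrt{2}\,\length(w_i)$. A separate rectangle argument (Proposition~\ref{VerticalAlongHorizontal}) shows the lifted vertical vectors $\Ep^*(v)$ along $w_i$ stay in an $\ep$-ball of $UT\H^3$.
\end{itemize}
This immediately gives (\ref{iVerticalTranslation}) from the $u_i$ piece, (\ref{iHorizontalTotalRotation}) from the $w_i$ piece, and (\ref{iTranslationLength}) from combining both. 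Your confusion in the middle of the proposal (``the translation length would be $\approx 0$??'') is exactly the symptom of trying to read off both effects from the oblique geodesic $m_i$ at once; the decomposition separates them.

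\textbf{The gap.} You write that the accumulated error is controlled because ``$\length(m_i)$\dots stays bounded\dots part of the pinching hypothesis elsewhere''. This is not among the hypotheses of the proposition, and in fact the proposition is applied later (e.g.\ in Claim~\ref{HolonomyAlongLoop}) precisely when the circumference $\length(m_i)$ diverges. Your Gronwall/ODE comparison, as stated, would then face an error of size $\delta\cdot\length(m_i)$ with a priori exponential amplification over an unbounded interval. The paper's decomposition avoids this because the total-curvature bounds along $u_i$ and the length bound along $w_i$ are both of the form $\length(m_i)/a_i^2$ or $\length(m_i)/a_i$, which equal $1/(a_i\Mod A_i)$ and $1/\Mod A_i$ respectively and go to zero using $\Mod A_i\to\infty$, \emph{without} any bound on $\length(m_i)$. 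If you want to push your direct approach through, you would need to make this same observation and also argue that the comparison stays in a compact region of the frame bundle (so no exponential blowup); the staircase trick is both shorter and more transparent.
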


\begin{proof}
Isotope $m_i$ in $A_i$, fixing a point on $m_i$, so that  $m_i$ is a union of a vertical segment $u_i$ and a horizontal segment $w_i$ (Figure \ref{fVerticalAndHorizontal}). 
Then $m_i$ remains close to the middle of $A_i$.  

\begin{figure}
\begin{overpic}[scale=.6
] {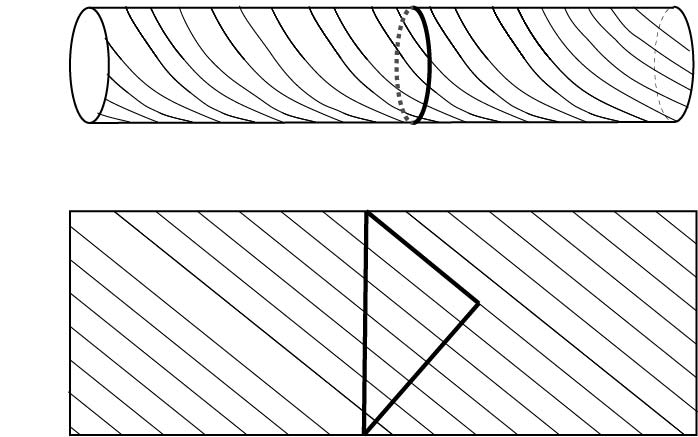} 
   \put( 64, 53){\contour{white}{\textcolor{Black}{$m_i$}}  }
  \put( 60, 26){\contour{white}{\textcolor{Black}{$u_i$}}}  
   \put( 63, 7){\contour{white}{\textcolor{Black}{$w_i$}}  }
      \put( 45, 15){\contour{white}{\textcolor{Black}{$m_i$}}  }
   \put( 35, 50){\contour{white}{\textcolor{Black}{$A_i$}}  }
      \end{overpic}
\caption{Isotope $m_i$ to a union of a vertical and horizontal segment. }\label{fVerticalAndHorizontal}
\end{figure}

We first analyze the vertical segment $\Ep_i | u_i$.
In the principal direction, the normal vector is preserved by parallel transports.
Thus, the parallel transport along the curve  $\Sigma_i |  u_i$ yields the holonomy $\zeta_i(u_i(s)) \in \PSL_2\C$. 
By the hypotheses, the distance from the loop $u_i \cup w_i$ and the set $Z_i$ of zeros of the differential $q_i$ diverges to $\infty$. 
Therefore, by Lemma \ref{DumasLemma} (\ref{iVStraight}), the curvature along $\Ep_i | u_i$ limits to zero, and it asymptotically has a constant speed $\sqrt{2}$ by \Cref{DumasLemma} (\ref{iDumasVertical}), so that its length is  $\sqrt{2}$ times the Euclidean length of $u_i$, which yields (\ref{iVerticalTranslation}).

   To analyze the total rotation angle in the vertical direction, we next consider the total curvature. 
  In a more general setting, the following holds. 
\begin{lemma}\Label{VerticalTotalCurvatureBound}
For every $\ep > 0$, if $R > 0$ is sufficiently large, then, if a vertical segment $u$ on a $\CP^1$-surface $C$ has Euclidean length less than $R/\ep$, then 
 total curvature of the curve $\Ep \vert u$ is less than $\ep$, where $\Ep\col \ti{C} \to \H^3$ is the Epstein surface of $C$. 
\end{lemma}
\begin{proof}
The curvature of the curve $\Ep \vert u$ at every point on $u$ is bounded from $\frac{6}{R^2}$ by Lemma \ref{DumasLemma} (\ref{iVStraight}).
Since, by the hypothesis, the length of $u$ is bounded from above by $\frac{R}{\ep}$, the total curvature along $u$ is bounded from above by $$\frac{R}{\ep}\cdot \frac{6}{R^2} = \frac{6}{\ep R}.$$
Therefore, if $R > \frac{6}{\ep^2},$
then the total curvature along $u$ is bounded from above by $\ep$.
\end{proof}
In our current setting, 
as $a_i \to \infty$ and $\Mod(A_i) \to \infty$, one can easily show that, for every $\ep$, the vertical segment $u_i$ satisfies the conditions of \Cref{VerticalTotalCurvatureBound} when $i$ is sufficiently large. 
Thus the following corollary holds. 

\begin{corollary}
The total (principal) curvature of the vertical segment $\Ep_i | u_i$ limits to zero as $i \to \infi$.
\end{corollary}
    
 We next show that the rotational holonomy along $u_i$ asymptotically vanishes as $i \to \infty$. 
\begin{lemma}\Label{AlmostNoTorsion}
For every $\ep > 0$, if $R > 0$ is sufficiently large, then, if a vertical segment $v$ on a $\CP^1$-surface $C$ has length less than $R/\ep$  and a distance at least $R$ from the singular set w.r.t. the singular Euclidean structure of $C$, then, letting $\Ep$ be its Epstein surface,  
the derivative of rotation of its $\Ep$-image is bounded from above by $\ep$. 
Moreover, the total rotation of its $\Ep$-image bounded from above by $\ep$ with respect to the geodesic $\ell$ connecting the endpoints of $\Ep$. 
\end{lemma}
\begin{proof}
Fix $\ep > 0$.
Let $v$ be a vertical segment on $C$ of length less than $R/ \ep$. 
Let $\alpha \col [0, \ell] \to \H^3$ be the curve $\Ep \circ v$, where $\ell$ is the Euclidean length of $v$. 
Let $s(t)$ be the geodesic segment  in $\H^3$ connecting $\alpha(0)$ and $\alpha(t)$ for each $t \in [0, \ell]$.
For $u \in [0, \ell]$, let $\Ep(u)$ be the surface which $s(t)$ sweeps out over $t \in [0, u]$, so that $\Ep(u)$ is bounded by $\alpha([0, u])$ and the geodesic segment $s(u)$ connecting its endpoints. 
Then, the intrinsic metric of  $\Ep(u)$ is a hyperbolic surface. 
Then, if $R > 0$ is sufficiently large, then $\Ep(u)$ isometrically embeds into a hyperbolic plane $\H^2$ so that its image is bounded by a geodesic segment isometric to $s(u)$ and a curve isometric to $\alpha(u)$. 
The curvature of the second segment is bounded from above the curvature of $\alpha \vert [0, u]$ at every point. 

Therefore, if $R > 0$ is sufficiently large, then the area of $\Ep$ is less than $\ep$ by the Gauss-Bonnet theorem to $\Ep$, since the total curvature  $\alpha$ is small.  
Let $n(t)$ denote the unit normal vector  $\Ep_\ast$ at $u(t)$.
Let $n'(t)$ be the parallel transport of $n(t)$ along the geodesic segment $s(t)$, so that $n'(t)$ be a tangent vector at $\alpha (0)$. 
By the Gauss-Bonnet theorem, the norm of the derivative $d n'(t)/ d t$  is bounded from above by the curvature of $\alpha$ and the derivative of the area of $\Ep(t)$ (\Cref{fSmallRotation}).
Thus, the total rotation of $n'(t)$ from $t = 0$ to $t = \ell$ is bounded from above by the sum of the total curvature of $\alpha$ and the total area of $\Ep$.
Therefore, by the combination of the small upper bounds above if $R> 0$ is sufficiently large, the total rotation is bounded by $\ep$.
 \begin{figure}
\begin{overpic}[scale=.2
] {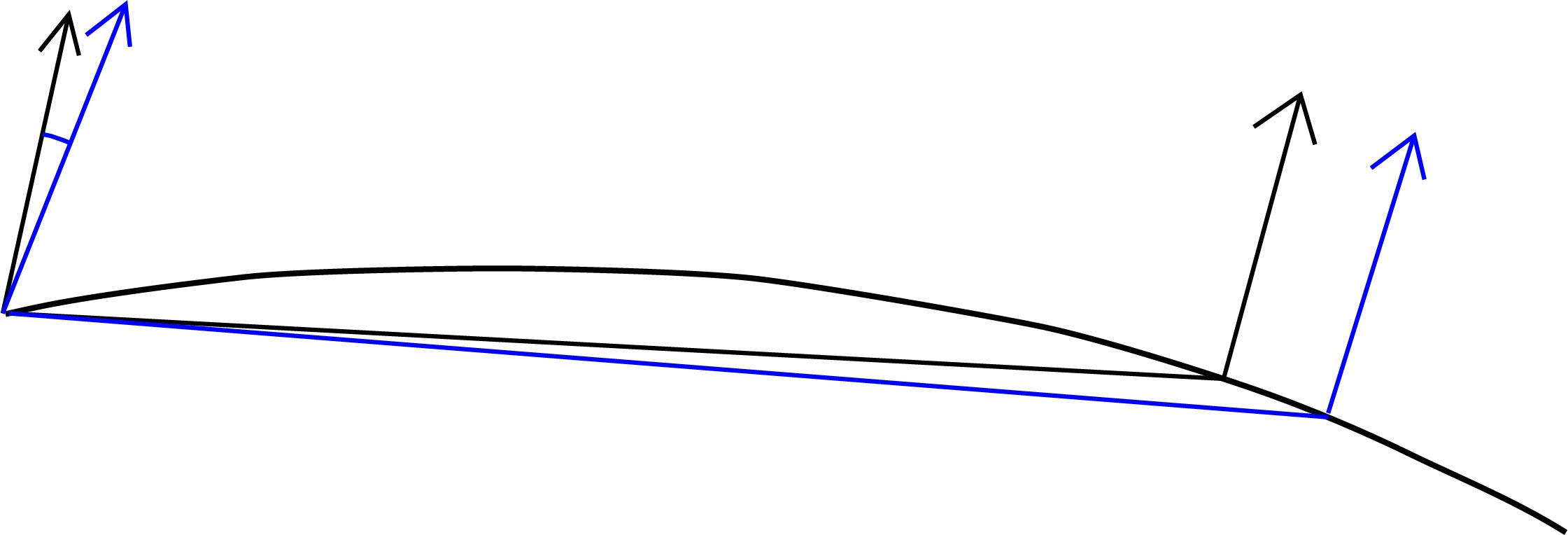} 
       \put(50 ,17 ){\contour{white}{$\alpha$}}  
         \put(40 ,5 ){\contour{white}{$s(t)$} } 
          \put( 70, 18){\contour{white}{$n(t)$ }  }
             \put(-10, 20){\contour{white}{$n'(t)$ }  }
      \end{overpic}
\caption{Infinitesimal change of the rotation angle $n'(t)$.}\label{fSmallRotation}
\end{figure}
\end{proof}

Next we analyze the holonomy along the horizontal segment $w_i$.
By Lemma \ref{DumasLemma} (\ref{iDumasHorizontal}), 
\begin{equation*}
 \length_{\H^3}  \Ep_i w_i  < \frac{6 \length_{E_i} m_i}{(a_i/3)^2} \to 0,
\end{equation*}
as $i \to \infty.$

\begin{proposition}\Label{VerticalAlongHorizontal}
Let $v_i(t)$ denote the tangent vector of $\Ep_i$ at $\Ep_i w_i(t)$ in the direction of $V_i$. 
For every $\ep> 0$, if $i$ is large enough, then
along $w_i$, $\Ep^\ast w_i(t)$ is contained in an $\ep$-ball in the unit tangent bundle ${\rm T^1} \H^3$. 
\end{proposition}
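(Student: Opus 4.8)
The plan is to track how the Epstein-surface normal (equivalently, the unit tangent vector $\Ep_i^\ast v_i(t)$) rotates as $t$ moves along the short horizontal segment $w_i$, and to show the total rotation is as small as we like once $i$ is large. First I would invoke the three pieces of control already established: by Lemma \ref{HorizontalSigmaLength}, $\length_{\H^3}\Ep_i w_i \to 0$, so the footpoint $\Ep_i w_i(t)$ stays in a tiny ball of $\H^3$; by Lemma \ref{DumasLemma}(\ref{iDumasPrincipleDirections}), the horizontal and vertical directions $h'(z), v'(z)$ are the principal directions of $\Ep_i$ along $w_i$ (the segment stays far from the zero set $Z_i$, since it lies deep inside $A_i$ and $d(z)\ge a_i/3 \to \infty$); and by Lemma \ref{DumasLemma}(\ref{iDumasHorizontal}) together with Lemma \ref{HorizontalTotalCurvature}, the principal curvature $k_h$ in the horizontal direction and the speed $\|\Ep_{i\ast}h'(z)\|$ along $w_i$ are controlled, with $k_h\|\Ep_{i\ast}h'(z)\| = \sqrt 2 + O(d(z)^{-1})$.

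Next I would estimate the variation of $\Ep_i^\ast v_i(t)$ as a curve in $U\H^3$. Moving along $w_i$, the unit normal to the Epstein surface rotates (in the ambient $\H^3$) at a rate governed by the second fundamental form evaluated on the tangent direction $h'$, i.e. essentially by $k_h \|\Ep_{i\ast}h'\|$ per unit Euclidean length of $w_i$; the additional drift of the tangent frame coming from the geodesic curvature of $\Ep_i w_i$ inside the surface is controlled by the total horizontal curvature. So the total displacement of $\Ep_i^\ast v_i(t)$ in $U\H^3$ is bounded, up to a bounded multiplicative constant, by
\[
\length_{E_i}(w_i)\cdot\bigl(\sqrt 2 + \sup_{z\in w_i} O(d(z)^{-1})\bigr) + (\text{total curvature of }\Ep_i w_i).
\]
Since $\Mod(A_i)\to\infty$ forces $\length_{E_i}(w_i)\le \length_{E_i}(m_i) = a_i/\Mod(A_i)\to 0$ (here using $a_i\to\infty$), and $d(z)\ge a_i/3\to\infty$ along $w_i$ so the $O(d(z)^{-1})$ terms vanish, and the total curvature of $\Ep_i w_i$ is bounded by $\length_{E_i}(w_i)\cdot 6/(a_i/3)^2\to 0$ exactly as in the proof of Lemma \ref{VerticalTotalCurvature}, this entire bound tends to $0$. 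Hence $\Ep_i^\ast w_i(t)$ stays in an $\ep$-ball in $U\H^3$ for $i$ large, which is the claim.

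The main obstacle is making precise, and bounding, the rate at which the Legendrian lift $\Ep_{i\ast}$ moves in $U\H^3$ as a function of arc length along $w_i$ — i.e. packaging "ambient rotation of the normal plus in-surface rotation of the frame" into a single Lipschitz-type estimate with constants that do not blow up. This is where one must be careful that the relevant geometric quantities (shape operator norm, induced metric distortion between $|\sqrt{q_i}|$ and the pullback of the hyperbolic metric on the image) are all uniformly controlled on $w_i$; but all of these follow from Lemma \ref{DumasLemma} and Lemma \ref{HorizontalTotalCurvature} because $w_i$ sits at distance $\gtrsim a_i$ from $Z_i$. Once that estimate is in hand, the conclusion is immediate from $\length_{E_i}(w_i)\to 0$.
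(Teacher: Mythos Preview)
There is a genuine gap. Your key step is the claim that $\length_{E_i}(w_i)\to 0$, which you justify by writing $\length_{E_i}(w_i)\le\length_{E_i}(m_i)=a_i/\Mod(A_i)\to 0$. But the hypotheses only give $a_i\to\infty$ and $\Mod(A_i)\to\infty$; the ratio $a_i/\Mod(A_i)$ can perfectly well stay bounded away from zero or even diverge (take $a_i=\Mod(A_i)^2$, so $\length(m_i)=\Mod(A_i)\to\infty$). So the integral bound you write down, $\length_{E_i}(w_i)\cdot(\sqrt 2+O(d^{-1}))$, need not be small at all.

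There is also a conflation of two different unit vectors. The curvature quantity $k_h\|\Ep_{i\ast}h'\|\approx\sqrt 2$ governs the rotation of the \emph{unit normal} $n$ to the Epstein surface as one moves in the horizontal principal direction: $\nabla_{h'}n=-k_h\,\Ep_{i\ast}h'$. This is not the variation of the vertical tangent $v_i(t)=\Ep_{i\ast}v'$, which is what the proposition concerns. Indeed the normal \emph{does} rotate by roughly $\sqrt 2\,\length_{E_i}(w_i)$ along $w_i$, and this is precisely the (possibly large) rotation angle appearing in Proposition~\ref{CylinderHonomyEstimate}(\ref{iHorizontalTotalRotation}); so your estimate, correctly interpreted, actually shows the normal does \emph{not} stay in a small ball. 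What stays nearly fixed is $v_i$, the direction transverse to this rotation plane, and controlling it infinitesimally would require bounding the geodesic curvature of $\Ep_i w_i$ inside the surface, which your argument does not address.

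The paper avoids both issues by a finite (rather than infinitesimal) comparison: it thickens $w_i$ to a thin rectangle $R_i$, observes via Lemma~\ref{DumasLemma}(\ref{iDumasHorizontal}) that every horizontal leaf of $R_i$ has $\Ep_i$-image of diameter $<\ep$ (this uses only $\length(w_i)/d(z)^2\lesssim 1/(a_i\Mod A_i)\to 0$, which \emph{does} follow from the hypotheses), and via Lemma~\ref{DumasLemma}(\ref{iDumasVertical}) that the vertical leaves map almost $\sqrt 2$-isometrically. Hence the images of all vertical leaves through points of $w_i$ are pairwise Hausdorff $\ep$-close curves of definite length, which forces their tangent directions $v_i(t)$ to be $\ep$-close.
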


\begin{proof}
Let $\ti{E}_i$ be the universal cover of $E_i$.
Pick a lift $\ti{u}_i$ of the vertical segment $u_i$ in $E_i$ to $\ti{E}_i$.
Let $R_i$ be a Euclidean rectangle, in $\ti{E_i}$, bounded by vertical and horizontal edges,  such that $w_i$ divides $R_i$ into two isometric rectangles of half height (Figure \ref{fHorizontalImage}, left).
We may in addition assume that the height of $R_i$ divided by the width of $R_i$ goes to zero as $i \to \infi$. 

The vertical foliation $V_i$ and the horizontal foliation $H_i$ of $E_i$  induce a vertical and a horizontal foliation of $R_i$. 
By Lemma \ref{DumasLemma} (\ref{iDumasVertical}), for every $\ep > 0$, if $i$ is large enough, the restrictions of $\Ep_i$ to  vertical leaves in $R_i$ are $(\sqrt{2} -\ep, \sqrt{2} + \ep)$-bi-Lipschitz. 
By Lemma \ref{DumasLemma} (\ref{iDumasHorizontal}), the $\Ep_i$-images of  the horizontal leaves in $R_i$ have diameters less than $\ep$. 
Therefore, for sufficiently large $i$, the images of vertical leaves of $R_i$ are pairwise $\ep$-close in the Hausdorff metric  (Figure \ref{fHorizontalImage}  below).
As $v_i$ is tangent to the image of such a vertical leaf, we have the lemma.
   \endgroup 

\begin{figure}
\begin{overpic}[scale=.4
] {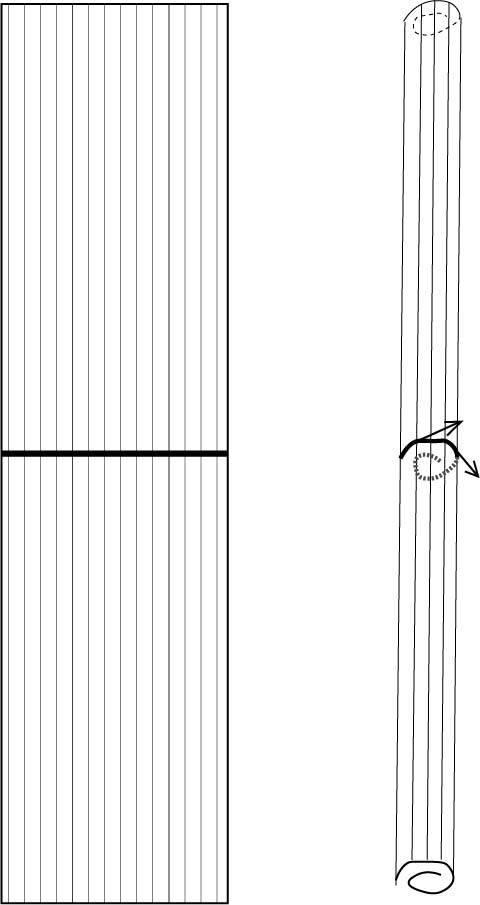} 
  \put(10, 28){\contour{white}{$R_i$}}  
  \put(50, 28){\contour{white}{$\Ep_i(R_i)$}}  
    \put(10 ,52 ){\contour{white}{\textcolor{Black}{$w_i$}}}  
      \end{overpic}
\caption{}\label{fHorizontalImage}
\end{figure}
\end{proof}
  We have already shown a good approximation of the holonomy along the vertical segment $u_i$. 
    For every $\ep > 0$, if $i$ is sufficiently large, then the translation length along $u_i$ is $(1 + \ep)$-bilipschitz to $\sqrt{2}$ times the Euclidean length of $u_i$ and the rotation is less than $\ep$ (\Cref{AlmostNoTorsion}). 
On the other hand, by Proposition \ref{VerticalAlongHorizontal} and \Cref{HorizontalTotalCurvature}, if $i$ is sufficiently large, then the total rotation along the horizontal segment $w_i$ is $(1 + \ep, \ep)$-bi-Lipschitz to $\sqrt{2}$ times the Euclidean length of $w_i$ and the translation is less than $\ep$. 
Thus we obtained, (\ref{iVerticalTranslation}) and (\ref{iHorizontalTotalRotation}).
\Qed{CylinderHonomyEstimate}

    \subsection{The exponential map and Epstein surfaces}
    Recall that, given a $\CP^1$-structure $C = (X, q)$ on $S$,  for $x \in C$, $~d(x)$ is the Euclidean distance from the singular set of the singular Euclidean structure $E$ induced by the holomorphic quadratic differential $q$. 
Note that, if $x \in C$ is not a singular point of $E$, then there is a neighborhood $U$ of $x$ in $E$ so that $U$ is isometrically embedded in the Euclidean plane $\C \cong \E^2$ so that 
 vertical leaves of $E$ in $U$ map into horizontal lines of $\C$, and horizontal leaves map into vertical lines. 

Consider the $\exp\col \C \to \C \minus \{0\}$.
Its domain $\C$ is isometrically identified with the Euclidean plane $\E^2$, and the codomain $\C \minus \{0\}$ admits a push-forward Euclidean metric.
Note that this induced Euclidean metric on $\C \minus \{0\}$ is invariant under the dilations $\C \to \C: z \mapsto k z$ for all $k \in \C \minus \{0\}$.
Therefore, given, any two distinct points $p, q$ in $\CP^1$, by a conformal mapping from  $\CP^1 \minus  \{p, q\}$ to $\C \minus \{0\}$, the complement $\CP^1 \minus \{p, q\}$ has the push-forward Euclidean metric. 
By abuse of notation, we denote this composition by $\exp\col \C \to \minus  \{p, q\}$ and call it the {\it normalized exponential map. }

Let $(p, q)$ be the geodesic in $\H^3$ connecting $p$ to $q$. 
Recalling that $\CP^1$ is the ideal boundary of $\H^3$, 
let $\Psi\col \CP^1 \minus \{p, q\} \to (p, q)$ be the orthogonal projection along a geodesic rays in $\H^3$. 
Let $\Psi_\ast\col \CP^1 \minus \{p, q\}  \to T^1\H^3$ be the map taking $z \in \CP^1 \minus \{p, q\}$ to the unit tangent vector at $\Psi(z) \in \H^3$ which is tangent to the geodesic ray from $\Psi(z)$ to $z \in \bdr \H^3$. 

For $r > 0$, let $Q_r(z)$ be a $r$-neighborhood of a point $z$ of the singular Euclidean surface $E$ in the $L^\infty$-metric (w.r.t. the vertical and the horizontal directions). 
 If $Q_r(z)$ contains no singular point, then it is a square with horizontal and vertical edges of length $2 r$. 

    \begin{proposition}\Label{AlmostExponentialMap}
For every $\ep > 0$, there is $R > 0$ such that, if $z \in \til{C}$ satisfies $d(z) > R$,  then we have a normalized exponential map $\exp\col \C \to \CP^1 \minus \{p, q\}$ and can isometrically embed the $\frac{1}{\ep}$-neighborhood $Q_{1/\ep}(z)$ of $z$ in $\C$ exchanging the horizontal and the vertical directions, such that, in the $C^0$-metric, 
\begin{enumerate}
\item  the restriction of the Epstein surface $\Sigma$ to $Q_{1/\ep}(z)$ is $\ep$-close to $w \mapsto \Psi_{\ast} \exp (\frac{w}{\sqrt{2}})$,  \Label{iSpeteinSurfaceAwayFromZero}
\item the restriction of $\Sigma_\ast$ to $Q_{1/\ep}(z)$ of $z$ is $\ep$-close to $w \mapsto \Psi_\ast \exp (\frac{w}{\sqrt{2}})$, and \Label{iSpeteinSurfaceNormalAwayFromZero}
\item the restriction of  the developing map $f$ to $Q_{1/\ep}(z)$ is $\ep$-close to the normalized exponential map. \Label{iAlmostExponentialMapDev}
\end{enumerate}

\end{proposition}
\proof
 we prove the desired approximations by showing them along all leaves of the restriction of the vertical foliation $V$ and the horizontal foliation $H$ to the square $Q_{1/\ep}(z)$. 

For every $\epsilon' > 0$, 
by Lemma 
\ref{DumasLemma} and \Cref{AlmostNoTorsion}, if $R > 0$ is sufficiently large, then
\begin{enumerate}[(i)]
\item  the restriction of  $\Sigma$ to each leaf of the vertical foliation $V$ in $Q_{\frac{1}{\epsilon'}}(z)$ is a smoothly $(\sqrt{2} - \epsilon', \sqrt{2} + \epsilon')$-bilipschitz embedding, \Label{iVerticalSegment}
\item the restriction of  $\Sigma$ to each leaf of the horizontal foliation $H$ in $Q_{\frac{1}{\epsilon'}}(z)$  has derivative less than $\epsilon'$, and   \Label{iHorizontalSegment}
\item the derivative of the rotation of $\Sigma_\ast$ along a vertical leaf in $Q_{\frac{1}{\epsilon'}}(z)$ is bounded from above by $\ep'$, and the total rotation along the leaf is also bounded from above by $\ep'$. \Label{iNormalAlongVertical}
\end{enumerate}

Pick a vertical leaf $v_0$ in $Q_{\frac{1}{\ep}}(z)$, and let $\ell$ be the geodesic in $\H^3$ passing through the endpoints of the $(\sqrt{2} - \epsilon', \sqrt{2} + \epsilon')$-bilipschitz curve $\Sigma \vert v_0. $
We normalize the exponential map with respect to the endpoints of this geodesic. 
Then (\ref{iVerticalSegment}) and (\ref{iHorizontalSegment}) implies  (\ref{iSpeteinSurfaceAwayFromZero}) with this normalization.

We next show (\ref{iAlmostExponentialMapDev}).
We first analyze $f$ on each vertical leaf. 
By (\ref{iVerticalSegment}) and (\ref{iNormalAlongVertical}), the restriction of the developing map $f$ to $v_0$ is $\epsilon'$-close to the normalized exponential map, by isometrically embedding $e$ onto $\C \cong \E^2$ in the scaled Euclidean metric $\sqrt{2} E$ (i.e. the metric on $v_0$ is scaled by $\sqrt{2}$).  

The $\Sigma$-images of horizontal segments are very short curves in $\H^3$. 
Therefore,  for every $\ep' > 0$, if $R > 0$ is sufficiently large, then for each vertical leaf $v$ of $Q_{\frac{1}{\ep}}(z)$, the restriction of $f$ to $v$ is $\ep'$-close to the restriction of the normalized exponential map to a vertical segment in $\C$ by isometrically embedding $v$ w.r.t. $\sqrt{2} E$. 

Next, we analyze $f$ on horizontal leaves. 
Let $h$ be a horizontal leaf in $Q_{\frac{1}{\ep}}(z)$.
Consider the vector field along $h$ consisting of the unit vectors in the vertical direction.
Then, for every $\epsilon' > 0$, if $R > 0$ is sufficiently large, then, as in the proof of \Cref{VerticalAlongHorizontal},  
the image of the tangent vectors are $\ep'$-close to each other in the $C^0$-topology. 
By the curvature estimate along the horizontal direction in  
Lemma \ref{HorizontalTotalCurvature}, for every $\ep' > 0$ if $R > 0$ is large enough,  the amount of the total rotation of $f$ along every horizontal segment in  $Q_{\frac{1}{\ep}}(z)$ is close to the horizontal length times $\sqrt{2}$. 
Therefore, a restriction of $f$ to every horizontal segment $h$ is $\ep'$-close to the restriction of $\exp$ when $h$ is isometrically embedded onto a horizontal segment after scaling the length of $h$ by $\sqrt{2}$. 
Therefore, a restriction of $\Sigma_\ast$ to every horizontal segment $h$ is $\ep'$-close to the restriction of $\Psi_\ast \exp$ when $h$ is isometrically embedded onto a horizontal segment w.r.t the $\sqrt{2} E$-metric. 

We proved that the restrictions of $f$ to horizontal and vertical leaves in $Q_{\frac{1}{\ep}}(z)$ are $\ep'$-close to the normalized exponential map when  $Q_{\frac{1}{\ep}}(z)$ is isometrically embedded in $\C$.
This immediately implies (\ref{iAlmostExponentialMapDev}).

Finally (\ref{iSpeteinSurfaceAwayFromZero}) and (\ref{iAlmostExponentialMapDev}) immediately imply (\ref{iSpeteinSurfaceNormalAwayFromZero}), since $f(z)$ and $\Psi(z)$ determines $\Psi_\ast(z)$.
\Qed{AlmostExponentialMap}

\section{Holonomy maps for surfaces with punctures}\Label{sHolonomySurfaceWithPuncture}

\subsection{Classification of cusps of $\CP^1$-structures}

\begin{definition}
Let $F$ be a surface with punctures. 
A $\CP^1$-structure on $F$ is a pair $(X, q)$ of a Riemann surface structure $X$ on $F$ and a holomorphic quadratic differential $q$, such that at each puncture of $X$, $q$ is at most a pole of order two.
\end{definition}
This class is a natural class to consider, especially in our setting due to the upper injectivity radius bound (see Theorem \ref{UpperInjectivityRadiusBound}).
\begin{proposition}\Label{CuspClassification}
Let $F$ be a closed surface with at least one puncture $c$ such that the Euler characteristic of $F$ is negative. 
 Let $C = (f, \rho)$ denote a $\CP^1$-structure on $F$ expressed by a developing pair.
Denote by $\ell_c$ the peripheral loop around $c$. 
Let $C \cong (\tau, L)$ denote Thurston parameters, and $(E, V)$ be the singular Euclidean structure $E$ with the vertical foliation $V$ given by the Schwarzian parameters of $C$. 
\begin{enumerate}
\item Suppose that a cusp neighborhood of $c$ in $E$ is an expanding cylinder of infinite modulus shrinking towards $c$. 
Then 
\begin{itemize}
\item $\rho(\ell_c)$ is parabolic, 
\item $c$ has  a horodisk quotient neighborhood, and
\item in Thurston parameters $(\tau, L)$,  $c$ also has a horodisk quotient neighborhood where the lamination $L$ is the empty lamination. 
\end{itemize}
  \Label{iShrinkingCylinder}
\item Suppose that a cusp neighborhood of $c$ in $E$ is a (half-infinite) flat cylinder $F$ of infinite modulus. \Label{iFaltCylinderEnd}
Then exactly one of the following holds.
\begin{enumerate}
\item \Label{iVNotOrthogonal} The  circumferences of $F$ are {\it not} orthogonal to $V$,  $\rho(\ell_c)$ is hyperbolic, and $\sqrt{2}\int_{\ell_c} \sqrt{q}$ is its complex translation length. 

In Thurston parameters, the cusp $c$ corresponds to boundary component $b$ of $\tau$ whose length is the real part of the translation length (in $\C / 2 \pi i \Z$). 
\item \Label{iVOrthogonalt} The circumferences of $F$ are orthogonal to  $V$. 
\begin{itemize}
\item If $\sqrt{2}\, V(\ell_c)$ is {\it not} a $2\pi$-multiple, then $\rho(\ell_c)$  is an elliptic element of angle $\sqrt{2} \int_{\ell_c} \sqrt{q} \ (\in \R)$.
In Thurston parameters, $c$ is a cusp of  $\tau$ and the total weight of leaves of $L$ around $c$ (counted with multiplicity) is, modulo $2\pi$, equal to 
the rotation angle of $\rho(\ell_c)$.
\item  If $\sqrt{2}\, V(\ell_c)$ is a $2\pi$-multiple, then $\rho(\ell_c )$ is either the identity $I$ or a parabolic element. 
In Thurston parameters, $c$ is a cusp of $\tau$ and the total weight of  $L$ around $c$  is the $2 \pi$-multiple. 
\end{itemize}
\end{enumerate}
\end{enumerate}
\end{proposition}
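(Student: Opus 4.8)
The plan is to argue locally in a punctured–disk neighbourhood $D^{\ast}$ of the cusp $c$, separately for the two geometric types of end, in each case first normalising $q$ near $c$ and then reading off $\rho(\ell_{c})$ through the Epstein map and the Thurston data $(\tau,L)$ near $c$ through the collapsing map. In the flat–cylinder case I would use the normal form $q=r^{2}z^{-2}\,dz^{2}$ up to higher–order terms (with $r$ the residue), so that $E=|\sqrt{q}|$ is asymptotic to $|r|\,|d\log z|$, a half–infinite flat cylinder of circumference $2\pi|r|$; in the expanding–cylinder case I would instead work directly with the developing map on $\widetilde{D^{\ast}}$. The holonomy is accessed through the Epstein map: since $\Ep\col\ti{X}\to\H^{3}$ and its frame lift $\Ep_{\ast}$ are $\rho$–equivariant, the holonomy of $\Ep$ (defined in \S\ref{sEpsteinSurface}) along a lift of any loop freely homotopic to $\ell_{c}$ is conjugate to $\rho(\ell_{c})$; so it suffices to control $\Ep$ over the cusp end, which is exactly what Lemma \ref{DumasLemma} and the estimates of \S\ref{sEpsteinSurface} provide.

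For case (\ref{iFaltCylinderEnd}) I would take the nested core circles $m_{j}\subset F$ $(j\to\infty)$ marching towards $c$. Since $F$ carries no zeros of $q$ and has infinite modulus and diverging height, the distance from $m_{j}$ to the zero set of $q$ tends to $\infty$, so the local estimates behind Proposition \ref{CylinderHonomyEstimate} apply verbatim to sub–cylinders $A_{j}$ centred on $m_{j}$; because $\int_{m_{j}}\sqrt{q}=\int_{\ell_{c}}\sqrt{q}$ is independent of $j$, the $(1+\epsilon)$–estimates become exact identities in the limit. This identifies $\rho(\ell_{c})$ as the screw motion along a geodesic with translation length $\sqrt{2}\,\operatorname{Re}\!\int_{\ell_{c}}\sqrt{q}$ and rotation angle $\sqrt{2}\,\Im\!\int_{\ell_{c}}\sqrt{q}$ (via Lemma \ref{DumasLemma}, Lemma \ref{VerticalTotalCurvature}, Lemma \ref{HorizontalTotalCurvature}, Proposition \ref{VerticalAlongHorizontal}, as in the proof of Proposition \ref{CylinderHonomyEstimate}). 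I then split on the direction of the circumferences of $F$: if they are transverse to $V$ the translation part survives and $\rho(\ell_{c})$ is a hyperbolic (loxodromic) element with complex translation length $\sqrt{2}\int_{\ell_{c}}\sqrt{q}$ --- this is (\ref{iVNotOrthogonal}), the subcase $\sqrt{2}\int_{\ell_{c}}\sqrt{q}\in 2\pi i\Z$ degenerating the translation to zero, i.e.\ boundary length zero; if the circumferences are orthogonal to $V$ the translation part vanishes and $\rho(\ell_{c})$ is a pure rotation by $\sqrt{2}\,V(\ell_{c})$, hence elliptic unless $\sqrt{2}\,V(\ell_{c})\in 2\pi\Z$, in which case it is $I$ or a degenerate parabolic --- this is (\ref{iVOrthogonalt}). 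For the Thurston statements I would analyse the collapsing map $\kappa\col C\to\tau$ of \S\ref{sThurston} over $F$: the half–infinite cylinder collapses onto an expanding collar of a geodesic boundary of $\tau$ of length $\operatorname{Re}\bigl(\sqrt{2}\int_{\ell_{c}}\sqrt{q}\bigr)$ when $\rho(\ell_{c})$ is hyperbolic and onto a cusp of $\tau$ otherwise, while the lamination $\LLL$ on $C$ supported over $F$, whose total transverse weight across $\ell_{c}$ is $V(\ell_{c})$, descends under $\kappa$ to $L$, so the total weight of $L$ around $c$ is $\sqrt{2}\,V(\ell_{c})$ and matches the rotation angle.

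For case (\ref{iShrinkingCylinder}) the core circles shrink to zero length with modulus staying infinite, so their $\Ep$–images contract to a single point $p\in\CP^{1}$; hence $\rho(\ell_{c})$ fixes $p$. I would then rule out the remaining possibilities: $\rho(\ell_{c})$ loxodromic/hyperbolic is incompatible with every circumference length tending to zero (Lemma \ref{DumasLemma}\,(\ref{iDumasVertical}) would force a direction in which the cylinder expands); a genuinely elliptic $\rho(\ell_{c})$ would force bounded modulus or a finite–angle cone, not a shrinking cylinder of infinite modulus; and $\rho(\ell_{c})=I$ is excluded because $c$ is a genuine cusp rather than a removable puncture. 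So $\rho(\ell_{c})$ is parabolic fixing $p$, the developing image near $c$ is contained in and exhausts a horodisk at $p$, which displays a horoball–quotient neighbourhood of $c$; and since over such an end $\kappa$ is asymptotically an isometry onto a standard cusp of $\tau$ with no lamination mass, $c$ is a cusp of $\tau$ with $L$ empty there.

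The step I expect to be the main obstacle is the Thurston–parameter side: reconstructing precisely the local shape of $\tau$ --- geodesic boundary versus cusp, and its length --- together with the total weight of $L$ around $c$, from the collapsing map and the pleated surface over the cusp cylinder, keeping careful track of how $V$ and the carried lamination $\LLL$ sit in the cylinder and map into $\tau$. A secondary technical point is the passage from the $O(d(z)^{-2})$ asymptotic bounds of \S\ref{sEpsteinSurface} to the exact holonomy formulae used above, which needs uniform control of the error terms along the semi–infinite cylinder and a genuine convergence of the frame holonomies; once these are in hand, the holonomy computations themselves are routine bookkeeping.
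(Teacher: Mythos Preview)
Your treatment of case~(\ref{iFaltCylinderEnd}) is essentially the paper's: both compute the holonomy via the Epstein-surface estimates encapsulated in Proposition~\ref{CylinderHonomyEstimate}, and then read off the Thurston data. The paper is slightly more concrete---it notes that the developing map on the half-infinite flat cylinder is asymptotic to $\exp\col\C\to\C^\ast$, and in subcase~(\ref{iVNotOrthogonal}) observes that the circumferences become admissible loops so the end is an infinite grafting cylinder, invoking \cite[Proposition~8.3]{Baba-17} for the geodesic boundary of $\tau$---but your collapsing-map description amounts to the same thing. One small confusion: your parenthetical ``the subcase $\sqrt{2}\int_{\ell_c}\sqrt{q}\in 2\pi i\Z$ degenerating the translation to zero'' does not belong under (\ref{iVNotOrthogonal}); if the circumferences are not orthogonal to $V$ then the real part is nonzero by definition, so this degeneration cannot occur there.

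Case~(\ref{iShrinkingCylinder}), however, has a genuine gap. Your plan is to use the Epstein map and eliminate loxodromic/elliptic/identity one by one, but the Epstein estimates of Lemma~\ref{DumasLemma} and \S\ref{sEpsteinSurface} require $d(z)\to\infty$, and in an expanding cylinder shrinking towards $c$ the entire end has \emph{finite} Euclidean diameter, so $d(z)$ stays bounded and those estimates give no information. The assertion that the $\Ep$-images of the core circles ``contract to a single point $p\in\CP^1$'' is thus unjustified (and note $\Ep$ lands in $\H^3$, not $\CP^1$). Your elimination of $\rho(\ell_c)=I$ is also incorrect: identity holonomy around a genuine cusp \emph{does} occur, as in case~(\ref{iVOrthogonalt}) when $\sqrt{2}\,V(\ell_c)\in 2\pi\Z$; that is not what rules it out here. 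The paper's argument is much shorter and avoids all of this: since the Euclidean distance to $c$ is finite, $q$ vanishes asymptotically in the hyperbolic metric on $X$, and by continuous dependence of the $\CP^1$-structure on $q$ the structure near $c$ is asymptotically the uniformizing hyperbolic structure---hence a horoball quotient, hence parabolic peripheral holonomy, hence a cusp of $\tau$ with empty lamination.
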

In (\ref{iVOrthogonalt}), by ``counted with multiplicity'', we mean that, if a single leaf of $L$ has both endpoints at $c$, the weight of the leaf is counted twice.  

\proof
(\ref{iShrinkingCylinder})
We first describe an intuition, and then make it precise. 
As the Euclidean distance to the cusp is finite in $E$,  in the hyperbolic metric on $X$,  the quadratic differential  $q$  vanishes asymptotically towards the cusp $c$. 
A Riemann surface with the zero differential (in our parametrization) corresponds to a hyperbolic structure.

To make it precise, for $t > 0$, let $D_t$ be the punctured disk of radius $t$ centered at $c$. 
Note that the $c$ may be the zero of the quadratic differential $q$ induced by $C$. 
Thus, if $t > 0$ is small enough, $D_t$ is a union of the Euclidean semi-disks of radius $t$ foliated by geodesics parallel to the diameter segment.
Consider the restriction of $q$ to $D_t$. 
Then, by conformally identifying a once-punctured unit disk with $D_t$, the holomorphic quadratic differential on $D_t$, the differential goes to zero uniformly on every compact subset as $t \to \infty$.

The solution of the Schwarzian equation depends continuously in the differential. 
As a punctured disk with the zero differential corresponds to a hyperbolic structure $h$ with a cusp at the puncture, and the holonomy around the cusp is parabolic.
Therefore, the developing map of $D_t$ converges to the developing map of the hyperbolic cusp-neighborhood structure $h$, which is a quotient of horodisk by the infinite cyclic group generated by a parabolic element. 
By the equivariance property of the developing maps, the holonomy of $D_t$ around the cusp must converge to a parabolic element, and as the holonomy of $D_t$ around the cusp is independent of $t > 0$, the holonomy is genuinely parabolic. 
Moreover, if one deforms a little bit the hyperbolic structure $h$ on the punctured disk to any other $\CP^1$-structure on the punctured disk keeping the holonomy around the cusp parabolic, it still contains a horodisk quotient as a cusp neighborhood.  
 Therefore $c$  has a horodisk quotient neighborhood in $C$. 

 In Thurston parameters, $c$ is a cusp of $\tau$, and $L$ is the empty lamination in a sufficiently small neighborhood of $c$. 

(\ref{iFaltCylinderEnd}) By  \Cref{AlmostExponentialMap}, the developing map of the half-infinite flat cylinder becomes closer and closer to the exponential map $\exp \col \C \to \C^\ast$ as a point in the domain approaches the cusp, where, in the domain $\C$,  the vertical direction corresponds to the real direction and the horizontal direction corresponds to the imaginal direction (to be precise, the $\exp$ is composed with the calling to the domain $\C$ by $\sqrt{2}$). 
Thus the assertions about the holonomy along $\ell_c$ hold.

It remains only to show the description in Thurston parameters.

(\ref{iVNotOrthogonal}) 
By Proposition \ref{CylinderHonomyEstimate},
outside of a large compact set of $F$,  all circumferences of $F$ are admissible loops. 
Therefore an appropriate neighborhood of $c$  corresponds to an infinite grafting cylinder. 
By \cite[Proposition 8.3]{Baba-17},  the hyperbolic surface $\tau$ has a (possibly open) boundary component corresponding to $c$, and its boundary length is indeed the translation length of the hyperbolic element $\rho(\ell_c)$.

(\ref{iVOrthogonalt})
The developing map in an appropriate cusp neighborhood is the exponential map $\exp\col \C \to \C^\ast$ so that the deck transformation corresponds to the translation in the imaginary direction in the domain. 

Therefore, $c$ is a cusp of $\tau$ and the total weight of leaves of $L$ near $c$ must be the length of the circumference times $\sqrt{2}$ (Proposition \ref{CylinderHonomyEstimate} (\ref{iHorizontalTotalRotation})). 
\Qed{CuspClassification}

\subsection{$\PSL_2\C$ and fixed points on $\CP^1$}\Label{sHatPSL}
In order to construct an appropriate holonomy map for a surface with punctures, we will make $\PSL_2\C$ slightly bigger as a topological space, by carefully pairing its elements with their fixed points on $\CP^1$.
Let $(\CP^1)^2/\Z_2$ denote the set of unordered pairs of points on $\CP^1$. 
Let $\reallywidehat{\PSL_2\C}$ be the set of all pairs $(\gamma, \Lambda) \in \PSL_2\C \times ((\CP^1)^2/ \Z_2)$ such that
\begin{itemize}
\item if $\gamma$ is a hyperbolic element with zero rotation (i.e. $\tr \gamma \in \R \minus [-2,2]$ when $\gam$ is lifted to ${\rm SL}(2, \C)$), then $\Lambda$ is a pair of (not necessarily distinct) fixed points of $\gamma$,  and
\item otherwise,  $\Lambda$ is the pair $(a, a)$ of identical fixed points $a \in \CP^1$ of $\gamma$.
\end{itemize}

We call the pair $\Lambda$ a {\it framing}.  
In particular, if $\gamma = I$, then $\Lambda$ can be $(a, a)$ for any $a \in \CP^1$.
The second case also includes the case where $\gamma$ is a hyperbolic element with non-zero torsion.
(By abuse of notation, if $\Lambda$ is a pair $(a, a)$ of identical points  on $\CP^1$, for simplicity, we may regard $\Lambda$ as a single point $a$.)

Fock and Goncharov introduced a framing of a representation, which equivariantly assigns a single fixed point to each peripheral element (\cite{FockGoncharov06}). 
What is new here is that we are assigning a pair of fixed points in the first case.

Next we define a (non-Hausdorff) topology on  $\reallywidehat{\PSL_2\C}$ by the following open base of neighborhoods at each $(\gamma, \Lambda) \in \reallywidehat{\PSL_2\C}$.
\begin{itemize}
\item 
    If $\gam$ is hyperbolic, then, 
for every (small) connected neighborhood $U$ of $\gamma$ in  $\PSL_2\C$ consisting of hyperbolic elements,  
the set of all pairs $(\gamma', \Lambda') \in \reallywidehat{\PSL_2\C}$ such that
\begin{itemize}
\item if $\tr \gam$ is real and  $\sharp \Lambda = 2$, then for $\gam' \in U$ with $\tr \gam'$ real,  $\sharp \Lambda' = 2$, and
\item otherwise $\gamma' \in U$ and,  $\Lambda'$ is a pairs of identical points identified with $\Lambda$ by identifying  the fixed points of $\gamma$ with those of $\gamma'$ by a path connecting $\gam$ to $\gam'$ in $U$.
\end{itemize}
   
   \item If $\gamma$ is {\it not} hyperbolic, then the topology near $(\gamma, \Lambda)$ is given by the product topology of $\PSL_2\C \times (\CP^1)^2/ \Z_2$ equipped with the Hausdorff topology on $ (\CP^1)^2/ \Z_2$.

\end{itemize}

\begin{remark}\Label{CuspParameter} 
Let $C = (f, \rho)$ be a $\CP^1$-structure  on a surface with punctures. 
Let $\alpha \in \pi_1(S)$ be such that its free homotopy class is the peripheral loop around a cusp $c$ of $C$. 
Then $\gam$ corresponds to a unique element  in $(\CP^1)^2/\Z_2$ as follows:
As the universal cover $\til{C}$ of $C$ is conformally identified with $\H^2$ by the uniformization,  let $\til{c}$ be the point on the ideal boundary of $\bdr \H^2$ fixed by $\alpha \in \pi_1(S)$. 
Let $(\tau, L) \in \TT \times \ML$ be the Thurston parametrization of $C$, and let $\LLL$ be the circular measured lamination on $C$ which descends to $L$.
For each leaf $\ell$ of $\ti\LLL$ ending at $\ti{c}$, the corresponding endpoint of the circular arc $f(\ell)$ on $\CP^1$ is a fixed point of $\rho(\alpha)$.
If $\LLL$ is non-empty in a small neighborhood of the cusp, let $\Lambda$ be the set of such half leaves of $\ti\LLL$ ending at $c$.
Then $\alpha$ corresponds to a unique element $(\rho(\alpha), \Lambda)$ in $\reallywidehat{\PSL_2\C}$.
If $\LLL$ is empty near the cusp,  an appropriate cusp neighborhood of $c$ is a horodisk quotient, and $\alpha$ corresponds to $(\rho(\alpha), \Lambda)$, where $\Lambda$ is the parabolic fixed point of $\rho(\alpha)$.
\end{remark}
\subsection{Cusp neighborhoods in Thurston parameters}
The following lemma determines the isomorphism classes of cusp neighborhoods of $\CP^1$-structures in Thurston coordinates. 
\begin{lemma}\Label{CuspNbhdInThurstonCoordinates}
Let $C = (f, \rho)$ be a $\CP^1$-structure on a surface $F$ with cusps.
Let $C \cong (\tau, L)$ be Thurston parameters of $C$.
Then, for each cusp $c$ of $C$,  its small neighborhood  (i.e. its germ) in $C$ is determined by
\begin{itemize}
\item  the holonomy $h \in \PSL_2\C$ around $c$,
\item the transversal measure of a peripheral loop around $c$ given $L$, and
\item  if $h$ is hyperbolic, the direction in which the leaves of $L$ spirals towards the boundary component. 
\end{itemize}
(See Figure \ref{fCuspNeighborhoods}.)
\end{lemma}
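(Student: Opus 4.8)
The plan is to classify cusp neighborhoods by reducing to the local model for the developing map near a puncture, and then matching that model against the Thurston parameters via the collapsing map. First I would work in the universal cover near the puncture $c$: let $\gam \in \pi_1(F)$ be the peripheral loop, and consider the restriction of the developing pair $(f, \rho)$ to (a lift of) a small punctured-disk neighborhood $U$ of $c$. Since $C = (X, q)$ is a $\CP^1$-structure with $q$ at most a double pole at $c$, Proposition \ref{CuspClassification} already tells us the shape of the singular Euclidean metric $E$ near $c$: it is either an expanding cylinder shrinking to $c$ (the uniformizable horoball case), or a half-infinite flat cylinder. In the flat cylinder case, Proposition \ref{CylinderHonomyEstimate} and the computation in the proof of Proposition \ref{CuspClassification}\,(\ref{iFaltCylinderEnd}) show the developing map limits to an explicit model: a scaled exponential map $\exp\colon \C \to \C^\ast$ (equivalently, in the hyperbolic case, a Möbius conjugate thereof), composed with a grafting along the core if the circumferences are admissible. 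So near $c$ the germ of $(f,\rho)$ is determined by $h = \rho(\gam)$ together with the grafting amount — i.e.\ the number of times the developing image wraps, which is exactly the transversal measure $L(\ell_p)$ of the peripheral loop.

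Next I would translate this into Thurston coordinates. The collapsing map $\kap\colon C \to \tau$ and the canonical lamination $\LLL$ on $C$ (descending to $L$) restrict to the cusp neighborhood $U$. Using the relation $\beta \circ \ti\kap(z) = \Psi_z f(z)$ between the developing map, the collapsing map, and the pleated surface, the germ of $\tau$ at $c$ and the germ of $L$ there are read off from the local model of $f$. Concretely: in the expanding-cylinder case the lamination is empty near $c$ and $\tau$ has an honest cusp; in the flat-cylinder-with-$V$-orthogonal case $\tau$ again has a cusp and $L$ contributes weight near $c$ equal to $\sqrt 2$ times the circumference; in the $V$-not-orthogonal case $\tau$ has a geodesic boundary component and the leaves of $L$ spiral into it. The data $(h, L(\ell_p))$, plus — when $h$ is hyperbolic — the spiraling direction (two leaves can spiral toward a boundary geodesic from either side, and these give non-isomorphic germs with the same holonomy and weight), determine which of these cases we are in and pin down the germ of $(\tau, L)$ up to isomorphism.

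The key steps, in order, are: (1) fix the peripheral element $\gam$ and pass to a small cusp neighborhood $U$ and a lift $\ti U$; (2) invoke Proposition \ref{CuspClassification} to get the Euclidean picture of $E|_U$ and hence, via Proposition \ref{CylinderHonomyEstimate}, the limiting local model of $f|_{\ti U}$ — an (exp-type) map determined by $h$ and the wrapping number; (3) check that two cusp germs with the same local model of the developing pair are isomorphic as $\CP^1$-structure germs (this is essentially the removability/rigidity of the model near the puncture, i.e.\ that a germ of developing map of the given form with given equivariance is unique up to $\PSL(2,\C)$); (4) push everything through the collapsing map to express $(\tau, L)|_U$ in terms of $(h, L(\ell_p), \text{spiraling direction})$, and conversely observe that these three pieces of data determine $h$ and the wrapping number, closing the loop. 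I expect step (3) — the rigidity statement that the germ of a $\CP^1$-structure at a cusp is determined by the listed holonomy-and-weight data — to be the main obstacle, since it is where the subtle flexibility flagged in the introduction (trivial or elementary peripheral holonomy, and the freedom to "move the cusp") has to be handled carefully; concretely one must argue that, once $h$, the transversal weight, and (if needed) the spiraling side are fixed, any ambiguity in the local developing map is absorbed by an isotopy of $F$ fixing a neighborhood of the other cusps, so the germ is well-defined. The remaining steps are direct unwindings of the earlier propositions and the definition of the collapsing map.
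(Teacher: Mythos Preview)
Your proposal is correct and follows essentially the same route as the paper: reduce to the singular-Euclidean picture via Proposition~\ref{CuspClassification}, recognize the developing map near the cusp as a restriction of $\exp\colon \C \to \C^\ast$, and read off the Thurston parameters from that model (the paper organizes the casework by the isometry type of $h$ rather than by the Euclidean type of the end, but the content is the same).

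One remark: your worry about step~(3) is overstated. Once you have the exponential model, the germ of the $\CP^1$-structure on the punctured disk is exactly the quotient of a half-plane in $\C$ by a single translation, and that translation vector is determined by $h$ together with the transversal weight $L(\ell_p)$ (and, in the hyperbolic case, the spiraling direction, which fixes the sign of the imaginary part). So the rigidity you need is immediate from the model --- there is no isotopy argument required. The ``flexibility'' flagged in the introduction concerns global deformations of a $\CP^1$-structure with fixed holonomy (moving the $f$-image of the puncture in $\CP^1$), not non-uniqueness of the germ: two such deformations still have isomorphic cusp germs, since the germ only sees the local model up to $\PSL(2,\C)$.
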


\begin{figure}
\begin{overpic}[scale=.4,
] {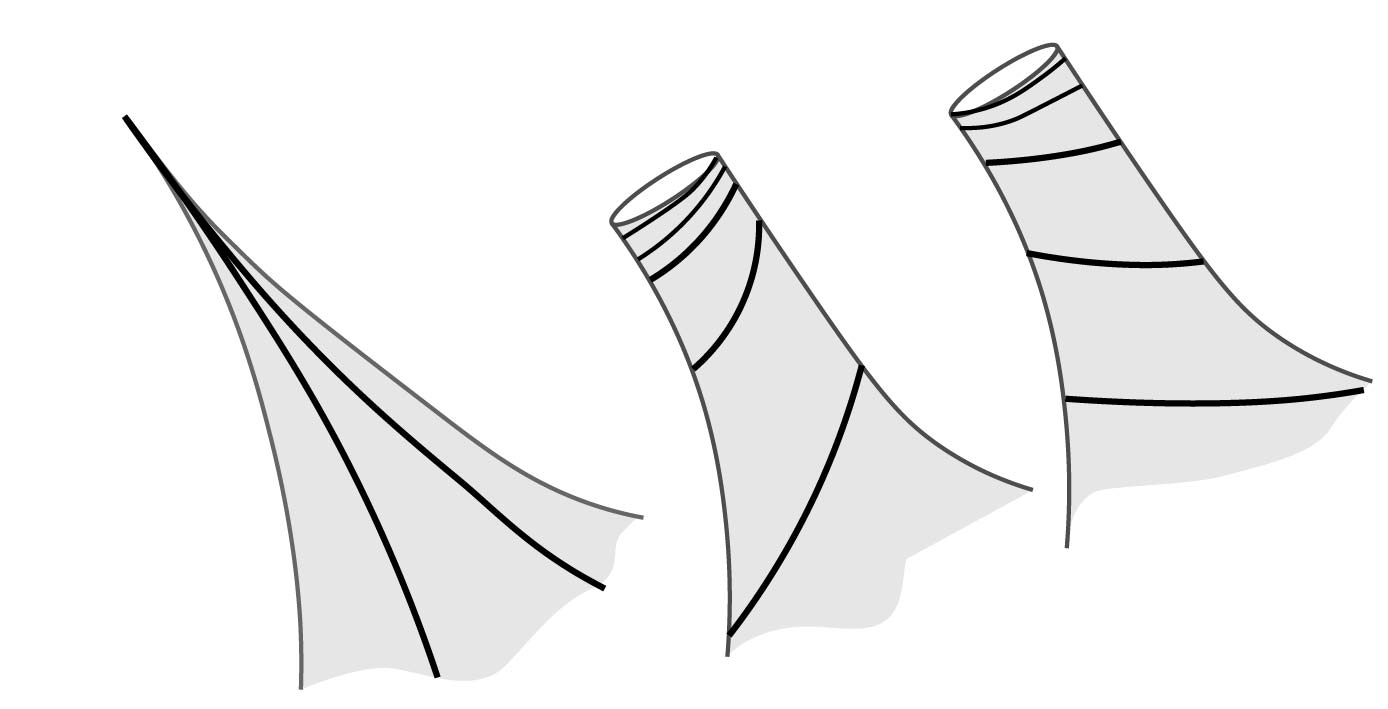} 
  \put(0 , 0){\textcolor{Black}{\small elliptic, parabolic, identity}} 
  \put(45 , 46){\textcolor{Black}{\small hyperbolic}} 
 \put(60 , 7){\textcolor{Black}{Different spiraling directions}}  
      \end{overpic}
\caption{Cusp neighborhoods in Thurston parameters}\Label{fCuspNeighborhoods}
\end{figure}

\proof
Let $(E, V)$ be the pair of a singular Euclidean structure $E$ on $F$, and $V$ be a vertical foliation on $E$ induced by $C$. 

{\it Hyperbolic Case.} First suppose that $h \in \PSL_2\C$ is hyperbolic.  
Then, by Proposition \ref{CuspClassification}, its cusp neighborhood,  in $(E, V)$,  corresponds to a half-infinite cylinder $A$, and the complex translation length is $\sqrt{2}\int_{\ell_c} \sqrt{q}$, where $\ell_c$ is a peripheral loop of $c$.  

The developing map $f$  of a small neighborhood of $c$ is a restriction of the exponential map $\C \to \C^\ast$.
Thus the complex translation length determines the deck transformation on the domain $\C$ by $\Z \cong \langle \ell_c \rangle$, which determines the $\CP^1$-structure of a small cusp neighborhood. 

The cusp $c$ corresponds to the geodesic boundary circle $b$ of $\tau$ whose length is equal to the translation length of  $h$. 
By the properties of bending maps,  one can show that the total weight of $L$ along $\ell_c$ times $\sqrt{2}$ is the rotational angle of $h$ and the direction of rotation in which leaves of $L$ spiral towards $b$ determines the orientation of the angle (Figure \ref{fSpiralDirection}). 
\begin{figure}
\vspace{5mm
}
\begin{overpic}[scale=.4, 
] {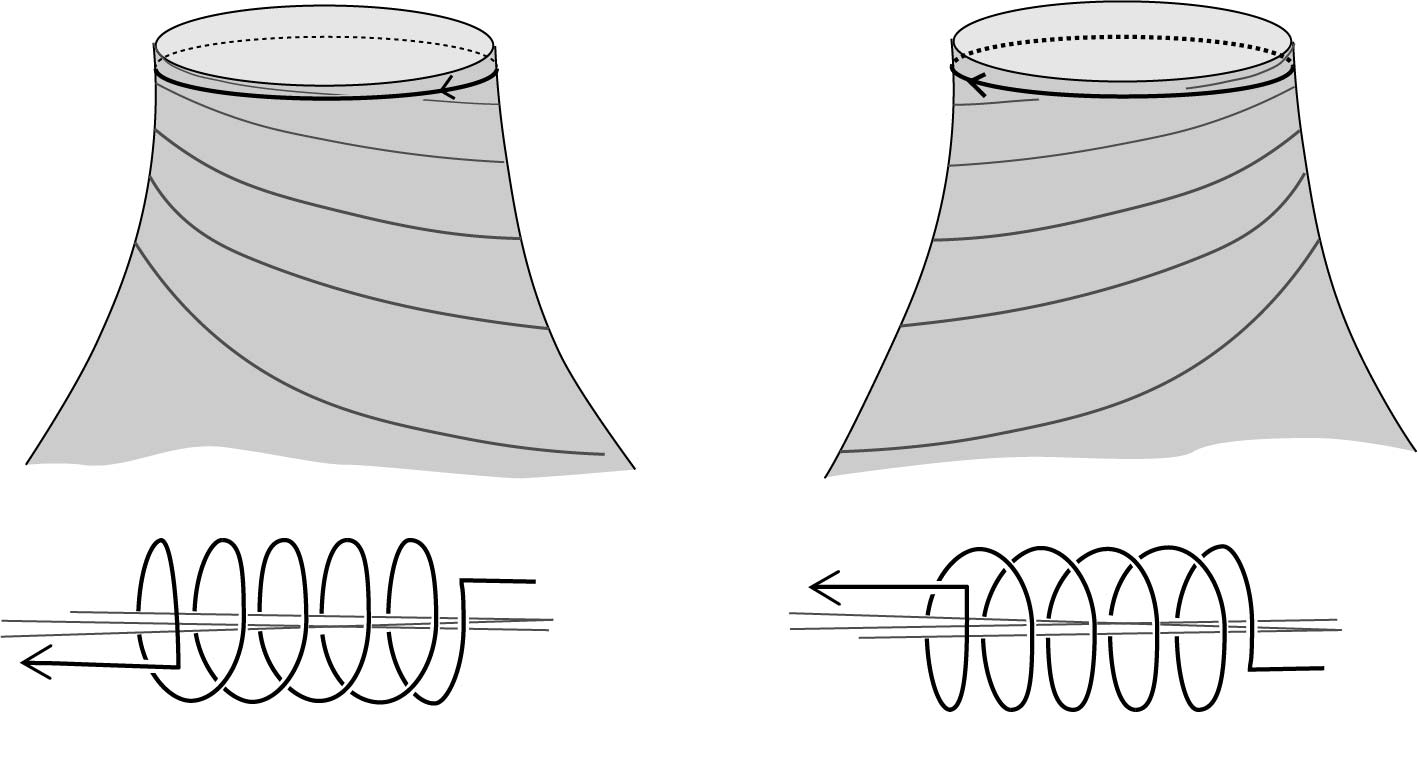} 
      \end{overpic}
\caption{The opposite spiral directions give the holonomy the opposite rotational directions.}\label{fSpiralDirection}
\end{figure}

{\it Parabolic Case.}
Suppose that $h$ is parabolic. 

If a neighborhood of a cusp $c$ in $E$ is an expanding cylinder shrinking towards $c$, then a neighborhood of $c$ in $(\tau, L)$ is a hyperbolic cusp with the empty lamination 
(Proposition \ref{CuspClassification} (\ref{iShrinkingCylinder})).

Next suppose that the cusp neighborhood of $c$ in $(E, V)$ is a half-infinite flat cylinder $A$ in $E$. 
Then the circumferences of $A$ are orthogonal to $V$, and $\sqrt{2}V(\ell_c)$ is a positive $2\pi$-multiple.

Let $\Ep\col \til{X} \to \H^3$ be the Epstein map associated with $C = (X, q)$.
Let $\ti{V}$ be the pull-back of $V$ to the universal cover of $E$, and let $\ti{c}$ be the lift of $c$ to the ideal boundary of $\ti{X} \cong \H^2$.
Let $\gam \in \pi_1(F)$ be the element which fixes $\ti{c}$ such that its free homotopy class is $\ell_c$.
Then, for every leaf $\ell$ of $\ti{V}$ ending at $\ti{c}$, its image $\Ep(\ell)$ is indeed a quasi-geodesic limiting to the parabolic fixed point of $\rho(\gam)$ on $\CP^1$,  and its curvature of $\Ep(\ell)$ converges to zero as it approaches the fixed point by Lemma \ref{DumasLemma}. 
Therefore $c$ corresponds to a cusp of $\tau$.
By 
Proposition
\ref{CylinderHonomyEstimate}, the total weight of the leaves must be  $\sqrt{2}V(\ell_c)$.

{\it  Elliptic Case.}
The proof when $h$ is elliptic is similar to the parabolic case. 
\Qed{CuspNbhdInThurstonCoordinates}

    \fontsize{12pt}{12pt}\selectfont
  Let $D$ be the unit closed disk in $\C$ centered at the origin $O$.  
 Let $D^\ast = D \minus \{O\}$, and let $\ell$ be the peripheral loop around the origin.  
  Let $\PPP(D^\ast)$ denote the space of all developing pairs $(f, h)$ for the $\CP^1$-structures on $D \minus \{O\}$ (not up to $\PSL_2\C$) so that $O$ is a cusp and the boundary circle is smooth, where $f\col \til{D^\ast} \to \CP^1$ is the developing map and $h \in \PSL_2\C$ is the holonomy along $\ell$. 
Recall from Remark \ref{CuspParameter} that each cusp corresponds to a unique element $(\gamma, \Lambda)$ in $\reallywidehat{\PSL_2\C}$.
Let $\ch{D}^\ast$ be a subsurface of $D^\ast$ obtained by removing a regular neighborhood of the boundary circle of $D^\ast$. 

By the following proposition, the deformation of the $\CP^1$-structures of the cusp neighborhoods is locally modeled on $\reallywidehat{\PSL_2\C}$.

\begin{proposition}\Label{PerturbingCusp}
Let $F$ be a closed surface minus finitely many points, and let $C$ be a $\CP^1$-structure on $F$, and pick its developing pair $(f, \rho)$.  
Then,  each cusp $c$ of $C$ has a disk neighborhood $\Sigma = (f, \gamma) \in \PPP(D^\ast)$ of $c$ in $C$   with the following properties:
\begin{enumerate}
\item Let $(\gam, \Lambda) \in \reallywidehat{\PSL_2\C}$ be the element corresponding to the peripheral loop around $c$.
Then,  for every $\ep > 0$ and every compact subset $K$ of the universal cover $\til{\Sigma}$, there is a subset $U = U(K, \ep)$ of $(\gam, \Lambda)$ in $\reallywidehat{\PSL_2\C}$, such that, for every $(\gamma', \Lambda') \in U$, \Label{iLocalLocalHomeo}
\begin{enumerate}
\item if $\sharp \Lambda = 1$, then there is $\Sigma' = \Sigma'(\gamma', \Lambda') \in \PPP(D^\ast)$  with holonomy $\gam'$ and the framing $\Lambda$, such that its developing map $f'$ of $\Sigma(\gamma', \Lambda')$ is $\ep$-close, in $C^1$-topology, to the developing map $f$ of $\Sigma$ in $K$, 
\item if $\sharp \Lambda = 2$, then there is a neighborhood $W$ of $\gam$ in $\PSL_2\C$, such that, for every $\gamma' \in W$, there is $\Sigma' = \Sigma'(\gamma', \Lambda') \in \PPP(D^\ast)$  with holonomy $\gam'$ and a unique framing $\Lambda$, such that its developing map $f'$ of $\Sigma(\gamma', \Lambda')$ is $\ep$-close, in $C^1$-topology, to the developing map $f$ of $\Sigma$ in $K$. 
\end{enumerate}
\item 
Moreover, $\Sigma'$ is uniquely determined on $\ch{D}^\ast$ by an isotopy of $D^\ast$ (uniqueness near the cusp).
\Label{iUniquenessNearCusp}
\end{enumerate}
 \end{proposition}
 
\proof[Proof of Proposition \ref{PerturbingCusp}]

We divide the proof by the isometry type of $\gam$. 
In each case, we construct a deformation of $\Sigma$ in a small neighborhood in $\reallywidehat{\PSL_2\C}$ by specifying the deformation of a fundamental membrane.

{\it Elliptic Case.}
First,
suppose that $\gam= I$ or $\gam$ is an elliptic element. 
Then the puncture $O$ corresponds to a unique point $f(O)$ on $\CP^1$ by continuously extending $f$.
Then pick a cusp neighborhood $\Sigma$ biholomorphic to a punctured disk, such that the development of the boundary circle is a round circle $\alp$ on $\CP^1$ and there is a unique Lie subgroup of $\PSL_2\C$ isomorphic to $SO(2)$ which preserves $\alpha$ and $f(O)$.  
We identify $\CP^1$ with $ \C \cup \{\infty\}$ so that the puncture $f(O)$ is at the origin and $\alpha$ is the unit circle of $\C$ centered at the origin $f(O)$.

Pick a ``fan-shaped fundamental domain'' in $\til{D^\ast}$ bounded by three circular arcs $e_1, e_2, e_3$ such that
\begin{itemize}
\item $f|e_1$ and $f|e_2$ are radii  of $\alpha$ connecting $f(O)$ to points on $\alpha$, so that $\gam f(e_1) = f(e_2)$ are orthogonal to $\alpha$, and 
\item $f | e_3$ immerses into $\alpha$, and it connects the endpoints of $e_1$ and $e_2$
\end{itemize}
(Figure \ref{fFundamentalDomainTrivialHolonomy}, left). 
Let $q$ be the endpoint of the arc $f(e_1)$ on $r$.  

If the neighborhood $U$ of $(\gam, f(O))$ is sufficiently small, then  given $(\gam', \Lambda') \in U$, one can easily construct a $\CP^1$-structure $\Sigma' = (f', \gam')$ close to $\Sigma$ on $D^\ast$ realizing $(\gam', \Lambda’)$. 
Indeed, we pick $z \in \Lambda'$, we can construct a  fundamental membrane bounded by $e_1', e_2', e_3'$ such that, 

\begin{enumerate}
\item $f'(e_1')$ is a straight line on $\C$ connecting $z$ and $fq$,
\item $f'(e_2')$ is $\gam'(f'(e_1'))$ (which is a circular arc connecting $z$ and $\gam(q)$),
\item $f'(e_3')$ is an arc connecting $q$ to $\gam(q)$ so that  $f(e_3')$ is a segment of a trajectory under a one-dimensional Lie subgroup of the affine transformations of $\C$ preserving $z$, and
\item $f'(e_i)$ is close to $f(e_i)$ in the Hausdorff topology on $\CP^1$.
\end{enumerate}
(see Figure
\ref{fFundamentalDomainTrivialHolonomy}, right).
(The choice of $z$ may not be unique if  $r$ is identity and $\tr r' \in \R \minus [-2,2]$, i.e. hyperbolic without screw motion)

On the other hand, one can easily see that,  for every small deformation $\Sigma'$ of $\Sigma$, there is a ``fan-shaped'' fundamental membrane satisfying all conditions  (1) - (4) such that the fundamental membranes coincide on $\ch{D}^\ast$. 
Therefore, we have the uniqueness property of $\Sigma'$ near the cusp.

{\it Generic hyperbolic case.}
Let $(\tau, L)$ be the Thurston parametrization of $C$, and let $\LLL$ be the Thurston lamination on $C$. 
Let $\ell$ be the peripheral loop around $O$.
Suppose that $\gam$ is hyperbolic and $L(\ell) \neq 0$, so that $\Lambda$ is a single point. 
Then $\tau$ has a geodesic boundary loop $b$ corresponding to the cusp $c$ and, as $L(\ell) > 0$,   leaves of $L$ spiral towards  $b$.
Let $\til{b}$ be a lift of $b$ to the universal cover $\til{\tau}$ of $\tau$, so that $\til{b}$ is a boundary geodesic of $\til{\tau}$.
Then those spiraling leaves lift to geodesics in $\til{\tau}$ having a common endpoint at an endpoint of $\til{b}$; by the bending map $\beta\col \til{\tau} \to \H^3$, the endpoint maps to the point $\Lambda$. 
Accordingly, the leaves of $\LLL$ near the cusp $O$ develop onto circular arcs ending at $\Lambda$.

Normalize $\CP^1 = \C \cup \{\infi\}$ by an element of $\PSL_2\C$, so that $0 = \Lambda$ and the other fixed point of $\gamma$ is at $\infty$. 
Let $(E, V)$ be the foliated singular Euclidean structure given by $C$.
Then, there is a half-infinite flat cylinder $A$ in $E$ which corresponds to a cusp neighborhood of $c$\,; then each circumference has a positive transversal measure given by the horizontal foliation. 
Therefore, one can take a cusp neighborhood $\Sigma$ bounded by a loop $m$  such that $m$ develops onto a {\it spiral} on $\CP^1$, i.e. a curve invariant under a one-parameter subgroup in $\PSL_2\C$ which contains $\gamma$.

Take, similarly,  a  ``fan-shaped'' fundamental domain $F$ in the universal cover $\til{\Sigma}$ which is bounded by three smooth segments $e_1, e_2, e_3$ such that 
\begin{itemize}
\item $e_1$ and $e_2$ are half-leaves of $\ti\LLL$ such that $\gam e_1 = e_2$  and the circular arcs $f(e_1)$ and $f(e_2)$ end at $0 \in \C$,, and
\item $f(e_3)$ is in a segment of the spiral which connects the other endpoints of $f(e_1)$ and $f(e_2)$
\end{itemize}
(Figure \ref{fPerburgingGenericHyperbolicCusp}, left).
Then $\gamma(f(e_1)) = f(e_2)$ by the equivariant property. 

Take a sufficiently small neighborhood $U$ of $(\gamma, \Lambda)$ such that the subset $W \sub \PSL_2\C$ of holonomy elements of pairs in $U$  consists of only hyperbolic elements closed to $\gam$; then,  for all $(\gamma', \Lambda') \in U$,   the fixed point $\Lambda'$ of the hyperbolic element $\gam'$ uniquely corresponds to the fixed point of $\gam$ in $\Lambda$ by every short path connecting $\gam'$ to $\gam$ in $W$. 
Then, similarly to the elliptic case, one can easily find a $\CP^1$-structure on $D^\ast$ close to $\Sigma$  which realizes $(\gamma', \Lambda')$,  by constructing a fundamental membrane close to $F$  (Figure \ref{fPerburgingGenericHyperbolicCusp}). 

On the other hand, for every small deformation $\Sigma''$ of $\Sigma$ realizes $(\gamma', \Lambda')$, one can easily find a fundamental membrane of $\Sigma''$ so that it coincides, on $\ch{D}^\ast$ with that of $\Sigma'$ constructed above. 

\begin{figure}
\begin{overpic}[scale=.4,
] {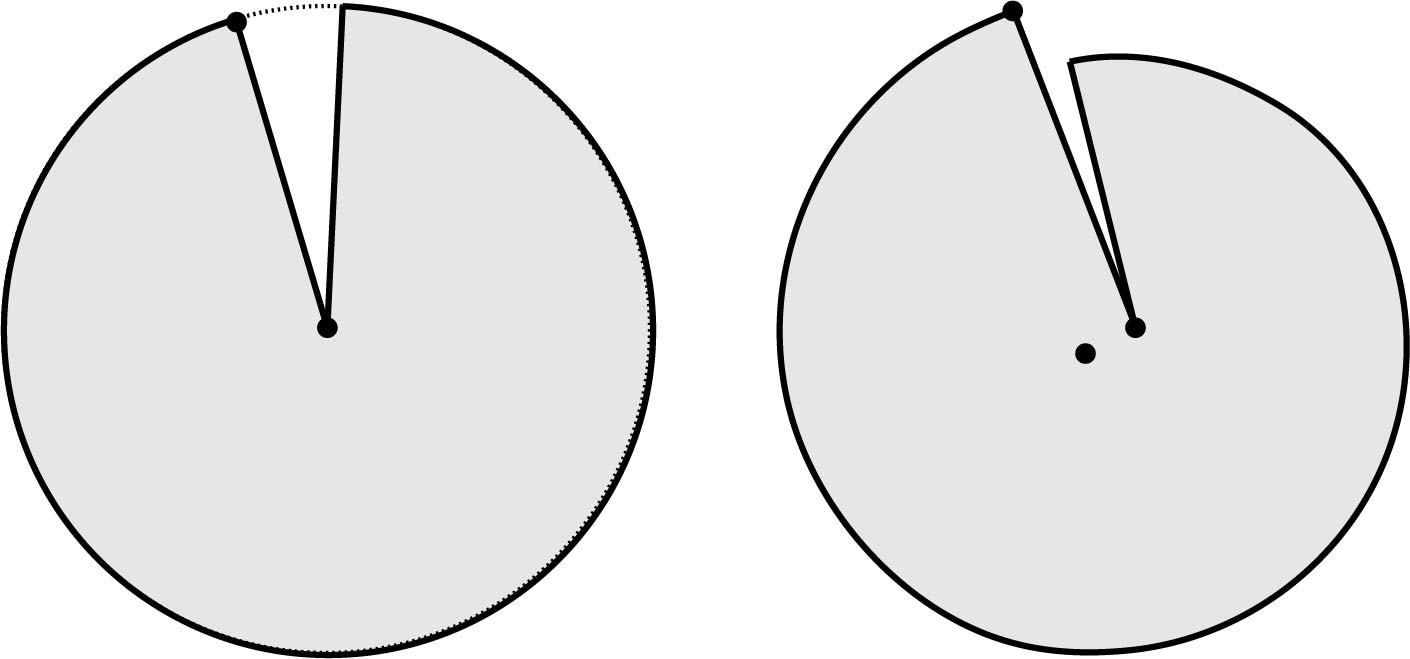} 
  \put(20 ,18 ){\textcolor{Black}{$O$}} 
    \put(10 ,30 ){\textcolor{Black}{$f(e_1)$}} 
    \put(38 , 10 ){\textcolor{Black}{$r$}}  
      \put(25 ,30 ){\textcolor{Black}{$f(e_2)$}}  
        \put(20 , 3 ){\textcolor{Black}{$f(e_3)$}}  
                \put(19 , 43 ){\textcolor{Black}{$q$}}  
  \put(66 ,28 ){\textcolor{Black}{$f'(e_1')$}} 
  \put(80 , 32 ){\textcolor{Black}{$f'(e_2')$}} 
    \put(92 ,10 ){\textcolor{Black}{$r'$}}  
                    \put(74 , 45 ){\textcolor{Black}{$q$}}  
      \put( 80 , 20 ){\textcolor{Black}{$z$}} 
      \end{overpic}
\caption{Perturbing a fundamental membrane of a cusp with elliptic holonomy.}\label{fFundamentalDomainTrivialHolonomy}
\end{figure}

\begin{figure}
\begin{overpic}[scale=.4, 
] {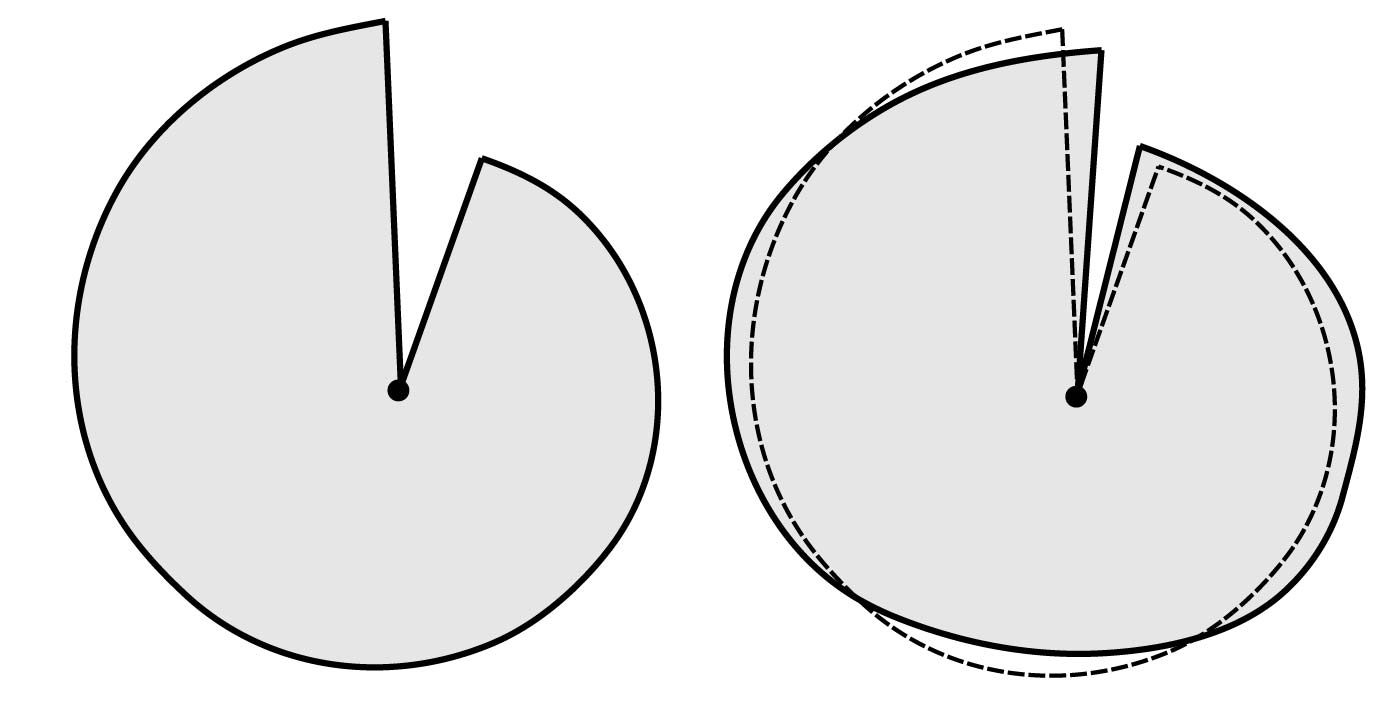} 
   \put(14 , 30){\textcolor{Black}{$f(e_1)$}}  
 \put(32 , 27){\textcolor{Black}{$f(e_2)$}}  
 \put(21 , 7){\textcolor{Black}{$f(e_3)$}}  
 \put(26 , 17){\textcolor{Black}{$0$}}  
      \end{overpic}
\caption{Perturbing a fundamental membrane of a cusp with a hyperbolic holonomy.}\Label{fPerburgingGenericHyperbolicCusp}
\end{figure}

\begin{figure}
\begin{overpic}[scale=.4
] {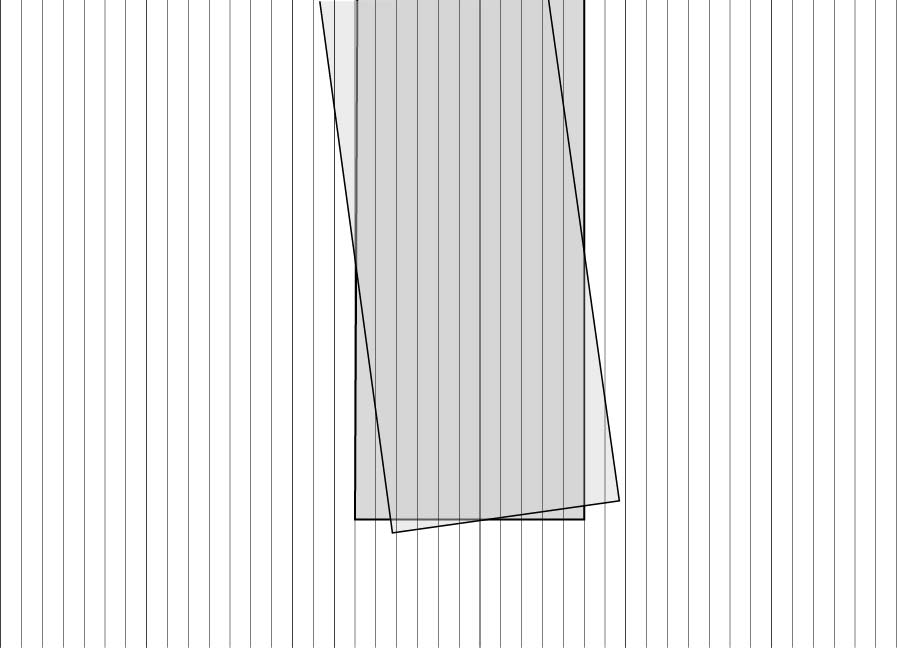} 
      \end{overpic}
\caption{Deformation of a hyperbolic cusp neighborhood. }\Label{fHyperbolicCuspNbhdNoTorsion}
\end{figure}
{\it Special hyperbolic case ($\sharp \Lambda = 2$).}
Suppose that $\gam$ is hyperbolic and $L(\ell) = 0$ (in particular $\tr \gam \in \R$). 
Then the boundary component $b$ of $\tau$ is a leaf of $L$ with weight infinity (\cite[Proposition 8.3]{Baba-17}).

Let $\kap \col C \to \tau$ be the collapsing map. 
Then  $ \kap^{-1}(b) \eqqcolon F$ is a half-infinite cylinder.  
The developing map of $F$ is the restriction of $\exp\col \C \to \C^\ast$ to a half-space bounded by a horizontal line in $\C$. 
Then we identify the universal cover $\til{F}$ of $F$ with the half-space, so that $\gam$ acts as a horizontal translation $t_\gam$. 
Take a fundamental domain $Q$ in  $\til{F}$ such that $Q$ is a vertical half-infinite strip in $\C$ bounded by two vertical rays and one horizontal segment (\Cref{fHyperbolicCuspNbhdNoTorsion}).

If $W$ is a small neighborhood of $\gam$ in $\PSL_2\C$ consisting of hyperbolic elements, for every $\gam' \in  W$, there is a translation $t_{\gam'}$ of $\C$ (close to the horizontal translation $t_\gam$), such that $t_{\gam'}$  descends to $\gam'$ by the exponential map up to $\PSL_2\C$.
Therefore, there is a small deformation of $\Sigma$ realizing $(\gam', \Lambda')$, and  (\ref{iLocalLocalHomeo}) holds.

On the other hand, arbitrary deformations of the cusp neighborhood $F$ contain such a deformation of such a half-infinite strip fundamental domain on $D$. 
Moreover, if $U$ is sufficiently small, then if there are two $\ep$-small deformations of  $F$ with the same framed holonomy $(\gam', \Lambda')$, up so isotopy, the structures on $D^\ast$ coincide by the $\ep$-closeness to $F$. 
Thus the uniqueness holds (\ref{iUniquenessNearCusp}). 

{\it Parabolic case. }
Suppose that $\gam$ is parabolic. 
Then in Thurston parameters, the puncture corresponds to a cusp of the hyperbolic surface $\tau$, and the total weight of $L$ along the peripheral loop $\ell$ is a non-negative $2\pi$-multiple.
Then, similarly to the case that $\gam = I$, we can show the claim 
by finding a cusp neighborhood and a fundamental domain in its universal cover which is bounded by circular arcs. 
\Qed{PerturbingCusp}

\subsubsection{Holonomy maps of $\CP^1$-structures with cusps}

Let $F$ be a closed surface minus finitely many points $p_1, \dots, p_n$.
Recall that $\PPP(F)$ denotes the space of all developing pairs $(f, \rho)$ for $\CP^1$-structures on $F$. 
Let  $(f, \rho) \in \PPP(F)$. 
Then $(f, \rho)$ gives a $\CP^1$-structure on $F$, and we let $X$ be its induced complex structure on $F$. 
Identify the universal cover $\ti{X}$ of $X$ with $\H^2$; then for each $i = 1, \dots n$, pick a lift $\til{p}_i$ of $p_i$ to a point on the ideal boundary of  $\til{X}$.
Then, by Remark \ref{CuspParameter}, for every $(f, \rho) \in \PPP(F)$ and a puncture $p_i$, we have a corresponding element in $(\gamma_i, \Lambda_i)\in \reallywidehat{\PSL_2\C}$.
Thus, by the definition of the topology of $\reallywidehat{\PSL_2\C}$, we have a continuous mapping from $\hol\col \PPP(F) \to (\reallywidehat{\PSL_2\C})^n \times \RRR(F)$ taking $(f, \rho) \in \PPP(F)$ to $( (\gamma_i, \Lambda_i)_{i = 1}^n, \rho)$.
In fact, $\hol$ yields a holonomy theorem in our setting. 
  
\begin{theorem}\Label{HolImmersion}
Every $(f, \rho) \in \PPP(F)$ has a neighborhood $W$ such that 
$$\hol |W$$ is a {\it local homeomorphism} onto its image.
{Moreover, for any $(f, \rho) \in \PPP(F)$, if there is a path $\rho_t ~(t > 0)$ converging to $\rho$ in $\RRR(F)$ as $t \to \infi$, then there is a lift of $\rho_t$ to  a path in $\PPP(F)$ for $t \gg 0$ converging to $(f, \rho)$. 
}
\end{theorem}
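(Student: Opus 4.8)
The plan is to reduce Theorem~\ref{HolImmersion} to the closed-surface holonomy theorem of Hejhal--Hubbard--Earle by gluing the local cusp deformations from Proposition~\ref{PerturbingCusp} to a global deformation on the compact part. First I would fix $(f,\rho)\in\PPP(F)$, choose pairwise disjoint cusp neighborhoods $\Sigma_1,\dots,\Sigma_n\subset F$ as in Proposition~\ref{PerturbingCusp}, and let $F^\circ$ be the compact subsurface-with-boundary obtained by removing the open ``inner'' parts $\check D^\ast$ of these cusp neighborhoods. The induced $\CP^1$-structure on $F^\circ$ has a well-defined developing pair, and its holonomy is an open condition; the key point is that a $\CP^1$-structure on $F^\circ$ (with smooth boundary and prescribed peripheral holonomy) is equivalent to a $\CP^1$-structure on the closed double, or, more directly, one can invoke that the restriction of the holonomy map to $\CP^1$-structures on a compact surface-with-boundary is a submersion onto the appropriate product of $\PSL(2,\C)$-representation data. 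This gives a local homeomorphism $\PPP(F^\circ)\to \RRR(F^\circ)\times(\text{boundary holonomy data})$.

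Next I would show the fiber-product assembly is a local homeomorphism. Given a target point $\{(\gamma_i',\Lambda_i')_{i=1}^n,\rho'\}$ near $\hol(f,\rho)$, Proposition~\ref{PerturbingCusp} produces for each $i$ a cusp piece $\Sigma_i(\gamma_i',\Lambda_i')\in\PPP(D^\ast)$ whose developing map is $C^1$-close to that of $\Sigma_i$ on a large compact $K_i$; meanwhile the closed/compact-core holonomy theorem produces a $\CP^1$-structure on $F^\circ$ with holonomy $\rho'$ and boundary developing data $C^1$-close to the old one. Because the two constructions agree (up to isotopy) on the overlap annuli $\Sigma_i\setminus\check D^\ast$ --- here is where Corollary~\ref{UniqunessOfCuspDeformation} is essential, guaranteeing the cusp piece is essentially unique near the cusp so that the gluing is canonical --- one glues to obtain $(f',\rho')\in\PPP(F)$ with $\hol(f',\rho')$ equal to the prescribed target. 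Injectivity on a small neighborhood $W$ of $(f,\rho)$ follows from the injectivity of the two pieces (compact core and each cusp germ) together with the rigidity of the gluing region. Continuity of the inverse is inherited from the $C^1$-closeness estimates in Proposition~\ref{PerturbingCusp} and the openness in the compact-core theorem, so $\hol|W$ is a local homeomorphism.

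For the ``moreover'' statement, suppose $\rho_t\to\rho$ in $\RRR(F)$. I would first upgrade $\rho_t$ to a path of \emph{full} target data $\{(\gamma_{i,t},\Lambda_{i,t})_{i},\rho_t\}$ in $(\reallywidehat{\PSL(2,\C)})^n\times\RRR(F)$ converging to $\hol(f,\rho)$: the $\PSL(2,\C)$-components $\gamma_{i,t}=\rho_t(\gamma_i)$ converge automatically, and one must choose the extra fixed-point data $\Lambda_{i,t}$ converging to $\Lambda_i$ in the non-Hausdorff topology of $\reallywidehat{\PSL(2,\C)}$. When $\gamma_i$ is not hyperbolic this is forced (it is a single fixed point depending continuously on $\gamma_{i,t}$); when $\gamma_i$ is hyperbolic with zero rotation one has genuine freedom, and one simply picks $\Lambda_{i,t}$ to be a continuous choice of a pair of fixed points of $\gamma_{i,t}$ limiting to $\Lambda_i$. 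Then, applying the local homeomorphism $\hol|W$ (and covering the path by finitely many such charts, using that the image of any compact piece of the lifted path stays in a fixed chart by a connectedness/maximality argument), one lifts the path of target data to a path in $\PPP(F)$ for $t\gg 0$ converging to $(f,\rho)$.

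The main obstacle I expect is the gluing step: making precise that a small $C^1$-perturbation of the developing map on the overlap annulus, coming on one side from the compact-core holonomy theorem and on the other from Proposition~\ref{PerturbingCusp}, can actually be reconciled into a single smooth $\CP^1$-structure. This requires that the two developing maps on the annulus differ by an element of $\PSL(2,\C)$ close to the identity (so that one can conjugate one piece to match the other) and that the resulting identification respects the holonomy; controlling this uses the uniqueness in Corollary~\ref{UniqunessOfCuspDeformation} together with the fact that the annulus has nontrivial $\pi_1$ so that the matching $\PSL(2,\C)$-element is pinned down by the peripheral holonomy. A secondary subtlety, already flagged in the remark after Proposition~\ref{LiftingHolonomyPath}, is that not every lift of the target path can be realized --- so in the ``moreover'' part one must be careful to \emph{construct} the lift via the chart rather than start from an arbitrary one, and the non-Hausdorffness of $\reallywidehat{\PSL(2,\C)}$ means the choice of $\Lambda_{i,t}$ is part of the data being lifted, not something recoverable afterward.
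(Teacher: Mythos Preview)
Your overall strategy matches the paper's: decompose $F$ into a compact core and cusp neighborhoods, deform each piece separately (cusps via Proposition~\ref{PerturbingCusp}), and glue using the uniqueness in Corollary~\ref{UniqunessOfCuspDeformation}. The difference lies in how the compact core is handled. The paper does \emph{not} appeal to the closed-surface holonomy theorem via doubling; instead it regards $(f,\rho)$ as a smooth section $\Sigma$ of the flat $\CP^1$-bundle over $F$ transversal to the horizontal foliation $H_\rho$, and simply observes that the restriction $\check{\Sigma}$ to the compact subsurface $\check{F}$ remains transversal to $H_\xi$ for all $\xi$ in a neighborhood of $\rho$ in $\RRR(F)$. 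Transversality being an open condition, this immediately yields a $\CP^1$-structure on $\check{F}$ with holonomy $\xi$, with no need to pass to a closed surface.

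Your doubling suggestion is not obviously workable: doubling a $\CP^1$-structure across a boundary circle requires that circle to have special geometry (e.g.\ develop to a round circle), which you do not arrange. Your parenthetical alternative (``more directly, one can invoke that the restriction of the holonomy map \dots\ is a submersion'') is the right instinct, and the flat-bundle/transversality viewpoint is precisely what makes it rigorous and elementary. This also dissolves the gluing obstacle you flag: since the compact-core piece is literally the \emph{same} section $\check{\Sigma}$ reinterpreted against a nearby foliation, its developing map is automatically $C^1$-close to the original on the overlap annuli, and the matching with the cusp pieces from Proposition~\ref{PerturbingCusp} is then straightforward. Your treatment of the ``moreover'' clause is more explicit than the paper's (which just says it follows from the construction) and is correct.
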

    \begin{remark}\Label{rLocalHolonomyImage}
The image of $\hol(W)$ is contained in $$\{\, ( (\gamma_i, \Lambda_i)_{i = 1}^n, \rho) \mid  \rho \in  W, \rho(\alp_i) = \gam_i\, (i = 1, 2, \dots, n) \,\}.$$
Furthermore, its subset cut by the condition on the framing given by  \Cref{PerturbingCusp} (\ref{iLocalLocalHomeo}) determines the local image $\hol(W)$.
\end{remark}

\proof
Let $(f, \rho) \in \PPP(F)$, and 
let $C$ be the $\CP^1$-structure on $F$ given by the developing pair $(f, \rho)$. 
Applying Proposition \ref{PerturbingCusp} to a small $\ep > 0$, we obtain, for each $i = 1, \dots, n,$ a (small)  cusp neighborhood $C_i$ of the puncture $p_i$ of $C$, and a neighborhood $U_i$ of $(\gamma_i, \Lambda_i)$  in $\reallywidehat{\PSL_2\C}$ modeling the deformation of $C_i$.
Let $N_i$ be the underlying topological cusp neighborhood of the punctured surface $F$ supporting $C_i$.
Without loss of generality, we can assume $C_1, \dots, C_n$ are disjoint in $C$. 
Let $C_i'$ be an open cusp neighborhood of $p_i$ smaller than $C_i$ and $U_i$ be a subset of $\reallywidehat{\PSL_2\C}$ containing $\hol((f, \rho))$ given by \Cref{PerturbingCusp}(\ref{iUniquenessNearCusp}), such that the small deformation of $C_i$ on $C_i'$ is parametrized the framed holonomy in $U_i$.

Let $N_i''$ be a (even smaller) cusp neighborhood of $p_i$ whose closure is contained in the interior of $N_i'$.
 Let $\ch{F}$ be $F \minus \sqcup_i N_i''$, and let $\check{C}$ be the restriction of $C$ to $\ch{F}$.
For every $(\gamma_i', \Lambda_i') \in U_i$, let $C_i(\gamma_i', \Lambda_i')$ denote the unique $\CP^1$-structure on $N_i'$ with the framed holonomy $(\gamma_i', \Lambda_i') \in U_i$ such that   $C_i(\gamma_i', \Lambda_i')$  is sufficiently close to $C_i$. 

We shall regard $(f, \rho)$ as a smooth section $\Sigma$ of  a $\CP^1$-bundle  $B$ over $F$ such that $\Sigma$ is transversal to the horizontal foliation $H_\rho$ associated with $\rho$ (see for example \cite{Goldman22GeometricStructuresOnManifolds})
Let  $\ch{\Sigma}$ be the restriction of $\Sigma$ to the bundle over the subsurface $\ch{F}$.  
Then, there is a neighborhood $U$ of $\rho$ in the representation variety $\RRR(F)$ such that, for each $\xi \in U$, letting $H_\xi$ be the horizontal foliation of $B$ associated with $\xi$,
 $\ch{\Sigma}$ is  still transversal to $H_\xi$ by the openness of transversality; then  $\ch{\Sigma}$ yields a projective structure $\ch{C}_\xi$ on $\ch{F}$ with holonomy $\xi$. 
In this way, we obtain a unique $\CP^1$-structure on $\ch{F}$ close to $(f, \rho)$ on $\ch{F}$. 
This new structure is unique in a compact subset of $\ch{F}$ whose interior contains the closure of $F \minus \sqcup_{i = 1}^n N_i'$.

For each $i$, pick any $\Lambda_i$ in $\Fix \xi_i(\gam_i) \in \CP^1$ so that $(\xi_i(\gam_i), \Lambda_i) \in U_i$.
Then $C_i(\xi_i(\gam_i), \Lambda_i)$ is its associated deformation. 
Then we can glue $\ch{C}_\xi$ and $C_i(\xi_i(\gam_i), \Lambda_i)$ in the overlapping region, and obtain a desired developing pair for a $\CP^1$-structure on $F$. 
Consider the subset $W$ in $\Pi_{i = 1}^n U_i \times \RRR(F)$ consisting $(\gamma_i, \Lambda_i)_{i = 1}^n, \rho)$ satisfying $\rho(\alp_i) = \gam_i\, (i = 1, 2, \dots, n)$; clearly $W$ contains $\hol(f, \rho)$.
In this way, given a sufficiently small neighborhood of $\hol((f, \rho))$ in this subset $W$, for every element in this neighborhood, we construct a developing pair realizing it.
This new $\CP^1$-structure on $F$ is unique  by the uniqueness of the thick part $\ch{C}_\xi$ on $F \minus \sqcup{N'_i}$ and the uniqueness of the cusp neighborhoods $C_i(\xi_i(\gam_i), \Lambda_i)$ on $N_i'$.

Notice that $W$ projects to a neighborhood of $\rho$ in $\RRR$.
The path lifting along a path in $\RRR$ easily follows from the construction as $U$ is a neighborhood of $\rho$ in $\RRR(F)$. 
\Qed{HolImmersion}

\section{Bound on the upper injectivity radius}\Label{sInjetivityRadius}
Recall that $C_t = (f_t, \rho_t)$ is a path of $\CP^1$ structures on $S$ such that $C_t$ diverges to $\infty$ and the equivalence class $[\rho_t] \eqqcolon \eta_t$ converges in the character variety as $t \to \infty$. 
Recall also that $C_t = (X_t, q_t)$ is the expression in the Schwarzian parameters.
  
Let $E_t$ be the singular Euclidean structure on $X_t$  given by $ |q_t^{\frac{1}{2}}|$.
Let $R(E_t) \geq 0$ denote that the {\it upper injectivity radius} of $E_t$. 
In this section we show
\begin{theorem}\Label{UpperInjectivityRadiusBound}
Suppose that $X_t$ is pinched along a multiloop $M$. 
Then the upper injectivity radius $R(E_t)$ of $E_t$ is bounded from above for all $t \geq 0$. 
\end{theorem}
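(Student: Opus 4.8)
The plan is to argue by contradiction: suppose there is a diverging sequence $t_i$ with $R(E_{t_i}) \to \infi$. Since $E_{t_i}$ has bounded area (the area being controlled by the divergence hypothesis together with the quadratic-differential normalization), a large embedded Euclidean disk forces the differential $q_{t_i}$ to be very small on a large subset; concretely, a point of large injectivity radius is surrounded by a large flat region, and one extracts from it a flat annulus of large modulus and large height whose core is a geodesic loop $m_i$. Because $X_{t_i}$ is pinched along the multiloop $M$, the only curves whose extremal length goes to zero are those isotopic to components of $M$, so after passing to a subsequence one may assume $m_i$ is isotopic to a fixed component $m$ of $M$ (a curve bounding a disk or a once-punctured disk would contradict essentiality/area bounds, and a curve of bounded extremal length cannot bound an annulus of modulus $\to\infi$ in a surface of bounded area unless it is one of the pinched curves). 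This puts us exactly in the hypotheses of Proposition \ref{CylinderHonomyEstimate}, applied to $C_i = C_{t_i}$.

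Next I would feed the conclusions of Proposition \ref{CylinderHonomyEstimate} into the holonomy. Along the core loop $m_i$ the Epstein surface $\Ep_i$ has holonomy $\rho_{t_i}(m)$ (up to conjugacy), and by parts (\ref{iVerticalTranslation}) and (\ref{iHorizontalTotalRotation}) of that proposition, the translation length along the limiting axis $\ell_i$ is $(1+\ep)$-bi-Lipschitz to $\sqrt 2\,(\Re \int_{m_i}\sqrt{q_{t_i}})$ and the rotation angle is comparably close to $\sqrt 2\,(\Im\int_{m_i}\sqrt{q_{t_i}})$. Thus the complex translation length of $\rho_{t_i}(m)$ is asymptotic to $\sqrt 2 \int_{m_i}\sqrt{q_{t_i}}$. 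The height $a_i$ of the flat cylinder $A_i$ diverges, and since $m_i$ sits in the middle with $\Mod(A_i)\to\infi$, the real part $\Re\int_{m_i}\sqrt{q_{t_i}}$ — which is essentially the Euclidean length of $m_i$, hence comparable to $\length_{E_{t_i}} m_i$ — need not itself blow up; what does blow up is governed by the geometry of the large embedded disk. The key point is that a large embedded Euclidean disk of radius $R(E_{t_i})\to\infi$ produces, after the cylinder extraction, a \emph{large} value of $\big|\int_{m_i}\sqrt{q_{t_i}}\big|$ (one can run a vertical leaf of length comparable to $R(E_{t_i})$ through the flat region): either the vertical period or a nearby admissible loop has Euclidean length $\to\infi$. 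Consequently $\rho_{t_i}(m)$ — or $\rho_{t_i}$ of some fixed element obtained by concatenating $m$ with a bounded-length arc crossing the disk — has complex translation length $\to\infi$, i.e. $\tr^2\rho_{t_i}(\gamma)\to\infi$ for a fixed $\gamma\in\pi_1(S)$. This contradicts the convergence of $\eta_{t_i} = [\rho_{t_i}]$ in $\chi$, since convergence in the character variety forces $\tr^2\rho_{t_i}(\gamma)$ to stay bounded for every fixed $\gamma$.

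The step I expect to be the main obstacle is the extraction, from a point of large Euclidean injectivity radius, of the large-modulus, large-height flat cylinder $A_i$ \emph{together with} the identification of its core with a fixed topological curve $m$. One has to rule out that the embedded disk is ``folded'' around a zero of $q_{t_i}$ or wrapped inefficiently, so that the disk genuinely contributes a long essential loop; here the bounded-area hypothesis and the pinching hypothesis do the work, but combining them cleanly — so that the only geometrically large feature is a collar of a pinched curve — requires care. A secondary subtlety is bookkeeping the base-point and conjugacy when translating ``holonomy of $\Ep_i$ along $m_i$'' into ``$\rho_{t_i}$ of a fixed element of $\pi_1(S)$'': one should fix, once and for all, a based representative of $m$ and an arc realizing the crossing of the disk, and track that these have bounded combinatorial complexity along the sequence. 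Once those geometric-extraction issues are settled, the contradiction with trace-convergence in $\chi$ is immediate from Proposition \ref{CylinderHonomyEstimate}.
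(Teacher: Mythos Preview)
Your contradiction does not close. Suppose you succeed in extracting a flat cylinder $A_i$ homotopic to a fixed loop $m$ of $M$ with $\Mod A_i\to\infty$ and circumference $\to\infty$, and you apply Proposition~\ref{CylinderHonomyEstimate}. The conclusion is that the complex translation length of $\rho_{t_i}(m)$ is approximately $\sqrt2\int_{m_i}\sqrt{q_{t_i}}$. But convergence of $\eta_t$ does \emph{not} force this integral to stay bounded: it only forces its real part to go to zero (this is exactly Claim~\ref{HolonomyAlongLoop} in the paper --- the vertical foliation on $A_i$ must become asymptotically orthogonal to the circumferences). In that situation $\rho_{t_i}(m)$ is asymptotically an elliptic rotation by angle $\sqrt2\,\length_{E_{t_i}}(m_i)$, and nothing stops this angle from landing near a multiple of $2\pi$, so $\rho_{t_i}(m)\to I$ is entirely consistent with $\length_{E_{t_i}}(m_i)\to\infty$. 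This is precisely the Type~(ii) degeneration the paper constructs in \S\ref{sExoticDegeneration}, so the scenario you are trying to rule out actually occurs for the peripheral holonomy. Your fallback of ``concatenating $m$ with a bounded-length arc crossing the disk'' does not help either: such an arc lies inside the flat cylinder and stays near the axis $\ell_i$, so the resulting holonomy is again close to a rotation about $\ell_i$.

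Two secondary issues: the area of $E_t$ is $\|q_t\|_{L^1}$, and there is no normalisation on $q_t$ (it is the Schwarzian), so your ``bounded area'' claim is false; and the extraction of a flat cylinder of large modulus \emph{and} large circumference from a large embedded disk is not justified (the disk could sit in a thick component rather than in a collar of $M$).

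The paper's argument avoids all of this by \emph{not} deriving the contradiction from $\rho_t(m)$. Instead it locates a component $F$ of $S\setminus M$ with $\Area_{E_{t_i}}(F_{t_i})\to\infty$ whose every end is an \emph{expanding} cylinder shrinking outward (Propositions~\ref{ExpandingCydinderWithMaximalSystole} and the corollary after it), rescales $F_{t_i}$ to unit area, and then picks a loop $\ell$ in the \emph{interior} of $F$ that is nearly tangent to the vertical foliation (Proposition~\ref{BigCompnenetShrinkingEnds}). For that $\ell$ the vertical transversal measure, and hence $\tr\rho_{t_i}(\ell)$, genuinely diverges. The essential missing idea in your proposal is that the diverging-trace loop must be found in the thick part, not on $M$.
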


Immediately we have the following. 
\begin{corollary}\Label{NoLargeExpandingEnds}
There is an upper bound for the area of the expanding cylinders in $E_t$ for all  $t \geq 0$. 
\end{corollary}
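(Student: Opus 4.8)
\textbf{Proof proposal for Corollary \ref{NoLargeExpandingEnds}.}

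The plan is to deduce the area bound directly from the upper injectivity radius bound $R(E_t) \le R$ established in Theorem \ref{UpperInjectivityRadiusBound}. The key observation is that any point deep inside an expanding cylinder $A$ in $E_t$ has injectivity radius comparable to its distance from the core geodesic $\ell$ of $A$, so a uniform bound on the injectivity radius forces a uniform bound on how ``far'' the expanding cylinder can reach away from its core, and combined with the fact that the circumference $\length_{E_t} \ell$ of the core geodesic is itself bounded (because $\ell$ is a geodesic loop on $E_t$ whose injectivity radius considerations force it short), this pins down the area.

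First I would recall the geometry of an expanding cylinder: if $A$ is foliated by circles equidistant from a geodesic loop $\ell$ of length $L_0 = \length_{E_t} \ell$, then the circle at distance $s$ from $\ell$ has length $L_0 + 2 s \cdot (\text{opening angle factor})$, but more importantly a point $x$ at distance $s$ from $\ell$ in $A$ has an embedded disk of radius roughly $\min(s, c\, \length_{E_t}(\text{circle through }x))$ around it inside $A$ (for a suitable universal constant $c$), hence its injectivity radius in $E_t$ is at least of that order. Since $R(E_t) \le R$ for all $t$, this shows every expanding cylinder $A$ in $E_t$ is contained in a metric neighborhood of $\ell$ of width bounded by a universal constant $W = W(R)$; that is, $A \subset \{x : d_{E_t}(x, \ell) \le W\}$. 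Simultaneously, applying the injectivity radius bound at a point on $\ell$ itself: a geodesic loop bounding an expanding side must have length bounded in terms of $R$ as well, since otherwise a long subarc of $\ell$ together with the expanding collar would embed a large disk; call this bound $L_0 \le L_*(R)$.

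Next I would estimate the area. The region $\{x : d_{E_t}(x,\ell) \le W\}$ in a singular Euclidean surface containing the expanding cylinder $A$ has area at most that of a Euclidean collar of width $W$ around a curve of length $\le L_*$, plus contributions from cone points; since cone angles are at least $\pi$ (the singular points of $q_t^{1/2}$ have cone angle $(k+2)\pi \ge 3\pi$, in particular $\ge \pi$), the area of a $W$-neighborhood of a loop of length $L_*$ is bounded by a universal function of $W$ and $L_*$, hence by a universal function of $R$. Therefore $\Area(A) \le \operatorname{const}(R)$ uniformly in $t$, which is the claim.

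The main obstacle I expect is making the lower bound on injectivity radius inside an expanding cylinder precise and uniform, in particular handling the interaction between the expanding collar and nearby singular points of $E_t$ (which could in principle shrink the embedded disk around a point of $A$ and thereby weaken the argument), and ensuring the ``core geodesic is short'' step is valid even when $\ell$ passes near or through cone points. This is handled by noting that the relevant disks in the argument can be taken to avoid the singular set when the injectivity radius is small, and that cone points only decrease area for a fixed radius, so the comparison with the flat model remains an upper bound; the estimates are elementary Euclidean geometry once the definitions of expanding cylinder and injectivity radius are unwound.
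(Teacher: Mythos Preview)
Your plan is the natural one, but the two separate bounds you need do not both follow from the upper injectivity-radius bound, and this is where the argument breaks.

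For the bound on $L_0=\length_{E_t}(\ell)$: your justification (``a long subarc of $\ell$ together with the expanding collar would embed a large disk'') only produces a \emph{half}-disk on the $A$-side of $\ell$. The other side of $\ell$ lies outside $A$, and nothing prevents cone points of $E_t$ from sitting arbitrarily close to $\ell$ there, so no full embedded Euclidean disk of large radius is forced. This is a genuine obstruction, not a technicality: a simple closed geodesic built from saddle connections can be arbitrarily long on a singular flat surface with uniformly bounded injectivity radius (let the saddle connections run the length of long thin flat cylinders).

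For the bound on the width $s_{\max}$: when the cone-angle parameter $\Theta=L'(s)>0$ is small---which the definition of ``expanding'' certainly allows---the short closed geodesic in $A$ through a point at distance $s$ from $\ell$ has length comparable to $L(s)=L_0+\Theta s$, and it is this, not the distance $s$ to the boundary, that caps the embedded disk. So the injectivity-radius bound yields only $L(s)\lesssim R$, not $s\lesssim R$; the width is not controlled. In fact the two regimes are complementary: for large $\Theta$ one bounds $s_{\max}$ but not $L_0$, and for small $\Theta$ one bounds $L_0$ but not $s_{\max}$. In neither case does $\Area(A)\lesssim L_0\cdot s_{\max}$ close up.

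The paper itself offers no argument beyond ``immediately.'' Your proposal captures the intended spirit but does not supply the missing control; making it rigorous would require either an a~priori lower bound on $\Theta$ or an independent bound on $L_0$, neither of which comes from Theorem~\ref{UpperInjectivityRadiusBound} alone.
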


The rest of this section is a proof of  Theorem \ref{UpperInjectivityRadiusBound}.
  We suppose, to the contrary, that $\limsup R(E_t) = \infty$ and show that $\rho_t$ cannot converge.
Let $M_t$ be a geodesic representative of $M$ on $E_t$ (in the Euclidean metric) such that, for every $\ep > 0$ if $t > 0$ is sufficiently large, then $M_t$ is contained in the $\ep$-thin part of $X_t$.
We will find  a conformally thick part which is, in the Euclidean metric, bigger than its adjacent thick parts:

\begin{lemma}\Label{HolonomyAlongLoop}
Suppose that there is  a diverging sequence $(0 <)\, t_1 < t_2 < \dots$ such that $E_{t_i}$ contains a flat cylinder $A_{t_i}$  homotopy equivalent to a fixed loop $m$ of $M$
such that 
\begin{enumerate}
\item $\Mod{A_{t_i}} \to \infi$ as $i \to \infi$ \Label{iModA}, and
\item the circumference of $A_{t_i}$ limits to $\infty$ (equivalently $\Area A_{t_i} \to \infi$) as $i \to \infi$.  \Label{iDivergingArea}
\end{enumerate}
Then,
 leaves of the vertical foliation $V_{t_i}$ must be asymptotically orthogonal to the circumferences of $A_{t_i}$. 
\end{lemma}
\begin{proof}
  Suppose, to the contrary, that $V_{t_i}$ is {\it not} asymptotically orthogonal to circumferences. 
  Then, up to a subsequence, we may assume that there is a limiting angle $\theta_\infi \in [0,  \pi/2)$ between the angle between $V_{t_i}$ and the circumferences of $A_{t_i}$. 
Let $m_{t_i}$ be a geodesic representative of $m$  which sits in the middle of  $A_{t_i}$. 
Since $\theta_\infty \neq \pi/2$, Hypotheses  (\ref{iModA}) and (\ref{iDivergingArea}) imply that the transversal measure of the horizontal foliation $H_{t_i}$ along  $m_{t_i}$  diverges to infinity as $i \to \infi$. 
By \Cref{AlmostExponentialMap}, the translation length of $\rho_{t_i} (m_{t_i})$ is asymptotically $\sqrt{2}$ times the transversal measure. 
Therefore, the translation length of $\rho_{t_i} (m_{t_i})$ must diverge to infinity, which contradicts the convergence of $[\rho_t]$ as $t \to \infi$. 
\end{proof}

\begin{proposition}\Label{BigCompnenetShrinkingEnds}
Suppose that there are a component $F$ of $S \minus M$ and  a diverging sequence $(0 <) ~  t_1 < t_2 < \dots$ such that, letting $F_{t_i}$ be the component of  $E_{t_i} \minus M_{t_i}$ homotopic to $F$ on $S$,  
\begin{itemize}
\item $\Area_{E_{t_i}} F_{t_i} \to \infi$ as $i \to \infi$, and.
\item   for each boundary component $\ell$ of $F$, there is an expanding cylinder $B_{\ell, t_i}$ in $F_{t_i}$ bounded by the boundary component $\ell_i$ of $F_{t_i}$ homotopic to $\ell$ on $S$ such that
\begin{itemize}
\item $B_{\ell, t_i}$ shrinks toward $\ell_i$, i.e. $\ell_i$ is the shorter boundary component of $B_{\ell, t_i}$, and 
\item $\Mod B_{\ell, t_i} \to \infi$ as $i \to \infi$.
\end{itemize}
\end{itemize}
Then $[\rho_{t_i}] | \pi_1 F$ diverges to  $\infty$ in $\rchi$ as $i \to \infi$. 
\end{proposition}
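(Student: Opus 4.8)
The plan is to produce a single non-peripheral simple closed curve $\gamma$ on $F$ with $\tr^2\rho_{t_i}(\gamma)\to\infi$; since restriction to $\pi_1 F$ is continuous on $\chi$, this forces $[\rho_{t_i}]|\pi_1 F$ out of every compact subset of $\chi$. (We may pass to subsequences: it suffices that every subsequence of $([\rho_{t_i}]|\pi_1 F)_i$ has an unbounded further subsequence.) First rescale: put $A_i:=\Area_{E_{t_i}}F_{t_i}=\|q_{t_i}|_F\|_1\to\infi$ and $\hat q_i:=q_{t_i}/A_i$, so the flat structure of $\hat q_i$ has unit area on $F_{t_i}$ and is conformally $E_{t_i}|F$. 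Since $X_t$ is pinched along $M$, the curves of $\partial F$ stay short while non-peripheral curves of $F$ keep bounded extremal length, so $X_{t_i}|F$ converges to a finite-type Riemann surface $Y$ obtained from $F$ by turning $\partial F$ into cusps, and on $Y$ the unit-norm $\hat q_i$ subconverge to a regular quadratic differential $\hat q_\infi$. Here the expanding-collar hypothesis enters: each $B_{\ell,t_i}$ is expanding with $\Mod B_{\ell,t_i}\to\infi$, so after rescaling its inner circumference tends to $0$, and near every cusp of $Y$ the limit flat structure $\hat E_\infi$ is an infinite-modulus \emph{expanding} cylinder; by the argument of Proposition \ref{CuspClassification}\,(\ref{iShrinkingCylinder}) the differential $\hat q_\infi$ then vanishes at each cusp, so it has at most simple poles there, no $\hat q_i$-mass escapes into the nodes, $\|\hat q_\infi\|_1=1$, and in particular $\hat q_\infi\neq 0$. (If $F$ is a pair of pants, $Y$ is a thrice-punctured sphere, which carries no nonzero quadratic differential with at most simple poles at its three punctures, so the hypotheses cannot hold and there is nothing to prove; hence we may assume $F$ has non-peripheral simple closed curves.)

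Next, extract $\gamma$ and undo the rescaling. As $\hat q_\infi\neq 0$, its vertical foliation $\hat V_\infi$ is a nonzero measured foliation on $Y$, so there is a non-peripheral simple closed curve $\gamma$ on $F$ with $\Re\!\int_\gamma\sqrt{\hat q_\infi}\neq 0$ (the core of a vertical cylinder of $\hat q_\infi$ if $\hat V_\infi$ has one; otherwise any curve with $i(\gamma,\hat V_\infi)>0$). By continuity of vertical length along $\hat q_i\to\hat q_\infi$, for large $i$ the $\hat q_i$-geodesic representative $\hat\gamma_i$ of $\gamma$ has $\Re\!\int_{\hat\gamma_i}\sqrt{\hat q_i}\ge c_0>0$, so the $q_{t_i}$-geodesic representative $\gamma_{t_i}$ has $\bigl|\Re\!\int_{\gamma_{t_i}}\sqrt{q_{t_i}}\bigr|\ge c_0 A_i^{1/2}\to\infi$ while $\length_{E_{t_i}}\gamma_{t_i}=O(A_i^{1/2})$. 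Since $\gamma$ is non-peripheral and $\hat E_i\to\hat E_\infi$ with bounded geometry away from the cusps, $\hat\gamma_i$ stays a definite distance from the zeros of $\hat q_i$; hence $\gamma_{t_i}$ stays at Euclidean distance $\gtrsim A_i^{1/2}$ from the zero set $Z_{t_i}$ of $q_{t_i}$.

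Finally, run the estimates of Section \ref{sEpsteinSurface} along $\gamma_{t_i}$ in place of a cylinder core. Isotope $\gamma_{t_i}$ within $F_{t_i}$ to a concatenation of boundedly many vertical and horizontal segments. Each horizontal segment contributes negligibly --- its $\Ep_{t_i}$-image has length $O(A_i^{-1/2})\to 0$ by Lemma \ref{DumasLemma}\,(\ref{iDumasHorizontal}), as in Lemma \ref{HorizontalSigmaLength} --- while each vertical segment has $\Ep_{t_i}$-image a near-geodesic of speed $\sqrt 2$ with total curvature $O(A_i^{1/2})\cdot O(A_i^{-1})\to 0$ by Lemma \ref{DumasLemma}\,(\ref{iDumasVertical}),\,(\ref{iVStraight}). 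Thus $\Ep_{t_i}(\til\gamma_{t_i})$ is, up to an $o(1)$ error, a geodesic segment in $\H^3$ of length $\approx\sqrt 2\,\bigl|\Re\!\int_{\gamma_{t_i}}\sqrt{q_{t_i}}\bigr|\to\infi$, so $\rho_{t_i}(\gamma)$ lies within bounded distance of a hyperbolic isometry of translation length $\to\infi$, giving $\tr^2\rho_{t_i}(\gamma)\to\infi$.

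I expect the main difficulty in this last step to be the case where $\hat V_\infi$ is \emph{minimal} (no cylinder): then $\gamma_{t_i}$ is not a single straight vertical segment and its vertical/horizontal decomposition meets the finitely many zeros that $\hat\gamma_i$ approaches, at each of which the Epstein holonomy turns by a bounded amount; one must then rule out cancellation among the boundedly many large vertical translations --- e.g. using that the turnings and the normalized translation directions converge, so the product limits to a ping-pong configuration with no exact cancellation, or by passing to a power of $\gamma$. The case where $\hat V_\infi$ has a cylinder component is clean, since its core is a single straight vertical segment to which the argument of Proposition \ref{CylinderHonomyEstimate} applies essentially verbatim.
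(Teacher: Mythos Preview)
Your rescale-and-limit strategy is essentially the paper's: normalize the flat metric on $F_{t_i}$ to unit area, pass to a subsequential limit singular Euclidean surface $E_\infi$ homeomorphic to $F$ with its limit vertical foliation $V_\infi$, and then exhibit a loop whose Epstein holonomy has diverging translation length. Two remarks.

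First, a convention slip. In the paper's conventions the vertical direction is parallel to the imaginary axis, so along a vertical segment $\sqrt{q}\,dz$ is purely imaginary; the quantity controlling the Epstein translation is $\bigl|\Im\!\int_\gamma\sqrt{q}\bigr|$, not $\bigl|\Re\!\int_\gamma\sqrt{q}\bigr|$ (the paper's proof explicitly writes $\Im\!\int_\ell\sqrt{q_{t_i}}\to\infi$). Your two candidate curves are in fact inconsistent under either convention: the core of a vertical cylinder has $\Re\!\int_\gamma\sqrt{q}=0$ and $i(\gamma,V_\infi)=0$, whereas a curve with $i(\gamma,V_\infi)>0$ has nonzero \emph{horizontal} component. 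What you actually want is a loop whose tangent is everywhere close to the vertical direction.

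Second, and more to the point, the paper sidesteps the cancellation difficulty you flag in your last paragraph by choosing the loop more carefully. Rather than taking an arbitrary $\gamma$ with nonzero vertical period and then decomposing its geodesic representative into alternating vertical and horizontal segments (which forces you through zeros and raises the question of whether the several long vertical Epstein-segments point in opposing directions in $\H^3$), the paper simply takes an almost-geodesic loop $\ell$ on $E_\infi$ that (i) misses all singular points of $E_\infi$ and (ii) is almost parallel to $V_\infi$ at \emph{every} point. Pulled back to $E_{t_i}$, such $\ell$ stays at distance of order $A_i^{1/2}$ from the zero set, so Lemma~\ref{DumasLemma}\,(\ref{iDumasVertical}),\,(\ref{iVStraight}) give directly that $\Ep_{t_i}|\tilde\ell$ has speed $\sqrt 2+o(1)$ and total curvature $o(1)$; hence the translation length of $\rho_{t_i}(\ell)$ is $(1+\ep)$-bi-Lipschitz to $\sqrt 2\,\length_{E_{t_i}}(\ell)\to\infi$. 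No staircase decomposition, no ping-pong, no powers of $\gamma$ are needed. Your ``core of a vertical cylinder'' is exactly this choice in the Strebel case; the point is that an almost-vertical closed curve avoiding the singularities exists just as well when $V_\infi$ is minimal, by approximating a long recurrent vertical trajectory and closing up by a short arc away from the zeros.
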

\begin{proof}
Let $k_i > 0$  be such that $k_i \Area(F_{t_i}) = 1$ for each $i = 1, 2, \dots$.
 Then, as $\Area F_{t_i} \to \infi$, thus $k_{t_i} \to 0$ as $i \to \infi$.
All ends of $F_{t_i}$ have conformally long expanding cylinders shrinking towards adjacent components.
Take a base point in the thick part of $F_{t_i}$. 
Let $\hat{F}$ denote the compact surface with finitely many punctures, obtained by pinching the boundary loops of $F$ to puncture points.
Then the space of all holomorphic quadratic differentials on Riemann surfaces structures on $\hat{F}$ with Euclidean area one is a sphere of finite dimension. 
Then, by compactness,  up to a subsequence
\begin{itemize}
\item $k_i E_{t_i}$ converges, in the Gromov-Hausdorff topology,  to a compact singular Euclidean surface minus finitely many points, $E_\infi$,  which is homeomorphic to $F$, and
 \item the restriction of $k_i V_{t_i}$ to $k_i E_{t_i}$ converges to a measured foliation $V_\infi$ on $E_\infi$.
\end{itemize}
Take a piecewise geodesic loop $\ell$ on $E_\infi$ such that
\begin{enumerate}
\item  $\ell$ does not cross any singular point of $E_\infi$, 
\item each segment of $\ell$ is either vertical or horizontal,  and $\ell$ contains at least one vertical segment, and 
\item $\ell$ is a geodesic in the $L^\infty$-metric, so that at adjacent singular points, $\ell$ bends in the different direction by an angle $\pi/2$. 
\end{enumerate}

In fact, if $V_\infty$ contains a periodic leaf, then take it as $\ell$, which obviously satisfies the conditions.
Otherwise, $V_\infty$ contains a minimal irrational subfoliation, using the density of each leaf in the subfoliation,  a standard closing lemma gives a desired loop $\ell$ as in  \Cref{fClosingLemma} (see \cite[I.4.2.15]{CanaryEpsteinGreen84}).
 \begin{figure}
\begin{overpic}[scale=.15
] {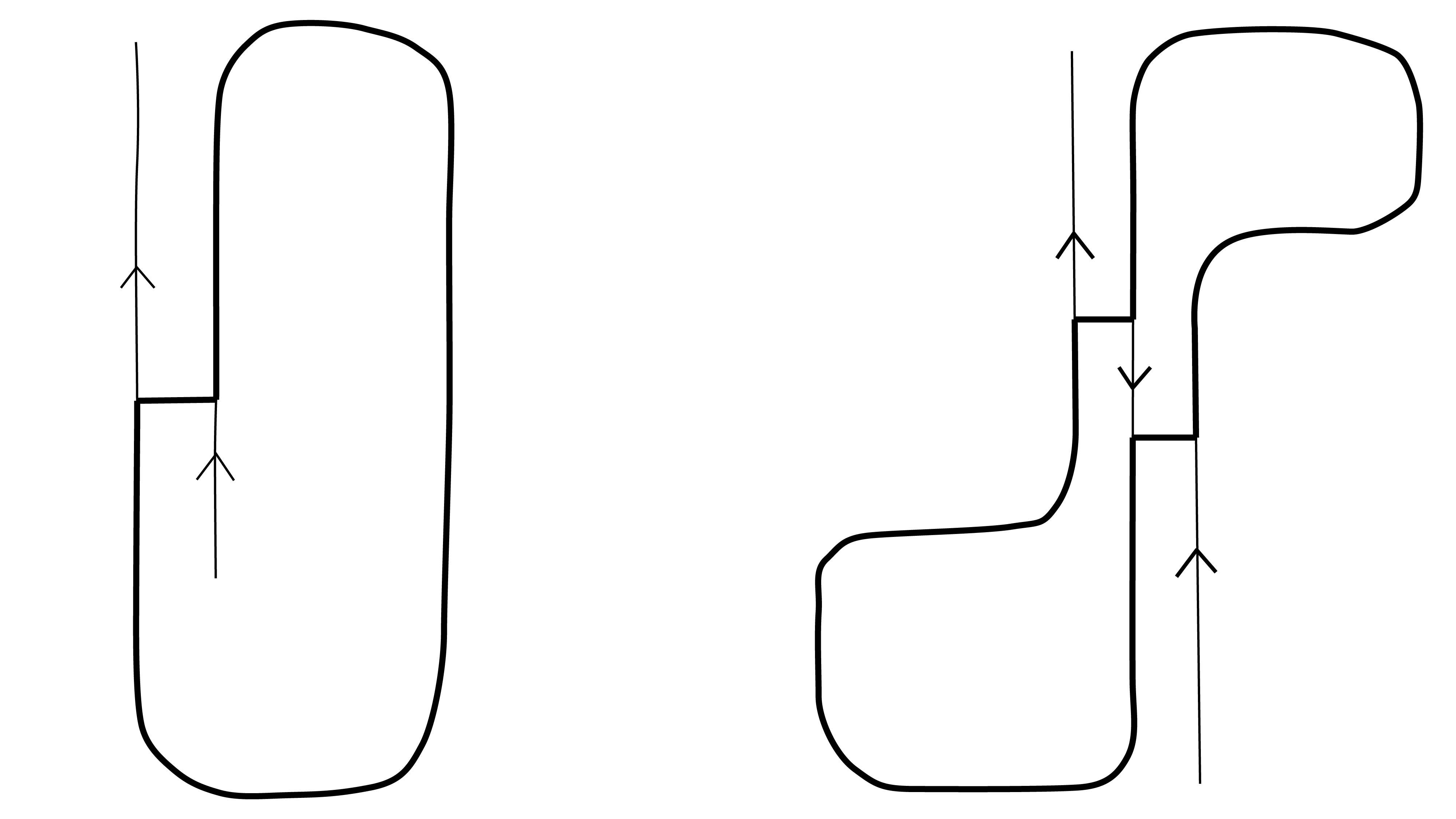} 
      \end{overpic}
\caption{Staircase closed loops $\ell$ consisting of long vertical segments and short horizontal segments }
\Label{fClosingLemma}
\end{figure}
      By the convergence $k_i E_i \to E_\infty$,
 for $i$ large enough, we pick a piecewise geodesic loop $\ell_i$ on $E_i$ satisfying the properties (1), (2), (3) such that $\ell_i$ has the same number of horizontal and vertical segments as $\ell$ has,  
  and $k_i \ell_i$ on $k_i E_i$ converges to $\ell$ on $E_\infty$ smoothly on each segment as $i \to \infty$. 
  Then the distance from $\ell_i$ to the singular set of $E_i$ goes to $\infty$ as $k_i \to 0$. 
 Therefore, by  \Cref{AlmostExponentialMap},  $\rho_{t_i}(\ell)$  is a hyperbolic element of translation length close to $\sqrt{2}$ times the total length of the vertical segment of $\ell_i$. 
Then, as $k_i \to 0$,  the total vertical length of $\ell_i$ on $E_i$ goes to infinity, and therefore $\tr \rho_i(\ell)$ must diverge to infinity. 
\end{proof}

Let $m_1, \dots, m_p$ be the loops of the multiloop $M$. 
\begin{proposition}\Label{ExpandingCydinderWithMaximalSystole} 
For every (large) $T > 0$, there are $t >  T$ and $k \in \{1, \dots, p\}$ such that
\begin{equation*}
\frac{1}{2}< \frac{\length_{E_t} (m_k)}{\max_{i = 1, \dots, p} \length_{E_t} (m_i)} \leq 2,
\end{equation*}
and 
 $\Mod_{E_t}(m_k)$ is $\frac{1}{3}$-dominated by an expanding cylinder $B_{k, t}$ homotopic to $m_k$, i.e. 

\begin{equation*}
\frac{\Mod B_{k, t}}{\Mod_{E_t} m_k}  > \frac{1}{3}.
\end{equation*} 
\end{proposition}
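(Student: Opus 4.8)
The plan is to argue under the standing assumption $\limsup_{t}R(E_t)=\infty$ of the proof of Theorem~\ref{UpperInjectivityRadiusBound}, so that there is a diverging sequence $t_1<t_2<\dots$ with $R(E_{t_i})\to\infty$; since the proposition only asks for arbitrarily large $t$, it is enough to verify its two conclusions along a subsequence. First I would normalize the loop: as there are finitely many components of $M$, pass to a subsequence along which a single loop $m_k$ realizes $\max_j\length_{E_{t_i}}(m_j)$ for every $i$, which already gives the first displayed inequality (with ratio $1$); pass also to a further subsequence along which every ratio $\length_{E_{t_i}}(m_j)/\length_{E_{t_i}}(m_{j'})$ converges in $[0,\infty]$, so that the factor-$2$ slack in the statement will later permit replacing $m_k$ by any loop length-comparable to the longest. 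Then apply Lemma~\ref{StandardCylindersDomination}: for $i$ large the $\epsilon$-thin collar homotopic to $m_k$ is, up to a $(1-\epsilon)$ factor in modulus, the union of a maximal flat cylinder $A_i$ of circumference $c_i:=\length_{E_{t_i}}(m_k)$ and two maximal oppositely-expanding cylinders $B_i,B_i'$, all homotopic to $m_k$, with $\Mod A_i+\Mod B_i+\Mod B_i'\ge\Mod_{X_{t_i}}(m_k)$; and $\Mod_{X_{t_i}}(m_k)\to\infty$ because $X_t$ is pinched along $M$.

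The heart of the argument is to show that the flat piece does not dominate: after a further subsequence, $\max(\Mod B_i,\Mod B_i')>\Mod A_i$. Granting this, $3\max(\Mod B_i,\Mod B_i')>\Mod A_i+\Mod B_i+\Mod B_i'\ge\Mod_{X_{t_i}}(m_k)$, so letting $B_{k,t_i}$ be the larger of $B_i,B_i'$ we obtain $\Mod B_{k,t_i}>\tfrac13\Mod_{X_{t_i}}(m_k)$, the second conclusion. To rule out the alternative $\Mod A_i\ge\max(\Mod B_i,\Mod B_i')$: then $\Mod A_i\to\infty$, so $A_i$ is a long flat cylinder of circumference $c_i$ and height $c_i\Mod A_i$. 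If $c_i\to\infty$ along a subsequence, then $A_i$ has both modulus and circumference diverging, so (after shrinking it to the sub-cylinder centred on the geodesic representative of $m_k$) Proposition~\ref{CylinderHonomyEstimate} applies, while Claim~\ref{HolonomyAlongLoop} forces $V_{t_i}$ to be asymptotically orthogonal to the circumferences of $A_i$; hence $\int_{m_k}\sqrt{q_{t_i}}$ is asymptotically real of modulus $c_i$, and Proposition~\ref{CylinderHonomyEstimate}~(\ref{iVerticalTranslation}) gives that the translation length of $\rho_{t_i}(m_k)$ is $\asymp\sqrt2\,c_i\to\infty$. Then $\tr^2\rho_{t_i}(m_k)\to\infty$, contradicting the convergence of $\eta_{t_i}=[\rho_{t_i}]$ in $\chi$, since the squared trace of a fixed conjugacy class is continuous on $\chi$.

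The hard part will be the remaining case, where $c_i=\length_{E_{t_i}}(m_k)$ stays bounded — equivalently every $m_j$ has bounded Euclidean length — because then a long thin flat collar of bounded circumference produces a bounded $\rho_{t_i}(m_k)$ and so is invisible to the character-variety hypothesis applied to $m_k$ alone. Here I would bring in the global input $R(E_{t_i})\to\infty$: a point of near-maximal injectivity radius cannot lie in a flat core (there the injectivity radius is at most half the circumference, hence bounded), so for $i$ large it lies either deep inside an expanding sub-cylinder of some $m_j$ whose far circumference diverges — in which case that $m_j$ carries an expanding cylinder of diverging modulus, and one reruns the comparison of the previous paragraph with $m_j$ in place of $m_k$, using the length-ratio subsequence to see $m_j$ is within a factor $2$ of the longest loop — or else in the thick part of a component $G_{t_i}$ of $E_{t_i}\minus M_{t_i}$ of diverging area, in which case one invokes the Proposition stated just before Claim~\ref{HolonomyAlongLoop} (which produces a component $F$ whose boundary loops all carry diverging-modulus expanders shrinking outward), together with Proposition~\ref{BigCompnenetShrinkingEnds} applied to $F$, to get $[\rho_{t_i}] | \pi_1 F\to\infty$ in $\chi$, again contradicting convergence. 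The genuinely delicate point is this bookkeeping — which loop to designate $m_k$ and why it remains length-comparable to the maximum; everything else reduces to the cylinder and holonomy estimates already set up in this section.
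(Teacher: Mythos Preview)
There is a genuine gap in your handling of the case $c_i=\length_{E_{t_i}}(m_k)\to\infty$. Once Claim~\ref{HolonomyAlongLoop} forces $V_{t_i}$ to be asymptotically orthogonal to the circumferences of $A_i$, the circumferences are \emph{horizontal} curves, and along a horizontal curve the Epstein map contributes rotation, not translation. Concretely, writing $m_k=u_i\cup w_i$ as in the proof of Proposition~\ref{CylinderHonomyEstimate}, the vertical segment $u_i$ (whose Euclidean length equals $|\Im\int_{m_k}\sqrt{q_{t_i}}|$) produces the translation and the horizontal segment $w_i$ produces the rotation; when the circumference is horizontal one has $\Im\int_{m_k}\sqrt{q_{t_i}}\to 0$ and $\Re\int_{m_k}\sqrt{q_{t_i}}\approx c_i$. (The statement of Proposition~\ref{CylinderHonomyEstimate} has the roles of $\Re$ and $\Im$ swapped; compare its proof and Proposition~\ref{CuspClassification}(\ref{iVOrthogonalt}), where a horizontal circumference gives an elliptic holonomy of angle $\sqrt{2}\int_{\ell_c}\sqrt{q}$.) So $\rho_{t_i}(m_k)$ is asymptotically elliptic with rotation angle $\approx\sqrt{2}\,c_i$; its squared trace stays in $[0,4]$ and there is \emph{no} contradiction with convergence of $\eta_{t_i}$ in $\chi$. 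Your argument would in fact make Claim~\ref{HolonomyAlongLoop} vacuous: that claim derives a contradiction from \emph{non}-orthogonality, so orthogonality cannot also yield one.

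This is exactly why the paper does not argue along a subsequence but along the continuous path $t$. The key step you are missing is Claim~\ref{PersistingFlatCylinder}: if a loop $m$ stays flat-$\tfrac14$-dominated on an interval $[u,w]$, then the total rotation angle $\approx\sqrt{2}\,\length_{E_t}m$ is pinned by the convergence of $\rho_t(m)$ \emph{together with its continuity in $t$}, so $\length_{E_t}m$ is nearly constant on $[u,w]$. Since $\max_j\length_{E_t}(m_j)$ is unbounded, some loop $m_h$ must at some moment double the current maximum; applying the same claim to $m_h$ shows $m_h$ cannot have been flat-$\tfrac14$-dominated throughout, and at the moment it is not, one of its expanding collars $\tfrac13$-dominates while $m_h$ is still within a factor $2$ of the maximum. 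A subsequence argument loses precisely the continuity that makes this work. (Separately, in your bounded-$c_i$ case you invoke the Proposition stated before Claim~\ref{HolonomyAlongLoop}, but that statement is the goal announced there; its proof is the Corollary following Proposition~\ref{ExpandingCydinderWithMaximalSystole}, so citing it here is circular.)
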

\proof
For  $u > T$, let $m_{k_u}$ be the loop realizing $\max_{i = 1, \dots, q} \length_{E_u} (m_i)$.
We may assume that $\max_{i = 1, \dots, p} \length_{E_t} (m_i) \to \infty$ as $t \to \infty$: in fact, otherwise,  since $\limsup R(E_t) = \infty$, Proposition \ref{BigCompnenetShrinkingEnds} implies that $[\rho_t]$ diverges in $\rchi$.

We first show that if a long flat cylinder persists, then its circumference must stay almost the same.
Namely
\begin{claim}\Label{PersistingFlatCylinder} 
For every $\ep > 0$, there is $K > 0$ such that, if  
there are  $w > u > K$ and  a flat cylinder in $E_t$ of height at least $K$ homotopic to $m$, then,  for every $t \in [u, w]$, 
 then  $$1 - \ep < \frac{\length_{E_t} m}{\length_{E_u} m} < 1 + \ep$$ for all $t \in [u, w].$ 
\end{claim}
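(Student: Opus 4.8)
Fix $\epsilon>0$. Since $M$ has finitely many components it is enough to produce, for each loop $m$ of $M$, a threshold $K=K(m,\epsilon)$ and then take the maximum; so fix $m$, and assume the hypotheses: $u>K$, $\length_{E_u}m>K$, and for every $t\in[u,w]$ the surface $E_t$ contains a flat cylinder homotopic to $m$ whose modulus exceeds $\tfrac14\Mod_{X_t}m$. Write $c(t):=\length_{E_t}m=\big|\int_m\sqrt{q_t}\big|$, a continuous function of $t$. I would argue by a bootstrap: set $s_0:=\sup\{\,s\in[u,w]:c(t)>(1-\epsilon)c(u)\text{ for all }t\in[u,s]\,\}$, and show that on $[u,s_0)$ the number $c(t)$ differs from a constant by at most $o_K(1)$ --- for $K$ large this forces $s_0=w$ and finishes the proof.

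Work on $[u,s_0)$, where $c(t)>(1-\epsilon)K$ is large and, since $X_t$ is pinched along $M$, $\Mod_{X_t}m\geq\mu_0(K):=\inf_{t\geq K}\Mod_{X_t}m\to\infty$ as $K\to\infty$. Let $A_t$ be the maximal flat cylinder of $E_t$ homotopic to $m$ and $m_t$ its middle core; then $m_t$ sits in the middle of $A_t$, $\Mod A_t\geq\tfrac14\Mod_{X_t}m\to\infty$, the circumference is $c(t)$, and the height is $a_t=c(t)\Mod A_t\geq(1-\epsilon)K\,\mu_0(K)\to\infty$. So Proposition \ref{CylinderHonomyEstimate} applies, and --- this is the crucial point --- inspection of its proof (Lemmas \ref{VerticalTotalCurvature}, \ref{HorizontalTotalCurvature}, which control errors by the distance of order $a_t$ from the zeros of $q_t$) shows that the \emph{additive} error in the total rotation of $\Ep_t m_t$ is of order $\length_{E_t}(w_t)/a_t\le c(t)/a_t=1/\Mod A_t$ and the additive error in the translation length is of order $\length_{E_t}(u_t)/a_t^2$, where $u_t,w_t$ are the vertical and horizontal legs of an isotope of $m_t$; the circumference cancels, so both errors are $O(1/\mu_0(K))\to0$ \emph{no matter how large $c(t)$ is}. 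Since going around $m_t$ on the Epstein surface realizes $\rho_t(m)$ up to conjugacy, and $\rho_t$ is a continuous path in $\RRR$ (so its complex translation length is continuous, even through parabolic and elliptic values), we obtain
\[
\operatorname{Im}\lambda(t)=\sqrt2\operatorname{Im}\int_m\sqrt{q_t}+O\big(1/\mu_0(K)\big)\ \text{ in }\R/2\pi\Z,\qquad \operatorname{Re}\lambda(t)=\sqrt2\,\Big|\operatorname{Re}\int_m\sqrt{q_t}\Big|+O\big(1/\mu_0(K)\big),
\]
where $\lambda(t)\in\C/2\pi i\Z$ is the complex translation length of $\rho_t(m)$.

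The second ingredient pins down $\lambda(t)$. As $\rho_t$ is a lift of $\eta_t$ and $\rho\mapsto\tr^2\rho(m)$ descends to $\chi$, we have $\tr^2\rho_t(m)=\tr^2\eta_t(m)$, which is within $e(K):=\sup_{t\geq K}\big|\tr^2\eta_t(m)-\tr^2\eta_\infty(m)\big|\to0$ of $\tr^2\eta_\infty(m)$. In particular $\operatorname{Re}\lambda(t)$ stays bounded (independently of $t$), hence so does $|\operatorname{Re}\int_m\sqrt{q_t}|$. Moreover the $\cosh$-preimage of the $e(K)$-disk about $\tr^2\eta_\infty(m)$, intersected with the compact slab $\{\operatorname{Re}\lambda\in[0,L_0]\}$ of $\C/2\pi i\Z$, is a union of small pairwise disjoint neighborhoods of a finite set once $e(K)$ is small; as $t\mapsto\lambda(t)$ is continuous on the connected interval $[u,s_0)$ it stays within $\delta_*(K)\to0$ of a single point $\lambda_*$ (the ramified cases $\tr^2\eta_\infty(m)\in\{0,4\}$, where $\lambda_*\in\{0,i\pi\}$, are handled identically with $\delta_*(K)\sim\sqrt{e(K)}$).

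Finally combine. The map $P(t):=\sqrt2\int_m\sqrt{q_t}$ is continuous into $\C$, not just into $\C/2\pi i\Z$, and the two displays say that $\operatorname{Im}P(t)$ stays within $\delta_*(K)+O(1/\mu_0(K))$ of $\operatorname{Im}\lambda_*$ modulo $2\pi$ while $\operatorname{Re}P(t)$ stays within the same error of $\pm\operatorname{Re}\lambda_*$. By continuity and connectedness of $[u,s_0)$ the residual $2\pi$-shift and the sign are locally constant, hence constant, so $P(t)$ stays within $\delta''(K):=\delta_*(K)+O(1/\mu_0(K))\to0$ of a fixed $P_*\in\C$; thus $c(t)=|P(t)|/\sqrt2$ stays within $\delta''(K)$ of $|P_*|/\sqrt2$. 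Since $c(u)>K$ this gives $|c(t)/c(u)-1|\le\delta''(K)/K\to0$, and choosing $K=K(m,\epsilon)$ with $\delta''(K)/K<\epsilon$ yields $c(t)/c(u)\in(1-\epsilon,1+\epsilon)$ on $[u,s_0)$, hence on $[u,s_0]$ by continuity. In particular $c(s_0)>(1-\epsilon)c(u)$ strictly, so if $s_0<w$ this strict inequality persists past $s_0$, contradicting the choice of $s_0$; hence $s_0=w$ and the estimate holds on all of $[u,w]$. The delicate step is the middle one: extracting from Proposition \ref{CylinderHonomyEstimate} a rotation estimate that is still meaningful \emph{modulo $2\pi$} when the circumference, and hence the total rotation, is huge --- this works only because the additive error carries a factor $1/\Mod A_t$ in which the large circumference cancels.
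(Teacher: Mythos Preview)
Your proof is correct and follows essentially the same route as the paper: use the Epstein-surface holonomy estimate (Proposition~\ref{CylinderHonomyEstimate}) together with convergence of $\rho_t(m)$ to pin down $\int_m\sqrt{q_t}$, hence $\length_{E_t}m$. You are in fact more careful than the paper on two points the paper leaves implicit --- the bootstrap guaranteeing the circumference stays large (so Proposition~\ref{CylinderHonomyEstimate} applies throughout $[u,w]$), and the continuity argument resolving the $2\pi$-ambiguity between the total rotation in $\R$ and the rotation part of $\rho_t(m)$ --- whereas the paper invokes Claim~\ref{HolonomyAlongLoop} to first force $V_t$ nearly orthogonal and then asserts ``the total rotation along $m$ must converge'' without spelling out the lift-to-$\R$ step.
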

 \begin{proof}
By \Cref{HolonomyAlongLoop}, 
for every $\ep > 0$, if $K > 0$ is sufficiently large, then the vertical foliation $V_t$ is $\ep$-almost orthogonal to circumferences of the flat cylinder homotopic to $m$.
Then, by Proposition \ref{CylinderHonomyEstimate}, for every $\ep > 0$, if $K > 0$ is sufficiently large, then the total rotation angle along $m$ is $(1 + \ep)$-bi-Lipschitz to $\sqrt{2}\length_{E_{t}} m$ for $t \in [u, w]$. 
As the holonomy of $\rho_t(m)$ converges as $t \to \infi$, for every $\ep > 0$, if $K$ is sufficiently large, then the total rotation along $m$ must be $\ep$-almost constant for all $t \in [u, w]$. 
Thus, if $K$ is sufficiently large, then the ratio of $\length_{E_t} m$  and $\length_{E_u} m$ is $\ep$-close to $1$.
\end{proof}

By Claim \ref{PersistingFlatCylinder}, for every $\ep > 0$, if $K > 0$ is sufficiently large, then,  if a flat cylinder $\frac{1}{4}$-dominates $\Mod m_{k_u}$ for all $t \in [u, w]$ for some $u > K$; then 
  $1 - \ep < \frac{\length_{E_t} m_{k_u}}{\length_{E_u} m_{k_u}} < 1 + \ep$ for all $t \in [u, w]$. 
Suppose, in addition,  that there is a loop $m_h$  of $M$ not $m_{k_u}$, such that $m_h$ on $E_t$ becomes exactly twice as long as $m_{k_u}$ on $E_u$ for the first time at $t = w < v$ after  $t = u$. 
 Then, by applying  Claim \ref{PersistingFlatCylinder} to $m_h$, we can show that there is $t \in [u, w)
 $ such that $\Mod_{E_t} {m_h}$ is $1/3$-dominated by an expanding cylinder:
Indeed,  otherwise, $\max_{i = 1, \dots, p} \length_{E_t} (m_i) $ must bounded from above by $\frac{3}{2} \length_{E_t}(m_{k_u})$ for all $t \in [u, w]$. 
 \Qed{ExpandingCydinderWithMaximalSystole}
 \begin{corollary}
There are a component $F$ of $S \minus M$ and a diverging sequence $0 < t_1 < t_2 < \dots$ such that the corresponding component $F_{t_i}$ of $E_{t_i} \minus M_{t_i}$ satisfies the assumptions of Proposition \ref{BigCompnenetShrinkingEnds}.
\end{corollary}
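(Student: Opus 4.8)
The plan is to obtain $F$ and the sequence $t_i$ by invoking Proposition~\ref{ExpandingCydinderWithMaximalSystole} along a sequence of thresholds $T_i\to\infi$. As in the proof of that proposition I may assume $\max_j\length_{E_t}(m_j)\to\infi$ as $t\to\infi$, the remaining case reducing directly to Proposition~\ref{BigCompnenetShrinkingEnds}. For each $i$, Proposition~\ref{ExpandingCydinderWithMaximalSystole} applied with threshold $T_i$ supplies a time $t_i>T_i$ and an index $k_i$ with $\length_{E_{t_i}}(m_{k_i})>\tfrac12\max_j\length_{E_{t_i}}(m_j)$ and with $\Mod_{X_{t_i}}(m_{k_i})$ being $\tfrac13$-dominated by an expanding cylinder $B_{k_i,t_i}$ homotopic to $m_{k_i}$. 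Since $M$ has finitely many loops and $S\minus M$ finitely many components, I pass to a subsequence along which $k_i\equiv k$ is constant, the direction in which $B_{k_i,t_i}$ expands is constant, and the component $F$ of $S\minus M$ on that side is fixed; by a further subsequence I also require that $F$ has maximal Euclidean area among the components of $E_{t_i}\minus M_{t_i}$.

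Because $X_t$ is pinched along $M$ we have $\Mod_{X_{t_i}}(m_k)\to\infi$, hence $\Mod B_{k,t_i}\ge\tfrac13\Mod_{X_{t_i}}(m_k)\to\infi$; and by construction $B_{k,t_i}\subset F_{t_i}$ is homotopic to the boundary loop $m_k$ of $F$ and shrinks towards the component adjacent to $F$ across $m_k$. So the requirement in Proposition~\ref{BigCompnenetShrinkingEnds} holds along $m_k$. The remaining two assertions of Proposition~\ref{BigCompnenetShrinkingEnds} follow from the fact that $F_{t_i}$ carries circumferences diverging to $\infi$: the outer circumference of $B_{k,t_i}$ exceeds its inner circumference $\length_{E_{t_i}}(m_k)$ by a factor diverging with $\Mod B_{k,t_i}$, and $\length_{E_{t_i}}(m_k)\to\infi$ under our assumption, so $\Area_{E_{t_i}}F_{t_i}\ge\Area_{E_{t_i}}B_{k,t_i}\to\infi$. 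For any other boundary loop $\ell$ of $F$ one has $\length_{E_{t_i}}(\ell)\le\max_j\length_{E_{t_i}}(m_j)<2\length_{E_{t_i}}(m_k)$, which is far below the largest circumference available in $F_{t_i}$; decomposing the maximal $\ep$-thin collar homotopic to $\ell$ via Lemma~\ref{StandardCylindersDomination} into a flat cylinder and two oppositely expanding cylinders and discarding the bounded-modulus pieces, this circumference gap forces one of the expanding cylinders, $B_{\ell,t_i}\subset F_{t_i}$, to satisfy $\Mod B_{\ell,t_i}\to\infi$ and to shrink towards the component across $\ell$. The flat piece cannot be the dominant one, for then its circumference $\length_{E_{t_i}}(\ell)$ would be comparable to $\length_{E_{t_i}}(m_k)$ and so diverge, and Claim~\ref{HolonomyAlongLoop} together with Proposition~\ref{CylinderHonomyEstimate} would make the rotation angle of $\rho_{t_i}(\ell)$ diverge, contradicting the convergence of $[\rho_t]$ in $\chi$; and the maximality of $\Area_{E_{t_i}}F_{t_i}$ rules out the dominant expanding cylinder expanding out of $F_{t_i}$ rather than into it. This verifies all hypotheses of Proposition~\ref{BigCompnenetShrinkingEnds}.

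The step I expect to be the main obstacle is making the single choice of $F$ correct simultaneously at all of its boundary loops: ruling out a neighbouring component that is geometrically comparable, across which the thin collar is flat or expands in the wrong direction. I would handle this precisely through the near-maximality of $\length_{E_{t_i}}(m_k)$ furnished by Proposition~\ref{ExpandingCydinderWithMaximalSystole}, which propagates a fixed multiplicative gap between the boundary circumferences of $F_{t_i}$ and its interior circumferences, reinforced by the area-maximality of $F$; the bounded-systole case is separate and easy, as noted above. Once the hypotheses of Proposition~\ref{BigCompnenetShrinkingEnds} are in place, that proposition yields $[\rho_{t_i}]\,|\,\pi_1F\to\infi$ in $\chi$, contradicting the convergence of $[\rho_t]$ and completing the proof of Theorem~\ref{UpperInjectivityRadiusBound}.
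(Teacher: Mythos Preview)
Your core strategy matches the paper's: invoke Proposition~\ref{ExpandingCydinderWithMaximalSystole} to obtain a diverging sequence $t_i$ and a loop $m_k$ of $M$ of near-maximal Euclidean length whose modulus is $\tfrac13$-dominated by an expanding cylinder $B_{k,t_i}$, then pass to a subsequence with a fixed direction of expansion and let $F$ be the component of $S\setminus M$ into which $B_{k,t_i}$ expands. The paper's own proof records exactly this much and stops.

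Where your argument goes beyond the paper---verifying the hypotheses of Proposition~\ref{BigCompnenetShrinkingEnds} at the remaining boundary loops of $F$---there are genuine gaps. First, the step ``by a further subsequence I also require that $F$ has maximal Euclidean area among the components of $E_{t_i}\setminus M_{t_i}$'' is illegitimate: $F$ has already been pinned down by the expansion direction of $B_{k,t_i}$, and nothing ensures that this particular component realizes the maximal area along any subsequence. Second, your exclusion of a dominant flat cylinder at another boundary loop $\ell$ rests on the assertion that its circumference $\length_{E_{t_i}}(\ell)$ would then be comparable to $\length_{E_{t_i}}(m_k)$; but flat-cylinder dominance only says that the modulus (height over circumference) diverges, and gives no lower bound on $\length_{E_{t_i}}(\ell)$ in terms of $\length_{E_{t_i}}(m_k)$. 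If $\length_{E_{t_i}}(\ell)$ stays bounded, Claim~\ref{HolonomyAlongLoop} does not even apply and no contradiction with the convergence of $[\rho_t]$ follows. Finally, without the unjustified area-maximality of $F$ you cannot rule out that the dominant expanding cylinder at $\ell$ expands out of $F$ rather than into it. You correctly flag this as the main obstacle in your last paragraph, but the tools you propose---near-maximality of $\length_{E_{t_i}}(m_k)$ and area-maximality of $F$---do not resolve it as written.
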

\begin{proof}
By Proposition \ref{ExpandingCydinderWithMaximalSystole},
there is a loop $m$ of $M$ and a diverging sequence $t_1 < t_2 < \dots$ such that 
\begin{itemize}
\item $\length_{E_{t_i}} m \to \infi$ as $i \to \infi$,
\item  $$\frac{1}{2} < \frac{\length_{E_{t_i}} m}{\max \{\length_{E_{t_i}} m_1, \dots, \length_{E_{t_i}} m_p\}} < 2$$ for all $i = 1, 2, \dots$, and
\item there is an expanding cylinder $B_{t_i}$ homotopic $m$ which $\frac{1}{3}$-dominates $\Mod_{E_{t_i}} m$.
\end{itemize}
Then, up to a subsequence, we may in addition assume that $B_{t_i}$ is expanding in the same direction. 
Then, let $F$ be the connected component of $S \minus M$ such that $m$ is a boundary component of $F$ and $B_{t_i}$ expands towards $F$.
As the size of $F_{t_i}$ becomes bigger and bigger than the length of $\length_{E_{t_i}} m$,  the first assumption of  \Cref{BigCompnenetShrinkingEnds} holds. 
Thus, by the second condition on the loop $m$ and the sequence $\{t_i\}$,  the second assumption of \Cref{BigCompnenetShrinkingEnds} is satisfied.
\end{proof}
By this corollary, we obtained a contradiction by \Cref{BigCompnenetShrinkingEnds} against the convergence of $\rho_t$. 
Hence we obtain Theorem \ref{UpperInjectivityRadiusBound}.
 
\section{Convergence of $\CP^1$-structures away from  pinched loops}\Label{sConvergenceOfThickPart}
We continue to suppose that $X_t$ is pinched along a multiloop. 
We will first see that the holonomy $\rho_\infi(m)$ determines the type of a conformally long Euclidean cylinder in $E_t$ which is homotopic to $m$ for $t \gg 0$. 
\begin{lemma}\Label{FlatOrExpanding}
\begin{enumerate}
\item Suppose that there are a sequence $t_1 < t_2 < \dots$ diverging to $\infty$ and a sequence of expanding cylinders $B_{t_i}$ in $E_{t_i}$ homotopic to  $m$  at time $t_i$, such that $\Mod_{E_{t_i}} B_{t_i} \to \infty$ as $t \to \infi$.\Label{iExpandingCylinderParabolic}
 Then $\rho_\infi(m)$ is parabolic.
\item Suppose that there is a sequence of flat cylinders $A_{t_i}$ in $E_{t_i}$ homotopic to a fixed loop $m$ on $S$ such that $\Mod{A_{t_i}}$ diverges to $\infty$ and the circumference of $A_{t_i}$ is bounded from below and above by positive numbers. 
 Let $w \in \C$ be such that the Möbius transformation $z \mapsto  (\exp w) z$ conjugates to $\rho_\infi(m)$.
 Then,  $\sqrt{2}\int_m \sqrt{q_{t_i} } $ converges
 to $w \mod 2\pi i$ up to a sign. \Label{iHolonomyOfFlactCylinder}
  \end{enumerate}
\end{lemma}

\begin{proof}
(\ref{iExpandingCylinderParabolic})
If a puncture of a $\CP^1$-structure corresponds to a regular point of its holomorphic quadratic differential, its peripheral holonomy is parabolic. 
Suppose that there are a sequence $t_1 < t_2 < \dots$ and an expanding cylinder $B_{t_i}$ in $E_{t_i}$ homotopic to $m$ such that $\Mod{B_{t_i}} \to \infty$ as $t \to \infi$.
 Then, by Corollary \ref{NoLargeExpandingEnds}, the length of the shorter boundary component of $B_{t_i}$ goes to zero as $i \to \infty$, and it asymptotically corresponds to, at most, a pole of order one of the quadratic differential. 
(A pole of order at least two corresponds to an infinite area end.)
Therefore $\rho_\infi(m)$ is parabolic, against the hypothesis.  

(\ref{iHolonomyOfFlactCylinder}) follows immediately from  Proposition \ref{CylinderHonomyEstimate}.
\end{proof}
      
   Given a compact surface $F$ with boundary, let $\hat{F}$ denote the surface with punctures obtained by pinching each boundary component of $F$ to a (puncture) point.
\begin{proposition}\Label{OneCylinderEnd}
Let $\ep > 0$ be a number less than the Bers constant. 
Let $F$ be a component of $S \minus M$, and let $F_t^\ep$ be the component of the conformally $\ep$-thick part of $E_t$ isotopic to $F$ for $t \gg 0$.
Then, if 
\begin{equation*}
\liminf_{t \to \infi} \Area_{E_{t}} (F_t^\ep) > 0, 
\end{equation*} 
there is a path of $\CP^1$-structures $\hat{F}_t$ on the punctured surface $\hat{F}$ such that 
\begin{enumerate}
\item 
for every $\ep > 0$, if $t > 0$ is sufficiently large, then $F_t^\ep$ isomorphically embeds into $\hat{F}_t$,  \Label{iCompletingFt}
\item \Label{iEndCylinder}
for each boundary component $\ell$ of $F$,  there is a  cylinder $A_{\ell, t}$ in $E_t$ homotopic to $\ell$ such that
\begin{itemize}
 \item $\Mod A_{\ell, t} \to \infi$ as $ t \to \infi$;
 \item $A_{\ell, t}$ is either a flat cylinder for all $t \gg 0$ or an expanding cylinder shrinking towards the adjacent component of $S \minus M$ across $m$ for all  $t \gg 0$;
\end{itemize}
\item $\hat{F}_t$ contains $A_{\ell, t}$ for every boundary component $\ell$ of $F$.  \Label{iCylindricalEnds}
\end{enumerate}

\end{proposition}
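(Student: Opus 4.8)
The plan is to build $\hat F_t$ by attaching, to the thick subsurface $F_t^{>\ep}$, a half--infinite cylindrical end along each boundary loop $\ell$ of $F$; here hypothesis \eqref{eNotSmallComponent}, together with Theorem \ref{UpperInjectivityRadiusBound} and the choice of $\ep$ below the Bers constant, guarantees that for $t \gg 0$ the piece $F_t^{>\ep}$ is a non--degenerate subsurface of $E_t$ isotopic to $F$ with controlled geometry, so that $\hat F$ is the correct topological type. The construction breaks into three parts: (1) produce, near each boundary loop $\ell$, a cylinder $A_{\ell,t}$ homotopic to $\ell$ of exploding modulus whose metric type is eventually constant; (2) extend the $\CP^1$--structure across $A_{\ell,t}$ to a genuine cusp with holonomy $\rho_t(\ell)$; (3) glue these ends to $F_t^{>\ep}$ along their overlaps.

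For (1), fix a boundary loop $\ell = m_i$ of $F$. Since $X_t$ is pinched along $M$ we have $\Mod_{X_t} m_i \to \infi$, and Lemma \ref{StandardCylindersDomination} splits the thin part homotopic to $m_i$, for $t \gg 0$, into at most one maximal flat cylinder and at most two oppositely--expanding maximal cylinders whose moduli nearly sum to $\Mod_{X_t} m_i$; hence one of them has modulus at least $\frac13 \Mod_{X_t} m_i \to \infi$. I would then split on the type of $\rho_\infi(\ell)$. If $\rho_\infi(\ell)$ is \emph{not} parabolic, Lemma \ref{FlatOrExpanding} gives, for all $t \gg 0$, a flat cylinder homotopic to $\ell$ of modulus $\to \infi$; take $A_{\ell,t}$ to be it, so the type is flat for all $t \gg 0$. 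If $\rho_\infi(\ell)$ \emph{is} parabolic, I would analyse $\sqrt 2 \int_\ell \sqrt{q_t}$: whenever a long flat cylinder homotopic to $\ell$ is present, Proposition \ref{CylinderHonomyEstimate} identifies this quantity modulo $2\pi i$ with the complex translation length of $\rho_t(\ell)$, and since $\tr^2 \rho_t(\ell) \to 4$ it accumulates only at $2\pi i\,\Z$; when it tends to $0$ the circumference collapses and, for $t \gg 0$, the dominant cylinder is an expanding one shrinking towards the component of $S \minus M$ adjacent across $\ell$ (take $A_{\ell,t}$ to be it), whereas when it tends to a nonzero multiple of $2\pi i$ the flat cylinder persists with bounded positive circumference (take $A_{\ell,t}$ to be that flat cylinder). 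Either way this yields item (ii), and by Corollary \ref{NoLargeExpandingEnds} the expanding case has bounded area.

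For (2) and (3): when $A_{\ell,t}$ is flat, its developing map is, up to $\PSL(2,\C)$, a restriction of $w \mapsto \exp(\lambda_t w)$ for a suitable $\lambda_t$; as $\exp$ is entire this extends to a half--infinite flat cylinder $\mathcal E_{\ell,t}$ with the same holonomy $\rho_t(\ell)$, which by Proposition \ref{CuspClassification}(2) is a legitimate cusp, i.e.\ a pole of order at most two. When $A_{\ell,t}$ is expanding, $\rho_\infi(\ell)$ is parabolic and the $\CP^1$--structure of $E_t$ near $\ell$ is modelled on the initial portion of a horoball--quotient cusp; I would complete it to such an end realizing the actual holonomy $\rho_t(\ell)$ by invoking Proposition \ref{PerturbingCusp}, which supplies a cusp--neighbourhood $\CP^1$--structure with holonomy close to the limiting parabolic and developing map $C^1$--close to the model, hence gluable onto $E_t$ along a sub--cylinder of $A_{\ell,t}$; again this is a pole of order at most two. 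Setting $\hat F_t = F_t^{>\ep} \cup \bigcup_{\ell} \mathcal E_{\ell,t}$, glued along the overlapping collars, produces a $\CP^1$--structure on $\hat F$ whose holonomy is $\rho_t|_{\pi_1 F}$ (the cusp holonomies being the images of the peripheral classes), into which $F_t^{>\ep}$ embeds isomorphically (item (\ref{iCompletingFt})) and which contains each $A_{\ell,t}$ (item (\ref{iCylindricalEnds})).

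The main obstacle is Step (1): obtaining, for \emph{all} $t \gg 0$ rather than merely along a subsequence, a cylinder $A_{\ell,t}$ whose type (flat versus expanding) is constant, i.e.\ ruling out oscillation of the thin collar homotopic to $\ell$ between the flat and expanding regimes. This is exactly where the metric classification of long cylinders (Lemmas \ref{StandardCylindersDomination} and \ref{FlatOrExpanding}), the holonomy estimates of Proposition \ref{CylinderHonomyEstimate}, and the convergence $\tr^2\rho_t(\ell) \to 4$ must be combined, using continuity of $t \mapsto \int_\ell \sqrt{q_t}$ to pin down the regime; the expanding subcase additionally needs Proposition \ref{PerturbingCusp} (and thereby the enlarged space $\reallywidehat{\PSL(2,\C)}$) to ensure the completed end carries the exact holonomy $\rho_t(\ell)$ and not only the limiting parabolic one.
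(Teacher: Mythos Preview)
Your proposal has the right skeleton but misses the one place where hypothesis \eqref{eNotSmallComponent} actually does work. You invoke it only to say that $F_t^{>\ep}$ has ``controlled geometry,'' but the paper uses it at the heart of Step (1). Writing $Y_t$ (the maximal expanding cylinder expanding toward the thick part of $F$), $Z_t$ (the maximal flat cylinder), and $W_t$ (the maximal expanding cylinder expanding toward the adjacent component) for the three pieces in Lemma \ref{StandardCylindersDomination}, the area lower bound together with Theorem \ref{UpperInjectivityRadiusBound} gives a uniform upper bound on $\diam W_t / (\diam Y_t + 1)$, hence on $\Mod W_t / (\Mod Y_t + 1)$. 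This is what forces $\Mod Y_t + \Mod Z_t \to \infty$, i.e.\ what rules out the possibility that the dominating cylinder is $W_t$. Your argument never excludes $W_t$: in your parabolic subcase with collapsing circumference you simply assert that the dominant expanding cylinder shrinks toward the adjacent component, but without the area bound nothing prevents it from being $W_t$, which shrinks toward $F$ instead and is useless for building a cusp on the $\hat F$ side.

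Once $W_t$ is controlled, the paper's dichotomy is also simpler than yours: rather than splitting on whether $\rho_\infty(\ell)$ is parabolic, it splits on whether $\limsup_{t\to\infty} \Mod Y_t = \infty$. If yes, a subsequence forces the circumference of $Z_t$ to $0$, so $\rho_\infty(\ell)$ is parabolic by Lemma \ref{FlatOrExpanding}; convergence of the holonomy then upgrades this to $\Mod Y_t \to \infty$ for \emph{all} large $t$, and one takes $A_{\ell,t} = Y_t$. If no, then $\Mod Z_t \to \infty$ with circumference converging to a positive number, and one takes $A_{\ell,t} = Z_t$. This is precisely what resolves the oscillation problem you flag as the main obstacle, and the area hypothesis is the missing ingredient that makes the first branch work.

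For the construction of $\hat F_t$ you are working too hard and slightly off target. The proposition does not require $\hat F_t$ to have holonomy $\rho_t|_{\pi_1 F}$; it only requires that $\hat F_t$ contain $F_t^{>\ep}$ and the cylinders $A_{\ell,t}$. So there is no need for Proposition \ref{PerturbingCusp} or the space $\reallywidehat{\PSL(2,\C)}$: the paper simply extends $Y_t$ (respectively $Z_t$) in the shrinking (respectively outward) direction to a half-infinite expanding (respectively flat) cylinder, obtaining a singular Euclidean surface on $\hat F$ and hence a $\CP^1$-structure via the Schwarzian parametrization. In the expanding case this yields a cusp whose peripheral holonomy is parabolic and need not equal $\rho_t(\ell)$; that is acceptable, since matching the holonomy is not part of the statement.
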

\proof
We first show that,
for each boundary component $\ell$ of $F$, there is a cylinder $A_\ell$ homotopic to $\ell$, such that 
\begin{enumerate}[(i)]
\item $\Mod A_{\ell, t} \to \infi$ as $t \to \infi$, \Label{iDiveringMod} and
\item $A_{\ell, t}$ remains either a flat cylinder for all sufficiently large $t > 0$ or an expanding cylinder shrinking forwards $\ell$ for all sufficiently large $t > 0$, \Label{iFaltOrExpanding}
\end{enumerate}

Let $Y_t, Z_t, W_t$ be disjoint cylinders homotopic to $\ell$,  such that $Z_t$ is a maximal flat cylinder,  $Y_t$ is the maximal expanding cylinder expanding towards the thicker part of $F_t$ and $W_t$ is the maximal expanding cylinder expanding towards the adjacent component across the geodesic representative $\ell_t$ of $\ell$.

As $X_t$ is pinched along $M$, by \Cref{Minsky},
 $\max\{  \Mod Y_t,  \Mod Z_t,  \Mod W_t\} \to \infi$ as $t \to \infi$.
Let $\diam W_t$ and $\diam Y_t$ denote the diameters of $W_t$ and $Y_t$, respectively,  in the Euclidean metric $E_t$. 
Then, by $\liminf_{t \to \infi} \Area_{E_{t}} (F_t^\ep) > 0$ and the upper injectivity radius bound (Theorem \ref{UpperInjectivityRadiusBound}), the ratio  $\frac{diam W_t}{diam Y_t + 1}$ is bounded from above for all $t > 0$.
Thus  $\frac{\Mod W_t}{\Mod Y_t  + 1}$ is bounded from above for all $t > 0$. 
Therefore $\Mod Y_t +   \Mod Z_t$ diverges to $\infi$ as $t \to \infi$. 
We claim, moreover, that either $\lim \Mod Y_t = \infi$ or $\lim \Mod Z_t = \infi$ holds.

\begin{lemma}\Label{ModYDiverges}
Suppose that $\limsup_{t \to \infi}\Mod Y_t = \infi$. 
Then $\Mod Y_t \to \infty$ as $t \to \infty$. 
\end{lemma}
\begin{proof}
Let $t_1 < t_2 < \dots$ be a sequence with $\lim_{i \to \infi} \Mod Y_{t_i} = \infi$.
Then the circumference of $Z_{t_i}$ limits to zero, and  by Lemma \ref{FlatOrExpanding} (\ref{iExpandingCylinderParabolic}), $\rho_\infi(\ell)$ is parabolic. 

Suppose to the contrary that there is a sequence $s_1 < s_2 < \dots$ diverges to $\infty$ such that 
$\Mod Y_{s_i}$ is bounded from above by some constant $b > 0$.
Then $\Mod Z_{s_i} \to \infty$, and the circumference of $Z_{s_i}$ is bounded from below $c > 0$.
On the other hand, since $\Mod Y_{t_i} \to \infty$, 
the circumference of $Z_{t_i}$ goes to zero as $i  \to \infty$. 
We can assume that $s_1 < t_1 < s_2 < t_2 < \dots$ by taking subsequences of $s_i$ and $t_j$ if necessary. 

Therefore, for every $r \in (0, c)$, for every sufficiently large $i$, there is $u_i \in [s_i, t_i]$, such that the circumference of $Z_{u_i}$ is $r$.
Then, as $\Mod Y_{u_i}$ is bounded from above,  $\Mod Z_{u_i} \to \infty$ as $i \to \infty$.  

 Then, by \Cref{FlatOrExpanding} (\ref{iHolonomyOfFlactCylinder}), the limit holonomy of $\rho_{u_i}(m)$ is determined by the complex length of the circumference. 
For almost all $r \in (0,c)$, $\rho_{u_i}(m)$ is not parabolic. 
This contradicts the convergence of $\rho_t$ as $\rho_\infty(\ell)$ is parabolic.
\end{proof}

Then,  $Y_t$ satisfies (\ref{iDiveringMod}) and  (\ref{iFaltOrExpanding}).

Next suppose that $\limsup_{t \to \infi}\Mod Y_t < \infi$. 
Then $\Mod Z_t$ diverges to $\infi$ as $t \to \infi$, and the circumference of $Z_t$ converges to a positive number. 
Then $Z_t$ satisfies   (\ref{iDiveringMod}) and  (\ref{iFaltOrExpanding}).

We shall construct  $\hat{F}_t$ satisfying  (\ref{iCylindricalEnds}) as follows. 
Suppose that, for a boundary component $\ell$ of $F$,  $\lim_{t \to \infi} \Mod Y_t = \infi$.
Let $\hat{Y}_t$  be an expanding cylinder of infinite modulus, obtained by extending the expanding cylinder $Y_t$ only in the shrinking direction, so that $\hat{Y}_t$ is conformally a punctured disk.  
Then replace $Y_t$ by $\hat{Y_t}$ in $E_t$ by gluing $E_t \minus Y_t$ and $\hat{Y}_t$ along the boundary component of $\hat{Y}_t$. 
Then the boundary component $\ell$ of $F$ corresponds to the puncture of $\hat{Y}_t$.  

Next suppose that $\limsup_{t \to \infi} \Mod Y_t < \infi$. 
 Then,  since $\Mod Z_t \to \infi$ and the circumference $\operatorname{Circ}(Z_t)$ converges to a positive number as $t \to \infi$, we extend the flat cylinder $Z_t$,  in the direction of $W_t$, 
 to  the half-infinite flat cylinder $\hat{Z_t}$; then $\hat{Z}_t$ is conformally a punctured disk.
 Then replace $Z_t$ in $E_t$ with $\hat{Z_t}$ so that it has a puncture corresponding to $\ell$. 

By applying,  such a replacement for all boundary component $\ell$ of $F$, we obtains a  desired complete singular Euclidean surface $\hat{F}_t$ satisfying (\ref{iCompletingFt}), (\ref{iEndCylinder}), (\ref{iCylindricalEnds}), as (\ref{iEndCylinder}) follows from (\ref{iDiveringMod}) and  (\ref{iFaltOrExpanding}).
\Qed{OneCylinderEnd}

 \begin{theorem}\Label{ConvergenceThickPart}
Let $F$ be a component of $S \minus M$. 
Let $\ep > 0$ be less than the Bers constant of $S$. 
For every $t > 0$ large enough, let $F_t^{\ep}$ be the component of the $\ep$-thick part of $C_t$ isotopic to $F$. 
 \begin{enumerate}
 \item Suppose that $$\liminf_{t \to \infi} \Area_{E_{t}} (F_t^{\ep}) = 0.$$ 
 Then, there is a continuous function $\ep_t > 0$ in $t$ with $\lim_{t \to \infi} \ep_t = 0$, such that  $F_t^{ \ep_t}$ converges (in the Gromov-Hausdorff topology) to a complete hyperbolic structure on a closed surface with finitely many punctures, denoted by $\hat{F}_\infi$, which is homeomorphic to $F$, as $t \to \infty$. \Label{iZeroAreaHyperbolic}
\item
Suppose that $$\liminf_{t \to \infi} \Area_{E_{t}} (F_t^{\ep}) > 0.$$ 
Then,  $\hat{F}_t$ accumulates to a bounded subset on the space of $\CP^1$-structures on $\hat{F}$.  \Label{iPositiveAreaThickPart}
 Moreover, if $\rho_\infi(m) \neq I$ for each boundary component $m$ of $F$, then $\hat{F}_t$ converges to a $\CP^1$-structure on $\hat{F}$ as $t \to \infty$. 
\end{enumerate}
\end{theorem}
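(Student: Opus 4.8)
The plan is to read off both cases from the completed $\CP^1$-structure $\hat F_t$ on $\hat F$ supplied by Proposition~\ref{OneCylinderEnd}, together with the uniform bound on $\|q_t\|_\infty$ equivalent to Theorem~\ref{UpperInjectivityRadiusBound}. First I would record the standard preliminaries: since $X_t$ is pinched along $M$ and $\ep$ is below the Bers constant, the conformally $\ep$-thin part of $E_t$ is a union of collars of the loops of $M$, so the component $F_t^\ep$ isotopic to $F$ exists for $t\gg 0$; and the $X_t$-lengths of the essential non-peripheral curves of $F$ lie in a fixed positive interval, so the complex structure $\hat X_t$ on $\hat F$ obtained by filling in the ends of $F_t^\ep$ stays in a compact subset of $\mathcal{M}(\hat F)$ (Mumford compactness). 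In case~(\ref{iPositiveAreaThickPart}), Proposition~\ref{OneCylinderEnd} gives $\hat F_t=(\hat X_t,\hat q_t)\in\PP(\hat F)$ extending $F_t^{>\ep}$, with cylindrical ends $A_{\ell,t}$ of modulus $\to\infty$; by Lemma~\ref{FlatOrExpanding} each end is flat or expanding and $\sqrt2\int_\ell\sqrt{q_t}$ converges, so the residue of $\hat q_t$ at each puncture is bounded, and together with the $L^\infty$-bound on the thick part (where the hyperbolic metric of $\hat X_t$ is comparable to the uniformizing metric of $X_t$) this confines $\hat F_t$ to a bounded, hence precompact, subset of $\PP(\hat F)$. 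That already proves the first assertion of~(\ref{iPositiveAreaThickPart}).

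For~(\ref{iZeroAreaHyperbolic}) I would pass to a subsequence along which $\Area_{E_t}(F_t^\ep)\to 0$; by the precompactness just obtained, a further subsequence has $\hat X_t\to\hat X_\infi$ and $\hat q_t\to\hat q_\infi$ locally uniformly on $\hat F$ minus its punctures. For each compact $K$ there one has $K\subset F_t^\ep$ and $\int_K|\hat q_t|\le\Area_{E_t}(F_t^\ep)\to 0$ eventually, so $\hat q_\infi\equiv 0$; equivalently the residues of $\hat q_t$ tend to $0$ (the residue is an integral over a fixed small loop about the puncture), so by Lemma~\ref{FlatOrExpanding} every $\rho_\infi(\ell)$ is parabolic, the ends $A_{\ell,t}$ are expanding cylinders of modulus $\to\infty$, and by Proposition~\ref{CuspClassification}~(1) they degenerate to horoball-quotient cusps. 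Hence $\hat F_t$ converges to the uniformizing complete hyperbolic metric of $\hat X_\infi$, and choosing $\ep_t\downarrow 0$ so that $F_t^{\ep_t}$ exhausts $\hat F_t$ gives Gromov--Hausdorff convergence to a finite-area complete hyperbolic surface homeomorphic to $F$.

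For the ``moreover'' in~(\ref{iPositiveAreaThickPart}), assume $\rho_\infi(\ell)\neq I$ for each boundary loop $\ell$ of $F$. Then the fixed-point set of $\rho_t(\ell)$ on $\CP^1$ varies continuously and converges (continuity of fixed points fails only at $I$), so by Remark~\ref{CuspParameter} the peripheral datum of $\hat F_t$ at $\ell$ converges, and by Proposition~\ref{CuspClassification} the restriction of $\hat F_t$ to a fixed cusp neighbourhood converges in the Hausdorff space $\PPP(D^\ast)$. Since also $\rho_t\to\rho_\infi$ (Proposition~\ref{LiftingHolonomyPath}), every accumulation point of the precompact path $\hat F_t$ in $\PP(\hat F)$ has holonomy $\rho_\infi|\pi_1F$ and these prescribed cusp germs; hence all accumulation points lie in a single fibre of the holonomy map $\hol$ of Theorem~\ref{HolImmersion}, which is discrete because $\hol$ is a local homeomorphism. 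A precompact connected path whose accumulation set is discrete must converge, so $\hat F_t$ converges in $\PP(\hat F)$, and restricting to $F_t^{>\ep}$ proves the claim.

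I expect the ``moreover'' to be the main obstacle: upgrading boundedness to genuine convergence requires both the careful control of the peripheral data inside the non-Hausdorff space $\reallywidehat{\PSL(2,\C)}$ --- which is precisely where $\rho_\infi(\ell)\neq I$ is used, since a cusp with trivial peripheral holonomy can be moved freely without altering the holonomy --- and the local injectivity of $\hol$ furnished by Theorem~\ref{HolImmersion}, applied along the connected parameter interval to rule out oscillation between distinct limits.
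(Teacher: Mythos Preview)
Your argument for case~(\ref{iPositiveAreaThickPart}) is sound and matches the paper: complete $F_t^{>\ep}$ to $\hat F_t\in\PP(\hat F)$ via Proposition~\ref{OneCylinderEnd}, bound it using the upper injectivity radius bound (Theorem~\ref{UpperInjectivityRadiusBound}) together with precompactness of $\hat X_t$ in Teichm\"uller space, and for the ``moreover'' invoke Theorem~\ref{HolImmersion}. Your discrete-fibres-plus-connectedness formulation of that last step is a correct rephrasing of the paper's one-line appeal.

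There is, however, a genuine gap in your treatment of case~(\ref{iZeroAreaHyperbolic}). You write ``by the precompactness just obtained, a further subsequence has $\hat X_t\to\hat X_\infty$ and $\hat q_t\to\hat q_\infty$'', but that precompactness --- and indeed the very object $\hat F_t=(\hat X_t,\hat q_t)$ --- came from Proposition~\ref{OneCylinderEnd}, whose hypothesis is precisely $\liminf_{t\to\infty}\Area_{E_t}(F_t^\ep)>0$, the \emph{opposite} of case~(\ref{iZeroAreaHyperbolic}). Without that hypothesis the proof of Proposition~\ref{OneCylinderEnd} breaks (the bound on $\diam W_t/(\diam Y_t+1)$ uses the lower area bound), so in this case you have no completed structure $\hat F_t$ and no $\hat q_t$ to take a limit of. Your subsequent references to residues of $\hat q_t$ and to Lemma~\ref{FlatOrExpanding} therefore have no object to attach to.

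The paper's route for~(\ref{iZeroAreaHyperbolic}) avoids this entirely and is more direct: along a subsequence with $\Area_{E_{t_i}}(F_{t_i}^\ep)\to 0$, the differential $q_{t_i}$ vanishes in the hyperbolic metric on the thick part, so the $\CP^1$-structure $F_{t_i}^\ep$ is asymptotically identical to the uniformizing hyperbolic structure $X_{t_i}|F_{t_i}^\ep$ (the zero section of $\PP\to\TT$). This gives subsequential convergence to a complete hyperbolic surface on $\hat F$; the peripheral holonomies are then parabolic, hence nontrivial, and Theorem~\ref{HolImmersion} upgrades subsequential to genuine convergence along the full path $t\to\infty$. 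Your argument can be salvaged exactly along these lines: drop the appeal to $\hat q_t$ and argue directly with $q_t$ on the thick part, and make the use of Theorem~\ref{HolImmersion} to pass from subsequential to genuine convergence explicit.
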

\begin{remark}
In Case (\ref{iPositiveAreaThickPart}), similarly to (\ref{iZeroAreaHyperbolic}), one can take a  sequence $t_1 < t_2 < \dots$ diverging to $\infty$ so that $\hat{F}_t$ converges to a $\CP^1$-structure $\hat{F}_\infty$ on $\hat{F}$.
Then, for every $\ep > 0$ less than the Bers' constant,  the $\ep$-thick part $F_{t_i}^{\ep}$ converge to a subsurface of $\hat{F}_\infty$. 
If, in addition, the $\rho_\infty (m) \neq I$ for every boundary component of $F$, then $F_t^{\ep}$ converge to a subsurface of $\hat{F}_\infty$. 
  \end{remark}

\begin{proof}
(\ref{iZeroAreaHyperbolic})
Let $t_1 < t_2 < \dots$ be a diverging sequence such that $\Area (F_{t_i}) \to 0$ as $t \to \infi$.
Then the holomorphic quadratic differential on $F_{t_i}$ asymptotically vanishes. 
Thus, for every small $\ep > 0$,  $F_{t_i}^{ \ep}$ and $X_{t_i} | F_{t_i}^{ \ep}$ asymptotically identical, where $X_{t_i}$ is regarded as a hyperbolic surface by the uniformization theorem for each $i$. 
Here, by asymptotically identical, we mean that, for every $\upsilon > 0$ and every compact set $K$ in the universal cover $\H^2$ of   $X_{t_i}$, if $i$ is sufficiently large,   the developing map of $F_{t_i}^{ \ep}$ is $\upsilon$-close to the developing map of the hyperbolic structure $X_{t_i} | F_{t_i}^{ \ep}$ on $K$.
    
The holonomy representations of $F_{t_i}^{\ep_i}$ and $X_{t_i} | F_{t_i}^{ \ep}$ are asymptotically identical in the character variety. 
As the holonomy of $F_{t_i}^{\ep_i}$ converges in the representation variety, the holonomy of $X_{t_i} | F_{t_i}^{ \ep}$ must converge in the representation variety. 
Thus $X_{t_i} | F_{t_i}^{ \ep}$ converges to a complete hyperbolic structure $\sigma_\infty$ on $F$.
 Therefore $F_t^{\ep_t}$ must genuinely converge to $\sigma_\infi$ (without taking a subsequence). 
In particular $\Area_{E_t} F_t^{\ep_t} \to 0$ as $t \to \infi$.

(\ref{iPositiveAreaThickPart})
Suppose that $\liminf_{t \to \infi} \Area F_t^{ \ep}  > 0$ for sufficiently small $\ep > 0$.
Then let $\hat{F}_t$ denote the singular Euclidean structure on $\hat{F}$ obtained from $F_t$ by Proposition \ref{OneCylinderEnd}.
 Then $\hat{F}_t$ induces a $\CP^1$-structure on $\hat{F}$. 
 Let $(Y_t, w_t)$ be the Schwarzian parameterization of $\hat{F}_t$.
 Then, indeed,  every puncture of $Y_t$ is, at most, a pole of order two. 

 As $X_t$ is pinched along a multiloop $M$,  $Y_t$ is bounded in the Teichmüller space $\TT(\hat{F})$.
By \Cref{UpperInjectivityRadiusBound}, the upper injectivity radius of $\hat{F_t}$ is also bounded from above, and $(Y_t, w_t)$ is also bounded in the parameter space.
Thus, the $\CP^1$-structures $\hat{F}_t$ are contained in a compact subset of the deformation space of $\CP^1$-structures on $\hat{F}$.
Therefore $\hat{F}_t$ accumulates to a bounded subset in the deformation space of $\CP^1$-structures on  $\hat{F}$.

Moreover, if each peripheral loop has non-trivial holonomy at $t = \infi$, by \Cref{HolImmersion}, the convergence of the holonomy of $\hat{F}_t$ implies the convergence in $(\reallywidehat{\PSL_2\C})^n \times \RRR(F)$.
Therefore $\hat{F}_t$ has a unique limit in $\PPP(\hat{F})$. 
\end{proof}

 Theorem \ref{ConvergenceThickPart} immediately implies 
 \begin{corollary}\Label{DifferentialConvertesMultiloop}
Suppose that $X_t$ is pinched along a multiloop $M$. 
Then, for every sequence $t_1 < t_2 < \dots$ diverging to $\infty$, up to a subsequence, $X_{t_i}$ converges to a nodal Riemann surface $X_\infi$ and $q_{t_i}$ converges to a  regular quadratic differential on $X_\infi$.
\end{corollary}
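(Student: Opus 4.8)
The plan is to assemble the component-wise convergence provided by Theorem \ref{ConvergenceThickPart} into a single statement about the nodal limit of the pairs $(X_{t_i},q_{t_i})$. Since $S\setminus M$ has only finitely many components $F^{(1)},\dots,F^{(k)}$, I would fix the diverging sequence and apply Theorem \ref{ConvergenceThickPart} to each $F^{(j)}$ in turn, passing to a subsequence at each step, so that after finitely many steps one subsequence (relabelled $t_i$) works for every component at once. A component falling in case (\ref{iZeroAreaHyperbolic}) of that theorem contributes a complete finite-area hyperbolic surface on $\hat F^{(j)}$, which in the present convention is the $\CP^1$-structure $(Z^{(j)}_\infi,0)$ with zero differential; a component in case (\ref{iPositiveAreaThickPart}) contributes, after the subsequence, a $\CP^1$-structure $(Z^{(j)}_\infi,w^{(j)}_\infi)$ on $\hat F^{(j)}$ whose differential $w^{(j)}_\infi$ has at most a double pole at each puncture. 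In either case $Z^{(j)}_\infi$ extends to a compact Riemann surface $\ol{Z^{(j)}_\infi}$ and $w^{(j)}_\infi$ extends meromorphically with at most double poles at the added points.

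Next I would form the nodal limit: let $\ol{X_\infi}:=\bigsqcup_j \ol{Z^{(j)}_\infi}$ be the (disconnected) normalization, and build $X_\infi$ by gluing, for each loop of $M$, the two punctures of the components meeting across it into a node; this produces a nodal Riemann surface with exactly $n=\#M$ nodes together with a normalization map $\xi\col\ol{X_\infi}\to X_\infi$. Put $q_\infi:=\bigsqcup_j w^{(j)}_\infi$, a meromorphic quadratic differential on $\ol{X_\infi}$ with at most order-two poles, each of which maps to a node. The convergence $X_{t_i}\to X_\infi$ is then the Deligne--Mumford convergence to the boundary stratum of $M$ (equivalently, Gromov--Hausdorff convergence of the thick parts together with the pinching of the necks homotopic to loops of $M$), and $q_{t_i}\to q_\infi$ normally on compact subsets of the smooth part $\mathring{X}_\infi$; both of these are exactly the component-wise conclusions of Theorem \ref{ConvergenceThickPart}, with the necks described by Proposition \ref{OneCylinderEnd}.

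It remains to check the residue condition making $q_\infi$ a \emph{regular} quadratic differential: if $z_1,z_2\in\ol{X_\infi}$ map to a common node coming from a loop $m$ of $M$, the residues of $q_\infi$ at $z_1$ and at $z_2$ agree up to sign. I would argue this from $\oint_m\pm\sqrt{q_\infi}=\lim_i\oint_{m_{t_i}}\pm\sqrt{q_{t_i}}$, where $m_{t_i}$ is a loop homotopic to $m$: this period is a free-homotopy invariant of the corresponding branched $1$-form and, by Lemma \ref{FlatOrExpanding} and Proposition \ref{CylinderHonomyEstimate}, it is determined (up to a universal constant and an element of $2\pi i\Z$) by the complex translation/rotation data of $\rho_{t_i}(m)$, which does not depend on which side of $m$ the representative loop lies on; letting $i\to\infi$ gives $\operatorname{res}_{z_1}q_\infi=\pm\operatorname{res}_{z_2}q_\infi$. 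When one of the adjacent components collapses as in Theorem \ref{ConvergenceThickPart}(\ref{iZeroAreaHyperbolic}) both residues are zero: the neck around $m$ is then an expanding cylinder shrinking toward the collapsing component, which forces $\rho_\infi(m)$ parabolic and, by Proposition \ref{CuspClassification}(\ref{iShrinkingCylinder}), $q_\infi=0$ near the corresponding puncture on the non-collapsing side as well. Hence $q_\infi$ is a regular quadratic differential on $X_\infi$, which is the assertion.

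The main obstacle I anticipate is precisely this last reconciliation step: one must verify that the pole order and residue attached to each node are the same whether read from either sheet of the normalization $\xi$ — that is, that the two one-sided descriptions of each degenerating neck furnished by Proposition \ref{OneCylinderEnd} are mutually consistent — and in particular handle the mixed situation in which one side of a loop of $M$ stays conformally thick while the other side collapses to a hyperbolic cusp. Everything else is bookkeeping over the finitely many components and repackaging the already-established component-wise convergence.
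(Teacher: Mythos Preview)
Your proposal is correct and follows the same route as the paper, which simply records this corollary as an immediate consequence of Theorem \ref{ConvergenceThickPart}. The paper does not spell out the residue-matching step you worry about; your direct period argument (both residues at a node are limits of the single quantity $\oint_{m_{t_i}}\pm\sqrt{q_{t_i}}$, which is independent of the side) already suffices, and the detour through Lemma \ref{FlatOrExpanding} and Proposition \ref{CylinderHonomyEstimate} is unnecessary. Your handling of the mixed case (one side collapsing) is likewise fine and a bit more careful than the paper.
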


\section{Degeneration by neck-pinching}\Label{sDegeneration}
In this section, we summarize our main theorems on asymptotic behavior under neck-pinching.

Let $C_t = (f_t, \rho_t), ~ t \geq 0$ be a path of $\CP^1$-structures which diverges to $\infty$ in the deformation space, such that its holonomy $[\rho_t]   \eqqcolon \eta_t$ converges in the character variety $\rchi$. 
By Proposition \ref{LiftingHolonomyPath}, we can assume that the holonomy $\rho_t \in \RRR$ also converges in the representation variety.
Let $X_t$ be the complex structure of $C_t$.

\begin{theorem}\Label{LimitHolonomyOfm}
Suppose that $X_t$ is pinched along a loop $m$.
Then  $\rho_\infi(m)$ is either $I$ or a parabolic element. 
Moreover $\rho_t(m) \neq I$ for large enough $t > 0$.
\end{theorem}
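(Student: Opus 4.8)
The plan is to argue by contradiction, building on the description of the neck obtained in \S\ref{sInjetivityRadius}--\S\ref{sConvergenceOfThickPart}. Suppose $\rho_\infi(m)$ is neither $I$ nor parabolic, i.e.\ $\tr^2\rho_\infi(m)\neq 4$, so that $\rho_\infi(m)$ is hyperbolic or a nontrivial elliptic element. By Lemma \ref{FlatOrExpanding}, for all large $t$ the surface $E_t$ then contains a flat cylinder $A_t$ homotopic to $m$ with $\Mod A_t\to\infty$, and $\sqrt 2\int_m\sqrt{q_t}$ converges to the complex length of $\rho_\infi(m)$ modulo $2\pi i\Z$, in particular to a \emph{nonzero} number. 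Since the core geodesic of a flat cylinder realizes $\bigl|\int_m\sqrt{q_t}\bigr|$ as its Euclidean length, the circumference of $A_t$ stays in a compact subset of $(0,\infty)$; hence the height of $A_t$, namely (circumference)$\,\cdot\Mod A_t$, tends to $\infty$. Shrinking $A_t$ by a collar of width $K_t\to\infty$ of each boundary circle, chosen slowly enough that $\Mod A_t$ still tends to $\infty$, we may also assume $A_t$ lies at distance $\geq K_t$ from the zeros of $q_t$; write $a_t\to\infty$ for the shrunk height.

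Consider first the case in which the circumferences of $A_t$ are asymptotically orthogonal to the vertical foliation $V_t$; by Proposition \ref{CylinderHonomyEstimate} this is exactly the case in which $\sqrt2\int_m\sqrt{q_t}$ has real limit, i.e.\ $\rho_\infi(m)$ is hyperbolic without a screw part. Fix $\gamma\in\pi_1(S)$ meeting $m$ once (two such curves if $m$ separates), based at a point $p$ in a thick part of $C_t$, and realize $\gamma$ so that $\gamma\cap A_t$ is a geodesic arc of $E_t$ crossing $A_t$; it then runs along $V_t$ and has Euclidean length $a_t$. Since $A_t$ is far from the zeros of $q_t$, Lemma \ref{DumasLemma}\,(\ref{iDumasVertical}),(\ref{iVStraight}) and Proposition \ref{CylinderHonomyEstimate}\,(\ref{iVerticalTranslation}),(\ref{iTranslationLength}) show that $\Ep_t$ restricted to this arc is an arc of $\H^3$ of length greater than $\sqrt2\,a_t$ whose total turning tends to $0$, hence asymptotically geodesic. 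On the other hand $\rho_\infi(m)\neq I$, so by Theorem \ref{ConvergenceThickPart} $C_t$, and hence $\Ep_t$, converges on the thick parts adjacent to $m$; thus $\Ep_t(p)$ stays in a compact subset of $\H^3$ and the parts of $\Ep_t\circ\widetilde\gamma$ over the thick parts have uniformly bounded length. Concatenating and using the quasigeodesic property, $d_{\H^3}\bigl(\Ep_t(p),\rho_t(\gamma)\,\Ep_t(p)\bigr)\to\infty$ while $\Ep_t(p)$ is bounded, so $\rho_t(\gamma)$ leaves every compact subset of $\PSL(2,\C)$; this contradicts $\rho_t\to\rho_\infi$ in $\RRR$ (Proposition \ref{LiftingHolonomyPath}).

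In the remaining cases --- $\rho_\infi(m)$ elliptic, or hyperbolic with a nonzero screw part --- the circumferences of $A_t$ are not orthogonal to $V_t$, so a cross-arc of $A_t$ is (partly) horizontal, its Epstein image may be short, and the previous argument fails. One must instead control the developing map $f_t$ on the entire neck: using Lemma \ref{DumasLemma}, Lemma \ref{HorizontalTotalCurvature} and Proposition \ref{CylinderHonomyEstimate}, one shows that for large $t$ the restriction of $f_t$ to $A_t$ is, up to uniformly small error, a standard developing map of a flat cylinder with holonomy $\rho_t(m)$ --- a spiral, resp.\ a rotational annulus, about the two converging fixed points of $\rho_t(m)$ --- and then one matches this with the limit of $C_t$ on the thick parts (Theorem \ref{ConvergenceThickPart}) to see that either $\Mod A_t$, or the translation length of $\rho_t$ along some curve transverse to $m$, cannot remain bounded, again contradicting the convergence of $\rho_t$. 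I expect this to be the main obstacle: the holonomy along $m$ itself stays bounded (indeed converges), so the contradiction has to be extracted from a transverse curve together with the geometry of the long neck, and the control of $f_t$ has to be carried all the way to $\partial A_t$, where Dumas's estimates degrade. This is precisely what the separate analyses of \S\ref{sHyperbolicNeck} and \S\ref{sEllipticNeck} carry out.

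Finally, $\rho_t(m)\neq I$ for all large $t$. If $\rho_{t_i}(m)=I$ along a diverging sequence, then the flat cylinder homotopic to $m$ in $E_{t_i}$ has trivial holonomy and so develops as an embedded annulus in $\CP^1$, and the two $\CP^1$-structures induced on the components of $S\setminus m$ (equivalently on $\hat F$ as in Theorem \ref{ConvergenceThickPart}) have trivial peripheral holonomy there. By the flexibility of cusps with trivial holonomy --- Proposition \ref{PerturbingCusp} and Theorem \ref{HolImmersion}, together with the fact that such a cusp can be moved without altering the holonomy --- one may reglue these pieces by a $\CP^1$-cylinder of bounded modulus so as to produce, with the same holonomy $\rho_{t_i}$, a $\CP^1$-structure on $S$ lying in a fixed compact subset of $\PP$. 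This contradicts that $C_t$ leaves every compact subset of $\PP$, so $\rho_t(m)\neq I$ for $t\gg 0$, completing the proof.
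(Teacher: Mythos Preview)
Your overall plan---rule out hyperbolic and elliptic $\rho_\infi(m)$ by contradiction, then treat $\rho_t(m)\neq I$ separately---is the paper's plan, but two of your three steps have genuine problems.

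First, the case identification is inverted. By Proposition~\ref{CuspClassification}(\ref{iVOrthogonalt}), the circumferences of the flat cylinder being orthogonal to $V_t$ is precisely the \emph{elliptic} (or $I$/parabolic) case, not the hyperbolic-without-screw case; the statement of Proposition~\ref{CylinderHonomyEstimate} has $\operatorname{Re}$ and $\operatorname{Im}$ swapped relative to its own proof, and you were misled by that. So your direct Epstein-surface argument (the crossing arc is vertical, hence its $\Ep_t$-image is a long almost-geodesic, hence a transverse holonomy must diverge) actually addresses the \emph{elliptic} case. As such it is a pleasant and more direct alternative to \S\ref{sEllipticNeck}, though you still owe an argument in the separating case that the two crossings do not cancel. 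For the hyperbolic case, however, the crossing arc is no longer vertical and its Epstein image need not be long; your second paragraph essentially admits this and defers to \S\ref{sHyperbolicNeck}--\S\ref{sEllipticNeck}, which is not a proof. The paper's argument there is of a different nature: it uses convergence on the thick part (Proposition~\ref{ConvergenceThick}) together with a foliation of cusp neighborhoods by admissible loops (Lemma~\ref{FoliatedHyperbolicCuspNeighborhood}, Proposition~\ref{EllipticNeck}) to show that the neck cylinder, and hence all of $C_t$, converges in $\PP$---the contradiction is with the divergence of $C_t$, not with the convergence of $\rho_t$.

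Second, your argument for $\rho_t(m)\neq I$ does not work. Producing \emph{some other} $\CP^1$-structure on $S$ with holonomy $\rho_{t_i}$ that lies in a compact subset of $\PP$ does not contradict the hypothesis: the hypothesis is that the \emph{specific} path $C_t$ leaves every compact, and many distinct structures share each holonomy. The paper's proofs (Lemma~\ref{AlmostParabolicCuspsAreHyperbolicOrElliptic} in the parabolic case, Lemma~\ref{m-neq-I} in the $\rho_\infi(m)=I$ case) instead show that if $\rho_{t_i}(m)=I$ along a sequence, then the holonomy of one of the components of $S\setminus m$ must diverge in $\chi$; this uses the location of the cusps of the limit structure $C_\infi$ relative to the fixed set of $\rho_\infi$ (Proposition~\ref{NonelementaryComplements}, Lemma~\ref{PunctureNotAtAxis}, Proposition~\ref{PunctureElementaryHolonomy}).
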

Recall that $\phi\col \ti{S} \to S$ is the universal covering map. 
Let $N_m$ be a regular neighborhood of $m$ in $S$. 
Regard the loop $m$ also as a fixed element of $\pi_1(S)$ representing $m$, and let $\ti{N}_m$ be the component of $\phi^{-1}(N_m)$ preserved by $m \in \pi_1(S)$.
\begin{theorem}[Convergence of developing maps]\Label{PinchedAlongLoop} 
Suppose that $X_t$ is pinched along a loop $m$. 
Then, exactly one of the following two holds.
\begin{enumerate}
\item 
    \begin{itemize}
    \item  $\rho_\infi(m)$ is parabolic;
    \item the cusp neighborhoods of $C_\infi$ are horodisk quotients;
    \item $f_t\col \ti{S} \to \CP^1$ converges a $\rho_\infi$-equivariant continuous map $f_\infi \col \til{S} \to \CP^1$ uniformly on compact subsets;
    \item there is a multiloop $M$ on $S$ consisting of finitely many parallel copies of $m$, such that 
$f_\infi$ is a local homeomorphism on $\ti{S} \minus \phi^{-1}(M)$ and it takes  each component of $\phi^{-1} (M)$ to its corresponding parabolic fixed point.
\end{itemize}
\item  $\rho_\infi(m) = I$, and,  for every sequence $t_1 < t_2 < \dots$ diverging to $\infi$, up to a subsequence,  there is a  path of markings $S \to C_t$ such that, as $i \to \infi$,
\begin{itemize}
\item $C_{t_i} | S \minus N_m $ converges to a $\CP^1$-structure on a surface with punctures homeomorphic to $S \minus m$; \Label{iThickPartConvergence}
\item the axis $a_i$ of $\rho_{t_i}(m)$ converges to a point on $\CP^1$ or a geodesic in $\H^3$;
\item the restriction of   $f_{t_i}$ to $\til{S} \minus \phi^{-1}(N_m)$ converges to a continuous map,  and each boundary component of $\ti{N}_m$ maps to an ideal point of  $\lim_{i \to \infi} a_i$.\Label{iBoundaryComponentAtEndpoints}
\end{itemize}

\end{enumerate}
\end{theorem}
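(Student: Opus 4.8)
The plan is to run everything off the dichotomy already supplied by Theorem~\ref{LimitHolonomyOfm}: either $\rho_\infty(m)$ is parabolic or $\rho_\infty(m)=I$, and these are disjoint, so ``exactly one'' is automatic (and $\rho_t(m)\neq I$ for large $t$, also from Theorem~\ref{LimitHolonomyOfm}, is what makes the axis statements in case~(ii) meaningful). In both cases the first move is to pin down $C_t$ away from $m$. Applying Theorem~\ref{ConvergenceThickPart} with the multiloop $M=\{m\}$ to each component $F$ of $S\minus m$ (there are one or two of them), one finds that the conformally thick part of $C_t$ isotopic to $F$ converges --- genuinely when $\rho_\infty(m)$ is parabolic, and, when $\rho_\infty(m)=I$, only along a subsequence after choosing markings --- to a $\CP^1$-structure on the punctured surface $\hat F$ homeomorphic to $F$, each puncture coming from $m$. (If $\rho_\infty(m)=I$ one is necessarily in case~(\ref{iPositiveAreaThickPart}) of Theorem~\ref{ConvergenceThickPart}, since the zero-area case forces a complete hyperbolic limit and hence parabolic peripheral holonomy.) Together with Proposition~\ref{OneCylinderEnd} this records the neck around $m$ inside $E_t$ as a conformally long flat or expanding cylinder, and, via the holonomy theorem for punctured surfaces (Theorem~\ref{HolImmersion}), the convergence takes place in $\PPP(\hat F)$, so the developing map of $\hat F_t$ converges locally uniformly on $\til{\hat F}$.

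The second step is to reassemble these into a convergent developing map on $\til S\minus\phi^{-1}(N_m)$. Having fixed the convergent lift $\rho_t\to\rho_\infty$ of Proposition~\ref{LiftingHolonomyPath}, I would choose the path of markings $S\to C_t$ so that on one distinguished lift of one component of $S\minus N_m$ the normalized developing map converges; the remaining lifts are $\rho_t$-equivariant copies of these, so the convergence $\rho_t\to\rho_\infty$ in $\RRR$ propagates the convergence to all of $\til S\minus\phi^{-1}(N_m)$, producing a $\rho_\infty$-equivariant continuous $f_\infty$ with $f_t\to f_\infty$ locally uniformly. Choosing the marking is essentially choosing the twist along $m$; I would fix it by demanding that the seams between adjacent lifts agree with those of the developing maps of the $\hat F_t$. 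Since in the parabolic case the peripheral holonomies at $m$ are nontrivial, the convergence here is genuine, which is the first bullet of~(i); in the $\rho_\infty(m)=I$ case a subsequence is unavoidable, and this simultaneously yields the required convergence of $C_{t_i}\vert_{S\minus N_m}$.

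The third step is the analysis on $\phi^{-1}(N_m)$. When $\rho_\infty(m)=I$: by Lemma~\ref{FlatOrExpanding} the neck is a flat cylinder $A_t$ with $\Mod A_t\to\infty$ and $\sqrt2\int_m|\sqrt{q_t}|\to 2\pi n i$, and by Proposition~\ref{CylinderHonomyEstimate} (built on Lemma~\ref{DumasLemma}) the Epstein data, hence the developing map, along $A_t$ is modelled on an exponential map; so the two ends of $A_t$ develop into shrinking neighborhoods of the two fixed points of $\rho_t(m)$, i.e.\ of the ideal endpoints of $\Axis(\rho_t(m))$. Passing to a subsequence along which the compact object $\Axis(\rho_t(m))$ converges --- to a geodesic of $\H^3$, or to a single point of $\CP^1$ when its endpoints collide --- and using that the two boundary circles of $\til N_m$ are pushed arbitrarily deep into the two ends because $\Mod A_t\to\infty$, I would conclude that $f_\infty$ carries each boundary component of $\til N_m$ onto an ideal point of $\lim_i\Axis(\rho_{t_i}(m))$, which is~(ii). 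When $\rho_\infty(m)$ is parabolic and, moreover, the cusps of $C_\infty$ are horoball quotients, the neck is, on each side, an expanding cylinder of infinite modulus shrinking toward $m$ (Proposition~\ref{CuspClassification}\,(\ref{iShrinkingCylinder})), whose developing map is asymptotically the inclusion of a horoball based at the parabolic fixed point $p$ of $\rho_\infty(m)$ --- the same point for both sides, since $m$ bounds both. Hence $f_\infty$ extends continuously across $\phi^{-1}(N_m)$; tracking the exponential model through the limit, the extension collapses finitely many parallel copies of $m$ to $p$ (each copy but one arising from a $2\pi$-grafting along $m$, which is what accounts for the accumulated turning $\sqrt2\,\Im\int_m\sqrt{q_t}$) and is a local homeomorphism exactly on the complementary bands; this multiloop is the asserted $M$.

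I expect the principal obstacle to be the interplay of the marking/normalization bookkeeping of the second step with the precise neck analysis of the third step in the parabolic, horoball-quotient case: one must show that the limit of the exponential-type developing maps on the neck extends continuously, identify the exact local-homeomorphism locus, and count the parallel copies of $m$, all while the holonomy $\rho_t(m)$ is degenerating to a parabolic (or, in case~(ii), to $I$ with a possibly collapsing axis). Controlling that degeneration --- and ruling out that anything escapes as $\bdr\til N_m$ is pushed into the ends --- is where the Epstein-surface estimates of Proposition~\ref{CylinderHonomyEstimate} and the upper injectivity-radius bound (Theorem~\ref{UpperInjectivityRadiusBound}, already used inside Theorem~\ref{ConvergenceThickPart}) carry the weight.
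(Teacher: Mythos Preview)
Your high-level structure matches the paper's: the dichotomy from Theorem~\ref{LimitHolonomyOfm}, thick-part convergence from Theorem~\ref{ConvergenceThickPart}, then a neck analysis. The parabolic branch is essentially right; the paper's proof of the horoball-quotient subcase (Theorem~\ref{ParabolicDevConverges}) proceeds through explicit $Z$-shaped fundamental domains under $\exp\col\C\to\C^\ast$ (Lemma~\ref{ConvergenceOfUniformizableParabolicNeck}, Proposition~\ref{DevConvergencePararbolicNeck}), which is more than your sketch supplies but in the same spirit.

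There is a genuine gap in the case $\rho_\infty(m)=I$. You infer from the exponential model on the flat neck that the boundary circles of $\til N_m$ develop to the ideal points of $\lim_i\Axis\rho_{t_i}(m)$. But those circles sit at the interface with the \emph{thick} part, and what the thick-part convergence tells you is only that they limit to the cusp images of the limit structure $C_\infty$ on $S\minus m$. The Epstein/exponential estimates on the neck do not by themselves force those cusp images to lie on the limit axis. The paper closes this by contradiction (Proposition~\ref{PunctureAtAxis}, Claim~\ref{PuncturesAtAxes}): first one shows the Fenchel--Nielsen twist along $m$ diverges (Lemma~\ref{IDivergingTwist}) and $\rho_\infty|\pi_1 F$ is nontrivial for each component $F$ of $S\minus m$ (Proposition~\ref{NonelementaryComplements}, supported by Lemma~\ref{PunctureNotAtAxis} and Proposition~\ref{PunctureElementaryHolonomy}); then, if a cusp of $C_\infty$ lay off the limit axis, conjugating the holonomy of the component across $m$ by the diverging powers of $\rho_t(m)$ would push that holonomy to infinity in $\chi$, contradicting convergence of $\rho_t$. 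A further step (Lemma~\ref{DifferentEndpoints}) is needed to show the two boundary circles go to \emph{different} endpoints when $a_\infty$ is a genuine geodesic.

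A related omission in your step~2: deck transformations carry a lift of one component of $S\minus N_m$ only to lifts of the \emph{same} component, so when $m$ is separating, convergence on a lift of $F_1$ does not ``propagate by $\rho_t$-equivariance'' to lifts of $F_2$. Moreover, even on a single component $F$, if $\rho_\infty|\pi_1 F$ has a continuous stabilizer the developing map need not converge to a local homeomorphism under the fixed normalization $\rho_t\to\rho_\infty$; the paper shows (Proposition~\ref{DevConvergesSubsurfaces}, via Lemma~\ref{NormalizingAndredegenerating}) that it nonetheless converges to a continuous map, possibly constant or constant off finitely many bubble disks. That is why the theorem asserts only a continuous limit on $\til S\minus\phi^{-1}(N_m)$, and why a subsequence is genuinely required in this case.
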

 
For each $t \geq 0$, 
let $(\tau_t, L_t) \in \TT \times \ML$ be the Thurston parameterization of $C_t$, and let $\beta_t \col \H^2 \to \H^3$ be the $\rho_t$-equivariant pleated surface. 
In fact, $\beta_t$ converges a continuous map to $\H^3 \cup \CP^1$:
\begin{theorem}\Label{LimitOfPleatedSurface}
Suppose that $X_t$ is pinched along a loop $m$ on $S$. 
Let $N_m$ be a regular neighborhood of $m$ on $S$.
Then, by taking an appropriate path of markings $\iota_t\col S \to \tau_t$,  exactly one of the following two holds:
\begin{enumerate}
\item $\rho_\infi(m) \in \PSL_2\C$ is parabolic, and $\beta_t\col \til{S} \to \H^3$ converges to a $\rho_\infi$-equivariant continuous map  $\beta_\infi \col \til{S}  \to \H^3 \cup \CP^1$ uniformly on compact subsets as $t \to \infi$, such that  $\beta^{-1}_\infi(\CP^1)$ is a $\pi_1(S)$-invariant multicurve which is $\pi_1(S)$-equivariantly homotopic to the multicurve $\phi^{-1}(m)$.
\item $\rho_\infi(m) = I \in \PSL_2\C$, and for every diverging sequence $t_1 < t_2 < \dots$,  up to a subsequence, $\beta_{t_i}\col \til{S} \to \H^3$ converges to a $\rho_\infi$-equivariant continuous map $\beta_\infi \col \til{S} \to \H^3 \cup \CP^1$  as $i \to \infi$ and the axis $a_i$ of $\rho_{t_i}(m)$ converges to a point $\CP^1$  or a geodesic of $\H^3$ such that 
\begin{itemize}
\item if $\lim_{i \to \infi} a_i$ is a point on $\CP^1$, then $\beta_\infi^{-1}(\CP^1) = \phi^{-1}(m)$, and
\item if $\lim_{i \to \infi} a_i$  is a geodesic $a_\infi$ in $\H^3$, then $\beta_\infi$ takes each component of $\phi^{-1}(N_m)$ to its corresponding limit geodesic $a_\infi$ and each component of $\til{S} \minus \phi^{-1}(N_m)$  to either a pleated surface in $\H^3$ or a single point on $\CP^1$.
\end{itemize}
\end{enumerate}
\end{theorem}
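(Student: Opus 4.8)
The plan is to deduce Theorem~\ref{LimitOfPleatedSurface} from the convergence of the developing maps already obtained in Theorem~\ref{PinchedAlongLoop}, the convergence of the $\CP^1$-structures on the thick pieces (Theorem~\ref{ConvergenceThickPart}), and the dichotomy for $\rho_\infi(m)$ (Theorem~\ref{LimitHolonomyOfm}). Recall from \S\ref{sThurston} that the pleated surface is linked to the developing map through the collapsing map: writing $\ti\kappa_t\col\ti C_t\to\ti\tau_t$ for the lift of the marking-preserving collapsing map, one has $\beta_t(\ti\kappa_t(z))=\Psi_z f_t(z)$, where $\Psi_z$ is the orthogonal projection of the maximal ball $B_z\subset\ti C_t$ onto the hyperbolic plane bounded by $\partial_\infi B_z$. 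Choosing the markings $\ti C_t\cong\ti S$ as in Theorem~\ref{PinchedAlongLoop} and letting $\iota_t\col S\to\tau_t$ be the induced (collapsing) markings, the convergence of $\beta_t$ reduces to three things: (i) the convergence of $f_t$; (ii) control on the collapsing maps $\kappa_t$, whose geometry is governed by the lamination $L_t$ and which collapse only the grafting locus; and (iii) control on the maximal balls $B_z$, equivalently the ideal boundary planes $\partial_\infi B_z$, along the path.

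On a lift of each component $F$ of $S\setminus m$ I would argue as follows. By Theorem~\ref{ConvergenceThickPart}, $C_t|_F$ either converges to a complete hyperbolic structure on $\hat F$ (when the Euclidean area collapses) or accumulates to a bounded family of $\CP^1$-structures on $\hat F$, and it converges genuinely as soon as $\rho_\infi(m)\ne I$ (Theorem~\ref{HolImmersion}). In each regime the Thurston parameters $(\tau_t,L_t)$ stay in a compact set on $F$ and the maximal balls $B_z$, for $z$ in a fixed compact subset of a lift of the thick part, lie in a compact family; hence $\beta_t$ converges, uniformly on compacts of $\ti S\setminus\phi^{-1}(N_m)$, either to an honest pleated surface into $\H^3$, or — when $\rho_\infi(m)=I$ and the renormalization needed to make $\Axis(\rho_{t_i}(m))$ converge pushes $\rho_t|_{\pi_1F}$ out to infinity — to a single point of $\CP^1$, as in the statement.

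It remains to analyse the neck $\phi^{-1}(N_m)$. By Lemma~\ref{FlatOrExpanding}, for $t\gg 0$ a conformally long cylinder in $E_t$ homotopic to $m$ is either an expanding cylinder shrinking toward $m$ — in which case $\rho_\infi(m)$ is parabolic — or a flat cylinder with $\sqrt 2\int_m|\sqrt{q_t}|$ converging to $2\pi n\,i$ for a positive integer $n$. Feeding this into Proposition~\ref{CuspClassification} and Lemma~\ref{CuspNbhdInThurstonCoordinates}, the Thurston coordinates on $N_m$ are, up to small error, a hyperbolic collar of the short geodesic $m$ in $\tau_t$ together with leaves of $L_t$ spiralling into $m$ with total weight near $2\pi n$ (resp., in the expanding case, a hyperbolic collar degenerating to two cusps carrying the empty lamination near $m$). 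As $t\to\infi$ this collar degenerates; combining Proposition~\ref{CylinderHonomyEstimate} with the convergence of $f_t$, the maximal balls along $\ti m$ open up, so that for $z\in\phi^{-1}(m)$ the point $\Psi_z f_t(z)$ converges to a single point of $\CP^1$ — the parabolic fixed point of $\rho_\infi(m)$ in Case~(1), and the relevant ideal point of $\lim_i\Axis(\rho_{t_i}(m))$ in Case~(2) — while for $z\in\phi^{-1}(N_m)\setminus\phi^{-1}(m)$ the image stays in $\H^3$ (resp.\ on the limiting geodesic $a_\infi$). This pins down $\beta_\infi^{-1}(\CP^1)$ and glues the neck limit to the thick-part limits.

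Finally, the dichotomy $\rho_\infi(m)\in\{\text{parabolic},\,I\}$ is Theorem~\ref{LimitHolonomyOfm}. In the parabolic case every peripheral holonomy of the limiting thick $\CP^1$-structures is non-trivial, so Theorem~\ref{HolImmersion} upgrades accumulation to genuine convergence and $\beta_t$ converges without passing to a subsequence; in the case $\rho_\infi(m)=I$, since $\rho_{t_i}(m)\to I$ its axis is free to escape, and one must first extract a subsequence along which $\Axis(\rho_{t_i}(m))$ converges (to a point of $\CP^1$ or a geodesic) and choose the marking accordingly. I expect the main obstacle to be step~(iii): controlling the maximal balls $B_z$ uniformly along the path inside the long grafting/flat cylinder around $m$, and showing that the projections $\Psi_z$ degenerate \emph{exactly} on $\phi^{-1}(m)$ in the parabolic case — and producing the stated trichotomy for the images of the thick pieces under renormalization in the $\rho_\infi(m)=I$ case — all while keeping the change of marking uniform across the neck and both sides simultaneously.
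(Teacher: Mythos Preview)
Your overall architecture matches the paper's: the dichotomy is Theorem~\ref{LimitHolonomyOfm}, the thick-part convergence comes from Theorem~\ref{ConvergenceThickPart} (upgraded to genuine convergence via Theorem~\ref{HolImmersion} in the parabolic case), and the thick-part pleated-surface limit follows from convergence of Thurston parameters there. The paper's treatment of the parabolic neck (Theorem~\ref{LimitPleatedSurfaceParabolic}) is in fact simpler than your sketch: rather than analysing maximal balls, it just traps the $\beta_t$-image of the $\delta$-thin part of $\tau_t$ in a shrinking family of $\rho_t(m)$-invariant regions $D_t$ (equidistant tubes about $\Axis\rho_t(m)$, or horoballs) that converge to a horoball at the parabolic fixed point.

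Where your mechanism diverges from the paper is the neck analysis when $\rho_\infi(m)=I$. You propose to control the maximal balls $B_z$ on $\phi^{-1}(N_m)$ via ``the convergence of $f_t$'', but Theorem~\ref{PinchedAlongLoop} gives convergence of $f_t$ only on $\ti S\setminus\phi^{-1}(N_m)$; on the neck itself there is no convergent developing map to feed into $\Psi_z f_t(z)$. The paper sidesteps this by proving a separate comparison (Proposition~\ref{AlmostExponentialMap}) showing that, far from the zeros of $q_t$, the Epstein map $\Ep_t$ and the pleated-surface map $\beta_t\circ\ti\kappa_t$ are $C^1$-close. On the long flat cylinder $A_t$ the Epstein map is \emph{explicitly} modelled on $\Psi\circ\exp\col\C\to\H^3$ (no limiting argument needed), so Dumas's horizontal contraction (Lemma~\ref{DumasLemma}) together with the endpoint information from Theorem~\ref{TrivialNeck}(\ref{iBoundaryComponentAtEndpoints}) pins $\beta_{t_i}\circ\ti\kappa_{t_i}$ on the neck to an $\ep$-neighbourhood of $\Axis\rho_{t_i}(m)$ (Claim~\ref{NeckCloseToAxis}), giving the geodesic case; the point case again uses the horoball-trapping trick. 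The missing ingredient in your outline is precisely this Epstein--pleated-surface comparison on the flat cylinder, which replaces the unavailable convergence of $f_t$ there.
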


In order to prove Theorem \ref{LimitHolonomyOfm}, Theorem \ref{PinchedAlongLoop} and Theorem \ref{LimitOfPleatedSurface},
 we carefully observe the behavior of $C_t$, fixing the isometry type of $\rho_\infi(m)$.
In particular, for Theorem \ref{LimitHolonomyOfm},  we will show that, supposing, to the contrary, that $\rho_\infi(m)$ is hyperbolic  (\S\ref{sHyperbolicNeck}) or elliptic   (\S\ref{sEllipticNeck}), then $\rho_t$ cannot converge. 
The convergence when  $\rho_\infi(m) = I$ is given in \S\ref{mTrivialHolonomy} and the convergence when  $\rho_\infi(m)$ is parabolic is given in  \S \ref{sParabolicNeck}. 
 
 \section{$\CP^1$-structures on punctured surfaces with elementary holonomy}
\begin{lemma}\Label{PunctureAtDomain}
Let $F$ be a closed surface with finitely many punctures, such that the Euler characteristic of $F$ is negative.  
Let $C = (f, \rho)$ be a $\CP^1$-structure on $F$ such that
\begin{itemize}
\item  $\rho$ is an elementary representation, and 
\item for each puncture of $C$, its peripheral holonomy is non-hyperbolic (so that its developing image is a single point on $\CP^1$). 
\end{itemize}
 Let $\Lambda$ be the subset in $\CP^1$ of cardinality  $0, 1,$ or $2$ which $\Im \rho$ preserves as a set. 
Then,  there is at least one puncture of $C$ which maps to a point in the complement $\CP^1 \minus \Lambda \eqqcolon \Omega$ by $f$.
\end{lemma}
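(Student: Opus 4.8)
The argument is by contradiction. Suppose that every puncture of $C$ develops to a point of $\Lambda$; I will derive $\chi(F)\ge 0$. If $|\Lambda|=0$ then $\Omega=\CP^1$ and any puncture satisfies the conclusion, so assume $|\Lambda|\in\{1,2\}$ and, after conjugating, that $\Lambda\subset\{0,\infty\}$, so that $\Im\rho$ lies in the $\PSL(2,\C)$-stabilizer of $\Lambda$. I first make a reduction at the peripheral elements, using that the developing image of a cusp is a fixed point of its peripheral holonomy. If $|\Lambda|=2$ and some peripheral $\rho(\gamma)$ interchanges $0$ and $\infty$, then $\rho(\gamma)$ is an order-two elliptic whose two fixed points lie in $\CP^1\setminus\{0,\infty\}=\Omega$, so that cusp develops into $\Omega$ and we are done; and if $|\Lambda|=1$ and some peripheral $\rho(\gamma)$ is elliptic, its second fixed point lies in $\Omega$, so either that cusp develops into $\Omega$ (done) or to $\infty\in\Lambda$. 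Thus we may assume that $\Im\rho$ fixes each point of $\Lambda$ and that every peripheral element is trivial, elliptic developing to a point of $\Lambda$, or --- only when $|\Lambda|=1$ --- parabolic.

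Next I pass from $C$ to a flat, resp. complex-affine, structure off a finite set. The set $f^{-1}(\Lambda)\subset\til S$ is discrete (the preimage of a finite set under a local homeomorphism) and $\pi_1(F)$-invariant; by the cusp classification (Proposition~\ref{CuspClassification}), near a puncture developing to $a\in\Lambda$ the developing map carries a small cusp neighborhood into a small neighborhood of $a$ and omits $a$ itself, so $f^{-1}(\Lambda)$ avoids suitable cusp neighborhoods and descends to a finite set $Z\subset F$ disjoint from the set $P$ of punctures. On $F'=\bar F\setminus(P\cup Z)$ the map $f$ has image in $\Omega$ and holonomy in $\Stab(\Lambda)$; composing with $\log\col\C^\ast\to\C$ when $|\Lambda|=2$, or taking $\Omega=\C$ directly when $|\Lambda|=1$, this yields on $F'$ a translation structure, resp. a complex-affine structure, whose developing map is a local homeomorphism (hence with no interior cone singularity) and whose holonomy factors through $\rho$. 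In the first case the associated object is the meromorphic $1$-form $\eta=d(\log f)$ on $\bar F$; in the second it is $df$, a meromorphic section of $K_{\bar F}$ twisted by the topologically trivial flat line bundle $\mathcal L$ of the linear part of $\rho$. In either case this object is holomorphic and nowhere zero on $F'$.

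Now the count. For $|\Lambda|=2$, the explicit local models show that $\eta$ has a simple pole at each point of $P\cup Z$ --- at a point of $Z$ because $f$ is a degree-one local homeomorphism onto a neighborhood of $0$ or $\infty$, and at a puncture because its cusp develops to $0$ or $\infty$ with finite winding --- and no other zeros or poles, so $\deg(\operatorname{div}\eta)=-(|P|+|Z|)$. Since $\deg K_{\bar F}=-\chi(\bar F)$, this gives $\chi(\bar F)=|P|+|Z|\ge|P|\ge 1$; hence $\bar F$ is a sphere, $|P|+|Z|=2$, and $\chi(F)=\chi(\bar F)-|P|=2-|P|\ge 0$, contradicting $\chi(F)<0$. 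For $|\Lambda|=1$ one runs the analogous count for $df$: at each point of $P\cup Z$ its local model (at a point of $Z$ it is $1/u$; at a parabolic or trivial puncture it is $\log u$- or $u^{-d}$-type; at an elliptic puncture developing to $\infty$ it is $u^\alpha$ with $\alpha<0$) contributes a pole of order at least $1$, so the degree of $K_{\bar F}\otimes\mathcal L$ --- which is $-\chi(\bar F)$ because $\mathcal L$ has degree zero --- is at most $-|P|<0$; again $\chi(\bar F)\ge|P|\ge 1$, forcing $\bar F$ to be a sphere and $\chi(F)\ge 0$, the same contradiction.

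The hard part is the case $|\Lambda|=1$ with an elliptic puncture developing to $\Lambda$: there the local model $u^\alpha$ has $\alpha\notin\Z$, so $df$ is not literally meromorphic at that puncture and $\mathcal L$ must be replaced by a canonical extension across $P$ carrying fractional residues that have to be matched against the fractional local order of $df$. Carrying this twisted Riemann–Roch bookkeeping through --- or, alternatively, showing via Proposition~\ref{CuspClassification} and the Thurston parametrization that such an elliptic cusp is in fact forced to develop to its other fixed point, which lies in $\Omega$ and gives the conclusion at once --- together with the verification of the local models at the cusps, is where essentially all the work lies; the degree count itself is then immediate.
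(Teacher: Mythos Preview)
Your approach is the same as the paper's at heart: argue by contradiction, pass to the complement $F\setminus D$ of $f^{-1}(\Lambda)$, and use the $\Im\rho$-invariant geometry on $\Omega=\CP^1\setminus\Lambda$ to force $\chi\ge 0$. The paper's execution is considerably shorter. For $|\Lambda|=2$ it simply observes that $\Omega\cong\C^\ast$ carries a complete $\Im\rho$-invariant Euclidean metric (namely $|dz/z|^2$), whose pullback makes $F\setminus D$ a complete flat surface and hence forces $\chi(F\setminus D)\ge 0$; your form $\eta=d\log f$ is precisely the $1$-form underlying this metric, so your degree count is a repackaging of the same fact. For $|\Lambda|=1$ the paper argues analogously via the complex-affine structure on $F\setminus D$, asserting in one line (``preserves parallel lines'') that such a structure forces $\chi(F\setminus D)\ge 0$; this sidesteps the twisted-line-bundle bookkeeping you correctly identify as the delicate point, though the paper's justification here is itself terse. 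Note also that by working with the metric rather than the form, the paper needs no reduction to the pointwise stabilizer of $\Lambda$: the metric $|dz/z|^2$ is already invariant under the swap $z\mapsto 1/z$.

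On the gap you flag: the elliptic-cusp case for $|\Lambda|=1$ closes cleanly if you replace $df$ by the meromorphic affine connection $\nabla$ on $K_{\bar F}$ induced by the affine structure on $F\setminus D$. From the local models one reads off $\mathrm{res}_p(\nabla)\ge 1$ at every point of $P\cup Z$ --- the point-of-$Z$ model $f=1/u$ gives residue $2$, the parabolic model $f=\log u$ gives residue $1$, and the elliptic model $f=u^\alpha+z_0$ with $\alpha<0$ gives residue $1-\alpha>1$ --- and then the residue theorem $\sum_p\mathrm{res}_p(\nabla)=-\deg K_{\bar F}=\chi(\bar F)$ yields $\chi(\bar F)\ge |P|+|Z|$, hence $\chi(F)=\chi(\bar F)-|P|\ge 0$. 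No extension of $\mathcal L$ across the punctures is needed.
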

\begin{proof}
The discrete subset $f^{-1}(\Lambda)$ in $\ti{F}$ descends a finite subset $D$ on $F$. 

We can assume that $\Lambda$ is a non-empty set, since 
if $\Lambda$ is the empty set, then the assertion is obvious. 
First, suppose that the cardinality of $\Lambda$ is two, then $\Omega$ admits a complete Euclidean metric invariant under $\Im \rho$. 
Then, if all cups of $F$ map to $\Lambda$, $F \minus D$ admits a complete Euclidean metric, which is a contradiction against the Euler characteristic of $F$. 

Next, suppose that the cardinality of $\Lambda$ is one. 
Suppose, to the contrary,  that all cups of $C$ map to the point $\Lambda$.
Then $C \minus D$ has a complex affine structure. 

We claim that $C \minus D$ is {\it complete}, i.e. the developing map of $C \minus D$ is a diffeomorphism onto $\C$, when we normalize $\dev C$ so that $\{\infty \}$ corresponds to the punctures.
Suppose, to the contrary,  that $C \minus D$ is incomplete.
As the cardinality of $\Lambda$ is not two,  $\Im \rho$ does not preserve an incomplete point of $C \minus D$ in $\C$. 
Thus  $C$ admits Thurston's parametrization $(\tau, L)$ where $\tau$ is a finite area hyperbolic structure on $F$ and $L$ is a measured lamination on $\tau$ (Theorem \cite[Theorem 11.6]{Kulkani-Pinkall-94}, cf \cite[Theorem 3.1]{Baba-17}). 
Since $F$ is incomplete and the cardinality of $\Lambda$ is not two, there is a maximal ball $B$ of $\dev F$ such that its ideal point set contains two distinct points in $\C$. 
Then the holonomy of $F$ must contain a hyperbolic element in $\PSL_2\C$ whose fixed points are in $\C$, whose endpoints are close to those two points in $\C$. 
This leads to a contradiction to all cups mapping to the same point.  
Therefore, the $C \minus D$ is complete. 

Thus, the holonomy of $F$ consists of parabolic elements fixing $\infty$. 
Then the Euler characteristic of $F \minus D$ is zero, since $F\minus D$ admits Euclidean structure. 
 Therefore $F$ has a positive Euler characteristic, which is a contradiction. 
  \end{proof}
\begin{proposition}\Label{PunctureNotfixed}
Let $F$ be a closed surface with two punctures $p$ and $q$ such that the Euler characteristic of $F$ is negative. 
Suppose that $C = (f, \rho)$ is a $\CP^1$-structure on $F$ such that
\begin{itemize}
\item the holonomy of $C$ is  elementary,  and the stabilizer of $\Im \rho$ (in $\PSL_2\C$) is non-discrete, and
\item  the degrees of $f$ around the two punctures are the same. 
\end{itemize}
Then, no cusp of $F$ maps to the subset $\Lambda$ defined in \Cref{PunctureAtDomain}.
\end{proposition}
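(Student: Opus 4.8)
The plan is to argue by contradiction. Suppose one of the two punctures, say $q$, develops to a point $\xi_q \in \Lambda$; then by Lemma \ref{PunctureAtDomain} the other puncture $p$ develops to a point $\xi_p \in \Omega = \CP^1 \minus \Lambda$, and in particular $\Lambda \ne \emptyset$. The first step is to use the hypothesis that the stabilizer of $\Im\rho$ is non-discrete in order to normalize the situation: an elementary subgroup of $\PSL(2,\C)$ with non-discrete stabilizer is conjugate into one of the two abelian groups $G_1 = \{\, z \mapsto z+b : b \in \C \,\}$ of translations or $G_2 = \{\, z \mapsto a z : a \in \C^\ast \,\}$ of homotheties, with $\Im\rho \subseteq G_1$ precisely when $|\Lambda| = 1$ (and then $\Lambda = \{\infty\}$ after conjugation) and $\Im\rho \subseteq G_2$ precisely when $|\Lambda| = 2$ (and then $\Lambda = \{0,\infty\}$); the remaining possibility $|\Lambda| = 0$ makes the statement vacuous. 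Establishing this amounts to checking that an elementary group with non-discrete stabilizer cannot contain an element interchanging the two points of $\Lambda$, nor a nontrivial homothety together with a nontrivial translation, and that a subgroup of $\SO(3)$ with non-discrete stabilizer lies in a maximal torus.

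Next I would deduce that both peripheral holonomies vanish. Since $\rho(\ell_p)$ fixes $\xi_p \in \Omega$ and lies in $G_1$ or $G_2$, while no nontrivial element of $G_1$ or $G_2$ fixes a point of $\Omega$, we get $\rho(\ell_p) = I$. As $\Im\rho$ is abelian, applying $\rho$ to the relation $\prod_i [a_i,b_i]\,\ell_p\,\ell_q = 1$ of $\pi_1(F)$ yields $\rho(\ell_q) = \rho(\ell_p)^{-1} = I$. Hence $f$ extends to a holomorphic map $\hat f \colon \hat F \to \CP^1$, where $\hat F$ is the closed surface underlying $F$, of genus $g \geq 1$ (recall that $F$ has negative Euler characteristic); the degree $d_p$ (resp.\ $d_q$) of $f$ around $p$ (resp.\ $q$) is the local degree of $\hat f$ there, and the hypothesis gives $d_p = d_q$. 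Moreover, since $\Lambda$ is finite and $f$ is an immersion, $f^{-1}(\Lambda)$ descends to a finite subset $D \subset F$ disjoint from the punctures, exactly as in the proof of Lemma \ref{PunctureAtDomain}.

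The heart of the proof is then a degree count on $\hat F$. Suppose first $|\Lambda| = 2$. The logarithmic derivative $df/f$ is invariant under $\Im\rho \subseteq G_2$, so it descends to a meromorphic $1$-form $\omega$ on $\hat F$; since $\hat f$ is a local homeomorphism on $F \minus D$ with image in $\Omega$ there, and has local degrees $d_p$, $d_q$ at $p$, $q$, the only zeros and poles of $\omega$ are a zero of order $d_p - 1$ at $p$, a simple pole at $q$ of residue $\pm d_q$, and a simple pole of residue $\pm 1$ at each point of $D$. Comparing degrees of divisors gives $(d_p - 1) - 1 - |D| = 2g - 2$, so $d_p = 2g + |D|$; the residue theorem gives that $d_q$ equals the absolute value of the sum of the residues of $\omega$ along $D$, whence $d_q \leq |D|$; therefore $d_q \leq |D| < 2g + |D| = d_p$, contradicting $d_p = d_q$. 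Suppose now $|\Lambda| = 1$. Then $df$ is invariant under $\Im\rho \subseteq G_1$, so it descends to a meromorphic $1$-form $\omega$ on $\hat F$ with a zero of order $d_p - 1$ at $p$, a pole of order $d_q + 1$ at $q$, a double pole at each point of $D$, and nothing else; comparing degrees of divisors gives $(d_p - 1) - (d_q + 1) - 2|D| = 2g - 2$, i.e.\ $d_p - d_q = 2g + 2|D| \geq 2$, again contradicting $d_p = d_q$. In either case we reach a contradiction, so no puncture of $F$ develops to $\Lambda$.

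I expect the main obstacle to be the first step: precisely identifying which elementary subgroups of $\PSL(2,\C)$ have non-discrete stabilizer and reconciling this classification with the definition of $\Lambda$ in Lemma \ref{PunctureAtDomain}, in particular handling the borderline configurations (for instance when $\Im\rho$ fixes a point of $\H^3$, or is trivial). Once $\Im\rho$ has been confined to $G_1$ or $G_2$, the vanishing of the peripheral holonomies and the divisor-degree and residue bookkeeping for $\omega$ are routine.
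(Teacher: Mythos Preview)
Your argument is correct and takes a genuinely different route from the paper's proof. The paper works entirely in Thurston parameters $(\tau,L)$: assuming $q$ develops into $\Lambda$, it observes (using the non-discrete stabilizer hypothesis) that all lifts of $q$ develop to the \emph{same} point of $\Lambda$, so every leaf of $L$ emanating from $q$ must terminate at $p$; since the degrees agree this forces every leaf of $L$ from $p$ to terminate at $q$ as well, whence each complementary region of $L$ is a bigon and $f$ becomes a covering of a twice-punctured sphere --- impossible since $F$ is not a cylinder. Your proof instead classifies $\Im\rho$ as a subgroup of the translation group $G_1$ or the homothety group $G_2$, deduces algebraically that both peripheral holonomies vanish, and then runs a divisor/residue count for the invariant meromorphic $1$-form $df$ or $df/f$ on the closed surface $\hat F$. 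The paper's approach is more geometric and ties directly into the pleated-surface machinery used elsewhere; your approach is more elementary and self-contained, needing no Thurston parametrisation at all, and in fact yields the sharper quantitative conclusion $d_p - d_q \ge 2g$ (resp.\ $d_p \ge d_q + 2g$ after the residue bound) rather than merely $d_p \ne d_q$.

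One small point to tidy: the sentence ``Hence $f$ extends to a holomorphic map $\hat f\colon \hat F\to\CP^1$'' is not literally true when $\rho$ is nontrivial, since $f$ lives on $\tilde F$ and is only $\rho$-equivariant. What you actually need, and what your argument uses, is purely local: because $\rho(\ell_p)=\rho(\ell_q)=I$, the developing map descends to a single-valued holomorphic map on a punctured neighbourhood of each cusp and extends across it with a well-defined local degree $d_p$, $d_q$. Globally it is only the invariant form $\omega$ (namely $df$ or $df/f$) that descends to $F$ and extends meromorphically to $\hat F$; your divisor and residue computations for $\omega$ then go through exactly as written. With that rephrasing the proof is complete.
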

\begin{proof}
By Lemma \ref{PunctureAtDomain}, we can assume that  $p$ does not develop to  $\Lambda$. 
As the Euler characteristic of $F$ is negative, we let $C \cong (\tau, L)$ be the Thurston parameters of $C$\,; then by the assumption of the holonomy, $p$ and $q$ correspond to cusps of $\tau$. 
Then, as the degrees at  $p$ and $q$ agree, the total weights of leaves of $L$ around the punctures are the same. 

Suppose, to the contrary, that a puncture $q$ develops to a point of $\Lambda$.
Then $f$ takes all lifts of $q$ to the same point $r$ of $\Lambda$:
 Otherwise, as $\Lambda$ has cardinality two, $\Im \rho$ contains hyperbolic elements, and it also contains an elliptic element exchanging the points of $\Lambda$; then the stabilizer of $\Im \rho$ must be discrete against the hypothesis.
 
 Let $\ell$ be a leaf of $L$ initiating from $q$. 
 Then its lift $\ti\ell$ to the universal cover of $\tau$ maps, by the bending map $\beta \col \H^2 \to \H^3$,  to a geodesic in $\H^3$ initiating from $q$.  
 As all lifts of $p$ map to $r$, the other endpoint of $\beta(\ti\ell)$ is the image of a lift of $q$.
Therefore all leaves of $L$ initiating from  $q$ must end at $p$.
 For every complementary region $R$ of $\tau \minus L$, letting $\til{R}$ be the universal cover of $R$ (in $\til{\tau} = \H^2$), at most, one ideal point of $\ti{R}$ maps to $q$ by the pleated surface. 

Moreover, every leaf of $L$ initiating from $p$ must end at $q$, since the total weights of $L$ around $p$ and $q$ agree. 
Let $L_{p, q}$ be the sublamination of $L$ consisting of the isolated leaves of $L$ connecting $p$ and $q$. 
This implies that each component $\sigma$ of  $\tau \minus L_{p, q}$ has a negative Euler characteristic. 
Since no leaves of $L \minus L_{p, q}$ has an endpoint on the boundary of  $\tau \minus L_{p, q}$, 
 the restriction of $\rho$ to $\pi_1(\sigma)$ is non-elementary, which is a contradiction.  
\end{proof}   
\section{Parabolic limit}\Label{sParabolicNeck}
In this section, we assume that $\rho_\infi(m)$ is parabolic, and analyze the limit of $C_t$ as $t \to \infty$ in terms of its bending map and developing map. 
First, by Theorem \ref{ConvergenceThickPart}, for each component $F$ of $S \minus m$, by taking an appropriate base point $b_t$ in the thick part of $C_t$ homotopic to $F$,  $(C_t, b_t )$ converges to a $\CP^1$-structure $F_\infi$  on a compact surface with one or two punctures, such that $F_\infi$ is homeomorphic to $F$.
Let $C_\infi$ be the disjoint union of all such geometric limits $F_\infi$ over all thick parts. 
Then $C_\infi$ is a $\CP^1$-structure on a closed surface with two cusps homeomorphic to $S \minus m$. 
Note that $C_\infi$ is not connected if and only if $m$ is separating.
Then the limit holonomy has the following algebraic property.
\begin{lemma}\Label{ParabolicNonelementary}
Suppose that $\rho_\infi(m)$ is parabolic. 
Then, for each component $F$ of $S \minus m$, $\rho_\infi(F)$ is non-elementary.
\end{lemma}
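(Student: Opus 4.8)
The plan is to argue by contradiction and transport the statement to the limiting $\CP^1$-structure on the punctured surface, where Lemma~\ref{PunctureAtDomain} is available. So suppose $\rho_\infi(\pi_1(F))$ is elementary. Recall, as recorded at the beginning of this section via Theorem~\ref{ConvergenceThickPart}, that the thick part of $C_t$ isotopic to $F$ converges, after a suitable choice of base points, to a $\CP^1$-structure $F_\infi$ on the punctured surface $\hat{F}$ homeomorphic to $F$, with holonomy a conjugate of $\rho_\infi|\pi_1(F)$; since being elementary is a conjugacy invariant, $F_\infi$ has elementary holonomy. First I would verify that $F_\infi$ meets the hypotheses of Lemma~\ref{PunctureAtDomain}. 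An Euler-characteristic count, carried out separately when $m$ is separating and when it is non-separating (using that $S$ has genus at least two and $m$ is essential, so that no complementary component is a disc or an annulus), gives $\chi(\hat{F}) < 0$. Moreover every puncture of $\hat{F}$ arises by pinching a copy of $m$, so its peripheral loop is freely homotopic in $S$ to $m^{\pm 1}$, and its holonomy under $\rho_\infi$ is conjugate to the parabolic $\rho_\infi(m)^{\pm 1}$---in particular non-hyperbolic---so the developing image of each puncture of $F_\infi$ is a single point of $\CP^1$.

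With these hypotheses in place, Lemma~\ref{PunctureAtDomain} produces a puncture of $F_\infi$ whose developing image lies in $\Omega = \CP^1 \minus \Lambda$, where $\Lambda$ is the invariant subset, of cardinality at most two, of $\Im(\rho_\infi|\pi_1(F))$. The next step is to pin down $\Lambda$: since $\rho_\infi(\pi_1(F))$ is elementary and contains the nontrivial parabolic $\rho_\infi(m)$, it fixes a unique point $p_0 \in \CP^1$, fixes no point of $\H^3$, and preserves no two-point subset of $\CP^1$ (as $\rho_\infi(m)^2$ is again a nontrivial parabolic); hence $\Lambda = \{p_0\}$, and every nontrivial parabolic in $\rho_\infi(\pi_1(F))$ fixes $p_0$. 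But the developing image of any puncture of $F_\infi$ is a point fixed by that puncture's peripheral holonomy---a nontrivial parabolic element of $\rho_\infi(\pi_1(F))$---so it equals $p_0 \in \Lambda$. This contradicts the conclusion of Lemma~\ref{PunctureAtDomain}, and therefore $\rho_\infi(\pi_1(F))$ is non-elementary.

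The step I expect to carry the most weight is the appeal to Theorem~\ref{ConvergenceThickPart}, together with the holonomy theorem (Theorem~\ref{HolImmersion}): one needs to know that the thick part of $C_t$ isotopic to $F$ genuinely converges to an honest $\CP^1$-structure on $\hat{F}$ whose holonomy is $\rho_\infi|\pi_1(F)$ and whose punctures are poles of order at most two, so that Lemma~\ref{PunctureAtDomain} applies verbatim. It is also worth observing that the alternative in Theorem~\ref{ConvergenceThickPart} in which the thick part has area tending to zero, so that $F_\infi$ would be a complete hyperbolic surface, cannot arise under the standing assumption, because a complete hyperbolic structure on $\hat{F}$ with $\chi(\hat{F}) < 0$ has non-elementary holonomy. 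Everything else is routine bookkeeping.
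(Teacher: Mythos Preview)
Your argument is correct, but it takes a different route from the paper's own proof. The paper works directly in Thurston parameters: writing $F_\infi \cong (\sigma,\nu)$, it picks a bi-infinite simple geodesic $\ell$ in $\sigma$ running between cusps and disjoint from (or a leaf of) $\nu$, and observes that the pleated surface $\beta$ sends $\ell$ to a genuine geodesic in $\H^3$. The two ideal endpoints of $\beta(\ell)$ are then parabolic fixed points of two \emph{distinct} peripheral elements of $\pi_1(F)$; since these fixed points are distinct, the two parabolics do not commute and already generate a non-elementary subgroup of $\rho_\infi(\pi_1(F))$.

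Your approach instead packages the geometric content into Lemma~\ref{PunctureAtDomain}: assuming $\rho_\infi(\pi_1(F))$ elementary forces $\Lambda$ to be the single parabolic fixed point $p_0$, while every cusp of $F_\infi$ develops to $p_0$, contradicting the conclusion of that lemma. This is a clean reduction and reuses machinery already set up in \S\ref{sDegeneration}; it has the advantage of requiring nothing about the Thurston lamination of $F_\infi$. The paper's argument, on the other hand, is constructive---it exhibits the non-commuting parabolics explicitly---and is self-contained once the existence of $F_\infi$ and its pleated surface is granted. Your remark about the $\liminf \Area = 0$ branch of Theorem~\ref{ConvergenceThickPart} is well taken and correctly disposed of.
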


\begin{proof}
Since $S$ is a closed oriented surface of genus at least two, each component of $S \minus m$ is also of hyperbolic type.
Thus let $(\sigma, \nu)$ be the Thurston parameterization of $F_\infi$, where $\sigma$ is a complete closed hyperbolic with one or two cusps homeomorphic to $F$ and $\nu$ is a measured lamination on $\sigma$. 
Clearly, the cusps of $F_\infi$ correspond to the cusps of $\sigma$.
Then there is a bi-infinite simple geodesic $\ell$ properly embedded in $\sigma$ such that $\ell$ is a leaf of $\nu$ or disjoint from $\nu$ (note that each endpoint of $\ell$ is at a cusp of $\sigma$).

Let $\beta \col \H^2 \to \H^3$ be the bending map given by $(\sigma, \nu)$, such that $\beta$ is equivariant via $\rho_\infi | \pi_1(F)$.
Let $\ti{\ell}$ be a lift of $\ell$ to the universal cover $\H^2$ of $\sigma$. 
Then the endpoints of $\ti\ell$ are parabolic fixed points in the ideal boundary of $\H^2$.  
Let $\gam_1, \gam_2 \in \pi_1(F)$ be the peripheral elements fixing the endpoints.
As $\ell$ does not cross $\nu$, its image $\beta(\ti\ell)$ is a geodesic in $\H^3$. 
Moreover, as $\beta$ is $\rho_\infty$-equivariant,  $\rho_\infty(\gam_1)$ and  $\rho_\infty(\gam_2)$ are parabolic elements fixing the different endpoints of  $\beta(\ti\ell)$. 
Therefore $\rho_\infty(\gam_1)$ and  $\rho_\infty(\gam_2)$ are non-commuting parabolic elements in $\PSL_2\C$, and they generate a non-elementary subgroup of $\PSL_2\C$.
\end{proof}

Proposition 
\ref{CuspClassification} implies that the developing map extends to cups with parabolic holonomy. 
\begin{proposition}\Label{ExtendingDevToPunctures}
Let $C = (f, \rho)$ be a $\CP^1$-structure on a closed surface with finitely many punctures, denoted by $F$, such that the holonomy around each puncture is parabolic. 
Then the developing map $f\col\til{F} \to \CP^1$ extends continuously to the lift of cups so that they map to their corresponding parabolic fixed points.  
\end{proposition}
\begin{proof}
Set $C \cong (\tau, L)$ in Thurston's parameters, where $\tau$ is a hyperbolic surface homeomorphic to $F$ and $L$ is a measured lamination on $\tau$.
For each cusp $c$ of $C$, 
by Proposition 
\ref{CuspClassification},  as the holonomy $\rho$ around $c$ is parabolic element in $\PSL_2\C$,  $c$ corresponds to a cusp of $\tau$ and the total weight of leaves of $L$ ending at the cusp is either $0$ or a positive multiple of $2\pi$.
Let  $\beta\col \H^2 \to \H^3$ be the bending map, and let $\ti{L}$ be the $\pi_1(F)$-invariant measured lamination on $\H^2$ by pulling back $L$ by the universal covering map $\H^2 \to \tau$. 
Let $r$ be a geodesic ray in the universal cover $\H^2$ ending at a parabolic fixed point $p$ of a peripheral element of $\pi_1(S)$. 
Then $r$ eventually does not cross the $\ti{L}$. 
Thus the curve $\beta(r)$ is eventually a geodesic ray in $\H^3$ ending at $p$. 
By the correspondence between the developing map and the pleated surface, the assertion follows. 
\end{proof}
Recall that $\phi\col \ti{S} \to S$ denotes the universal covering map.
Then the above lemmas imply a good convergence of the developing map of $C_t$ away from $m$.
\begin{theorem}\Label{DevelopingMapWithParabolicCusp}
Suppose $\rho_\infi(m)$ is parabolic.
Then there is a regular neighborhood $N$ of $m$ such that 
$f_t  | \ti{S} \minus \phi^{-1} (N)$ converges to a $\rho_\infi$-equivariant continuous map $f_\infi\col \ti{S} \minus \phi^{-1} (N) \to \CP^1$ uniformly on compact subsets, such that the developing image of each boundary component of $\ti{S} \minus \phi^{-1} (N)$ maps to its corresponding parabolic fixed point. 
\end{theorem}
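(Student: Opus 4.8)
The plan is to derive this from the thick‑part convergence already established, the holonomy theorem for punctured surfaces (Theorem \ref{HolImmersion}), and the continuous extension of developing maps over parabolic cusps recorded in the Lemma immediately above.

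First, since $\rho_\infi(m)$ is parabolic we have $\rho_\infi(m)\neq I$, so Proposition \ref{ConvergenceThick} applies: after a suitable choice of markings, for every $\ep>0$ the $\ep$-thick part $C_t^\ep$ converges uniformly to the $\ep$-thick part of a $\CP^1$-structure $C_\infi\in\PP(S\minus m)$. Each peripheral element of $C_\infi$ is freely homotopic to $m$, hence maps under $\rho_\infi$ to a parabolic element; thus all cusps of $C_\infi$ have parabolic holonomy, and by Lemma \ref{ParabolicNonelementary} the restriction of $\rho_\infi$ to each component of $S\minus m$ is non-elementary, so the corresponding developing pair has trivial stabilizer. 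By the Lemma preceding this theorem, the developing map $f_\infi$ of $C_\infi$ extends continuously over the lifts of the punctures, sending each to the parabolic fixed point of the corresponding peripheral element.

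Next I would promote thick-part convergence to convergence of developing pairs. For $t\gg0$ let $\hat C_t\in\PPP(S\minus m)$ be the completion of $C_t\mid(S\minus m)$ furnished by Proposition \ref{OneCylinderEnd} (when the thick part has vanishing area, use instead the uniformization of Theorem \ref{ConvergenceThickPart}(1)); its cusps are at most poles of order two. Because $\rho_\infi(m)$ is parabolic, for large $t$ the element of $\reallywidehat{\PSL(2,\C)}$ consisting of $\rho_t(m)$ together with the endpoint of $\Axis(\rho_t(m))$ toward which the cusp of $\hat C_t$ develops lies in the Hausdorff part near $(\rho_\infi(m),\Lambda_\infi)$, where $\Lambda_\infi$ is the parabolic fixed point, and by Lemma \ref{FlatOrExpanding} and Proposition \ref{CuspClassification} it converges to $(\rho_\infi(m),\Lambda_\infi)$. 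Since moreover $\rho_t\mid\pi_1(S\minus m)$ converges to $\rho_\infi\mid\pi_1(S\minus m)$ in $\RRR(S\minus m)$, the path-lifting part of Theorem \ref{HolImmersion} yields a lift of this path to $\PPP(S\minus m)$ converging to $(f_\infi,\rho_\infi)$; the local-homeomorphism (local uniqueness) part, together with non-elementarity, identifies this lift with $\hat C_t$ for $t\gg0$. Hence the developing map of $\hat C_t$ converges to $f_\infi$ uniformly on compact subsets of $\widetilde{S\minus m}$, and it agrees with the developing map $f_t$ of $C_t$ over the thick part. Now fix a regular neighborhood $N$ of $m$, choose a sequence $\ep_t\downarrow 0$ slowly and markings $\iota_t\col S\to C_t$ so that $\iota_t(\partial N)$ is the frontier of the $\ep_t$-thin part of $C_t$ homotopic to $m$; collapsing $\partial N$ to punctures identifies $S\minus\operatorname{int}N$ with $S\minus m$ and exhibits $\ti S\minus\phi^{-1}(N)$ as $\widetilde{S\minus m}$ with $\phi^{-1}(\partial N)$ corresponding to the lifts of the punctures. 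Under this identification $f_t\mid(\ti S\minus\phi^{-1}(N))$ is the restriction of the developing map of $\hat C_t$, hence converges uniformly on compacts to $f_\infi$, and on $\phi^{-1}(\partial N)$ the limit is $f_\infi$ evaluated at the punctures, namely the corresponding parabolic fixed points, by the Lemma. The uniformity up to $\phi^{-1}(\partial N)$ comes from the cusp model of Proposition \ref{CuspClassification} (a horoball quotient or a $2\pi$-grafting of one) and the long-cylinder estimates of Proposition \ref{CylinderHonomyEstimate}: along the thin part the developing image stays within $\delta(\ep_t)$ of the fixed point of $\rho_t(m)$ with $\delta(\ep_t)\to0$, and that fixed point converges to $\Lambda_\infi$.

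The step I expect to be the main obstacle is this last one: one must follow the developing map of $C_t$ out to the locus $\iota_t(\partial N)$, which is itself receding into the cusp as $t\to\infi$, and show it converges uniformly to a single parabolic point rather than spiralling or wandering. This is exactly where the precise cusp classification (Proposition \ref{CuspClassification}), the long-cylinder holonomy estimates (Proposition \ref{CylinderHonomyEstimate}), and the continuous extension of the preceding Lemma are needed, and where the bookkeeping of the $t$-dependent markings against the fixed limiting structure $C_\infi$ must be carried out carefully.
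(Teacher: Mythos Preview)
Your proposal is correct and follows essentially the same route as the paper: thick-part convergence (Proposition \ref{ConvergenceThick}), the extension Lemma for parabolic cusps, non-elementarity from Lemma \ref{ParabolicNonelementary}, and the holonomy theorem for punctured surfaces (Theorem \ref{HolImmersion}) to upgrade to convergence of developing pairs. The paper's own argument is extremely terse---it merely records the thick-part convergence and cites Theorem \ref{HolImmersion}---whereas you have spelled out the mechanism (the completions $\hat C_t$, convergence in $\reallywidehat{\PSL(2,\C)}$, and the cusp estimates controlling the boundary behavior) that the paper leaves implicit; your concern about uniformity out to $\phi^{-1}(\partial N)$ is legitimate but is indeed handled by the ingredients you name.
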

\begin{proof}
By \Cref{ConvergenceThickPart} (\ref{iPositiveAreaThickPart}),
the restriction $C_t$ to $S \minus N$ converges to $C_\infty$ as $t \to \infty$ by taking an appropriate isotopy of $S$ uniformly.
Since $\rho_\infty(F)$ is non-elementary (\Cref{ParabolicNonelementary}), the restriction of $f_t$ to $\ti{S} \minus \phi^{-1}(N)$ converges to the developing map of $C_\infty$ uniformly on compact subsets. 
By  \Cref{ExtendingDevToPunctures}, each boundary component $\ti{S} \minus \phi^{-1} (N)$ converges to its corresponding parabolic fixed point uniformly on compact subsets. 
\end{proof}

In the rest of this section,  we show the convergence of the developing map of $C_t$ on the entire surface.  
First we analyze the holonomy of $C_t$ along $m$. 
\begin{proposition}\Label{AlmostParabolicCuspsAreHyperbolicOrElliptic}
For sufficiently large $t > 0$,
$\rho_t(m)$ is not the identity element of $\PSL_2\C$. 
Moreover, if the cusp neighborhoods of $C_\infty$ are horodisk quotients. 
Then, for sufficiently large $t > 0$,
$\rho_t(m)$ is hyperbolic.
\end{proposition}

\begin{proof}
Set $C_t \cong (\tau_t, L_t) \in \TT  \times \ML$ in Thurston's parameters for $t > 0$. 
Similarly set $C_\infty \cong (\tau_\infty, L_\infty)$, where $\tau_\infty$ is a complete hyperbolic structure on $F \minus m$ with finite volume, and $L_\infty$ is a measured geodesic lamination on $\tau_\infty$.

Let $m_t$ denote the geodesic representative of $m$ on $\tau_t$. 
Then, the length of $m_t$ on $\tau_t$ converges to $0$ as $t \to \infty$ since $\rho_\infty(m)$ is parabolic.

Suppose, to the contrary, that there is a sequence $t_1 < t_2 < \dots$ diverging to $\infty$ such that $\rho_{t_i}(m)$ is {\it not} hyperbolic. 
Then a leaf $\ell_i$ of $L_{t_i}$ intersects the geodesic loop $m_{t_i}$ for each $i = 1, 2 \dots$. 
Pick a point $p_i$ on $m_{t_i} \cap L_{t_i}$. 
Pick a lift $\ti{m}_t$ of $m_t$ to the universal cover $\ti\tau_i \cong \H^2$ which is preserved by an element $\gam_m$ in $\pi_1(S)$ whose free homotopy class is $m$. 
Then, for each $i$, let $p_{i, j} ~ (j \in \Z)$ be the lifts of $p_{t_i}$ on $\ti{m}_{t_i}$ in $\H^2$ indexed linearly, so that $p_{i, j} = \gam_m^j \cdot p_{i, 0}$.

For $t > 0$,  
let $\beta_t\col \H^2 \to \H^3$ be the $\rho_t$-equivariant bending map induced by $(\tau_t, L_t)$. 
Then, since $\{ p_{i, j} \}_{j \in \Z}$ is an orbit of the infinite cyclic group generated by $\gam_m$,  its image $\{\beta_{t_i}(p_{i, j})\}_{j \in \Z}$ is an orbit of the cyclic group generated by $\rho_{t_i} (\gam_m) \in \PSL_2\C$.
Then, since $\rho_{t_i}(m)$ is elliptic or parabolic (possibly the identity), by basic hyperbolic geometry, the points $\beta_{t_i}(p_{i, j})$  is contained in a totally geodesic hyperbolic plane $H_{t_i}$ in $\H^3$.
(In comparison, if $\rho_{t_i}(m)$ is hyperbolic and its screw rotation angle is not a multiple of $\pi$, then most of  its orbits do not lie in a totally geodesic plane.) 

Note that $H_{t_i}$ is uniquely determined by the choice of $p_i$ and the lift $\ti{m}_i$, unless $\rho_{t_i}(m)$ is the identity. 

If $\rho_{t_i}$ is the identity element in $\PSL_2\C$, then, letting $\ti\ell_i$ be the leaf of $\ti{L}_{t_i}$ intersecting $\ti{m}_i$ in $p_{i, j}$,  let $H_{t_i}$ be the hyperbolic plane orthogonal to the geodesic $\beta_{t_i}(\ti\ell_i)$ in the point $\beta_{t_i}(p_{i, j})$ for some $j \in \Z$.
Clearly $H_{t_i}$ is independent on the choice of $j \in \Z$, as $\rho_{t_i}(m)$ is the identity.

The infimum of $\angle_{\tau_{t_i}}(m_{t_i}, L_{t_i}) \geq 0$ over $i = 1, 2, \dots$ is positive, since $\angle_{\tau_i}(m_{t_i}, L_{t_i})$ is close to zero, then $\rho_{t_i}$ must be hyperbolic (\Cref{SmallIntersectionAngleImpliesHyperbolic}). 
Then, there is $\del > 0$, such that,  if $i$ is large enough, then, if a leaf $\ell$ of $\ti{L}_{t_i}$ intersects $\ti{m}_{t_i}$, then the angle between the geodesic $\beta_{t_i} (\ell)$ and  the hyperbolic plane $H_{t_i}$ is at least $\del$. 
Indeed,  otherwise, $\lim_{i \to \infty}\angle_{\tau_i}(m_{t_i}, L_{t_i}) = 0$. 

Recall that $\tau_\infty$ is a complete hyperbolic surface of finite volume homeomorphic to $S \minus m$, so that each boundary component of $S \minus m$ corresponds to a cusp of $\tau_\infty$. 
Pick a loop $\alpha$ on $S$ such that
\begin{enumerate}
\item $\alpha$ essentially intersects $m$ in a single point if $m$ is non-separating, and in two points if $m$ is separating, 
\item each segment $\alpha \minus m$ descends to a geodesic $g$ on $\tau_\infty$ with endpoints at cusps, and $g$ does not crossing $L_\infty$. \Label{iNotCrossingLamination}
\end{enumerate}

Below we show that the translation length of $\rho_{t_i}(\alpha)$ diverges to $\infty$, which contradicts the convergence of $\rho_t$. 
We assume that $m$ is non-separating, and one can similarly prove the case when $m$ is separating. 

For each $i = 1, 2 \dots$,  let $\alpha_i$ be the piecewise geodesic loop  on $\tau_{t_i}$  to homotopic to $\alpha$, such that 
\begin{itemize}
\item $\alpha_i$ is a union of two geodesic segments, 
\item one geodesic segment $s_i $ of $\alpha_i$ has its interior contained in $\tau_{t_i} \minus m_{t_i}$, and at each endpoint,  $s_i$ meets $m_{t_i}$ orthogonally, and
\item the other geodesic segment $u_i$ contained in $m_{t_i}$. 
\end{itemize}

Since $\tau_{t_i}$ is pinched along $m$ as $i \to \infty$,  the length of $s_i$ goes to $\infty$. 
Let $\ti\alpha_i$ be a lift of $\alpha_i$ to $\H^2$ which is a simple piecewise geodesic, and it is a bilipschitz curve. 

For each $i = 1,2, \dots$, let $\ti{u}_i$ be a lift of $u_i$ to a geodesic segment of $\ti\alpha_i$.
Then, let $\ti{m}_i$ be the lift of $m_{t_i}$ to $\H^2$ which contains $\ti{u}_i$, and let $\gam_{\ti{u}_i} \in \pi_1(S)$ be the element preserving $\ti{m}_{t_i}$.
For every $\ep > 0$ if $i$ is large, the $\beta_{t_i}(\ti{u}_i)$ is contained in the $\ep$-neighborhood  the $\rho_{t_i}(\gam_{\ti{u}_i})$-invariant hyperbolic plane $H_{\ti{u}_i}$ above, since $\length_{\tau_{t_i}}{m_{t_i}}$ goes to $0$. 

Let $\ti{s}_i$ be a lift of $s_i$ to a segment of $\ti\alpha_i$. 
Then, the length of $\ti{s}_i$ goes to $\infty$ as $i \to \infty$. 
For every $\ep > 0$, by (\ref{iNotCrossingLamination}), the transversal measure of $s_i$ by $L_{t_i}$ in the $\ep$-thick part of $\tau_{t_i}$ limits to $0$ as $i \to \infty$. 
In addition, there is $r > 0$, such that, the intersection angle of $L_{t_i}$ and $s_i$ in the $r$-thin part of $\tau_{\tau_{t_i}}$  goes to zero as $i \to \infty$.
Therefore, for every $\ep  > 0$, if $i$ is sufficiently large, then the restriction of $\beta_{t_i}$ to $\ti{s}_i$ is a $(1- \ep, 1 + \ep)$-bilipschitz embedding. 
Let $g_i$ be the bi-infinite geodesic in $\H^3$ passing through the endpoints of $\beta_{t_i} (\ti{s}_i)$. 

Let $u_{i, 1}, u_{i, 2}$ be the lifts of $u_i$ to the geodesic segments of $\ti\alpha_{t_i}$ which are adjacent to $\ti{s}_i$. 
 Then let $H_{i, 1}$ and $H_{i, 2}$ be the hyperbolic planes corresponding to  $u_{i, 1}$ and $u_{i, 2}$, respectively. 
Then, $g_i$ transversally intersects $H_{i, 1}$ and $H_{i, 2}$ at angle at least $\del/2$. 
Moreover, for every $\ep > 0$, if $i$ is large enough, then those intersection points are $\ep$-close to the endpoints of $\beta_{t_i} (\ti{s}_i)$.
Therefore, the distance between the hyperbolic planes $H_{i, 1}$ and $H_{i, 2}$ goes to $\infty$ as $i \to \infty$ (\Cref{fLongTranslationAlongPiecewiseGeodesic}). 
Therefore the translation length of $\rho_t(\alpha)$ goes to $\infty$ as desired. 
This contradicts the hypothesis. 
 Therefore $\rho_t(m)$ must be parabolic for sufficiently large $t > 0$.
\definecolor{lavendermist}{rgb}{0.9, 0.9, 0.98}
\begin{figure}
\begin{overpic}[scale=.2
] {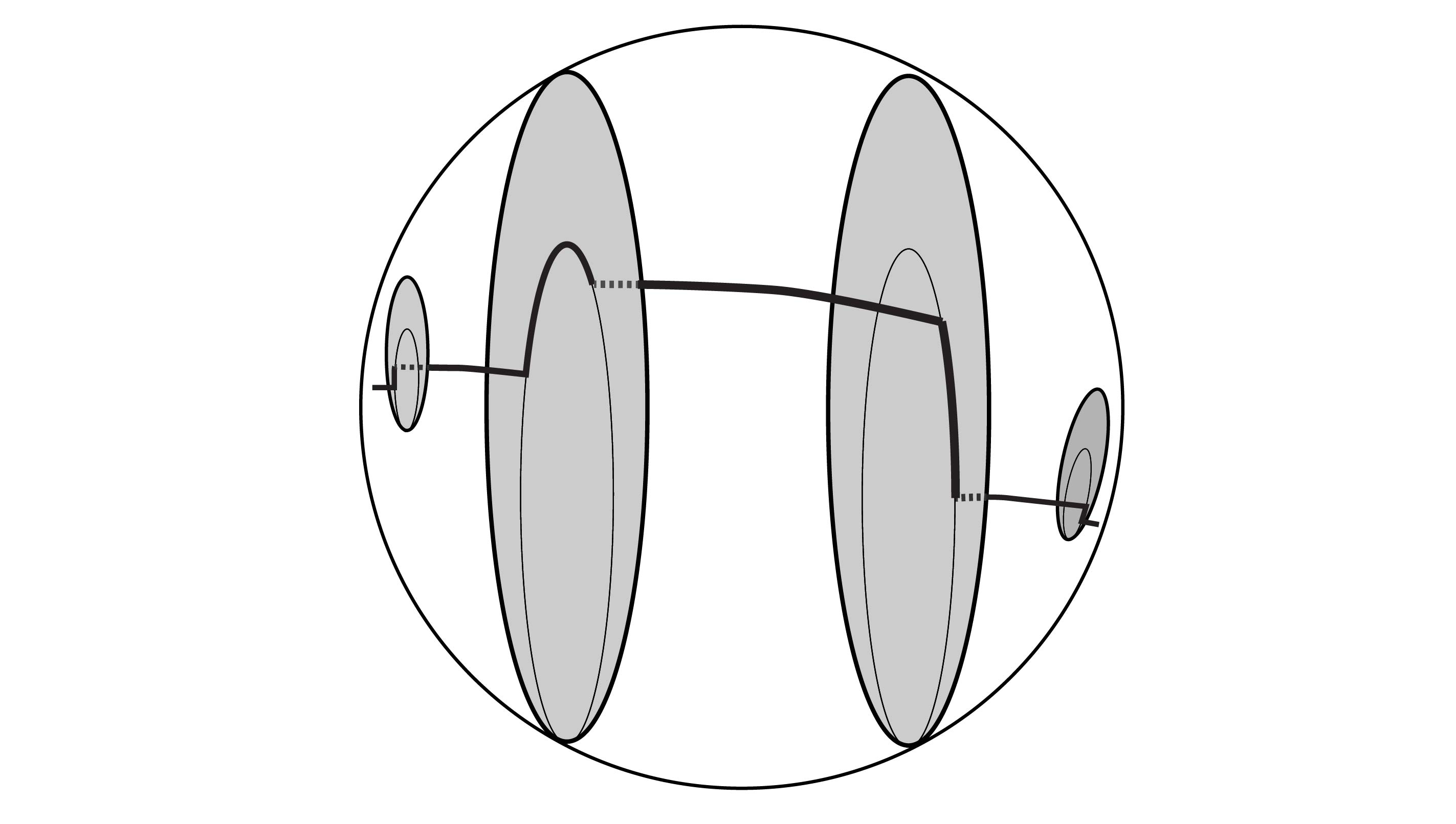} 
   \put(45.5 , 39){$\beta_{t_i}(\ti{s}_i)$}
  \put(35 , 20){\contour{lavendermist}{$H_{i, 1}$}  }
  \put(60 , 20){\contour{lavendermist}{$H_{i, 2}$}  }
      \end{overpic}
\caption{The quasi-geodesic $\beta_{t_i}(\ti\alpha_i)$ preserved by the hyperbolic element $\rho_{t_i}(\alpha)$.}\Label{fLongTranslationAlongPiecewiseGeodesic}
\end{figure}
\end{proof}

Let  $\phi\col \ti{S} \to S$ be the universal covering map. 
Let $\kap_t \col C_t \to \tau_t$ denote the collapsing map of $C_t$, and $\ti\kap_t \col \ti{C}_t \to \H^2$ denote its lift from the collapsing of the universal cover  (\S \ref{sThurston}).
We next show the convergence of the bending map. 
\begin{theorem}\Label{LimitPleatedSurfaceParabolic}
Suppose that $\rho_\infi(m)$ is parabolic. 
Then, up to an isotopy of $S$ in $t$, 
$\beta_t \cc \ti\kap_t \col \ti{S} \to \H^3$ converges to a $\rho_\infi$-equivariant continuous map $\alpha\col \til{S} \to  \H^3 \cup \CP^1$ such that 
\begin{itemize}
\item $\alpha^{-1}(\CP^1)$ is a $\pi_1(S)$-invariant multicurve on $\ti{S}$ isotopic to $\phi^{-1}(m)$ though $\pi_1(S)$-invariant multicurves, and
\item for each component $P$ of $\ti{S} \minus \phi^{-1}(m)$, the restriction $\beta_t \cc \ti\kap_t | P$ converges to the pleated surface for the component of $C_\infi$ corresponding to $P$.
\end{itemize}
\end{theorem}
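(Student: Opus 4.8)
The plan is to construct the limit $\alpha$ separately over the components of $\til{S}\setminus\phi^{-1}(m)$ and then to glue. Recall from the beginning of this section that, since $\rho_\infi(m)$ is parabolic (so $\rho_\infi(m)\neq I$), by Theorem~\ref{ConvergenceThickPart} the markings $S\to C_t$ may be chosen, uniquely up to isotopy of $S$ in $t$, so that for each component $F$ of $S\setminus m$ the restriction $C_t|F$ converges (no subsequence needed) to a $\CP^1$-structure $F_\infi$ on $\h{F}$, and $C_\infi=\bigsqcup_F F_\infi$. By Lemma~\ref{ParabolicNonelementary} and Proposition~\ref{CuspClassification}, every cusp of $F_\infi$ has parabolic peripheral holonomy; moreover, if $\ti{m}\subset\phi^{-1}(m)$ is a lift bounding a lift $P\cong\ti{F}$ of $F$, the parabolic fixed point of the cusp of $F_\infi$ facing $\ti{m}$ is the fixed point $p_{\ti{m}}\in\CP^1$ of $\rho_\infi(\mu)$, where $\mu\in\pi_1(S)$ generates the stabilizer of $\ti{m}$ in $\pi_1(S)$; this is well defined independently of the side, because both facing peripheral elements lie in $\langle\mu\rangle$ and a parabolic element has a single fixed point. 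Writing $(\sigma_F,\nu_F)$ for the Thurston parameters of $F_\infi$, with collapsing map $\ti{\kappa}_\infi^F$ and pleated surface $\beta_\infi^F$, continuity of Thurston's parametrization (also at the cusps) gives $(\tau_t,L_t)|F\to(\sigma_F,\nu_F)$.

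Next I would push this to the pleated surfaces. The collapsing map and the pleated surface depend continuously on the Thurston data, so $\beta_t\circ\ti{\kappa}_t|P$ converges, uniformly on compact subsets of $P$, to $\beta_\infi^F\circ\ti{\kappa}_\infi^F$ --- this is the asserted convergence to ``the pleated surface of the component of $C_\infi$ corresponding to $P$'', and it may also be read off from the identity $\beta_t\circ\ti{\kappa}_t(z)=\Psi_z^t f_t(z)$ together with the convergence of $f_t$ on $\til{S}\setminus\phi^{-1}(N_m)$ (Theorem~\ref{DevelopingMapWithParabolicCusp}) and of the maximal balls. Because $F_\infi$ has only parabolic cusps, $\beta_\infi^F\circ\ti{\kappa}_\infi^F$ extends continuously over the ideal cusp points, i.e.\ over $\ol{P}=P\cup(\partial P\cap\phi^{-1}(m))$, sending each boundary line $\ti{m}$ of $\ol{P}$ to $p_{\ti{m}}$ (a pleated surface of a cusped surface converges to the parabolic fixed point along the cusp; cf.\ Proposition~\ref{CuspClassification} and Lemma~\ref{CuspNbhdInThurstonCoordinates}). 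Declaring $\alpha$ on each $\ol{P}$ to be this extension, the two definitions of $\alpha$ on a boundary line $\ti{m}$ shared by two such closures agree (both are the constant $p_{\ti{m}}$), so $\alpha\col\til{S}\to\H^3\cup\CP^1$ is well defined and continuous on the locally finite closed cover $\{\ol{P}\}$, and it is $\rho_\infi$-equivariant since $\{\ol{P}\}$ is permuted by $\pi_1(S)$ and each piece was built equivariantly. As every interior point of a $\ol{P}$ lands in $\H^3$ while every point of $\phi^{-1}(m)$ lands in $\CP^1$, we get $\alpha^{-1}(\CP^1)=\phi^{-1}(m)$, the full $\phi$-preimage of $m$.

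Finally I would upgrade this to convergence of $\beta_t\circ\ti{\kappa}_t$ on all of $\til{S}$ (uniformly on compacts), the only remaining point being a uniform estimate over a fixed neighbourhood $\phi^{-1}(N_m)$ of the pinching locus. By Lemma~\ref{FlatOrExpanding} and Proposition~\ref{CuspClassification}, for $t\gg0$ the annulus of $C_t$ homotopic to $m$ is a flat and/or expanding cylinder of modulus tending to $\infi$, and in Thurston parameters the neck is a thin collar of $\tau_t$ about the short geodesic $m_t$ whose lamination is a (possibly empty) union of leaves isotopic to $m_t$; hence $\ti{\kappa}_t$ carries the core line $\ti{m}$ of $\ti{N}_m$ into a uniformly bounded neighbourhood of a lift $\ti{m}_t$ of $m_t$, and $\beta_t(\ti{m}_t)$ is a $\rho_t(m)$-invariant curve lying within bounded distance in $\H^3$ of $\Axis(\rho_t(m))$, its total turning being controlled by the weight of $L_t$ crossing $m_t$. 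Since $\rho_t(m)\to\rho_\infi(m)$, which is parabolic, $\Axis(\rho_t(m))\to\{p_{\ti{m}}\}$ in $\H^3\cup\CP^1$, hence so does every uniform neighbourhood of it, and therefore $\beta_t(\ti{\kappa}_t(\ti{m}))\to p_{\ti{m}}=\alpha(\ti{m})$. As the collar has width bounded independently of $t$ and $\beta_t$ is $1$-Lipschitz on it away from the bending leaves, the $\beta_t\circ\ti{\kappa}_t$-image of $\phi^{-1}(N_m)$ is squeezed, as $t\to\infi$, between the convergent images of its boundary curves and the shrinking geodesics $\Axis(\rho_t(m))$; a diagonal argument then gives $\beta_t\circ\ti{\kappa}_t\to\alpha$ uniformly on compacts, which also yields the second bullet of the statement. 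I expect this last step --- ruling out oscillation of the pleated surface inside the collapsing collar as $t\to\infi$ --- to be the main obstacle, and it should be handled by the bounded-geometry description of the collar (no zeros of $q_t$, bending locus isotopic to $m_t$), which confines $\beta_t$ over the collar to a controlled bent annulus based on $\Axis(\rho_t(m))$.
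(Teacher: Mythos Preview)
Your overall strategy matches the paper's: establish convergence on each component $P$ of $\ti{S}\setminus\phi^{-1}(m)$ via the thick-part convergence $C_t|F\to F_\infty$ (the paper simply cites Theorem~\ref{DevelopingMapWithParabolicCusp} for this), define $\alpha$ by gluing the limiting pleated surfaces at the common parabolic fixed points, and then control the neck. The construction of $\alpha$ and the gluing along $\phi^{-1}(m)$ are fine.

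The difference is in the neck estimate, where your argument carries unjustified claims while the paper's is shorter and cleaner. You assert that in Thurston parameters the lamination in the collar is ``a (possibly empty) union of leaves isotopic to $m_t$'', and that $\beta_t(\ti{m}_t)$ stays within \emph{bounded} distance of $\Axis(\rho_t(m))$ with total turning controlled by $L_t(m_t)$. Neither is established: leaves of $L_t$ may well cross the collar transversally, and you have no a~priori bound on $L_t(m_t)$, so the ``bounded distance from the axis'' and the subsequent ``squeezing'' are not yet justified. The paper bypasses all of this with a displacement argument: since $\beta_t$ is $\rho_t$-equivariant and $1$-Lipschitz, the $\delta$-thin part $\ti{N}_t^\delta\subset\H^2$ (the set of points displaced less than $\delta$ by the deck transformation of $m$) maps into $\{x\in\H^3:d(x,\rho_t(m)\,x)<\delta\}$, which is a tube about $\Axis(\rho_t(m))$ (hyperbolic/elliptic case) or a horoball (parabolic case). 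These are the sets $D_t$, and as $\rho_t(m)\to\rho_\infty(m)$ they converge to a fixed horoball about the parabolic fixed point, hence in $\H^3\cup\CP^1$ to $p_{\ti m}$. That single observation replaces your entire last paragraph and needs no information about $L_t$ in the collar.
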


\begin{proof}
The second assertion holds immediately from 
Theorem \ref{DevelopingMapWithParabolicCusp}.

The axis $a_t$ of $\rho_t(m)$ converges to the parabolic fixed point of $\rho_\infi(m)$.
By \Cref{AlmostParabolicCuspsAreHyperbolicOrElliptic},  $\rho_t(m)$ is a hyperbolic element for sufficiently large $t > 0$.
Let $D \sub \H^3$ be a horoball centered at the parabolic fixed point of $\rho_\infi(m)$. 
Then we pick a continuous path of $\rho_t(m)$-invariant subsets $D_t$ in $\H^3$ bounded by the surface equidistant from the axis of $\rho_t(m)$ so that 
$D_t$ converges to $D$ as $t \to \infi$.

Pick a sufficiently small $\del > 0$. 
For sufficiently large $t > 0$, 
let $N_t^\del$ be the component of the $\del$-thin part of $\tau_t$ homotopic to $m$.
Let $\ti{N}_t^\del$  be the lift of $N_t^\del$ to the universal cover $\ti\tau \cong \H^2$. 
If $\del > 0$ is sufficiently small, by the convergence of $\rho_t$,  the $\beta_t$-image of $\ti{N}_t^\del$ is eventually contained in $D_t$. 
This implies the first assertion.
\end{proof}

Next, we prove that cusp neighborhoods of the limit surface are isomorphic to cusp neighborhoods of a hyperbolic surface. 
\begin{proposition}\Label{HoroballQuotientCusp}
Suppose that $\rho_\infty(m)$ is parabolic.
The cusps of $C_\infty$ must be horodisk quotients.
\end{proposition}
\proof
Suppose, to the contrary, that the cusp neighborhoods of $C_\infty$ are {\it not} horodisk quotients. 

Let $C_t \cong (\tau_t, L_t)$ denote the Thurston's parameters of $C_t$. 
Then, as $\rho_\infty(m)$ is parabolic,  $L_t (m)$ converges to a non-negative integral multiple $2\pi n$ of $2\pi$. 
As the limit cusp neighborhoods are assumed to be  {\it not} horodisk quotients, $n$ is a positive integer. 
Similarly, let $C_\infty \cong (\tau_\infty, L_\infty)$ denote Thurston parameters of $C_\infty$.
Thus the $L_\infty$-transversal measure of each peripheral loop of $C_\infty$ is $2\pi n$.

For sufficiently large $t > 0$, $\rho_t(m)$ is not the identity; let $a_t$ be its axis (\Cref{axis}).
Pick $\del > 0$ less than the two-dimensional Margulis constant.
Let $N_t$ be the $\del$-thin part of $\tau_t$ homotopic to $m$. 
Let $\ti{N}_t$ be the lift of $N_t$ to the universal cover $\H^2$. 
If $\del > 0$ is sufficiently small, for all $t$ large enough,  each component of  $N_t \cap L_t$ is a geodesic segment connecting one boundary component of $N_t$ to the other.
Since the transversal measure of each peripheral loop of  $L_t$ is close to $2 \pi n > 0$ 
Thus, for $t \gg 0$, pick a fundamental domain $F_t$ in $\ti{N}_t$ bounded by two leaves of $\ti{L}_t$ such that a component $F_{t, 1}$ of $F_t \minus \ti{m}_t$ converges to a fundamental domain of the bending map $\beta_\infty\col \H^2 \to \H^3 \cup \CP^1$ (\Cref{LimitPleatedSurfaceParabolic}) near a cusp of $\tau_\infty$. 

 Let $\ell_t$ be a leaf of $\ti{L}_\infty$ bounding $F_t$, so that, for each component $r_t$ of  $\ell_t \minus \ti{m}_t$, the restriction of $\beta_t$ converges to a bi-infinite geodesic in $\H^3$ as $i \to \infty$.
Clearly the length of $\ell_t \cap \ti{N}_t$ goes to $\infty$, and the length of each segment of $\ell_t \cap \ti{N}_t \minus \ti{m}_t$ goes to $\infty$ as $t \to \infty$.

 Let $F_{t, 2}$ be the other component of $F_t \minus \ti{m}_t$.  
Then there is an element $\gam_t$ of $G_t$ such that the restriction of $\beta_t$ to $\gam_t F_{t, 2}$ converges to the fundamental domain of the other cusp of $C_\infty$.

We first show that if $\rho_t(m)$ is hyperbolic, it must be  ``almost elliptic'' for sufficiently large $t > 0$.
\begin{claim}\Label{AlmotElliptic}
Suppose that there is a sequence $t_1 < t_2 < \dots $ diverging to $\infty$, such that $\rho_{t_i}(m)$ is hyperbolic for each $i = 1, 2, \dots$. 
Then, the complex translation of $\rho_{t_i} (m)$ goes to zero from the imaginary direction as $i \to \infty$. 
In other words,  the sequence $\tr^2 \rho_{t_i}(m) \in \C$ converges to $4$ tangentially to  the real ray $\{ x \in \R \mid x \leq 4\}$.
\end{claim}
\begin{proof}
Suppose to the contrary that there is a sequence $t_1 < t_2 < \dots$ such that $\rho_{t_i}(m)$ is hyperbolic and the complex translation length converges to $0$ from the non-imaginary direction. 
As $\rho_{t_i}(m)$ is hyperbolic, the axis is a geodesic and it converges to the parabolic fixed point of $\rho_\infty(m)$.
Pick a point $p_i$ on $\beta_{t_i} (F_{t_i})$ closest to  $a_{t_i}$ in $\H^3$.
Let $R_i$ be the set of points in $\H^3$ whose distance from $a_{t_i}$ is at most the distance from $p_i$ to the axis $a_{t_i}$.

For each $i$, let $G_i$ be a one-dimensional Lie subgroup of $\PSL_2\C$ containing $\rho_{t_i}(m)$ such that the infinite cyclic group $\langle \rho_{t_i}(m) \rangle$ is asymptotically dense in $G_i$ as $i \to \infty$ w.r.t. the path metric on $G_i$ induced by the invariant metric on $\PSL_2\C$.
Since the complex translation length of $\rho_{t_i}$ converges to $0$ from a non-imaginary direction,  $G_t$ converges to a one-dimensional subgroup in $\PSL_2\C$ consisting of only hyperbolic elements except the identity. 
For every $i$, let $c_i$ be the $G_i$-invariant smooth curve in $\H^3$ passing $p_i$.
Then $c_i$ spirals on the boundary of $R_i$ limiting to the endpoints of $a_i$. (See \Cref{fAssymptoticallyAlmostElliptic}.)

The $\beta_{t_i}$-image of the leaf $\ell_i$ is a geodesic in $\H^3$ tangent to $R_i$ passing $p_i$. 
Then,  moreover, the geodesic $\beta_{t_i}(\ell_i)$ and the curve $c_i$ are asymptotically tangent to each other at $p_i$ as $i \to \infty$, because of the convergence of the bending map $\beta_{t_i}$ and the holonomy $\rho_{t_i}(m)$ as $i \to \infty$. 

Let $s_{i, 1}$ be the geodesic segment $\ell_i \cap F_{i, 1}$, so that $\beta_{t_i} (s_{i, 1})$ converges to a geodesic ray limiting to the fixed point of $\rho_\infty(m)$.
 Let $q_{1, i}$ be the endpoint of $s_{i, 1}$ that is on the boundary of $\ti{N}_i$, and  let $q_{2, i}$ be the other endpoint of $\ell_i \cap \ti{N}_i$.
Then  $\beta_{t_i} (q_{i, 1})$ converges to a point in $\H^3$ as $i \to \infty$. 
Then $\beta_{t_i} (\gam_i q_{i, 2})$ also converges to a point on $\H^3$. 

Since the length of each segment of $\ell_i \cap \ti{N}_i \minus \ti{m}_i$ goes to infinity, and $\beta_i(\ell_i)$ is asymptotically tangent to the curve $c_i$, therefore the distance between $\beta_{t_i} (q_{i, 1})$ and $\beta_{t_i} (q_{i, 2})$ diverges to $\infty$ as $i \to \infty$.  
This is a contradiction against the convergence of the bending map $\beta_{t_i}$ as $i \to \infty$.
\begin{figure}
\begin{overpic}[scale=.15
] {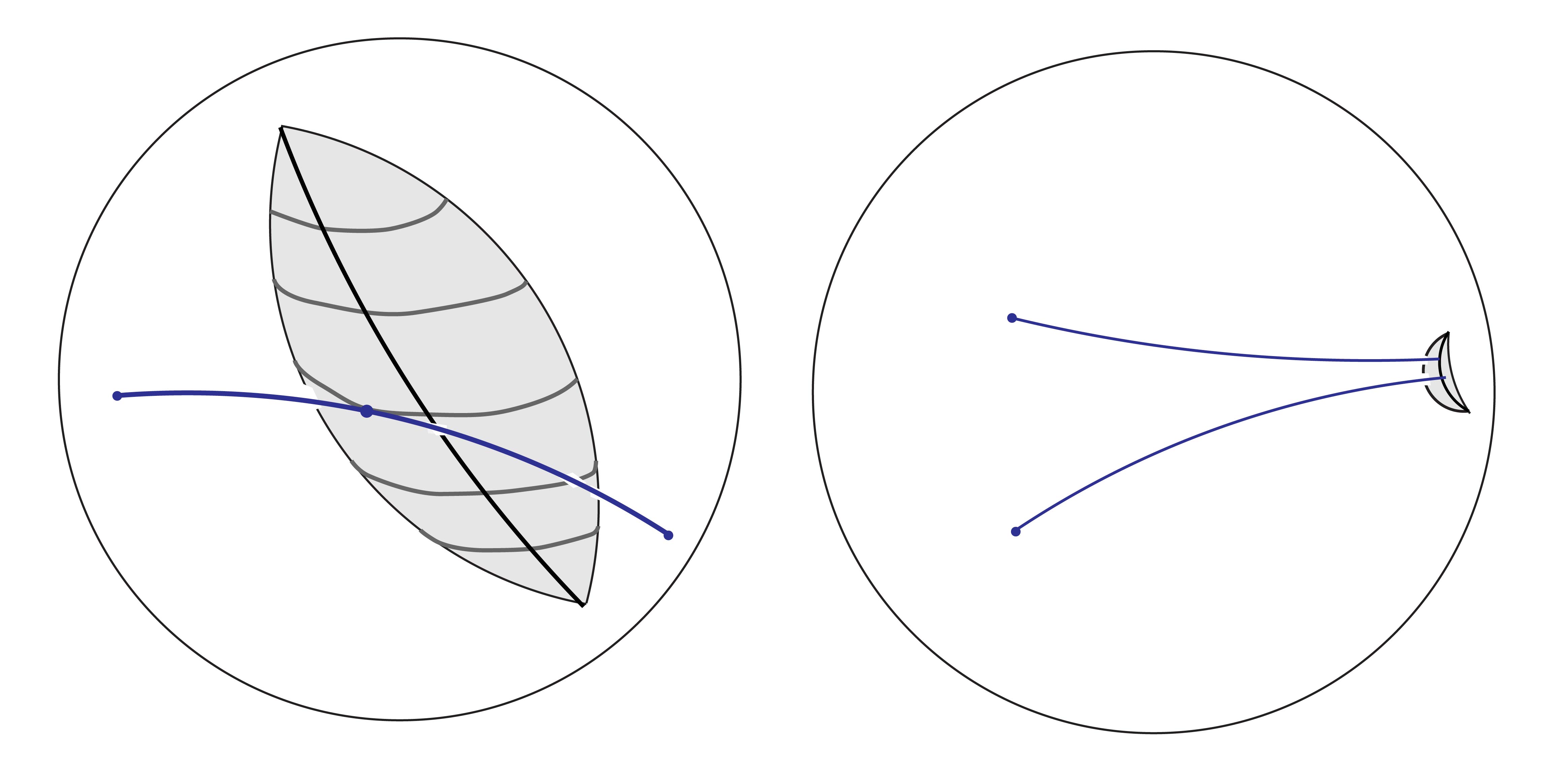} 
 \put(28 , 29 ){\contour{white}{\textcolor{darkgray}{$c_i$}}}  
 \put(22 , 19 ){\contour{white}{\textcolor{Blue}{$p_i$}} }
  \put(29 , 23 ){\tc{darkgray}{$R_i$}}
  \put(6 , 25 ){\contour{white}{\textcolor{Blue}{$\beta_{t_i} (q_{i, 1})$}}}
    \put(42 , 10 ){\contour{white}{\textcolor{Blue}{$\beta_{t_i} (q_{i, 2})$}}} 
  \put(22 , 31 ){\contour{white}{$a_{t_i}$}}
     \put(65, 12 ){\contour{white}{\textcolor{Blue}{$\beta_{t_i} (q_{i, 1})$}}}
    \put(60 , 30 ){\contour{white}{\textcolor{Blue}{$\beta_{t_i} (\gam_i q_{i, 2})$}} }   
      \end{overpic}
\caption{The left figure is the normalization of the right figure so that $p_i$ is at the center}\label{fAssymptoticallyAlmostElliptic}
\end{figure}
\end{proof}

Next we show the convergence of $\rho_t$ forces the convergence of twisting parameter along $m$. 
\begin{claim}\Label{TwistingConverges}
The Fenchel-Nielsen twisting parameter of $\tau_t$ along $m$ must converge (in $\R$) as $t \to \infty$.
\end{claim}

\begin{proof}
First,  for each non-identity element of $\PSL_2\C$,  we describe an associated foliation.
For a hyperbolic isometry or an elliptic isometry of $\H^3$, the hyperbolic planes containing its axis give a foliation on $\H^3$ minus the axis. 
For a parabolic isometry $\alpha \in \PSL_2\C$, pick a hyperbolic plane $H$ in $\H^3$ invariant under $\alpha$, which contains the parabolic fixed point.
Then there is a foliation of $\H^3$ by hyperbolic planes orthogonal to $H$ and containing the parabolic fixed point; this foliation is independent of the choice of $H$. 
For sufficiently large $t > 0$, as  $\rho_t(m)$ is not the identity (\Cref{AlmostParabolicCuspsAreHyperbolicOrElliptic}), let $\FFF_t$ denote such a foliation for $\rho_t(m)$.

Let $\ti{m}$ be a lift of $m$ to the universal cover $\ti{S}$.
Let $P_1, P_2$ be the connected components of $\ti{S} \minus \phi^{-1}(m)$ adjacent along $\ti{m}$.
For each $i = 1, 2$,   given a point $x_i$ in $P_i$ near $\ti{m}$, let $v_{\infty, i}$ be the tangent vector  at the point $\beta_\infty \circ \ti\kap_\infty(x_i)$ in $\H^3$ orthogonal its support hyperbolic plane of $x_i$ in the normal direction (\S \ref{sThurston}).
    Since the $L_t$-transversal measure along $m_t$ converges to $2\pi n > 0$,  we can pick $x_i$ so that
  $v_{\infty, i}$ is tangent to the foliation $\FFF_\infty$.
Similarly, for each $t \gg 0$, pick a point $x_{t, i}$ in $P_i$ such that, letting $v_{t, i}$ be the tangent vector of $\beta_t \circ \ti\kap_t$ at $x_{t, i}$ orthogonal to its support plane, $v_{t, i}$ is tangent to $\FFF_t$ and  $v_{t, i}$ converges to $v_{\infty, i}$ as $t \to \infty$. (See \Cref{fAlmostTwoPiTwisting}.)
 
 Let $\LLL_t$ be the circular measured lamination on $C_t$ which descends to the measured lamination of Thurston's parametrization by the collapsing map. 
 Let $e_t$ be the minimal transversal measure, given by $\LLL_t$,  of arcs connecting $x_1$ to $\rho_t(\gam_t) x_{t, 2}$.
Note that, since the isometry $\rho_t(m)$ preserves the foliation $\FFF_t$, the tangent vector  $\rho_t(\gam_t) v_{t, 2}$ at $\rho_t(\gam_t) x_{t, 2}$ is also tangent to $\FFF_t$.
 By \Cref{AlmotElliptic}, $\rho_t(m)$ is either parabolic, elliptic, or ``almost elliptic'' for $t \gg 0$.
 Therefore, for every $\ep > 0$, if $\del > 0$ is sufficiently small, then, for $t \gg 0$,  the transversal measure $e_t$ is $\ep$-close to a multiple of $2\pi$.
Thus the twisting parameter along $m$ converges modulo $2\pi$. 
By continuity, the twisting parameter of $\tau_t$ along $m$ must converge as $t \to \infty$. 

 \begin{figure}
\begin{overpic}[scale=.15
] {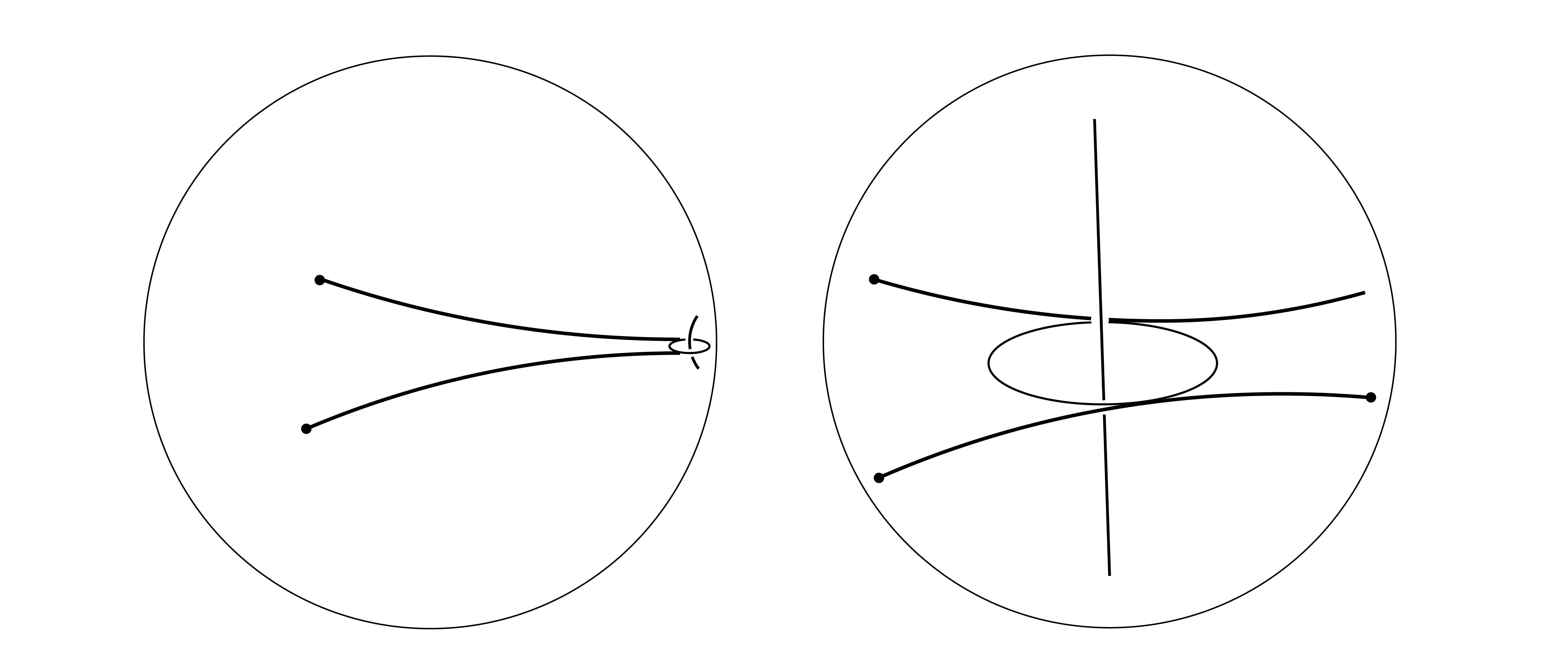} 
 \put(55.7,24){\color{black}\vector(0,1){5}}
  \put(56.5,26){\tc{black}{\contour{white}{$\rho_t(\gam_t) v_{2, t}$}}}
 \put(21.5,25.5){\tc{black}{$\rho_t(\gam_t) v_{2, t}$}}
 \put(56, 11.2){\color{black}\vector(0,1){5}}
 \put(87.45, 16.5){\color{black}\vector(0,1){5}}
 \put(19.5, 14.5){\color{black}\vector(0,1){5}}
 \put(14, 15){\tc{black}{$v_{t, 1}$}}
 \put(57, 15){\tc{black}{$v_{t, 1}$}}
 \put(88.5, 17){\contour{white}{$v_{t, 2}$}}
  \put(20.4, 24){\color{black}\vector(0,1){5}}
 \put(71.3 , 12){\textcolor{black}{$a_t$}}  
 \put(41 , 16){\textcolor{black}{$a_t$}}  
      \end{overpic}
\caption{The right figure is a normalization of the left figure so that the axis $a_t$ passes the center.}\label{fAlmostTwoPiTwisting}

\end{figure}
\end{proof}
By 
\Cref{TwistingConverges}, the Fenchel-Nielsen twisting parameter of $\tau_t$ along $m$ converges. 
For all $t > 0$, let $Q_{t, 1}$ and $Q_{t, 2}$ be the adjacent components of $\H^2 \minus \psi^{-1}(m_t)$ corresponding to $P_1$ and $P_2$, respectively, so that $Q_{t, 1}$ and $Q_{t, 2}$ are separated by the geodesic $\ti{m}_t$. 
Then, as the restriction of  $\beta_t$ of the component $Q_{t, 1}$ converges,  uniformly on compact subsets,  to the bending map of the corresponding cusp neighborhood of $C_\infty$ by \Cref{LimitPleatedSurfaceParabolic}.
Then, since the length of the geodesic loop $m_t$ goes to $0$ as $t \to \infty$, the convergence of the twisting parameter implies that the restriction of  $\beta_t$ to $Q_{t, 2}$  converges to the parabolic fixed point of $\rho_\infty (m)$ uniformly on compact subsets. 
This is a contradiction against the convergence of the bending map $\beta_t$ of $Q_{t, 2}$ uniformly on compact subsets guaranteed by \Cref{LimitPleatedSurfaceParabolic}. 
\Qed{HoroballQuotientCusp}


\begin{theorem}\Label{ParabolicDevConverges}
Suppose that $\rho_\infi(m)$ is parabolic.
Then, by an appropriate isotopy of $S$ in $t$,  $f_t \col \til{S} \to \CP^1$ converges to a $\rho_\infi$-equivariant continuous map $\t{S} \to \CP^1$ such that, for some multiloop $M$ on $S$ consisting of finitely many parallel copies of $m$,  
\begin{itemize}
\item $f_\infi$ is a local homeomorphism on $\ti{S} \minus \phi^{-1}(M)$, and
\item $f_\infi$ takes each component of $\phi^{-1}(M)$ to its corresponding parabolic fixed point.
\end{itemize}
\end{theorem}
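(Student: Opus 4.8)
The plan is to upgrade Theorem~\ref{DevelopingMapWithParabolicCusp}, which gives convergence of $f_t$ only on $\til{S}\minus\phi^{-1}(N)$, to convergence on all of $\til{S}$, by putting $C_t$ on the neck $N_m$ into a normal form for $t\gg 0$ and following its developing map as $\rho_t(m)$ degenerates to the parabolic $\rho_\infty(m)$.

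First I would normalise and fix the combinatorics. Since the cusps of $C_\infty$ have horodisk quotient neighbourhoods, Lemma~\ref{AlmostParabolicCuspsAreHyperbolicOrElliptic} gives that $\rho_t(m)$ is hyperbolic for all $t\gg 0$; as $\rho_t(m)\to\rho_\infty(m)$ with the latter parabolic, its two fixed points merge and its translation length tends to $0$. After conjugating $\rho_t$ (absorbed into the path of markings, as in Theorem~\ref{DevelopingMapWithParabolicCusp}) I would assume $\rho_\infty(m)$ fixes $\infty\in\CP^1$ and the fixed points of $\rho_t(m)$ converge to $\infty$. Because $\rho_t(m)$ converges, the number $g\ge 0$ of $2\pi$-graftings absorbed into $C_t\,|\,N_m$ stabilises for $t\gg 0$; I would take $M\subset N_m$ to be a multiloop of $g+1$ parallel copies of $m$, so that, by the remark following Theorem~\ref{LImitInDevelopingMap}, $g$ of its components are the grafting loops.

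Second, I would describe $C_t\,|\,N_m$ as a chain of standard pieces and pass to the limit. By Proposition~\ref{CuspClassification}, Lemma~\ref{FlatOrExpanding} and the cusp-neighbourhood classification of Lemma~\ref{CuspNbhdInThurstonCoordinates}, for $t\gg 0$ the $\CP^1$-annulus $C_t\,|\,N_m$ decomposes, along $g+1$ essential circles $\ell_{0,t},\dots,\ell_{g,t}$ homotopic to $m$, into $g+2$ sub-annuli: the two outer ones are collars that, by Theorem~\ref{ConvergenceThickPart} together with Theorem~\ref{DevelopingMapWithParabolicCusp}, converge to the horodisk quotient cusp neighbourhoods of $C_\infty$, while the $g$ intermediate ones are the standard $2\pi$-grafting pieces with holonomy $\rho_t(m)$, whose moduli diverge because the translation length of $\rho_t(m)$ tends to $0$. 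The circles $\ell_{j,t}$ descend to the components of $M$. In the limit each of the $g+2$ pieces exhausts a full cusp neighbourhood (a horodisk quotient for the collars, a bi-infinite horodisk quotient for the grafting pieces), so its ends, and in particular the circles $\ell_{j,t}$, are pushed onto the parabolic fixed point; hence $f_t(\til{\ell}_{j,t})\to\{\infty\}$. For the collars the convergence of the developing map and the collapse of the outer circle follow from Theorem~\ref{DevelopingMapWithParabolicCusp} and the fact (Proposition~\ref{CuspClassification}) that a $\CP^1$-structure with parabolic peripheral holonomy and horodisk quotient end has its developing map extending continuously to the puncture, with value the fixed point; for the intermediate pieces one checks directly that, as $\rho_t(m)\to\rho_\infty(m)$, the standard $2\pi$-grafting structure converges to the bi-infinite horodisk quotient. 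On each of the $g+2$ open pieces the developing map converges, so together with Theorem~\ref{DevelopingMapWithParabolicCusp} on $\til{S}\minus\phi^{-1}(N)$ and $\rho_t$-equivariance --- which propagates the convergence over all of $\phi^{-1}(N_m)$ --- we obtain a $\rho_\infty$-equivariant continuous $f_\infty\col\til{S}\to\CP^1$ with $f_t\to f_\infty$ uniformly on compact sets. Finally $f_\infty$ is a local homeomorphism off $\phi^{-1}(M)$, each limiting piece being a nondegenerate $\CP^1$-annulus, and it sends each component of $\phi^{-1}(M)$ to its corresponding parabolic fixed point by the collapse above and equivariance.

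The main obstacle is this second step, i.e.\ controlling $f_t$ on the neck: the conformal modulus of $N_m$ in $X_t$ blows up, so there is no compactness for free, and one has to use that $\rho_t(m)$ stays hyperbolic and degenerates in a controlled way in order to (i) pin down that exactly $g$ grafting pieces sit inside $C_t\,|\,N_m$ and that this number --- together with the identification of the two collars --- stabilises, (ii) show each of the $g+2$ pieces converges to a nondegenerate $\CP^1$-annulus rather than degenerating further, and (iii) show the $g+1$ separating circles are pushed onto the parabolic fixed point. The Epstein-surface estimates of Proposition~\ref{CylinderHonomyEstimate} control the holonomy along the circumferences of the flat part of the neck; converting them, uniformly in $t$, into the stated behaviour of the developing map, and matching the integer $g$ to the screw of $\rho_t(m)$, is where the real work lies.
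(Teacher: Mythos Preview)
Your proposal is correct and follows the same strategy as the paper: convergence on the thick part via Theorem~\ref{DevelopingMapWithParabolicCusp}, then an explicit analysis of the neck as a model annulus with hyperbolic holonomy $\rho_t(m)$ degenerating to the parabolic $\rho_\infty(m)$, decomposed into two collar pieces plus the intermediate $2\pi$-grafting pieces. The paper carries out your second step concretely in Lemma~\ref{ConvergenceOfUniformizableParabolicNeck} (the ungrafted case) and Proposition~\ref{DevConvergencePararbolicNeck} (the $n$-times grafted case) by working with explicit ``Z-shaped'' fundamental domains in the strip $R_t\subset\C$ for $\exp\col\C\to\C^\ast$; this is precisely the computation behind your phrase ``one checks directly,'' and it simultaneously handles your points (i)--(iii) without separately invoking the Epstein-surface estimates.
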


Under the assumption of \Cref{ParabolicDevConverges}, each cusp of $C_\infty$ is a horodisk quotient by \Cref{HoroballQuotientCusp}.
Thus, by \Cref{AlmostParabolicCuspsAreHyperbolicOrElliptic}, $\rho_t(m)$ is hyperbolic for all sufficiently large $t > 0$, and it  converges to the parabolic element $\rho_\infty(m)$ as $t \to \infty$.

More generally, let $\gam_t \in \PSL_2\C, ~t \geq 0$ be a path of hyperbolic elements such that $\gam_t$ converges to a parabolic element $\gam_\infty$ in $\PSL_2\C$ as $t \to \infi$.  
Let $G_t$ be the one-parameter subgroup of $\PSL_2\C$ containing $\gam_t$ such that the cyclic group generated by $\gam_t$ is asymptotically dense in $G_t$ with respect to the path metric on $G_t$ induced by the (left) invariant metric on $\PSL_2\C$.

    Continuously conjugate $\gam_t$ by elements $\omega_t$ of $\PSL_2\C$ so that the axis of  $\omega_t \cdot \gam_t \coloneqq r_t w_t r_t^{-1}$ remains, for all $t$,  to be the geodesic in $\H^3$ which connects $0$ to $\infty$ in the ideal boundary $\CP^1 = \C \cup \{\infty\}$. 

\begin{proposition}\Label{DevConvergeenceOfThinPart}
Let $A$ be a cylinder and homeomorphically identify $A$ with $[-1, 1] \times \SS^1$, and let $\ti{A}$ be the universal cover of $A$. 
Let $A_t ~( t > 0)$ be a path of $\CP^1$-structures on a cylinder $A$, and let $f_t$ be its developing map which changes continuously in $t$, such that 
\begin{itemize}
\item the holonomy of $A_t$ is the limit holonomy isomorphism $\pi_1(S) \cong \Z \to \langle\gam_t \rangle$,
\item each boundary of $A_t$ develops onto a $G_t$-invariant curve on $\CP^1$ for all $t > 0$. 
 \item for each boundary circle $b$ of $A$, the restriction of $f_t$ to the lift $\ti{b}$ to $\ti{A}$ converges to a $G_\infty$-invariant simple curve on $\CP^1$ (which is a $G_\infty$-invariant round circle minus the parabolic fixed point). 
 \end{itemize}
 Then, by an isotopy of $A$ fixing the boundary, $\dev A_t\col \ti{A} \to \CP^1$ converges to an continuous map $f_\infty\col \ti{A} \to \CP^1$ such that
 \begin{itemize}
 \item  $f_\infty$ is equivariant via the isomorphism $\Z \to\langle \gam_\infty\rangle$;
 \item there is a multiloop $M$ consisting of loops homotopy equivalent to $A$, such that $f_\infty$ is a local homeomorphism on $\ti{A} \minus \ti{M}$;
 \item $f_\infty$ takes $\ti{M}$ to the parabolic fixed point of $\gam_\infty$. 
\end{itemize}
\end{proposition}
\begin{proof}
We construct a path of fundamental membranes $Z_t$ for the developing maps $f_t$ which give the desired limit as $t \to \infty$. 

The normalized developing map  $\omega_t \circ f_t\col \ti{A} \to \C \cup \{\infty\}$ is  identified with the restriction of $\exp\col \C \to \C^\ast$ to a bi-infinite strip $I_t$ bounded by parallel lines in $\C \cong \E^2$. 
Let $b_1$ and $b_2$ denote the boundary components of $A$.  
Regarding $b_1, b_2$ as simple closed curves, we can lift $b_1$ and $b_2$ to segments $s_1$ and $s_2$, respectively, of segments of the boundary components of $\ti{A}$. 
For each $t > 0$ and  $i = 1, 2$, let $s_{i, t}$ be the segment of the boundary line of $I_t$ such that $\omega_t \circ f_t (s_i) = \exp(s_{i, t})$.
Then $s_{1, t}$ and $s_{2, t}$ are parallel and have the same length. 
Thus $s_{2, t}$ is the Euclidean translation of $s_{1, t}$ by unique $z_t \in \C \minus \{0\}$.  
\begin{claim}\Label{FundamentalSegments}
\begin{enumerate}
\item The length of $s_{i, t}$ goes to zero as $t \to \infty$, and \Label{iLengthZero}
\item $z_t$ converges to an integer multiple of $2\pi i$ as $t \to \infty$. \Label{iTwoPiTranslation}
\end{enumerate}
\end{claim}
\begin{proof}
(\ref{iLengthZero})
As $0$ and $\infty$ are the fixed points of $\omega_t \gam_t \omega_t^{-1}$ and $\gam_t$ converges to $\gam_\infty$, both
$\omega_t^{-1}(0)$ and $\omega_t^{-1}(\infty)$ converge to the parabolic fixed point of $\gam_\infty$ as $t \to \infty$.
Since the development of $b_i$ converges to a $G_\infty$-invariant curve on $\CP^1$, clearly the development of $s_{i, t}$ converges to a simple arc contained in the $G_\infty$-invariant curve. 
Therefore, since $f_t = w_t^{-1} \exp$ on $\ti{A}$, the norm of the derivative of $f_t$ at each point on the segment $s_i$ goes to infinity as $t \to \infty$. 
Hence the Euclidean length of $s_{i, t}$ must go to zero as $t \to \infty$.

(\ref{iTwoPiTranslation})
Since the Euclidean length of $s_{1, t}$ goes to zero on $I_t \subset \C$,  translating $I_t$ by a multiple of $2\pi i$,  we may assume that $s_{1, t}$ converges to a point $p$ on $\C$. 
Let $q \in \CP^1$ be the parabolic fixed point of $\gam_\infty$.
Let $K$ be a compact subset $K$ in $\CP^1 \minus \{q\}$ and  $U_p$ be a neighborhood of $p$ in $\C$. 
Let $U$ denote the union of translates of $U_p$ by the integer multiples of $2\pi i$. 
Then, if $t$ is sufficiently large, then $\omega_t^{-1} \exp (I_t \minus U)$ is contained in $\CP^1 \minus K$. 
Therefore, as the developments of $s_{1, t}$ and $ s_{2, t}$ converge to simple arcs in $\CP^1 \minus \{q\}$, their difference $z_t$ must converge to a multiple of $2\pi i$.  
\end{proof}

Let $n$ be the integer such that $z_t$ converges to $2\pi i n$. 
Pick a polygonal fundamental domain $Z_t$ of $A_t$ in $I_t$ with following properties: 
$Z_t$ is a union of $(n + 1)$-rectangles $R_{t, 1}, R_{t, 2}, \dots, R_{t, n+ 1}$ and $n$ parallelograms $P_{t, 1}, \dots, P_{t, n}$ as in the figure (\Cref{fZigsagTwo}) so that 
\begin{itemize}
\item for each $i = 1, \dots, n, n+ 1$, a pair of edges of $R_{t, i}$ are parallel to the boundary of the Euclidean strip $I_t$ , the boundary segment $s_{1, t}$ is an edge of $R_{t, 1}$,  the boundary segment $s_{2, t}$ is an edge of $R_{t, n + 1}$, and, for each $i = 2, \dots, n-2$,   the Euclidean translation of $s_{t, i}$ by $2\pi i$ decomposes $R_{t, i + 1}$ into two isometric rectangles, and
\item for each $i = 1, \dots, n$, the parallelogram $P_{t, i}$ have edges parallel to the boundary of $I_t$ which are an edge $R_{t, i}$ and an edge $R_{t, i + 1}$.
\end{itemize}
In addition,  we take $R_{t, 1}, R_{t, 2}, \dots, R_{t, n+ 1}$ and $n$ parallelograms $P_{t, 1}, \dots, P_{t, n}$ appropriately so that
\begin{itemize}
\item the development of $P_{t, i}$ by $f_t$ converges to the parabolic fixed point of $\gam_\infty$ as $t \to \infty$; 
\item the $f_t$-images of $R_{t, 1}$ and $R_{t, n + 1}$ converge to horodisks bounded by the limit of $f_t (\ti{b})$ in the hypothesis, and the restriction of $f_t$ to $R_{t, 1}$ and $R_{t, n + 1}$ converge to a developing map of horodisk quotients; 
\item for $i = 2, \dots, n$, the restriction of $f_t$ to $R_{t, i}$ converges to a developing map of the Euclidean cylinder $(\CP^1 \minus \{p\})/ \langle \gam_\infty \rangle$
\end{itemize}
(\Cref{fCloseToParabolicLimit}).
Let $M$ be a multiloop on $A$ consisting of $n$ boundary parallel loops. 
Pick a path of regular neighborhood $N_t$ of $M$ so that $N_t$ converges to $M$ as $t \to \infty$. 
Isotope $A$ so that a fundamental domain $F$ of $\ti{A}$ maps to $Z_t$ and that $N_t$ are identified with $P_{t, 1}, \dots, P_{t, n}$.
Then we a desired convergence. 

\begin{figure}
\begin{overpic}[scale=.10
] {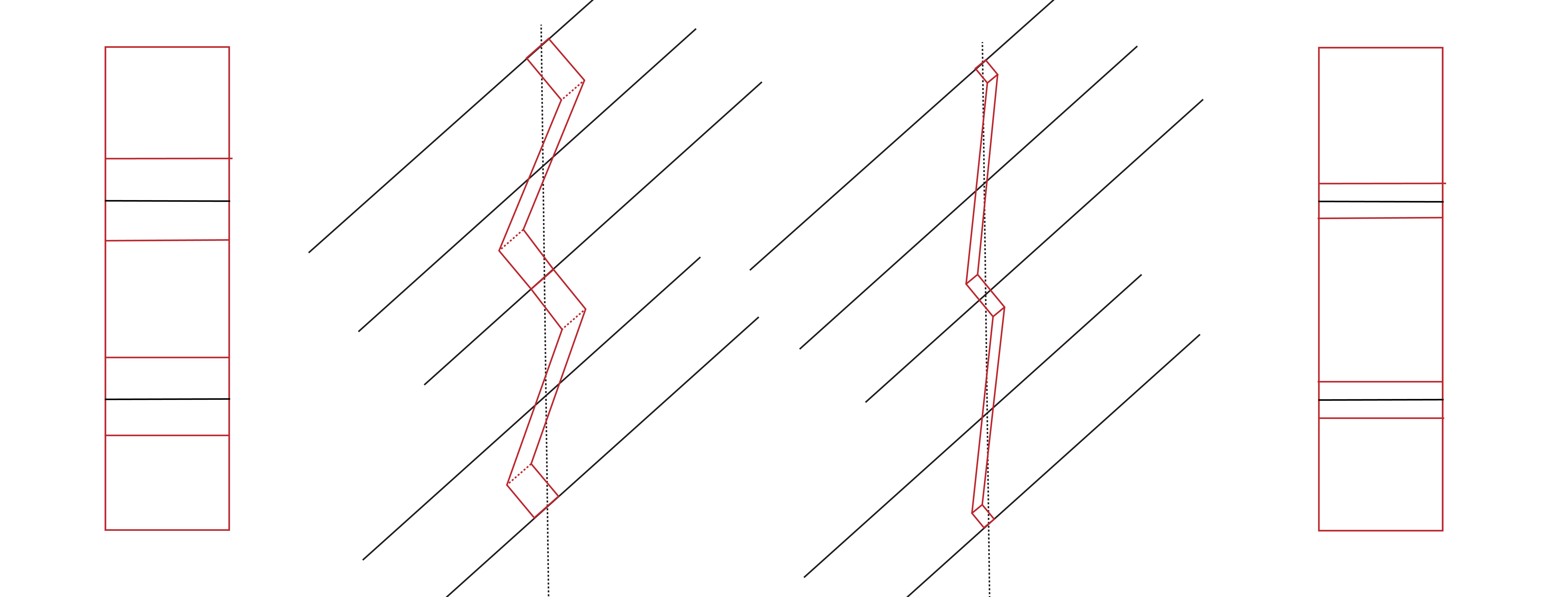} 
  \put(33 , 37){\contour{white}{$s_{1, t_1}$}}  
  \put(20 , 15){$I_{t_1}$}  
  \put(68 , 20){$I_{t_2}$}  
  \put(30 , 2){\contour{white}{$s_{2, t_1}$}  }
 \put(58 , 30){\contour{white}{$s_{1, t_2}$}  }
  \put(65 , 2){\contour{white}{$s_{2, t_2}$}  }
 \put(38 ,33 ){\contour{white}{\textcolor{red}{$R_{t_1, 1}$}}  }
 \put(37 ,25 ){\contour{white}{\textcolor{red}{$P_{t_1, 1}$}}  }
 \put(37 , 15 ){\contour{white}{\textcolor{red}{$P_{t_1, 2}$}}  }
 \put(2 , 22 ){\textcolor{red}{$N_{t_1}$}}  
 \put(8 , 0 ){\textcolor{red}{$F$}}  
 \put(94 , 12 ){\textcolor{red}{$N_{t_2}$}}  
 \linethickness{1pt}
 \put(18,20){\color{black}\vector(1,0){5}}
   \put(19 ,22 ){\contour{white}{$f_{t_1}$}  }
      \put(19 ,22 ){$f_{t_1}$}  
          \put(80 ,22 ){$f_{t_2}$}  
 \put(83,20){\color{black}\vector(-1,0){5}}
      \end{overpic}
\caption{The limiting behavior of the fundamental membrane $Z_t$ of $A_t$, where $n = 2$ and $t_1 < t_2$. }\label{fZigsagTwo}
\end{figure}

\begin{figure}
\begin{overpic}[scale=.2
] {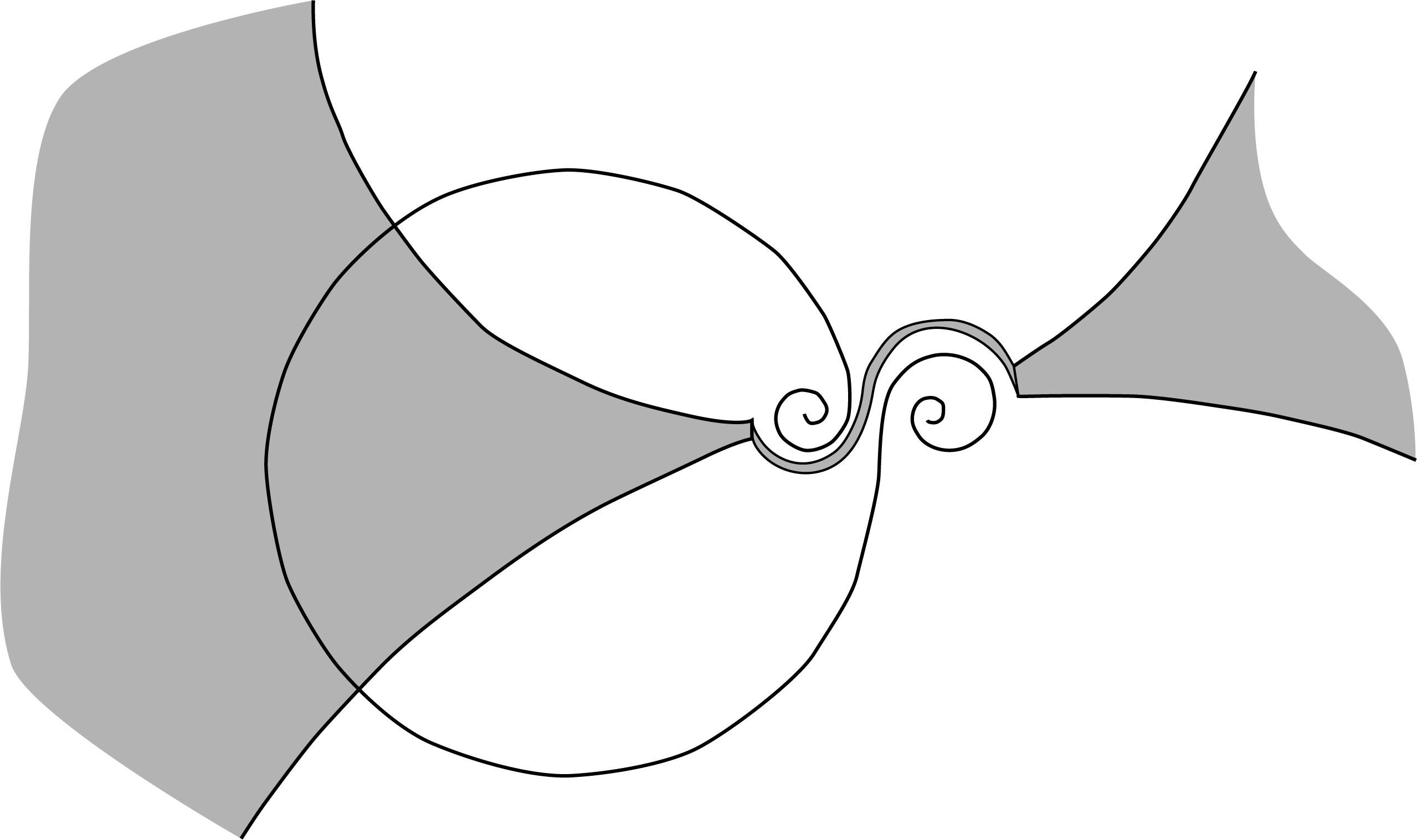} 
   \put( 20, 26){\contour{lightgray}{$f_t(R_{t, i})$}}
   \put( 60, 39){$f_t(P_{t, i})$} 
      \end{overpic}
\caption{}\label{fCloseToParabolicLimit}
\end{figure}
\end{proof}

\begin{proof}[Proof of Theorem \ref{ParabolicDevConverges}]
We already know the convergence of the developing map in every thick part by Theorem \ref{DevelopingMapWithParabolicCusp}. 
There are two cusps $c_1, c_1$ of $C_\infty$, which are horodisk quotients by \Cref{HoroballQuotientCusp}.
For each cusp $c_i$ of $C_\infty$, pick a simple closed curve $\ell_i$ which develops to a $G_\infty$-invariant simple curve on $\CP^1$. 
Then, for large $t > 0$, pick a simple closed curve $\ell_{i, t}$ on $C_t$ such that $\ell_{i, t}$ develops onto a $G_t$-invariant curve on $\CP^1$ and  $\ell_{i, t}$ converges to $\ell_i$ as $t \to \infty$. 

Let $A_t$ be the cylinder in $C_t$ bounded by $\ell_{1, t}$ and $\ell_{2, t}$. 
Then we can take such a path of cylinders $A_t$ in $C_t$ and a constant $\del > 0$ such that $A_t$ contains the $\del$-thin part of $C_t$ for all sufficiently large $t$. 
 Thus, by applying \Cref{DevConvergeenceOfThinPart} to $A_t$, we obtain a multiloop for the desired convergence property of $\dev C_t$. 
\end{proof}

\subsection{Convergence in holomorphic quadratic differential in the case of parabolic cusps}
Under the assumption that $\rho_\infty(m)$ is parabolic, we already have the limit $C_\infty$ of  $C_t$ as $t \to \infty$ where $C_\infty$ is a $\CP^1$-structure on a Riemann surface $X_\infty$ with two cusps homeomorphic to $S \minus m$. 
Moreover, each cusp of $C_\infty$ has a neighborhood which is a horodisk quotient (i.e. isomorphic, as a $\CP^1$-structure,  to a cusp neighborhood of a hyperbolic surface) by \Cref{HoroballQuotientCusp}. 
Then the holomorphic quadratic differential $\phi_\infty$ on $X_\infty$ representing $C_\infty$ has, at worst, a first order pole at each cusp. 
Therefore we have the following convergence of the differential. 
 \begin{theorem}\Label{LimitiDifferentialParabolicCusp}
Suppose that $\rho_\infi (m)$ is parabolic. 
Then $X_t$ converges to a nodal Riemann surface $X_\infi$ such that $X_\infi$ minus the node is homeomorphic to $S \minus m$ and $q_t$ converges to a  quadratic differential $q_\infi$ on $X_\infi$ such that the node is at worst first order pole.
\end{theorem}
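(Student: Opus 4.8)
The plan is to bootstrap from the subsequential convergence already available and to upgrade it to genuine convergence using the parabolicity of $\rho_\infi(m)$. By Corollary~\ref{DifferentialConvertesMultiloop}, applied to the multiloop consisting of the single loop $m$, along any diverging subsequence $X_{t_i}$ converges to a nodal Riemann surface and $q_{t_i}$ to a regular quadratic differential on it; since $X_t$ is pinched along the \emph{single} loop $m$, every essential loop of $S\minus m$ keeps its length bounded between two positive constants, so each such limit has exactly one node, with smooth part homeomorphic to $S\minus m$. Thus it remains only to identify this limit and show it is independent of the subsequence.

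First I would invoke Theorem~\ref{ConvergenceThickPart}. Since $\rho_\infi(m)$ is parabolic it is in particular non-trivial, so for each component $F$ of $S\minus m$ the theorem provides an appropriate path of markings along which the thick part $C_t\,|\,F$ converges, \emph{genuinely} and not merely subsequentially, to a $\CP^1$-structure $F_\infi$ on the punctured surface $\hat F$ (a complete hyperbolic structure in the ``zero area'' case, which is still a $\CP^1$-structure, with the zero differential). Precompactness of the completed thick parts in the deformation space of $\CP^1$-structures on $\hat F$ comes from the upper injectivity radius bound (Theorem~\ref{UpperInjectivityRadiusBound}) together with the area bound for expanding cylinders (Corollary~\ref{NoLargeExpandingEnds}); that the limit is unique, i.e.\ that subsequential convergence upgrades to genuine convergence, is exactly the holonomy local-homeomorphism theorem for surfaces with punctures (Theorem~\ref{HolImmersion}), using that $\rho_t$ converges in the representation variety (Proposition~\ref{LiftingHolonomyPath}) and that the peripheral holonomy is non-trivial. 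Since the deformation space of $\CP^1$-structures on $\hat F$ fibres affinely over $\TT(\hat F)$ with fibre the space of meromorphic quadratic differentials having at most double poles at the punctures, this convergence simultaneously yields $X_t\,|\,F\to X_{F_\infi}$ in $\TT(\hat F)$ and $q_t\,|\,F\to q_{F_\infi}$, normally on compact subsets of the smooth part, with $q_{F_\infi}$ having a pole of order at most two at each puncture. Assembling over the one or two components of $S\minus m$ and gluing back the punctures that descend to $m$, I obtain the nodal surface $X_\infi$ with a single node, normalization $\bigsqcup \hat F$ and smooth part $\cong S\minus m$, together with the candidate $q_\infi$ equal to $q_{F_\infi}$ on each piece; and $X_t\to X_\infi$ in the augmented Teichm\"uller space since $\length_{X_t}m\to 0$ while all other curves keep bounded length.

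It then remains to check that $q_\infi$ is \emph{regular}, i.e.\ that the residues at the two preimages $z_1,z_2$ of the node agree up to sign. The key point is that each $q_t$ is a genuine holomorphic quadratic differential on the \emph{closed} surface $X_t$, so its period $\oint_m\sqrt{q_t}$ is well defined up to sign and independent of which side of $m$ one computes it on. A small loop around $z_i$ in $\hat F$ is freely homotopic, under the chosen markings, to a peripheral loop representing $m$, so by the normal convergence $q_t\to q_\infi$ away from the node the residue of $q_\infi$ at $z_i$ equals $\pm\lim_{t\to\infi}\oint_m\sqrt{q_t}$; controlling this period across the degenerating neck, which by Lemma~\ref{FlatOrExpanding} and Proposition~\ref{CuspClassification} is, for $t\gg 0$, either a flat cylinder of diverging modulus or an expanding cylinder shrinking toward the cusp, shows the limit exists and is the same value for $i=1$ and $i=2$. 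Hence the two residues coincide up to sign, which is precisely the defining regularity condition, and the normal convergence of $q_t$ to $q_\infi$ on compacta of the smooth part is part of what the thick-part convergence provides.

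The main obstacle I expect is this residue-matching step: one must verify not merely that each limiting residue exists but that the two limits genuinely coincide, and this is where global-ness of $q_t$ on the closed $X_t$ — which makes $\oint_m\sqrt{q_t}$ side-independent — has to be combined with a uniform description of the neck as a flat or shrinking cylinder in order to keep the period under control as $t\to\infi$. A secondary, purely bookkeeping point is the ``zero area'' case of Theorem~\ref{ConvergenceThickPart}, in which $q_{F_\infi}$ may vanish identically on a component; this is a perfectly valid (degenerate) regular quadratic differential and needs no separate argument, but should be flagged, since the residue comparison then forces the period, hence the residue on the other branch, to vanish as well, and one wants to see this is consistent.
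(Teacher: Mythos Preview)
Your proposal is correct and follows essentially the same route as the paper, which simply cites Theorem~\ref{UpperInjectivityRadiusBound}, Corollary~\ref{NoLargeExpandingEnds}, and Theorem~\ref{HolImmersion} as the ingredients; your write-up spells out how these combine (via Theorem~\ref{ConvergenceThickPart}) to give genuine, not merely subsequential, convergence once the peripheral holonomy is non-trivial. Your explicit residue-matching argument via the side-independence of $\oint_m\sqrt{q_t}$ on the closed surface $X_t$ fills a detail the paper leaves implicit and is the right way to verify regularity of $q_\infi$.
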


\section{$\rho_\infi(m)$ cannot be hyperbolic}\Label{sHyperbolicNeck}

In this section, we show that $\rho_\infi(m)$ cannot be a hyperbolic element. 
\begin{lemma}\Label{HyperbolicLimit}
Suppose that $X_t$ is pinched along a loop $m$ and $\rho_\infi(m)$ is hyperbolic. 
Then 
\begin{enumerate}
\item 
$C_t$ converges to a $\CP^1$-structure $C_\infi$ on a compact surface with two punctures, which is homeomorphic to $S \minus m$, in the sense that, for every $\ep > 0$, the $\ep$-thick part of $C_t$ converges to the $\ep$-thick part of $C_\infi$ uniformly, and  \Label{iHyperbolicNeckConverenceAwayFromCusp}
\item $\rho_\infi (F)$ is non-elementary for each component $F$ of $S \minus m$. \Label{iHyperbolicNonelementary}
\end{enumerate}
\end{lemma}

\begin{proof}
(\ref{iHyperbolicNeckConverenceAwayFromCusp}) is an immediate corollary of \Cref{ConvergenceThickPart}.

(\ref{iHyperbolicNonelementary})
Let $F_\infi$ be the component of $C_\infi$ corresponding to $F$. 
Let $(\sigma, \nu)$ denote the Thurston parametrization of $F_\infi$. 
Then $\sigma$ is a hyperbolic surface with geodesic boundary, such that the lengths of the boundary components are the translation length of $\rho_\infi(m)$ (see the proof of Lemma \ref{CuspNbhdInThurstonCoordinates}). 
Let $(\til{\sigma}, \til{\nu})$ be  the universal cover of $(\sigma, \nu)$ so that $\til{\sigma}$ is a convex subset of $\H^2$ bounded by geodesics and that $\til{\nu}$ is a $\pi_1(\sigma)$-invariant lamination on $\til{\sigma}$.

Let $\alpha\col \til{\sigma} \to \H^3$ be its pleated surface equivariant by the holonomy of $F_\infi$.
Let $\ell$ be a boundary geodesic of $\til{\sigma}$. 
Then the endpoints of $\alpha (\ell)$ are in the limit set $\Lambda$ of $\Hol F_\infi$, as $\alpha(\ell)$ is the axis of the hyperbolic $\rho_\infi(m)$. 
Every component $R$ of $\ti{\sigma} \minus \ti{\nu}$ has at least three ideal points.
Then the ideal points of $\alpha(R)$ are in $\Lambda$ (see \cite[Lemma 5.1]{Baba20ThurstonParameter}). 
Thus $\rho_\infi |F$ is non-elementary. 
\end{proof}

\begin{lemma}\Label{FoliatedHyperbolicCuspNeighborhood}
For each cusp $p$ of $C_\infi$, there is a neighborhood of $p$ foliated by isomorphic admissible loops which develop to simple curves on  $\CP^1$ invariant under a one-parameter subgroup in $\PSL_2\C$ containing $\rho_t(m)$.
\end{lemma}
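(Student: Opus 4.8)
The plan is to locate a cusp neighborhood of $p$ inside the singular Euclidean structure of $C_\infi$ and then read the conclusion off from Proposition~\ref{CuspClassification} and Proposition~\ref{CylinderHonomyEstimate}. First I would set up the limit object: by Claim~\ref{HyperbolicLimit}(\ref{iHyperbolicNeckConverenceAwayFromCusp}) (equivalently, by Proposition~\ref{ConvergenceThick}), $C_\infi$ is a $\CP^1$-structure on a closed surface with two punctures homeomorphic to $S\minus m$, and the peripheral holonomy around each cusp $p$ is conjugate to $\rho_\infi(m)$, hence hyperbolic by the standing assumption of this section. Write $(E_\infi,V)$ for the singular Euclidean structure with vertical foliation coming from the Schwarzian parameters $(X_\infi,q_\infi)$ of $C_\infi$. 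Since each puncture of $C_\infi$ is at most a pole of order two of $q_\infi$, the normal forms of $q_\infi$ near such a pole (\cite[Theorem~6.3]{Strebel84QuadraticDifferentials}) show that a cusp neighborhood of $p$ in $E_\infi$ is either an expanding cylinder of infinite modulus shrinking towards $p$ (pole of order $\le 1$) or a half-infinite flat cylinder of infinite modulus (pole of order exactly two).

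Next I would rule out all but one case using Proposition~\ref{CuspClassification}. Because $\rho_\infi(m)$ is hyperbolic, the shrinking expanding-cylinder case is excluded by Proposition~\ref{CuspClassification}(\ref{iShrinkingCylinder}), which would force the peripheral holonomy to be parabolic; so a cusp neighborhood of $p$ in $(E_\infi,V)$ is a half-infinite flat cylinder $F$. Among the two sub-cases of Proposition~\ref{CuspClassification}(\ref{iFaltCylinderEnd}), the case in which the circumferences of $F$ are orthogonal to $V$ is excluded by Proposition~\ref{CuspClassification}(\ref{iVOrthogonalt}), which would force the peripheral holonomy to be $I$, parabolic, or elliptic. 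Hence we are in case (\ref{iVNotOrthogonal}): the circumferences of $F$ are not orthogonal to $V$.

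Finally I would invoke Proposition~\ref{CylinderHonomyEstimate}. On the half-infinite flat cylinder $F$, as one runs out towards $p$ the developing map of $C_\infi$ converges, up to the scaling by $\sqrt{2}$ and a fixed $\PSL(2,\C)$-conjugation, to the exponential map $\exp\colon\C\to\C^\ast$, the vertical direction in the domain corresponding to the real axis. Consequently, removing a large compact subcylinder of $F$ leaves a half-infinite flat subcylinder $F'$, foliated by its core geodesics, each of which develops onto an embedded arc of a trajectory of the one-parameter subgroup $G\subset\PSL(2,\C)$ whose orbits are the $\exp$-images of horizontal lines; this $G$ contains the loxodromic element $\rho_\infi(m)$, and each such trajectory is a simple $G$-invariant curve on $\CP^1$ (a loxodromic spiral between the two fixed points). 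A simple closed geodesic developing onto a segment of such a spiral, with loxodromic holonomy, is an admissible (graftable) loop in the sense of Goldman (\cite{Goldman-87}, see also \cite{Baba-17}); this is exactly the observation already extracted in the proof of Proposition~\ref{CuspClassification}(\ref{iVNotOrthogonal}). The core geodesics of $F'$ are mutually isometric, hence isomorphic, and they foliate a neighborhood of $p$, which is the required foliation.

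The only real content is pinning down the local Euclidean model at $p$ and selecting the correct case of Proposition~\ref{CuspClassification}; verifying that each circumference is admissible is routine given the asymptotically exponential form of the developing map, and I expect no serious obstacle beyond care with the orthogonality dichotomy and with the $\PSL(2,\C)$-normalization used to identify the spiral.
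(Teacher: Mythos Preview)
Your proposal is correct and follows essentially the same approach as the paper: both arguments identify the developing map near the cusp with a restriction of $\exp\colon\C\to\C^\ast$ to a half-plane, and then foliate by the parallel lines (equivalently, the circumferences of the half-infinite flat cylinder), which descend to the desired admissible loops. The paper's proof is terser and simply asserts the exponential form of the developing map, whereas you carefully justify it by running through the dichotomy of Proposition~\ref{CuspClassification} and eliminating the non-hyperbolic cases; this extra care is helpful but not a genuinely different route.
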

\begin{proof}
The developing map near a cusp neighborhood is the restriction of the exponential map $\exp\col \C \to \C^\ast$; moreover, by taking an appropriate neighborhood, one can assume that the restriction is to a half-plane bounded by a straight line in $\C$ invariant under the deck transformation corresponding to the hyperbolic element $\rho_t(m)$. 
 
 The half-plane is foliated by straight lines parallel to the boundary, and this foliation descends to a desired foliation of the cusp neighborhoods by admissible loops. 
\end{proof}

\begin{proposition}\Label{FoliatedNeck}
If $\ep > 0$ is sufficiently small, then, for every sufficiently large $t > 0$, there is a cylinder $A_t$ in $C_t$ homotopy equivalent to $m$ such that 
\begin{itemize}
\item $A_t$ changes continuously in $t \gg 0$;
\item $A_t$ is foliated by admissible loops whose developments are invariant under a one-parameter subgroup $G_t$ in $\PSL_2\C$ containing $\rho_t(m)$;
\item $A_t$ contains the conformally $\ep$-thin part of $C_t$;
\item $C_t \minus A_t$ converges to a $\CP^1$-structure on $S \minus m$ whose boundary components are admissible loops. 
\end{itemize}
\end{proposition}

\begin{proof}
Consider the cusp neighborhoods of $C_\infi$ foliated by admissible loops by Lemma \ref{FoliatedHyperbolicCuspNeighborhood}.
By the convergence of Lemma \ref{HyperbolicLimit} and the stability of the admissible loops, for $t \gg 0$, there is a cylinder $A_t$ foliated by admissible loops whose developments are invariant under $G_t$.
Then it is easy to realize other desired properties.
\end{proof}

 By Claim \ref{HyperbolicLimit} (\ref{iHyperbolicNonelementary}),  the developing map of $C_t \minus A_t$ converges uniformly on compact subsets. 
By normalizing $\rho_t$ by $\PSL_2\C$ continuously, so that, for sufficiently large $t > 0$,  we can, in addition, assume that the axis of the hyperbolic element $\rho_t(m)$ connects $0$ and $\infty$ of $\CP^1 = \C \cup \{\infty\}$.
Then the developing map of the cylinder $A_t$ is the restriction of the exponential map $\exp\col \C \to \C^\ast$ to the strip region $R_t$ bounded by parallel lines, since the boundary components of $A_t$ develop to $G_t$-invariant curves by \Cref{FoliatedNeck}.  
 Since the boundary components of $A_t$ converge to peripheral loops of $C_\infty$, by the continuity of $\dev C_t$ in $t$, the region $R_t$ converges to a strip in $\C$ with finite width. 
Therefore $A_t$ must converge as $t \to \infty$. 
 Thus $C_t$ converges to a $\CP^1$-structure on $S$--- this contradicts the divergence of $C_t$ in the deformation space. 
Hence $\rho_\infi(m)$ cannot be hyperbolic. 
\section{$\rho_\infi(m)$ cannot be  elliptic}\Label{sEllipticNeck}

In this section, similarly to the previous section (\S\ref{sHyperbolicNeck}), we show that $\rho_\infi (m)$ cannot be elliptic. 
To show this,  we assume, to the contrary, that $\rho_\infi(m)$ is elliptic and obtain a contradiction against the convergence of $\rho_t$ as $t \to \infi$.
By \Cref{ConvergenceThickPart}, we have
\begin{proposition}\Label{mEllipticConvergencOfThickPart}
Suppose that $\rho_\infi(m)$ is elliptic. 
Then $C_t$ converges to a $\CP^1$-structure $C_\infi$ on a compact surface minus two points homeomorphic to $S \minus m$, in the sense that, for every $\ep > 0$, the $\ep$-thick part of $C_t$ converges to the $\ep$-thick part of $C_\infi$.  
\end{proposition}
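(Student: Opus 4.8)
The plan is to obtain this as an immediate consequence of Proposition \ref{ConvergenceThick}. Recall that that proposition asserts: whenever $\rho_\infi(m) \neq I$, there is a single $\CP^1$-structure $\h{C}_\infi \in \PP(S \minus m)$ --- that is, a $\CP^1$-structure on a two-punctured surface homeomorphic to $S \minus m$ --- such that for every $\ep > 0$ the $\ep$-thick part $C_t^\ep$ of $C_t$ converges uniformly to the $\ep$-thick part of $\h{C}_\infi$. So the entire task reduces to checking that the standing hypothesis of the present proposition implies the hypothesis $\rho_\infi(m) \neq I$ of Proposition \ref{ConvergenceThick}.

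This check is trivial: by definition an elliptic element of $\PSL(2, \C)$ is conjugate to a non-trivial rotation fixing a geodesic of $\H^3$, hence is distinct from the identity. Thus the assumption that $\rho_\infi(m)$ is elliptic gives $\rho_\infi(m) \neq I$. Setting $C_\infi := \h{C}_\infi$, Proposition \ref{ConvergenceThick} then yields precisely the stated conclusion --- the uniform convergence of $\ep$-thick parts for every $\ep > 0$. (When $m$ is non-separating, $C_\infi$ is a connected $\CP^1$-surface with two punctures; when $m$ is separating, $C_\infi$ is the disjoint union of two once-punctured $\CP^1$-surfaces. Either way it is a compact surface minus two points homeomorphic to $S \minus m$.)

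Since the substantive input --- the upper bound on the upper injectivity radius of $E_t$ (Theorem \ref{UpperInjectivityRadiusBound}), which forces the completed thick pieces $\h{F}_t$ to stay in a compact subset of the relevant deformation space, together with the holonomy local-homeomorphism and path-lifting theorem for punctured surfaces (Theorem \ref{HolImmersion}), which upgrades boundedness to genuine convergence --- is already packaged inside Proposition \ref{ConvergenceThick}, there is essentially no obstacle at this step; the proof is a one-line invocation. The genuine difficulty lies further on: having established convergence of the thick part, one must still analyze the thin neck around $m$ and extract a contradiction with the convergence of $\rho_t$, so as to rule out the elliptic case entirely. That analysis, however, is the business of the remainder of \S\ref{sEllipticNeck}, not of this proposition.
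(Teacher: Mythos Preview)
Your proposal is correct and matches the paper's own proof, which is simply the one-line ``By Proposition \ref{ConvergenceThick}, we have'' preceding the statement. The observation that elliptic implies $\rho_\infi(m)\neq I$ is exactly what is needed to invoke that proposition.
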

\begin{lemma}\Label{EllipticNeckDiscreteStabilizer}
For each component $F_\infi$ of $C_\infi$,  the stabilizer of $\rho_\infi(F_\infi)$ by conjugation is a discrete subgroup in $\PSL_2\C$.
\end{lemma}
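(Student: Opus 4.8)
The plan is to argue by contradiction. Suppose the stabilizer $Z\subset\PSL(2,\C)$ of $\rho_\infi|\pi_1(F_\infi)$ under conjugation (the set of $g$ with $g\,(\rho_\infi|\pi_1(F_\infi))\,g^{-1}=\rho_\infi|\pi_1(F_\infi)$, i.e. the centralizer of $\rho_\infi(\pi_1(F_\infi))$) is not discrete. Since $Z$ is a closed subgroup of $\PSL(2,\C)$, it then has positive dimension and contains a nontrivial one‑parameter subgroup $G$. Every element of $\rho_\infi(\pi_1(F_\infi))$ commutes with $G$, and the centralizer of a nontrivial one‑parameter subgroup of $\PSL(2,\C)$ is abelian: a maximal torus $\cong\C^{*}$ when $G$ consists of loxodromic or elliptic elements, and the unipotent subgroup $\cong\C$ when $G$ is parabolic. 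Hence $\rho_\infi(\pi_1(F_\infi))$ is abelian, and in particular it is contained either in the set of loxodromic and elliptic elements with a common axis $\ell$, or in a group of parabolics (and the identity) fixing a common point of $\CP^1$.

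Now use the peripheral structure. By Proposition \ref{mEllipticConvergencOfThickPart} every puncture of $C_\infi|F_\infi$ corresponds to the pinched loop $m$, so its peripheral holonomy is conjugate to $\rho_\infi(m)^{\pm1}$, which is a nontrivial elliptic element by the standing hypothesis of this section (and $\rho_t(m)\neq I$ for $t\gg0$, so the neck is genuinely a cusp of $C_\infi|F_\infi$). An abelian group of parabolics and the identity contains no elliptic element, so the second alternative above is impossible; thus $\rho_\infi(\pi_1(F_\infi))$ preserves a geodesic $\ell$, the elliptic peripheral elements are rotations about $\ell$, and they fix exactly the two‑point set $\Lambda:=\partial_\infi\ell\subset\CP^1$, which $\rho_\infi(\pi_1(F_\infi))$ preserves. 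Next I would locate where the punctures of $C_\infi|F_\infi$ develop: since each peripheral holonomy is elliptic (neither hyperbolic nor parabolic), Proposition \ref{CuspClassification} places each cusp in case \ref{iVOrthogonalt} — a half‑infinite flat cylinder whose circumferences are orthogonal to the vertical foliation — where the developing map near the cusp is, after normalization, the exponential map $\C\to\C^{*}$. Therefore the puncture develops to the single point $0$, one of the two fixed points of its elliptic peripheral holonomy, hence a point of $\Lambda$. So \emph{every} puncture of $C_\infi|F_\infi$ develops into $\Lambda$.

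Finally this contradicts Lemma \ref{PunctureAtDomain}. Indeed $C_\infi|F_\infi$ is a $\CP^1$‑structure (with poles of order at most two) on the punctured surface $F_\infi$, with $\chi(F_\infi)<0$ since $S$ has genus at least two and $m$ is essential, whether or not $m$ separates; its holonomy $\rho_\infi|\pi_1(F_\infi)$ is elementary (being abelian) and all of its peripheral holonomies are non‑hyperbolic with one‑point developing image. So Lemma \ref{PunctureAtDomain}, applied with the invariant set $\Lambda$ (of cardinality $2$), forces at least one puncture of $C_\infi|F_\infi$ to develop into $\Omega=\CP^1\setminus\Lambda$, contrary to the previous paragraph. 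This contradiction shows $Z$ is discrete.

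The step I expect to require the most care is the reduction in the first two paragraphs: verifying that the boundary loops of $F_\infi$ genuinely contribute nontrivial elliptic elements to $\rho_\infi(\pi_1(F_\infi))$ (which rests on the thick‑part convergence of Proposition \ref{mEllipticConvergencOfThickPart}), and the group‑theoretic bookkeeping that identifies the centralizer of a one‑parameter subgroup as abelian and pins down the correct two‑point invariant set $\Lambda=\partial_\infi\ell$. Once that is in place, the invocations of Proposition \ref{CuspClassification} and Lemma \ref{PunctureAtDomain} are essentially routine.
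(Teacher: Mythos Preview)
Your proof is correct. Both you and the paper exploit the same underlying constraint—the peripheral elliptic elements—but the arguments are organized differently.

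The paper works constructively through the Thurston parameters $(\sigma,\nu)$ of $F_\infty$: it picks a complementary region $P$ of $\tilde\sigma\setminus\tilde\nu$ with at least three ideal vertices, each a lift of a cusp and hence fixed by a peripheral element of $\pi_1(F_\infty)$. Under the pleated map $\beta_\infty$ these vertices go to three distinct points of $\CP^1$, each fixed by an elliptic in $\rho_\infty(\pi_1(F_\infty))$. Three distinct fixed points cannot all lie on a single axis, so the centralizer is discrete.

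You instead argue by contradiction and group theory: a non-discrete centralizer forces the image into the centralizer of a one-parameter subgroup, hence into a maximal torus (the unipotent alternative being ruled out by the elliptic peripheral). Then every puncture develops to an endpoint of the common axis, and the Euler-characteristic obstruction of Lemma~\ref{PunctureAtDomain} gives the contradiction. Your route is a bit cleaner in that it reuses Lemma~\ref{PunctureAtDomain} rather than rebuilding that obstruction inside the pleated-surface picture, and it sidesteps the somewhat implicit step in the paper's argument that all ideal vertices of $P$ are lifts of cusps. The paper's route, on the other hand, is more explicit: it produces the three witnessing elliptic fixed points directly.
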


\begin{proof}
Let $F_\infi$ be a component of $C_\infi$. 
Then let $(\sigma, \nu)$ be the Thurston parametrization of $F_\infi$, and let $(\ti{\sigma}, \ti{\nu})$ be the universal cover of $(\sigma, \nu)$.
 Then the rotation angle of the elliptic element $\rho_\infi (m)$ is, modulo $2\pi$, equal to the total weight, given by $\nu$, of the leaves ending at a puncture (Proposition \ref{CuspClassification}).
Let $\beta_\infi\col \ti{\sigma} \to \H^3$ be the equivariant pleated surface. 
Pick a leaf $\ell$ of $\nu$ whose endpoints are at cusps of $\nu$; then $\ell$ is an isolated leaf.
Let $\ti\ell$ be a leaf of $\ti\nu$ which is a lift of $\ell$. 
Then its image $\beta_\infi (\ell)$ is a geodesic in $\H^3$. 
Each endpoint of this geodesic is a fixed point of the parabolic element in the image $\rho_\infi(\pi_1(F))$ corresponding to its associated peripheral loop.

As the leaf $\ell$ is isolated,  
$\ell$ bounds a component $P$ of $\ti{\sigma} \minus \ti{\nu}$, and $P$ has at least three ideal points. 
Then,  for each ideal point $p$ of $P$, let $\gam \in \pi_1(F_\infi)$ be such that $\gam$ fixes $p$.
Then  $\beta_\infi(p)$ is fixed by the elliptic element $\rho_\infi(\gam)$. 
Therefore, the stabilizer of $\rho_\infi( F_\infi)$ is a discrete subgroup of $\PSL_2\C$.
\end{proof}
Similarly to \Cref{FoliatedNeck}, the following follows from  Lemma \ref{mEllipticConvergencOfThickPart} and Lemma \ref{EllipticNeckDiscreteStabilizer}:
\begin{proposition}\Label{EllipticNeck}
If  $\ep > 0$ is sufficiently small, then for every sufficiently large $t > 0$,
there is a cylinder $A_t$ in $C_t$ homotopy equivalent to $m$ such that
\begin{itemize}
\item $A_t$ changes continuously in $t  \gg 0$;
\item $A_t$ is foliated by loops whose developments are invariant under the one-dimensional subgroup $G_t$ of $\PSL_2\C$ containing $\rho_t (m)$, and $G_t$ converges to a one-dimensional subgroup $G_\infi$ of $\PSL_2\C$ containing $\rho_\infi (m)$;
\item $A_t$ contains the conformally $\ep$-thin part of $C_t$ homotopic to $m$;
\item $C_t \minus A_t$ converges to a $\CP^1$-structure  on $S \minus m$ such that the boundary components cover round circles on $\CP^1$.
\end{itemize}
\end{proposition}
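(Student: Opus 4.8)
The plan is to argue in close parallel with the hyperbolic case of Section~\ref{sHyperbolicNeck}, with round circles playing the role of loxodromic spirals. First I would record the local picture at the cusps of $C_\infi$. By Proposition~\ref{mEllipticConvergencOfThickPart}, $C_t$ converges on every thick part to $C_\infi$, a $\CP^1$-structure on a surface with two punctures homeomorphic to $S\minus m$, each peripheral holonomy being conjugate to the elliptic element $\rho_\infi(m)$. Since $\rho_\infi(m)$ is elliptic, Proposition~\ref{CuspClassification} (the case of elliptic peripheral holonomy: the circumferences of the associated half-infinite flat cusp cylinder are orthogonal to the vertical foliation) applies at each puncture $p$ of $C_\infi$: a suitable cusp neighborhood of $p$ develops, via a restriction of $\exp\col\C\to\C^{\ast}$ post-composed with an element of $\PSL(2,\C)$, onto a neighborhood of a fixed point of $\rho_\infi(m)$. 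Hence $p$ has a punctured-disk neighborhood foliated by loops whose developments are the concentric round circles $\{|z|=r\}$, i.e.\ the trajectories of the one-parameter subgroup $G_\infi\subset\PSL(2,\C)$ of elliptic rotations with $\Fix G_\infi=\Fix\rho_\infi(m)$ and $\rho_\infi(m)\in G_\infi$; this is the elliptic analogue of Lemma~\ref{FoliatedHyperbolicCuspNeighborhood}.

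Next I would transplant this foliation to $C_t$ for $t\gg 0$. Since $\rho_t(m)\to\rho_\infi(m)$, the fixed-point set of $\rho_t(m)$ converges to that of $\rho_\infi(m)$, so the one-parameter subgroup $G_t$ containing $\rho_t(m)$ converges to $G_\infi$; each $G_t$-trajectory on $\CP^1$ is a round circle (when $\rho_t(m)$ is elliptic) or a loxodromic spiral (when $\rho_t(m)$ is loxodromic), and these trajectories converge to the round circles foliating the model cusps of $C_\infi$. Combining the thick-part convergence of Proposition~\ref{mEllipticConvergencOfThickPart} with the stability, under $C^1$-small perturbations of the $\CP^1$-structure, of loops developing onto $G_t$-trajectories, and invoking Lemma~\ref{EllipticNeckDiscreteStabilizer} so that Theorem~\ref{HolImmersion} applies to the pieces and yields genuine (not merely subsequential) convergence, I would produce for $t\gg 0$ a cylinder $A_t\subset C_t$ homotopic to $m$, varying continuously in $t$, foliated by loops with $G_t$-invariant developments and interpolating between the two model cusp cylinders. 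Taking the model cusp neighborhoods small (equivalently, $\ep>0$ small) makes $A_t$ contain the conformally $\ep$-thin part of $C_t$ homotopic to $m$, since that thin part sinks into the cusp regions as $t\to\infi$.

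For the last clause, $C_t\minus A_t$ is, up to isotopy, a sub-$\CP^1$-structure of a fixed thick part of $C_t$ with its two boundary loops chosen from the $G_t$-foliation; by Proposition~\ref{mEllipticConvergencOfThickPart} it converges to the corresponding sub-$\CP^1$-structure of $C_\infi$, whose two boundary loops develop onto round circles, these being the limits of the $G_t$-trajectories for the limiting elliptic holonomy. This would establish all four asserted properties.

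I expect the main obstacle to be the continuity in $t$ of $A_t$ together with the uniform control needed to pass to the limit in $C_t\minus A_t$: one has to splice the explicit exponential-map model near the cusps of $C_\infi$, the thick-part convergence, and the perturbation-stability of the foliating loops into a single coherent family, all while allowing $\rho_t(m)$ to be loxodromic rather than elliptic, so that $G_t$ genuinely changes isometry type along the path even though $G_t\to G_\infi$.
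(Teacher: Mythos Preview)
Your proposal is correct and follows essentially the same approach as the paper. The paper itself gives almost no proof here---it simply records that Proposition~\ref{mEllipticConvergencOfThickPart} and Lemma~\ref{EllipticNeckDiscreteStabilizer} together imply the statement---and your writeup supplies exactly the details one would expect: the elliptic analogue of Lemma~\ref{FoliatedHyperbolicCuspNeighborhood} via Proposition~\ref{CuspClassification}, the stability argument for the $G_t$-invariant foliating loops paralleling the hyperbolic case, and the use of the discrete-stabilizer lemma to upgrade subsequential convergence to genuine convergence via Theorem~\ref{HolImmersion}.
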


\begin{proposition}
Suppose that $\rho_\infi(m)$ is elliptic. 
Then $C_t$ converges to a $\CP^1$-structure on $S$, which is a contradiction as desired.
\end{proposition}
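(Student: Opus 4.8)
The plan is to mirror, almost verbatim, the argument of \S\ref{sHyperbolicNeck} that ruled out the hyperbolic case, now feeding in Proposition \ref{EllipticNeck} in place of the cylinder decomposition used there. Let $A_t \subset C_t$ be the cylinder homotopic to $m$ supplied by Proposition \ref{EllipticNeck}: it varies continuously in $t \gg 0$, it is foliated by loops whose developments are invariant under a one-parameter subgroup $G_t \ni \rho_t(m)$ with $G_t \to G_\infi \ni \rho_\infi(m)$, it contains the conformally $\ep$-thin part of $C_t$ homotopic to $m$, and $C_t \minus A_t$ converges to a $\CP^1$-structure on $S \minus m$ whose two boundary loops develop onto round circles. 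First I would record that these two boundary round circles, being leaves of the same foliation of $A_t$, are $G_t$-invariant; for an elliptic $G_t$ the $G_t$-invariant simple closed curves on $\CP^1$ are exactly the round orbit circles, which are pairwise nested. Since $C_t \minus A_t$ converges, the two boundary circles converge to disjoint round circles $r_{1,\infi}, r_{2,\infi}$ that bound an annular region $\Omega_\infi \subset \CP^1$, and the corresponding regions $\Omega_t$ converge to $\Omega_\infi$ in the Hausdorff sense.

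Next I would argue that the $\CP^1$-structure on $A_t$ is pinned down, up to bounded data, by its two boundary developments together with $G_t$. Because the loops foliating $A_t$ develop onto the nested $G_t$-invariant circles, the developing map $\widetilde{A_t} \to \CP^1$ sweeps monotonically from one boundary circle to the other through intermediate orbit circles and is $\rho_t(m)$-equivariant; hence its image is $\Omega_t$, traversed with some integer amount of extra wrapping, i.e. a $2\pi$-grafting along $m$. That wrapping is bounded: an unbounded amount would produce, inside $A_t$, conformally long flat cylinders homotopic to $m$ with unbounded circumference, which by Proposition \ref{CylinderHonomyEstimate} would force $\rho_t(m)$ to have unbounded rotation or translation length, contradicting $\rho_t(m) \to \rho_\infi(m)$. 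With the wrapping bounded and $r_{1,t}, r_{2,t}, G_t$ all convergent, the developing map of $A_t$ and its holonomy converge uniformly on compacts; gluing this to the convergence of $C_t \minus A_t$ along the two boundary loops yields a $\rho_\infi$-equivariant local homeomorphism $\widetilde S \to \CP^1$, i.e. a $\CP^1$-structure on the closed surface $S$ to which $C_t$ converges. This contradicts condition (\ref{iCDiverges}) that $C_t$ leaves every compact subset of $\PP$, so $\rho_\infi(m)$ cannot be elliptic.

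The main obstacle is exactly this middle step: showing $A_t$ converges rather than ``spiralling off.'' The delicate point is to bound the grafting hidden in $A_t$, since a priori the huge-modulus cylinder homotopic to $m$ produced by the pinching along $m$ could carry an escaping $\CP^1$-structure even while its two boundary circles stay put. I expect to handle this by combining the $G_t$-invariant foliation of $A_t$ from Proposition \ref{EllipticNeck} with the holonomy estimate of Proposition \ref{CylinderHonomyEstimate} (and Lemma \ref{DumasLemma}) applied to the flat part of $A_t$: the rotational contribution of $\rho_t(m)$ along each foliating loop is essentially proportional to the transversal measure of the horizontal foliation, so the boundedness of $\rho_t(m)$ caps the total transversal measure, hence the wrapping. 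Once this bound is in hand, the remaining gluing argument is the soft one already carried out in \S\ref{sHyperbolicNeck}.
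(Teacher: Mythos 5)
Your overall route is the paper's: take the cylinder $A_t$ supplied by Proposition \ref{EllipticNeck}, use the convergence of $C_t \minus A_t$, and conclude that $A_t$, hence $C_t$, converges, contradicting the divergence of $C_t$ in $\PP$. The genuine gap is exactly at the step you flag as the main obstacle: bounding the ``wrapping'' of $A_t$ by the holonomy of $m$. You claim that unbounded wrapping would produce flat cylinders homotopic to $m$ of unbounded circumference, and that Proposition \ref{CylinderHonomyEstimate} would then force $\rho_t(m)$ to have unbounded rotation or translation length. But the quantity controlled by Proposition \ref{CylinderHonomyEstimate}(\ref{iHorizontalTotalRotation}) is the \emph{total} rotation angle of the Epstein image about the axis, which is a real-valued lift, whereas the rotation angle of an elliptic element of $\PSL(2,\C)$ is only defined modulo $2\pi$. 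A circumference close to $(2\pi n+\theta)/\sqrt{2}$ with $n$ arbitrarily large is therefore perfectly compatible with $\rho_t(m)$ converging to an elliptic element of angle $\theta$ (or to $I$); this looseness is precisely the phenomenon the paper is built around (see Lemma \ref{FlatOrExpanding} and the exotic examples of \S\ref{sExoticDegeneration}, where the neck carries circumference near a large multiple of $2\pi$ while the holonomy along $m$ stays bounded). Only the translation-length half of your dichotomy is controlled by the convergence of $\rho_t(m)$, and that bounds ${\rm Re}\int_m \sqrt{q_t}$, not the winding. So, as written, the middle step of your argument does not follow, and the holonomy of $m$ alone cannot supply it.

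The rigidity actually comes from the boundary of $A_t$, not from $\rho_t(m)$, and this is what the paper uses. Because $\rho_\infi(m)$ is a nontrivial elliptic, Lemma \ref{EllipticNeckDiscreteStabilizer} shows the limit holonomy of each complementary component has discrete stabilizer, so by Proposition \ref{EllipticNeck} the developing maps of $C_t \minus A_t$ themselves (not merely the structures up to renormalization, as happens in the $\rho_\infi(m)=I$ case of \S\ref{mTrivialHolonomy}) converge uniformly on compacts. In particular $f_t$ converges on the two boundary lines of $\ti{A}_t$, and the $\rho_t(m)$-equivariant parametrization of such a line over its $G_t$-invariant circle already records how far a fundamental arc winds; its convergence freezes the winding, and together with the convergence of $G_t$ it determines the interpolating cylinder. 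That is the content of the paper's one-line conclusion that $f_t$ converges on $\bdr \ti{A}_t$ and hence $A_t$ converges up to isotopy. If you replace your holonomy-based wrapping bound by this boundary argument, the rest of your gluing and the final contradiction with condition (\ref{iCDiverges}) go through as you describe.
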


\begin{proof}
Fix sufficiently small $\ep > 0$, and let $A_t$ be a cylinder given by Proposition \ref{EllipticNeck}.
Let $\phi_t\col \ti{C}_t \to C_t$ be the universal covering map.
Then the developing map of $\ti{C_t} \minus \phi_t^{-1}(A_t)$ converges uniformly on compact subsets.  
Let $\ti{A}_t$ be the component of $\phi_t^{-1}(A_t)$ invariant under $m \in \pi_1(S)$, so that $A_t$ changes continuously in $t$.
We can normalize $\dev C_t$ by $\PSL_2\C$ continuously in $t$,  such that, for sufficiently large $t > 0$,  the geodesic axis of $\rho_t(m)$ connects $0$ and $\infty$ of $\CP^1 = \C \cup \{\infty\}$.
Then, the restriction of $\dev C_t = f_t$ to $\ti{A}_t$ is the restriction of the exponential map $\exp\col \C \to \C^\ast$ to an infinite strip in $\C$.
Since $f_t$ converges on the boundary components of $\ti{A}_t$, thus the restriction of $f_t$ to $\ti{A}$ converges as $t \to \infty$. 
 Hence $A_t$ must converge as $t \to \infty$ as a $\CP^1$-structure on a cylinder with boundary.
Therefore $C_t$ converges to a $\CP^1$-structure on $S$, which is a contradiction. 
\end{proof}

\section{Limit when $\rho_\infi(m) = I $}\Label{mTrivialHolonomy}
Let $A$ be a regular neighborhood of a loop $m$ on $S$.
For $t \geq 0$, let  $(\tau_t, L_t)$ be Thurston parameters of $C_t$.
Let $\beta_t\col \H^2 \to \H^3$ be its $\rho_t$-equivariant pleated surface.
Let $\kap_t \col C \to \tau$ be the collapsing map, and $\ti{\kap}_t\col \ti{C} \to \H^2$ denote the lift of $\kap$ to the map between their universal covers. 
Let $a_t$ denote the axis of $\rho_t(m) \in \PSL_2\C$ (\Cref{axis}).

Note that a $\CP^1$-structure on $S$ is defined up to an isotopy of the base surface $S$. 
Thus the developing map $f_t\col \ti{S} \to \CP^1$ of the path $C_t$ of $\CP^1$-structures on $S$ can be modified by an isotopy $\psi_t\col S \to S$ in $t$ without changing $C_t$. 
Finally, recall that $\phi \col \ti{S} \to S$ is the universal covering map.
\begin{theorem}\Label{TrivialNeck}
Suppose that $\rho_\infi(m) = I$. 
Then the following hold:
\begin{enumerate}
\item $\rho_t(m) \neq I$ for sufficiently large $t > 0$. \Label{iNotExactlyI}
\item The Fenchel-Nielsen twisting parameter (in  $\R$) of $X_t$ along $m$ diverges to either $\infty$ or to $- \infty$. \Label{iTwistDieverges}
\item \Label{iSubsequencialConvergence}
For every diverging sequence  $0 < t_1 < t_2 < \dots$,
there is  a subsequence such that
\begin{enumerate}
\item \Label{iIAixsConverging}
the axis $a_{t_i}$ converges to a point on $\CP^1$ or a geodesic in $\H^3$, denoted by $a_\infi$;
\item there is a $\CP^1$-structure in $\PPP(S \minus m)$ such that, for every  $\ep > 0$, the $\ep$-thick part of $C_{t_i}$ converges to the $\ep$-thick part of $C_\infi$ uniformly;  \Label{iThickPartConvergence}
\item up to an isotopy of $S$ in $t$,  the restriction of $f_{t_i}$ to $\til{S} \minus \phi^{-1}(A)$ converges to a $\rho_\infi$-equivariant continuous map $f_\infi\col \til{S} \minus \phi^{-1}(A) \to \CP^1$ as $t_i \to \infi$ such that, for each component $\ti{A}$ of $\phi^{-1}(A)$, 
its boundary components map onto the ideal points of  $a_\infi$.  \Label{iBoundaryComponentAtEndpoints}
\end{enumerate}
\item   the pleated surface $\beta_{t_i}\cc \ti{\kap}_{t_i} \col \til{S} \to \H^3$ converges to a $\rho_\infi$-equivariant continuous map $\ti{S} \to \H^3 \cup \CP^1$, up to an isotopy of $S$. \Label{iIPleatedSurfaceConverging}
\end{enumerate}
\end{theorem}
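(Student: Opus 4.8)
The plan is to establish the four assertions of Theorem \ref{TrivialNeck} in turn, reducing most of them to results already proved in earlier sections, with the bulk of the work going into the simultaneous control of the developing maps, the axes of $\rho_t(m)$, and the twist parameter along $m$.

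For (\ref{iNotExactlyI}), I would argue exactly as in the proof of Lemma \ref{AlmostParabolicCuspsAreHyperbolicOrElliptic}: if $\rho_{t_i}(m) = I$ along a diverging sequence, then since $X_{t_i}$ is pinched along $m$ and $\rho_{t_i}(m)=I$ a round cylinder $A_{t_i}$ of modulus tending to $\infty$ appears in $E_{t_i}$ homotopic to $m$; cutting along $m$ and applying Proposition \ref{ConvergenceThick} and the non-triviality forces some component's holonomy to diverge, contradicting convergence of $\rho_t$. So $\rho_t(m)\ne I$ for $t\gg 0$, and for such $t$, $\rho_t(m)$ is hyperbolic or elliptic and hence has a well-defined axis $a_t$ in $\H^3\cup\CP^1$.

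For (\ref{iSubsequencialConvergence}), part (\ref{iThickPartConvergence}) is immediate from Proposition \ref{ConvergenceThick}: since $\rho_\infty(m)=I\ne$ the constraint there is $\rho_\infty(m)\neq I$, which does not hold — so here I must instead invoke Theorem \ref{ConvergenceThickPart}(\ref{iPositiveAreaThickPart}) and Lemma \ref{FlatOrExpanding}(2)(a): $\rho_\infty(m)=I$ means $E_t$ contains a flat cylinder $A_t$ homotopic to $m$ with $\Mod A_t\to\infty$ and $\sqrt 2\int_m|\sqrt{q_t}|\to 2\pi n i$; the thick parts $C_{t_i}|S\setminus N_m$ are then bounded by Theorem \ref{UpperInjectivityRadiusBound}, so after passing to a subsequence they converge to a $\CP^1$-structure in $\PPP(S\setminus m)$ (using Theorem \ref{HolImmersion} for the cusp parameters, passing to a further subsequence since the peripheral holonomy limits to $I$ and $\reallywidehat{\PSL(2,\C)}$ is non-Hausdorff). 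For (\ref{iIAixsConverging}), the axes $a_{t_i}\subset \H^3\cup\CP^1$ lie in a compact space (the space of geodesics together with points of $\CP^1$, suitably compactified), so a subsequence converges to $a_\infty$. For (\ref{iBoundaryComponentAtEndpoints}), the developing map $f_{t_i}$ restricted to $\til S\setminus \phi^{-1}(A)$ converges on compacts by the convergence of the thick part, and on the cylinder $\ti N_m$ the developing map is an immersion into the complement of the two fixed points of $\rho_{t_i}(m)$ (i.e. into the complement of the ideal points of $a_{t_i}$), so continuity of the limit together with the fact that the core of $\ti N_m$ must collapse (the cylinder's flat model has circumference tending to $\infty$ in the vertical direction while its $\H^3$-image shrinks, so the boundary loops of $\ti N_m$ develop onto the two endpoints of $a_\infty$) gives the claim; the precise matching of endpoints comes from Proposition \ref{CylinderHonomyEstimate}, which says the $\Ep$-image of the core of $A_t$ runs along a geodesic asymptotic to $a_t$.

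For (\ref{iTwistDieverges}), the key point is that by Lemma \ref{FlatOrExpanding}(2)(a), $\sqrt 2\int_m \sqrt{q_t}\to 2\pi n i$ with $n\ge 1$ a fixed positive integer, so the \emph{vertical} component of $\int_m\sqrt{q_t}$ converges while the cylinder $A_t$ has $\Mod A_t\to\infty$; since the twisting parameter of $X_t$ along $m$ is (up to bounded error) $\length_{X_t}(m)$ times the modulus in the flat metric, and the flat circumference stays near a positive constant while the hyperbolic length $\length_{X_t}(m)\to 0$ is compensated by the diverging number of times the vertical foliation wraps — more carefully, the developing map of the neck wraps around a one-parameter subgroup of $\PSL(2,\C)$ whose generator $\rho_t(m)$ tends to $I$, and the number of ``windings'' is forced to diverge so that the $\CP^1$-structure diverges in $\PP$ (this is the only way $C_t$ can leave every compact set while all thick pieces and the holonomy converge). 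I would make this precise by noting that if the twist stayed bounded (in $\R$), then $C_t$ would lie in a compact subset of $\PP$, contradicting assumption (\ref{iCDiverges}); the sign of the divergence is constant after passing to a subsequence, corresponding to the rotation direction of $G_t$. Finally (\ref{iIPleatedSurfaceConverging}) follows from (\ref{iSubsequencialConvergence}) via the collapsing map $\kap_t$: on $\til S\setminus\phi^{-1}(A)$, Thurston's correspondence $\beta_t\circ\ti\kap_t(z)=\Psi_z f_t(z)$ (recalled in \S\ref{sThurston}) expresses the pleated surface in terms of the converging developing map and the maximal balls, which also converge; on $\ti N_m$, the pleated surface image lies in a thin neighborhood of $a_{t_i}$ (the bending lamination restricted to the neck has weight converging to $2\pi n$, so the pleated surface folds back on itself near $a_\infty$), yielding convergence to a $\rho_\infty$-equivariant continuous map into $\H^3\cup\CP^1$.

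The main obstacle I anticipate is part (\ref{iBoundaryComponentAtEndpoints}) together with the coherence of the isotopies: one must choose the markings $\iota_t\colon S\to C_t$ (equivalently, a path of isotopies) so that simultaneously the thick-part developing maps converge, the cylinder $\ti N_m$ stays in a controlled position, and the boundary loops of $\phi^{-1}(N_m)$ develop to the (moving) ideal points of $a_{t_i}$; since $\rho_{t_i}(m)\to I$, the two fixed points of $\rho_{t_i}(m)$ can wander or collide in the limit, and ruling out pathological escape of the developing image of the neck requires the quantitative Epstein-surface estimates of Proposition \ref{CylinderHonomyEstimate} — in particular assertion (\ref{iTranslationLength}) there, that the vertical tangent framing along the core stays in an $\ep$-neighborhood of a single oriented geodesic $\ell_i$, which forces $\ell_i\to a_\infty$ and pins down the boundary behavior. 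Everything else is bookkeeping on top of Theorem \ref{UpperInjectivityRadiusBound}, Theorem \ref{ConvergenceThickPart}, Theorem \ref{HolImmersion}, and Lemma \ref{FlatOrExpanding}.
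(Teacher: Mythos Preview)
Your overall outline matches the paper's, but there is a genuine gap in your treatment of (\ref{iBoundaryComponentAtEndpoints}), and it propagates into (\ref{iIPleatedSurfaceConverging}).

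You argue that ``$f_{t_i}$ restricted to $\til S\setminus \phi^{-1}(A)$ converges on compacts by the convergence of the thick part.'' But convergence of $C_{t_i}|S\setminus N_m$ as a $\CP^1$-structure in $\PPP(S\setminus m)$ does \emph{not} give convergence of the developing map $f_{t_i}|\ti F$ when $\rho_\infty|\pi_1(F)$ has a continuous stabilizer in $\PSL(2,\C)$---and this case genuinely occurs, since $\rho_\infty$ can even be the trivial representation (\S\ref{sExoticDegeneration}). In that situation the $\PSL(2,\C)$-orbit of $\rho_\infty|\pi_1(F)$ is not closed, and $f_{t_i}|\ti F$ may fail to converge to a local homeomorphism; the paper handles this through Lemma \ref{NormalizingAndredegenerating} and Proposition \ref{DevConvergesSubsurfaces}, which show that after renormalizing by $\gamma_i\in\PSL(2,\C)$ one gets a limit developing pair, and then analyze what the \emph{unnormalized} $f_{t_i}|\ti F$ actually limits to (a constant map, or a constant map away from finitely many bubbles). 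You need this to even make sense of $f_\infty$ on the thick part.

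More seriously, your mechanism for why the boundary loops of $\ti N_m$ land on the ideal points of $a_\infty$---via ``the developing map is an immersion into the complement of the fixed points of $\rho_{t_i}(m)$'' plus Proposition \ref{CylinderHonomyEstimate}---does not pin this down. The cylinder's developing image avoids $\Fix\rho_{t_i}(m)$, but that says nothing about where the \emph{boundary} (which belongs to the thick part, not the flat cylinder) limits. The paper's argument (Proposition \ref{PunctureAtAxis}, Claim \ref{PuncturesAtAxes}) is different: suppose a boundary component $\ell$ of $\ti N_m$ does \emph{not} limit to an ideal point of $a_\infty$; using the tangential convergence $\rho_{t_i}(m)\to I$, the one-parameter subgroups $G_i$ induce a limiting foliation $\mathcal F_\infty$ of $\H^3$, and then for a nontrivial $\gamma\in\Stab Q$ (supplied by Proposition \ref{NonelementaryComplements}, Lemma \ref{PunctureNotAtAxis}, Proposition \ref{PunctureElementaryHolonomy}) the axis of $\rho_{t_i}(\gamma)$, dragged by the diverging Dehn twist power $\iota_i$, escapes every compact in the leaf space of $\mathcal F_\infty$---contradicting convergence of $\rho_t$. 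The surjectivity onto both endpoints when $a_\infty$ is a geodesic (Lemma \ref{DifferentEndpoints}) likewise uses the diverging twist, not an Epstein estimate. Your Epstein-based sketch does not supply this contradiction, and without it the boundary behavior is unproved.
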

Notice that, by the surjectivity in (\ref{iBoundaryComponentAtEndpoints}),
if $a_\infi$ is a geodesic, then the different boundary components of $\ti{A}$ map to the different endpoints of $a_\infi$. 

We will prove (\ref{iIPleatedSurfaceConverging})  in the next subsection  (\S \ref{smIPleatedSurface}).
In this section, we will prove the other assertions: 
(\ref{iNotExactlyI}) will be proved in Lemma
\ref{m-neq-I}; (\ref{iTwistDieverges}) will be proved in  Lemma \ref{IDivergingTwist}; (\ref{iBoundaryComponentAtEndpoints}) will be proved in Proposition \ref{PunctureAtAxis}.
The proof of 
(\ref{iThickPartConvergence}) is similar to the proof of \Cref{ConvergenceThickPart}.

Let $C_\infi \cong (\sigma_\infi, \nu_\infi)$  denote the Thurston parameterization, where $\sigma_\infi$ be a hyperbolic structure in the Teichmüller space $\TT(S \minus m)$ and $\nu_\infi$ be a measured lamination on $\sigma_\infi$. 
Then $\sigma_\infi$ has two cusps.
 At each cusp $c$ of $\sigma_\infi$, there are only finitely many leaves of $\nu_\infi$ ending at $c$ by a basic property of geodesic laminations (\cite{CanaryEpsteinGreen84}).
 Then, since $\rho_\infi(m) = I$, the total weight of those leaves is a positive $2\pi$-multiple.
\begin{lemma}\Label{IrrationalThenNonelementary}
If
$\nu_\infi$ contains an irrational sublamination, then the holonomy of $C_\infi$ is non-elementary. 
\end{lemma}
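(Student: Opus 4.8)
The plan is to show that an irrational sublamination $\mu$ of $\nu_\infi$ forces the holonomy of $C_\infi$ to be non-elementary by locating two loops on $S\minus m$ whose holonomy images generate a non-elementary subgroup. First I would recall that an irrational (minimal, non-simple-closed) sublamination $\mu$ of a measured lamination on a finite-area hyperbolic surface $\sigma_\infi$ fills a subsurface $Y$ of $\sigma_\infi$, and in particular the complementary regions of $\mu$ in $Y$ are all ideal polygons or crowns of finite type. The key is that the pleated surface $\beta_\infi\col\ti\sigma_\infi\to\H^3$ associated to $(\sigma_\infi,\nu_\infi)$ restricted to the preimage of $Y$ is a pleated map bent along a lift $\ti\mu$ of $\mu$, and I want to argue its image cannot be contained in a geodesic, a point, or a geodesic-plus-endpoints.

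**Key steps.** (1) Suppose, for contradiction, that $\rho_\infi\col\pi_1(C_\infi)\to\PSL(2,\C)$ is elementary, so $\Im\rho_\infi$ fixes a point $x\in\H^3\cup\CP^1$ or preserves an unordered pair $\{a,b\}\subset\CP^1$. (2) Pick two closed leaves' worth of data: more precisely, since $\mu$ is minimal and irrational, one can find two simple closed curves $\gamma_1,\gamma_2$ carried by a train track neighborhood of $\mu$ (hence with geometric intersection number $i(\gamma_1,\mu),i(\gamma_2,\mu)>0$ and with $\gamma_1,\gamma_2$ themselves filling a subsurface of $Y$, in particular $i(\gamma_1,\gamma_2)>0$). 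Because these curves cross $\mu$ essentially, the pleated surface bends nontrivially along their axes, so $\rho_\infi(\gamma_1)$ and $\rho_\infi(\gamma_2)$ are hyperbolic elements (their axes are realized by $\beta_\infi$ as geodesics whose endpoints lie in the limit set). (3) Now run the standard ping-pong/elementary-group dichotomy: if $\rho_\infi$ were elementary with two hyperbolic elements $\rho_\infi(\gamma_1),\rho_\infi(\gamma_2)$, their axes would have to coincide (sharing both endpoints) or share exactly one endpoint. Sharing both endpoints means $\gamma_1,\gamma_2$ have parallel axes on the pleated surface, contradicting $i(\gamma_1,\gamma_2)>0$ together with the fact that $\beta_\infi$ restricted to $\ti Y$ is a bending map along $\ti\mu$ (two curves with positive intersection number cannot both have axes mapped to the same $\H^3$-geodesic, as that would force $\mu$ to be supported on a single closed geodesic, contradicting irrationality). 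Sharing exactly one endpoint and no orientation-reversing elliptic (ruled out because $\Im\rho_\infi\subset\PSL(2,\C)$ fixing a point on $\CP^1$ would make $\rho_\infi(\pi_1 Y)$ reducible, forcing $\mu$ to lift to a lamination whose leaves all spiral to a common point — again impossible for a filling irrational lamination whose complementary ideal polygons have $\ge 3$ distinct ideal vertices mapping into $\CP^1$, as in the argument of Lemma~\ref{EllipticNeckDiscreteStabilizer} and \cite{Baba[19]}).

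**Main obstacle.** The delicate point is step (2)–(3): turning "$\mu$ is irrational, hence filling a subsurface $Y$" into the concrete statement that the image of the pleated surface over $\ti Y$ is not contained in a geodesic (or geodesic-plus-ideal-endpoints). The cleanest route is to use that each complementary region $R$ of $\ti\mu$ in $\ti Y$ is an ideal polygon with at least three ideal vertices, and $\beta_\infi|R$ maps $R$ into $\H^3$ totally geodesically with its ideal vertices going to three distinct points of $\CP^1$ (three distinct fixed points of parabolic or hyperbolic peripheral-type elements of $\pi_1 Y$) — exactly the mechanism already invoked in the hyperbolic and elliptic sections of the paper. Once three distinct points of the limit set are produced, $\Im\rho_\infi|\pi_1(Y)$ is non-elementary, hence so is $\Im\rho_\infi$, contradicting the assumption; this closes the proof. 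I would cite \cite{Baba[19]} for the statement that the pleated-surface image of a complementary region has its ideal vertices in the limit set, mirroring its use in Claim~\ref{HyperbolicLimit}(\ref{iHyperbolicNonelementary}) and Lemma~\ref{EllipticNeckDiscreteStabilizer}.
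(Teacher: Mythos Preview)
Your proposal has a real gap in step~(2). You assert that $\rho_\infi(\gamma_1),\rho_\infi(\gamma_2)$ are hyperbolic because $\gamma_1,\gamma_2$ cross $\mu$ essentially and ``their axes are realized by $\beta_\infi$ as geodesics whose endpoints lie in the limit set.'' But $\gamma_1,\gamma_2$ are closed curves on the surface, not leaves of the lamination; positive transverse measure with $\mu$ does not by itself force the holonomy to be hyperbolic (the bending contributions along the curve could in principle cancel or produce an elliptic or parabolic element). Your alternative route via ideal polygons with $\geq 3$ ideal vertices has the same difficulty from a different angle: the leaves of an irrational lamination are not periodic, so their ideal endpoints in $\partial\H^2$ are not fixed by any element of $\pi_1$, hence not obviously in the limit set. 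This is exactly what makes the irrational case different from Claim~\ref{HyperbolicLimit}(\ref{iHyperbolicNonelementary}) and Lemma~\ref{EllipticNeckDiscreteStabilizer}, where the relevant ideal points \emph{are} fixed points of peripheral elements.

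The paper's proof avoids both problems by a different mechanism. It takes a minimal irrational sublamination $N$, the subsurface $F$ it fills, and a single closed geodesic $\ell$ approximating $N$. Then it invokes a black-box from \cite{Baba-10}: for each component $R$ of $F\minus\ell$, the restriction $\beta_\infi|R$ is a \emph{quasi-isometric embedding} into $\H^3$. That single statement immediately gives that $\rho_\infi|\pi_1 R$ is non-elementary, and hence so is $\rho_\infi$. The QI-embedding result absorbs exactly the analytic content you are missing (it rules out collapse of the pleated surface over a region whose boundary carries the approximating curve).
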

\begin{proof}
Suppose that $\nu_\infi$ contains an irrational sublamination. 
Then, there is a minimal irrational sublamination $N$ of $L$, so that every leaf of $N$ is dense in $N$.
Let $F$ be a (topologically) smallest subsurface of $S$ containing $N$, such that $F \sub N$ is a $\pi_1$-injective.
Let $\ell$ be a geodesic loop in $\sigma_\infi$ which is a good approximation of $N$. 
Let $\beta_\infi\col \H^2 \to \H^3$ be the equivariant pleated surface corresponding to $ (\sigma_\infi, \nu_\infi)$.
Then, for each  component $R$ of $F \minus \ell$,  the restriction of $\beta_\infi$ to $R$ is a quasi-isometric embedding (\cite{Baba-10}). 
Thus $\rho_\infi | \pi_1 R$ is non-elementary, immediately implying the lemma. 
\end{proof}

Using the assumption that $C_t$ is pinched along a single loop, we prove the following: 
\begin{proposition}\Label{NonelementaryComplements}
For each component $F$ of $S \minus m$,
the restriction of $\rho_\infi$ to $\pi_1 F$ is a non-trivial representation in the representation variety. 
\end{proposition}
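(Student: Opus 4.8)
The plan is to argue by contradiction: suppose that for some component $F$ of $S \minus m$ the restriction $\rho_\infi | \pi_1 F$ is the trivial representation, and derive a contradiction with the hypothesis that $X_t$ is pinched along the \emph{single} loop $m$ (not a larger multiloop). First I would recall from Theorem \ref{TrivialNeck}\,(\ref{iThickPartConvergence}) that $C_t | (S \minus A)$ converges (after passing to a subsequence, and up to isotopy) to a $\CP^1$-structure $C_\infi \in \PPP(S \minus m)$ with $C_\infi \cong (\sigma_\infi, \nu_\infi)$, so that $F$ corresponds to a component $F_\infi$ of $C_\infi$ whose holonomy would then be trivial. Since the peripheral loops of $F_\infi$ bound the cusps coming from $m$, and $\rho_\infi(m) = I$, this is at least not immediately absurd; the substance is to see that a $\CP^1$-structure on a negative-Euler-characteristic surface with trivial holonomy forces a second degenerating loop inside $F$.

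The key mechanism is the following. If $\rho_\infi|\pi_1 F = I$, then the developing map $f_\infi$ of $F_\infi$ (which exists on $\til{F_\infi}$ and is a local homeomorphism) descends to a genuine local homeomorphism $F_\infi \to \CP^1$, i.e. $F_\infi$ has a \emph{developing map defined on $F_\infi$ itself}. A closed surface with punctures and negative Euler characteristic cannot immerse into $\CP^1$ unless it has enough topology absorbed into cusps — more precisely, by the classification of $\CP^1$-structures with cusps (Lemma \ref{PunctureAtDomain} and Proposition \ref{CuspClassification}), a developing map into $\CP^1$ invariant under the trivial group with all cusps of order-two-pole type must factor through a branched-cover-like description that forces $\sigma_\infi$ to be degenerate, i.e. $\nu_\infi$ restricted to $F$ must contain a loop of infinite weight, or equivalently the hyperbolic structure $\sigma_\infi|F$ is not a finite-area surface homeomorphic to $F$. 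But by Theorem \ref{ConvergenceThickPart} the thick part of $C_t|F$ converges to a finite-area hyperbolic structure with exactly the cusps coming from $m$ — contradiction. I would make this precise by examining, for each component $F$ of $S \minus m$ with allegedly trivial limit holonomy, a loop $\ell \subset F$ that is not peripheral: by Proposition \ref{CuspClassification} and Lemma \ref{FlatOrExpanding}, $\rho_t(\ell)$ with $\rho_\infi(\ell) = I$ forces a degenerating flat or expanding cylinder in $E_t$ homotopic to $\ell$, hence $\length_{X_t} \ell \to 0$, contradicting that $X_t$ is pinched along $m$ alone (so $\length_{X_t}\ell$ is bounded between two positive numbers for every essential $\ell$ in $S\minus m$ not homotopic to $m$).

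More directly, here is the cleanest route I would take. Since $F$ has negative Euler characteristic, $\pi_1 F$ is a nonabelian free group; pick $\gamma_1, \gamma_2 \in \pi_1 F$ generating a rank-two free subgroup with $\gamma_1, \gamma_2$ represented by essential simple closed curves in $F$ that are neither peripheral nor homotopic to $m$. If $\rho_\infi|\pi_1 F = I$ then in particular $\rho_\infi(\gamma_1) = \rho_\infi(\gamma_2) = I$. Applying the trichotomy of Lemma \ref{FlatOrExpanding} to the loop $\gamma_1$ (with $\rho_\infi(\gamma_1) = I$, which is the case $\rho_\infi(\gamma_1) = I$ of that lemma): for $t \gg 0$, $E_t$ contains a flat cylinder $A_t$ homotopic to $\gamma_1$ with $\Mod A_t \to \infi$. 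But $\Mod A_t \to \infi$ forces $\length_{X_t}\gamma_1 \to 0$ (the extremal length of $\gamma_1$ in $X_t$ is at most $1/\Mod A_t \to 0$). Since $\gamma_1$ is an essential loop in $S \minus m$ not homotopic to $m$, this contradicts the definition of $X_t$ being pinched along the loop $m$, which requires $\length_{X_t}\gamma_1$ bounded below by a positive constant. Hence $\rho_\infi|\pi_1 F$ is not trivial.

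\textbf{Main obstacle.} The delicate point is subsequences and markings: Theorem \ref{TrivialNeck}\,(\ref{iThickPartConvergence}) only gives convergence of $C_t|(S\minus A)$ up to a subsequence and up to a path of isotopies of $S$, so I must be careful that the loop $\gamma_1 \subset F$ I fix is tracked correctly under these isotopies — but since the isotopies fix the homotopy class of $m$ and hence the isotopy class of the subsurface $F$, and the pinching hypothesis on $X_t$ is stated for \emph{all} $t \geq 0$ and all essential loops in $S \minus m$ simultaneously (not just along a subsequence), the contradiction is genuinely with the full path and does not require choosing the subsequence compatibly. The only real work is to confirm that Lemma \ref{FlatOrExpanding} applies to an arbitrary essential loop $\gamma_1$ in $S\minus m$ and not merely to the pinched loop $m$ — but that lemma is about the holonomy of a loop with $\rho_\infi$-image equal to $I$ and does not use that the loop is a component of a pinching multiloop; what it needs is that $E_t$ \emph{does} degenerate along $\gamma_1$, and that follows from $\rho_t(\gamma_1) \to I$ together with the standard fact that a sequence of $\CP^1$-structures whose holonomy along a loop $\gamma$ tends to $I$ while the underlying complex structure does \emph{not} degenerate along $\gamma$ would have to converge along $\gamma$ — so I may instead need to run the contradiction in the other order: assume $\length_{X_t}\gamma_1$ stays bounded below, conclude (by the holonomy theorem, Theorem \ref{HolImmersion}, applied to a thick annular neighborhood of $\gamma_1$) that $C_t$ restricted near $\gamma_1$ converges to a $\CP^1$-structure with trivial peripheral holonomy, and then invoke the flexibility-of-trivial-holonomy-cusps phenomenon (cf.\ Theorem \ref{HolImmersion} and the remark after Theorem \ref{LImitInDevelopingMap}) to find a diverging direction in $\PP$ fixing the holonomy — contradicting the convergence setup for $C_t$ in $\PP$ combined with $X_t$ pinched only along $m$. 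I will choose whichever of these two routes is cleanest once the details of Lemma \ref{FlatOrExpanding}'s hypotheses are pinned down.
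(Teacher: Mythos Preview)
Your ``cleanest route'' has a genuine gap. Lemma \ref{FlatOrExpanding} is stated and proved only for a loop $m$ belonging to the pinching multiloop $M$: its proof begins by observing that, since $X_t$ is pinched along $M$, for each loop of $M$ there is a flat or expanding cylinder in $E_t$ homotopic to it whose modulus diverges. For your $\gamma_1 \subset F$, which by hypothesis is \emph{not} being pinched, there is no such cylinder, and nothing in the lemma survives. Your ``Main obstacle'' paragraph half-recognizes this, but the claim that the lemma ``does not use that the loop is a component of a pinching multiloop'' is simply false, and the alternative (``run the contradiction in the other order'') does not work either: there is no mechanism by which $\rho_\infi(\gamma_1)=I$ alone, with $\gamma_1$ in the thick part, forces any degeneration of $C_t$ near $\gamma_1$.

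The paper's argument is entirely different and works directly on the limit object. It uses the Thurston parameters $(\sigma_\infi,\nu_\infi)$ of the limit $C_\infi$. Since $\rho_\infi(m)=I$, the total weight of $\nu_\infi$ at each cusp is a positive $2\pi$-multiple; in particular $\nu_\infi$ is nonempty near the cusps. If $\nu_\infi$ contains an irrational sublamination one is already done by Lemma~\ref{IrrationalThenNonelementary}. Otherwise $\nu_\infi$ consists of isolated leaves, and because each component of $\sigma_\infi$ has only one or two cusps (here the single-loop hypothesis enters), there is a leaf $\ell$ of $\nu_\infi$ with both endpoints at the \emph{same} cusp $c$. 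The bending map sends $\ell$ to a genuine biinfinite geodesic in $\H^3$; a loop $\gamma$ obtained by closing up $\ell$ near $c$ by a short horocyclic arc then has holonomy that carries one ideal endpoint of $\beta_\infi(\ell)$ near the other, so $\rho_\infi(\gamma)\neq I$. This produces an explicit nontrivial element of $\rho_\infi(\pi_1 F)$.

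Incidentally, your first idea --- that with trivial holonomy the developing map of $F_\infi$ descends to $F_\infi$ itself and extends to a branched cover $\bar F\to\CP^1$ branched only over the (one or two) filled-in punctures --- can be pushed through: Riemann--Hurwitz then gives $\chi(\bar F)\geq 2d-2(d-1)=2$, impossible since $\bar F$ has genus at least $1$. You abandoned this too quickly in favor of the flawed route through Lemma~\ref{FlatOrExpanding}.
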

\begin{remark}
On the other hand, the restriction $\rho_\infty \vert \pi_1(F)$ may be the trivial representation in the character variety (see \Cref{ExoticDegenerationHyperbolic}).
\end{remark}
\begin{proof}
If $\nu_\infi$ contains an irrational lamination, by Lemma \ref{IrrationalThenNonelementary},  $\rho_\infi$ is non-elementary.  
Then we can assume, without loss of generality, that $\nu_\infi$ contains only isolated leaves, and $\nu_\infi$ divides $\sigma_\infi$ into ideal polygons. 

Since each component of $\sigma_\infi$ has one or two cusps, there is a leaf $\ell$ of $\nu_\infi$ whose endpoints are at a single cusp $c$ of $\sigma_\infi$. 
Let $D$ be a small horodisk quotient neighborhood of $c$.
Then $\ell \minus D$ is a long geodesic segment, and by connecting its endpoints by a horocyclic simple arc in $\bdr D$, we obtained a simple loop $\gam$ (which is a good approximation of $\ell$); see \Cref{fNontrivialHolonomy} (Left).

Pick a lift $\ti\ell$ of $\ell$ to the universal cover $\H^2$ of $\sigma_\infty$ and fix an orientation. 
Then there is  $\alpha_\gam \in \pi_1(S)$ representing $\gam$ which takes the oriented (bi-infinite) geodesic $\ti\ell$ to an oriented geodesic starting from the endpoint of $\ti\ell$; see \Cref{fNontrivialHolonomy} (Right).
Clearly $\beta_\infi(\ti\ell)$ is an oriented geodesic in $\H^3$.
Then, by the equivariant proeprty, the holonomy along $\alpha$ takes the oriented geodesic ${\beta}_\infi(\ti\ell)$ to an oriented geodesic starting from the endpoint of ${\beta}_\infi(\ti\ell)$,  and thus $\rho_\infi(\gamma) \neq I$.
\begin{figure}[H]
\begin{overpic}[scale=.15,
] {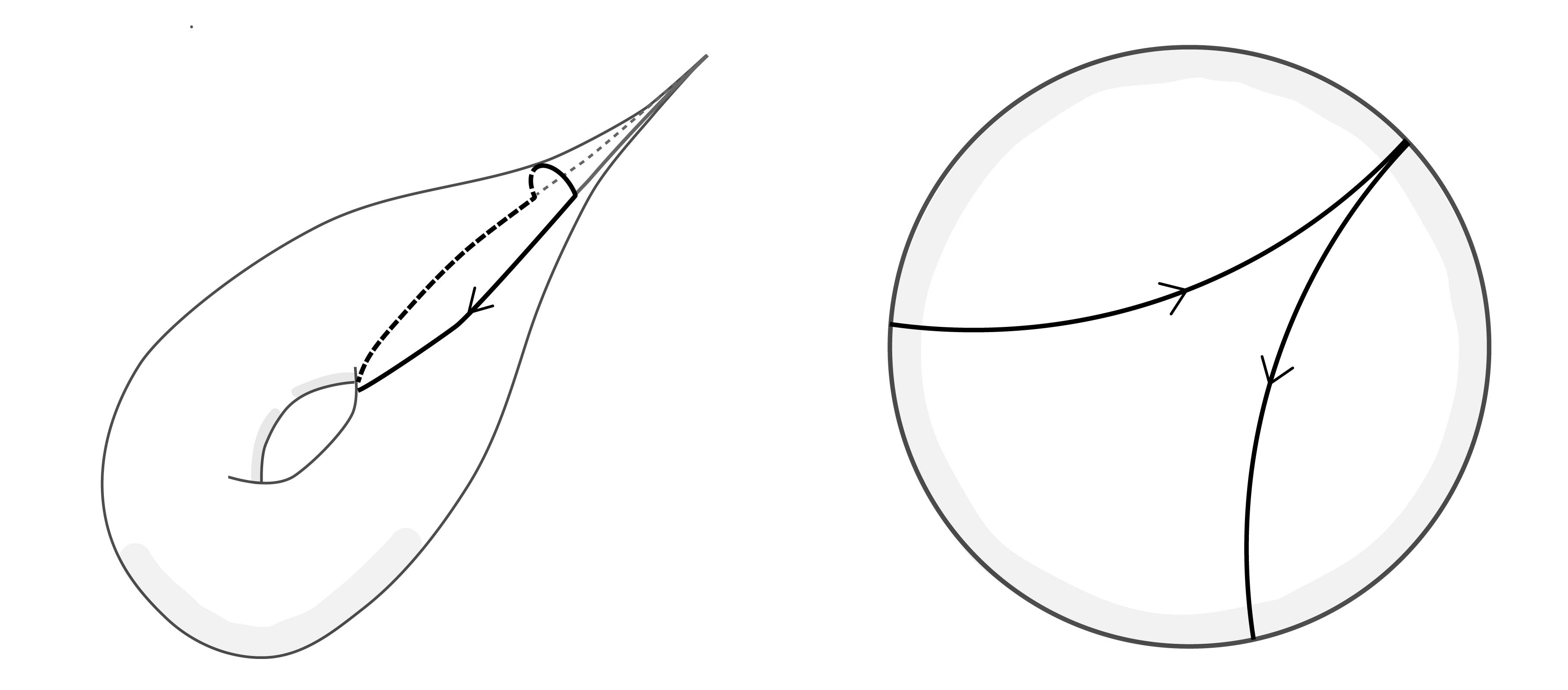} 
    \put( 10, 10){\textcolor{darkgray}{$\sigma_\infty$}}  
    \put( 26, 18){\textcolor{Black}{$\gamma$}}  
       \put(70, 18){\textcolor{Black}{$\ti\ell$}}  
              \put(81, 11){\textcolor{Black}{$\alpha_\gam\,\ti\ell$}}  
    \put(68, 10){\textcolor{darkgray}{$\H^2$}}  
      \end{overpic}
\caption{}\label{fNontrivialHolonomy}
\end{figure}
\end{proof}

\begin{lemma}\Label{ContinuousStabilizerFixedPoint}
Let $G$ be a non-trivial subgroup of $\PSL_2\C$.
Consider the (pointwise) stabilizer of the action $\PSL_2\C \curvearrowright G$ by conjugation. 
Suppose that the stabilizer is continuous. 
Then there is a set $\Lambda$ of one or two points of $\CP^1$ fixed pointwise by the action of $G$. 
\end{lemma}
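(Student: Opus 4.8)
\textbf{Proof plan for Lemma \ref{ContinuousStabilizerFixedPoint}.}

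The plan is to analyze the stabilizer $Z$ of $G$ under conjugation, observe that it is a (closed) subgroup of $\PSL(2,\C)$ of positive dimension by the continuity hypothesis, and then use the classification of positive-dimensional subgroups of $\PSL(2,\C)$ to pin down what $G$ must look like. First I would recall that a nontrivial connected subgroup of $\PSL(2,\C)$ of positive dimension is, up to conjugacy, one of: the full group, a copy of $\PSL(2,\R)$ (or $\mathrm{PSU}(2)$), the upper-triangular (affine) group $\{z \mapsto az+b\}$, the diagonal torus $\{z \mapsto az\}$, or the unipotent one-parameter group $\{z \mapsto z+b\}$; in particular, apart from $\PSL(2,\C)$ and its conjugates of $\PSL(2,\R)$-type, every positive-dimensional subgroup fixes either one or two points of $\CP^1$.

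The key step is the observation that $Z$ centralizes $G$: each element of $Z$ commutes with every element of $G$ (pointwise stabilizer means exactly $\gamma g \gamma^{-1} = g$ for all $g \in G$). So $G$ is contained in the centralizer $C_{\PSL(2,\C)}(Z)$. Now take any one-parameter subgroup $\{z_s\}_{s\in\R} \subset Z$ obtained from the assumed continuity; its generator is a nonzero element $\zeta \in \PSL(2,\C)$, which is either loxodromic (diagonalizable with an axis, hence two fixed points) or parabolic (one fixed point). In the first case the centralizer of $\zeta$ in $\PSL(2,\C)$ is the diagonal torus fixing the same two points, so $G \subset C(\zeta)$ fixes both of those points and $\Lambda$ is that pair. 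In the second case the centralizer of $\zeta$ is the one-parameter unipotent group fixing the single parabolic fixed point, so $G$ fixes that point and $\Lambda$ is that single point. Either way, since $G$ is nontrivial it must genuinely fix the one or two points of $\Lambda$ pointwise, which is exactly the conclusion. (The case where $Z$ is all of $\PSL(2,\C)$, or a conjugate of $\PSL(2,\R)$, cannot arise: such a $Z$ has trivial centralizer in $\PSL(2,\C)$, forcing $G = \{I\}$, contrary to hypothesis.)

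The main obstacle — really the only subtlety — is making precise what "continuous stabilizer" means and extracting a genuine one-parameter subgroup: one should argue that a closed subgroup of the Lie group $\PSL(2,\C)$ that is not discrete contains a nontrivial one-parameter subgroup (by Cartan's closed-subgroup theorem its identity component is a positive-dimensional Lie subgroup), and then choose a single generator $\zeta$ to which the centralizer computation applies. Once $\zeta$ is in hand, the rest is the elementary centralizer bookkeeping sketched above. I would also remark that if $G$ itself is elementary this is immediate, and the content is precisely the non-elementary-looking case, which the centralizer argument handles uniformly.
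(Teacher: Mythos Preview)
Your argument is correct and takes a genuinely different route from the paper. The paper argues by case analysis on the element types appearing in $G$ (hyperbolic present, parabolic present, only elliptic), in each case showing that a non-discrete centralizer forces all nontrivial elements of $G$ to share a common axis or fixed point. You instead work from the other side: pick a nontrivial element $\zeta$ in the identity component of the centralizer $Z$ and use $G\subset C_{\PSL(2,\C)}(\zeta)$, together with the elementary computation of centralizers of single elements in $\PSL(2,\C)$. This is cleaner and more uniform; it avoids the threefold case split entirely.

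Two small points to tighten. First, your phrase ``its generator is a nonzero element $\zeta\in\PSL(2,\C)$'' conflates the Lie-algebra generator with a group element; it is cleaner to simply pick a nontrivial $\zeta=z_{s_0}$ in the one-parameter subgroup. Second, in the semisimple case you should note that you choose $\zeta$ not of order two: the centralizer of an order-two elliptic contains the element swapping its fixed points, so $C(\zeta)$ would only preserve (not pointwise fix) the pair. Since you have an entire one-parameter family $\{z_s\}$ at your disposal, almost every $s$ avoids this, and in fact the swap would force $z_s=z_{-s}$ for all $s$, which is impossible. With that caveat your argument goes through; the paper's version sidesteps this by working inside $G$ rather than $Z$, at the cost of a longer case analysis.
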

\begin{proof}
Suppose that $G$ has a continuous stabilizer. Then, clearly,  $G$ is an elementary subgroup of $\PSL_2\C$. 
First suppose, in addition, that $G$ contains a hyperbolic element $h$. 
Then no element in $G$ exchanges the fixed points of $h$, as otherwise, the stabilizer cannot be continuous. 
Therefore $\Lambda$ is the fixed point set of $h$, and all elements in $G \minus \{I\}$ must be hyperbolic or elliptic elements with the same axis. 

Next suppose that $G$ contains a parabolic element $p$.
Then there is no elliptic element or hyperbolic element in $G$, as otherwise, the stabilizer cannot be continuous. 
Then $\Lambda$ must be the single fixed point of $p$, and all $G \minus \{I\}$ are all parabolic elements with the same fixed point.

Suppose that $G$ contains an elliptic element $e$ and contains no hyperbolic element. 
Then, similarly,  $\Lambda$ must be the fixed point set of $e$, and $G$ contains no parabolic element.
Moreover $G \minus \{I\}$ are all elliptic elements with a common axis. 
Then  $\Lambda$ is the set of the two endpoints of the axis.
\end{proof}
Given a $\CP^1$-surface with a cusp such that the holonomy around the cusp is trivial, its developing map continuously extends to the cusp, so that it is a branched covering map near the cusp.
\begin{lemma}\Label{PunctureNotAtAxis}
Let $F$ be a compact surface with finitely many punctures, such that the Euler characteristic of $F$ is negative.
Let $(f, \rho)$ be a developing pair of a $\CP^1$-structure $C$ on $F$ such that  
\begin{itemize}
\item $\rho\col \pi_1(F) \to \PSL_2\C$ is not the trivial representation, 
\item the holonomy around each puncture is trivial, and 
\item  the stabilizer of $\Im \,\rho$ in $\PSL_2\C$ is continuous;  thus let $\Lambda \sub \CP^1$ be the one- or two-point set in Lemma \ref{ContinuousStabilizerFixedPoint}.
\end{itemize}
Then, there is a cusp $p$ of $F$ such that $f(p)$ is not a point of  $\Lambda$. 
\end{lemma}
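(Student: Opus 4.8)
The plan is to argue by contradiction: suppose every puncture of $F$ develops to a point of $\Lambda$. Since the holonomy around each puncture is trivial, the developing map $f$ extends continuously over each puncture (as a branched covering near the puncture), so the extended developing map sends the full set of punctures into $\Lambda$. Let $D = f^{-1}(\Lambda) \subset \til F$, which is $\rho$-invariant and hence descends to a finite subset $\bar D$ of the closed surface $\bar F$ obtained by filling in the punctures of $F$ (finiteness: $D$ is discrete and $\rho$-invariant, and the quotient is a closed surface, exactly as in Lemma \ref{PunctureAtDomain}). By hypothesis $\bar D$ contains all the filled-in punctures.

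The key structural input is the invariant geometry of $\Omega := \CP^1 \setminus \Lambda$. I would split into the two cases of Lemma \ref{ContinuousStabilizerFixedPoint}. If $\#\Lambda = 2$, then $\Omega = \CP^1 \setminus \{2 \text{ points}\}$ carries a complete $\Im\rho$-invariant Euclidean (cylindrical) metric, so pulling it back by $f$ and restricting to $\til F \setminus D$ gives an incomplete Euclidean metric whose completion, after adding $D$, is a Euclidean cone metric on a subset of $\bar F \setminus \bar D$ — but then $F \setminus \bar D$ (a surface with $\chi < 0$ minus finitely many points removed) would admit a complete flat metric, forcing $\chi \le 0$ to be $\ge 0$, a contradiction. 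If $\#\Lambda = 1$, then $\Omega = \C$ carries an $\Im\rho$-invariant complex-affine structure (the stabilizer being continuous means $\Im\rho$ lies in the affine group fixing $\Lambda = \{\infty\}$), so $F \setminus \bar D$ inherits a complete complex-affine structure; such a structure on a surface has nonnegative Euler characteristic (the affine action preserves a parallel line field on $\C$ away from branch/zero locus, which one pushes through the standard index/Euler-characteristic argument), again contradicting $\chi(F) < 0$. This is exactly the dichotomy and the contradiction mechanism of Lemma \ref{PunctureAtDomain}, now transplanted to the trivial-holonomy-around-punctures setting where continuity of the extension over punctures replaces the "single developing point" hypothesis there.

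The one point requiring genuine care — and the main obstacle — is handling the branch points of $f$ and the points of $D$ that are \emph{not} punctures: near such a point $f$ looks like $z \mapsto z^k$, so the pulled-back invariant metric on $\Omega$ acquires a cone singularity with excess angle, not a nice flat chart. I would address this by noting the cone angles at such points are positive multiples of the ambient invariant angle and hence contribute the "correct sign" to a Gauss–Bonnet-type count: in the $\#\Lambda = 2$ case the total angle defect is $\le 0$ while Gauss–Bonnet would demand it equal $2\pi\chi(F \setminus \bar D) < 0$ only if the surface were closed — more precisely I would remove small disks around all of $D$, apply Gauss–Bonnet with boundary to $\til F$ minus these disks (equivariantly, i.e. to a fundamental domain), and observe the boundary terms and cone contributions cannot make up the deficit forced by $\chi(F) < 0$. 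In the $\#\Lambda = 1$ case the analogous statement is that a complex-affine structure with cone points of angle in $2\pi\Z_{>0}$ still cannot live on a surface of negative Euler characteristic with punctures filled, since the developing-map/holonomy obstruction (the affine holonomy's linear part generating the translation structure) forces a parallel line field whose index sum is $\chi$, which must then be $\ge 0$. So the real work is making the Gauss–Bonnet / line-field bookkeeping with cone points and the equivariance rigorous; the large-scale logic is a direct adaptation of Lemma \ref{PunctureAtDomain}.

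Finally, to set up the contradiction cleanly I would first reduce to the case where $F$ is connected (apply the argument to each component; some component has $\chi < 0$) and record that, since $\rho$ is nontrivial while its stabilizer is continuous, $\Im\rho$ is a nontrivial elementary subgroup of one of the three types in Lemma \ref{ContinuousStabilizerFixedPoint}, which is precisely what pins down $\Omega$'s invariant geometry. With that in place the two geometric sub-arguments above complete the proof.
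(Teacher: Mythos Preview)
Your approach is the paper's: suppose all punctures develop into $\Lambda$, pull back an invariant geometry on $\Omega=\CP^1\setminus\Lambda$ to $F$ minus finitely many points, and contradict $\chi(F)<0$. Two places where you overcomplicate it. First, the case split is unnecessary: by Lemma~\ref{ContinuousStabilizerFixedPoint}, when $\#\Lambda=1$ every nontrivial element of $\Im\rho$ is parabolic with that common fixed point, i.e.\ a translation of $\C$, so the standard Euclidean metric on $\C$ is already $\Im\rho$-invariant and the argument you give for $\#\Lambda=2$ applies verbatim---no affine/line-field detour is needed. The paper's proof is accordingly three lines: $\CP^1\setminus\Lambda$ carries a complete $\Im\rho$-invariant Euclidean metric in either case, pull it back, done. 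Second, your cone-point worry is misplaced: the developing map $f$ is by definition a local homeomorphism on all of $\til F$, so the pulled-back flat metric is nonsingular on $F$ minus the finite descended set $\bar D$; each point of $\bar D$ (including the punctures, where the \emph{extended} $f$ may branch but which are absent from $F$ anyway) becomes a complete end of the flat metric rather than a cone point, so no Gauss--Bonnet bookkeeping with cone angles is required.
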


\begin{proof}
Notice that $\CP^1$ minus $\Lambda$ admits a complete Euclidean metric invariant under $\Im \rho$, which is unique up to scaling.
Thus, if $f$ takes all cusps of $F$ into $\Lambda$, then the surface $F$ minus finitely many points admits a complete Euclidean metric. 
This is a contradiction as the Euler characteristic of $F$ is negative.
\end{proof}

The next proposition immediately follows from \Cref{PunctureNotfixed}.
\begin{proposition}\Label{PunctureElementaryHolonomy} 
Let $F$ be a compact connected surface with two punctures, such that the Euler characteristic of $F$ is negative. 
Let $C = (f, \rho)$ be a $\CP^1$-structure on $F$, such that 
\begin{itemize}
\item $\Im\, \rho$ has a continuous stabilizer in $\PSL_2\C$;
\item  the holonomy around each puncture is trivial;
\item the degrees around the two punctures are the same.
\end{itemize}
Then no cusp of $C$ maps to a point of $\Lambda$  by $f$, where $\Lambda$ is as in Lemma \ref{ContinuousStabilizerFixedPoint}.
\end{proposition}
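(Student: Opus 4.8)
The plan is to upgrade the flat‑metric argument of Lemma~\ref{PunctureNotAtAxis} to a complex‑analytic one, producing numerical identities that the hypothesis $d_p=d_q$ on the degrees around the punctures can break; this sidesteps the Thurston‑parameter case analysis (closed and spiralling leaves of $L$) that a direct analogue of Proposition~\ref{PunctureNotfixed} would require. First I would apply Lemma~\ref{PunctureNotAtAxis} and relabel so that the puncture $p$ does not develop to a point of $\Lambda$, and then argue for a contradiction, assuming the other puncture $q$ develops to some $\lambda\in\Lambda$. Since the stabilizer of $\Im\rho$ is continuous, Lemma~\ref{ContinuousStabilizerFixedPoint} shows that $\Im\rho$ fixes $\Lambda$ pointwise --- in particular no element of $\Im\rho$ interchanges the two points of $\Lambda$ when $|\Lambda|=2$, and $\Lambda$ is nonempty because $\Im\rho\ne\{I\}$. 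Hence $\Omega=\CP^1\setminus\Lambda$ carries a genuinely $\Im\rho$‑invariant holomorphic $1$‑form $\omega$, nowhere zero on $\Omega$: one may take $\omega=dz/z$ (two simple poles on $\CP^1$, with residues $\pm1$) when $|\Lambda|=2$, and $\omega=dz$ (one double pole) when $|\Lambda|=1$. Using invariance, the pullback $f^\ast\omega$ descends to a holomorphic $1$‑form $\eta$ on $F\setminus Z$, where $Z\subset F$ is the set of interior points at which $f$ takes a value in $\Lambda$; $Z$ is finite because $f^{-1}(\Lambda)$ is closed, discrete and $\pi_1$‑invariant, and does not accumulate at the punctures (near $\overline p$ and $\overline q$ the developing map is a branched cover of a disc that meets $\Lambda$ in at most one point).

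The next step is to observe that $\eta$ extends to a meromorphic $1$‑form on the \emph{closed} surface $\overline F$ obtained by filling in $p$ and $q$. Extension across $Z$ is automatic; extension across $\overline p$ and $\overline q$ uses the trivial‑holonomy hypothesis, since by the remark preceding Lemma~\ref{PunctureNotAtAxis} the developing map then extends over each of these punctures as a branched cover, of degree $d_p$ and $d_q$ respectively (the ``degrees around the punctures''). Computing $f^\ast\omega$ in branched‑cover coordinates pins down $\operatorname{div}\eta$ on $\overline F$ exactly: there is a zero of order $d_p-1$ at $\overline p$ (because $f(\overline p)\notin\Lambda$ is a regular point of $\omega$) and no other zero; when $|\Lambda|=2$ there is a simple pole at each point of $Z$ and a simple pole of residue $\pm d_q$ at $\overline q$; when $|\Lambda|=1$ there is a double pole at each point of $Z$ and a pole of order $d_q+1$ at $\overline q$.

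Finally I would impose the two standard constraints on $\eta$ on $\overline F$, whose genus $g$ satisfies $g\ge1$ because $\chi(F)<0$ forces $\chi(\overline F)=\chi(F)+2\le0$. The identity $\deg(\operatorname{div}\eta)=2g-2$ reads, when $|\Lambda|=1$, $(d_p-1)-2|Z|-(d_q+1)=2g-2$, which using $d_p=d_q$ becomes $2g=-2|Z|\le0$ --- impossible. When $|\Lambda|=2$ the same identity gives $d_p=2g+|Z|$, while the residue theorem $\sum\operatorname{res}\eta=0$ gives $\pm d_q+\sum_{x\in Z}(\pm1)=0$, hence $d_q\le|Z|$; combined with $d_p=d_q$ this again forces $2g\le0$. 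Either way the assumed $\CP^1$‑structure cannot exist, proving the proposition. I expect the only genuine computation to be verifying the orders of the zero at $\overline p$ and of the poles at $\overline q$ and along $Z$ in the two cases $|\Lambda|=1,2$; the conceptual heart of the argument is the observation that trivial peripheral holonomy is exactly what allows $f^\ast\omega$ to close up to a meromorphic form on the \emph{compact} surface $\overline F$, converting the soft conclusion of Lemma~\ref{PunctureNotAtAxis} into the rigid identities that the equality $d_p=d_q$ contradicts.
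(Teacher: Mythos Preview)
Your argument is correct and takes a genuinely different route from the paper. The paper's proof works entirely in Thurston parameters $(\tau,L)$: since $\rho$ is elementary, $L$ consists of isolated leaves (Lemma~\ref{IrrationalThenNonelementary}); because every lift of the puncture landing in $\Lambda$ is sent by the pleated surface to the \emph{same} point of $\Lambda$, each leaf of $L$ emanating from that puncture must terminate at the other puncture, and the equal-weight hypothesis then forces \emph{all} leaves of $L$ to connect the two punctures; finally, any complementary ideal polygon of $\tilde\tau\setminus\tilde L$ can have at most one ideal vertex over the $\Lambda$-puncture, so some edge joins two lifts of the other puncture, a contradiction. Your proof instead pulls back the $\Im\rho$-invariant abelian differential on $\CP^1\setminus\Lambda$ through the developing map and uses the trivial peripheral holonomy to close it up to a meromorphic $1$-form on the compact surface $\overline F$, where the degree formula $\deg(\operatorname{div}\eta)=2g-2$ and (when $|\Lambda|=2$) the residue theorem yield rigid numerical identities that the hypothesis $d_p=d_q$ together with $g\geq 1$ cannot satisfy. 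Your approach is more self-contained --- it bypasses the Thurston parametrization machinery and Lemma~\ref{IrrationalThenNonelementary} altogether --- and makes the role of the equal-degree hypothesis quantitatively transparent; the paper's approach, by contrast, stays inside the pleated-surface framework used throughout \S\ref{mTrivialHolonomy}, and its combinatorial picture of how leaves of $L$ organize around the cusps is reused elsewhere (e.g.\ in Proposition~\ref{PunctureNotfixed} and the proof of Theorem~\ref{TrivialNeck}).
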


Let $\til{m}$ be a lift of $m$ to $\til{S}$. 
Let $Q$ and $R$ be the adjacent components of $\til{S} \minus \phi^{-1}(m)$ across $\til{m}$.
Let $\operatorname{Stab}{Q}$
 and $\operatorname{Stab}{R}$ denote the subgroups in $\pi_1(S)$ which setwise preserve $Q$ and $R$, respectively. 
 Let $C_\infty^Q, C_\infty^R$ denote the component of $C_\infty$ corresponding $Q, R$ (if $m$ is non-separating, $C_\infty^Q = C_\infty^R$). 
 
We first prove (\ref{iNotExactlyI}) in Theorem \ref{TrivialNeck}.
\begin{lemma}\Label{m-neq-I}
For sufficiently large $t > 0$, $\rho_{t}(m) \neq I$.
\end{lemma}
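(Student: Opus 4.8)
The plan is to argue by contradiction: suppose $\rho_{t_i}(m) = I$ for some sequence $t_i \to \infty$. Since $X_t$ is pinched along $m$, for large $i$ the singular Euclidean surface $E_{t_i}$ contains a cylinder homotopic to $m$ of modulus tending to $\infty$, and --- because $\rho_\infi(m)=I$ --- Lemma \ref{FlatOrExpanding} lets us take this to be a \emph{flat} cylinder $A_{t_i}$, orthogonal to $V_{t_i}$, with $\sqrt 2 \int_m \sqrt{q_{t_i}} \to 2\pi n i$ for a fixed positive integer $n$. The core $m_{t_i}$ of $A_{t_i}$ is then an admissible loop of $C_{t_i}$ whose developing image is a round circle traversed $n$ times and whose holonomy is exactly $I$.

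First I would cut $C_{t_i}$ along $m_{t_i}$. As $\rho_{t_i}(m)=I$, each component $F$ of $S\minus m$ acquires a $\CP^1$-structure $C^F_{t_i}$ on the cusped surface $\hat F$, with trivial holonomy around the new punctures and developing map of degree $n$ there. Because $X_t$ is pinched along $m$, the conformal class of $\hat F$ stays in a compact subset of $\TT(\hat F)$, and by the upper injectivity radius bound (Theorem \ref{UpperInjectivityRadiusBound}) together with Corollary \ref{NoLargeExpandingEnds} the Schwarzian of $C^F_{t_i}$ stays bounded; hence $C^F_{t_i}$ lies in a compact subset of the deformation space of $\CP^1$-structures on $\hat F$ (this is Theorem \ref{ConvergenceThickPart}(\ref{iPositiveAreaThickPart}) in the present situation, the zero-area case being already settled). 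Passing to a subsequence, $C^F_{t_i}$ converges to a $\CP^1$-structure $C^F_\infi$ on $\hat F$ with holonomy $\rho_\infi|_{\pi_1 F}$; by Proposition \ref{NonelementaryComplements} this holonomy is non-trivial, so the developing map of $C^F_\infi$ is non-constant, extends continuously over the punctures (the peripheral holonomy being trivial), and is a branched cover of degree $n$ at each of them.

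The contradiction then comes from reassembling the developing map across the collapsing neck. After a suitable normalization, $f_{t_i}$ restricted to $\ti S\minus\phi^{-1}(A_{t_i})$ converges to a $\rho_\infi$-equivariant map $f_\infi$ that on each lift of a component $F$ is the developing map of $C^F_\infi$; on the other hand, since $\Mod A_{t_i}\to\infty$ and $A_{t_i}$ is flat and orthogonal to $V_{t_i}$, every component of $\phi^{-1}(A_{t_i})$ develops to a round annulus traversed $n$ times whose two complementary discs shrink to a pair of points of $\CP^1$. Pushed back to the two adjacent pieces, whose attaching circles are the two boundary circles of that neck component, this forces in the limit the developing image of a cusp neighbourhood of $C^F_\infi$ to lie in an arbitrarily small neighbourhood of a single point, so $\rho_\infi|_{\pi_1 F}$ must fix that point; hence $\rho_\infi|_{\pi_1 F}$ is elementary preserving a set $\Lambda$ of one or two points, and each puncture of $\hat F$ develops into $\Lambda$. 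When $m$ is non-separating this contradicts Proposition \ref{PunctureElementaryHolonomy} (the two cusps share the degree $n$, and $\rho_\infi(m)=I$ forces the stabilizer of the image group to be continuous), and more generally it contradicts Lemma \ref{PunctureNotAtAxis}; when $m$ is separating the same reasoning applied to each component again contradicts Lemma \ref{PunctureNotAtAxis}. Either way $\rho_{t_i}$ cannot exist, so $\rho_t(m)\neq I$ for all sufficiently large $t$.

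The step I expect to be the main obstacle is the last one --- controlling the developing maps $f_{t_i}$ across the degenerating neck $A_{t_i}$: choosing normalizations compatible for the pieces and the neck simultaneously, verifying that the two ends of the neck genuinely develop near two well-defined limit points, and extracting from this the rigidity that forces $\rho_\infi|_{\pi_1 F}$ to be elementary with its developed punctures in $\Lambda$. Everything else is compactness, the holonomy theorem (Theorem \ref{HolImmersion}), and bookkeeping.
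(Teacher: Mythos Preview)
Your overall strategy matches the paper's: assume $\rho_{t_i}(m)=I$ along a sequence, pass to the limit $\CP^1$-structure $C_\infty$ on $S\setminus m$, and combine the geometry of the collapsing neck with Proposition~\ref{NonelementaryComplements}, Lemma~\ref{PunctureNotAtAxis}, and Proposition~\ref{PunctureElementaryHolonomy} to reach a contradiction. The paper's version is terser: it extracts a cylinder $A_i$ homotopic to $m$ bounded by round circles with $\Mod A_i\to\infty$ and complement converging to $C_\infty$ minus cusp neighbourhoods, and then argues directly from the three cited results that $\rho_{t_i}|\Stab Q$ or $\rho_{t_i}|\Stab R$ must diverge in $\chi$, contradicting the convergence of $\rho_t$.

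There is, however, a genuine gap in your contradiction step. You write that the two complementary discs of the developed neck shrink to a pair of points, and that ``this forces in the limit the developing image of a cusp neighbourhood of $C^F_\infty$ to lie in an arbitrarily small neighbourhood of a single point, so $\rho_\infty|_{\pi_1 F}$ must fix that point.'' The implication is false: a single cusp neighbourhood developing near a point $z$ only says that the continuous extension of $\dev C^F_\infty$ sends \emph{that} lift of the puncture to $z$; the other lifts go to the $\rho_\infty|_{\pi_1 F}$-orbit of $z$, and nothing forces the whole image group to fix $z$. Worse, you have already asserted in the same paragraph that in one normalisation $f_{t_i}$ converges on \emph{each} lift of a component $F$ to the developing map of $C^F_\infty$, and a few lines later that both complementary discs shrink; these are incompatible, since if $f_{t_i}|\tilde P$ converges to a local homeomorphism then the image of a fundamental domain of $\tilde P$ cannot collapse into a shrinking disc. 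What actually happens (and what underlies the paper's one-line citation) is a dichotomy: once one side, say $\tilde P$, is pinned so that $f_{t_i}|\tilde P$ converges to $\dev(C_\infty|P)$ with the puncture landing off $\Lambda$ (this is where Lemma~\ref{PunctureNotAtAxis} and Proposition~\ref{PunctureElementaryHolonomy} enter), the high-modulus round-circle cylinder forces the \emph{entire} adjacent piece $\tilde Q$ to develop into a shrinking disc; it is the collapse of a full fundamental domain of $\tilde Q$, not of a cusp neighbourhood, that via equivariance makes $\rho_{t_i}|\Stab Q$ diverge in $\chi$. Your argument can be salvaged along these lines, but the inference as written does not go through.
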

\begin{proof}
Suppose, to the contrary, that there is a diverging sequence $0 \leq t_1 < t_2 < \dots$ such that $\rho_{t_i}(m) = I$ for each $i$. 
We may, in addition, assume that $C_{t_i}$ converges to $C_\infi$ as $i \to \infi$ uniformly on compact subsets as $i \to \infi$. 
Then, as $\rho_{t_i} (m) = I$ and $C_t$ is pinched along $m$,
for $i \gg 0$, there is a cylinder $A_i$  in $C_{t_i}$ homotopic to $m$ such that 
\begin{itemize}
\item  $A_i$ is bounded by round circles (i.e. the development of each boundary component is a round circle on $\CP^1$), 
\item $\Mod A_i \to \infty$, and
\item $C_{t_i} \minus A_i$ converges to $C_\infi$ minus cusp neighborhoods bounded by round circles (in other words, for every $\ep > 0$, if $i$ is sufficiently large, then $A_i$ is contained in $\ep$-thin part of $C_{t_i}$ ).
\end{itemize}
We can normalize $\rho_{t_i}$ so that $\rho_{t_i} \vert \Stab R$ converges as $i \to \infty$ and the developing map $f_{t_i} \vert R$ also converges  to a developing map of $C_\infty^R$ as $i \to \infty$. 
Then the development of $\ti{m}$ converges to a point $p$ on $\CP^1$.

First suppose that the stabilizer of $\rho_\infty \vert \Stab\, Q$ is discrete. 
Then, there are elements $\alpha_1, \alpha_2$ of $\Stab \, Q$ with disjoint fixed point sets on $\CP^1$. 
Pick a sequence $\gam_i \in \PSL_2\C$ such that the restriction of  the conjugation $\gam_i \rho_{t_i} \gam^{-1}_i \eqqcolon \rho_{t_i}'$ to $\Stab Q$ converges as $i \to \infty$. 
 Therefore, the properties of $A_i$ imply that  $\gam_i$ must leave every compact in $\PSL_2\C$.
 As  $\alpha_1, \alpha_2$ have disjoint fixed point sets in $\CP^1$, one of the fixed point sets does not contain the puncture point of $C^Q_\infty.$
 Therefore either $\rho_{t_i} (\alpha_1)$ or  $\rho_{t_i} (\alpha_2)$ diverges to infinity in $\PSL_2\C$ as $i \to \infty$ against the hypothesis. 

Next suppose that the stabilizer of $\rho_\infty \vert \Stab\, Q$ is continuous. 
Then,  by Proposition \ref{PunctureElementaryHolonomy} and Lemma \ref{PunctureNotAtAxis}, with respect to the normalization $\rho_{t_i}'$, no cusp of $C_\infty^Q$ develops to a point of $\Lambda$ for $C_\infty^Q$.
Let $\omega \in \Stab Q$ such that $\rho_\infty(w)$ is non-trivial (\Cref{NonelementaryComplements}). 
Then, by the properties of $A_i$, 
  $\rho_{t_i}  (\omega)$ must diverges to $\infty$ since the continuous stabilizer preserves $\Lambda$.

This is a contradiction against the convergence of $\rho_t$. 
\end{proof}
\begin{lemma}\Label{IDivergingTwist}
The Fenchel-Nielsen twist coordinate along $m$ must diverge to $\infty$ or $-\infty$ as $t \to \infty$.  
\end{lemma}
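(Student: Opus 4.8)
The statement to prove is that the Fenchel--Nielsen twist coordinate of $X_t$ along $m$ diverges to $+\infty$ or $-\infty$ as $t\to\infty$. The plan is to argue by contradiction: suppose the twist parameter $\theta_t$ stays in a bounded subset of $\R$ (after passing to a subsequence, we may assume $\theta_{t_i}\to\theta_\infty\in\R$). Since $X_t$ is pinched along $m$, the length $\length_{X_t} m\to 0$ while all other curve lengths stay bounded away from $0$ and $\infty$; combined with the boundedness of the twist, the surface $X_{t_i}$ then converges in the \emph{augmented} Teichm\"uller space to a specific nodal Riemann surface $X_\infty$ with a single node corresponding to $m$, and this convergence is ``canonical'' in the sense that there is a definite choice of collar gluing, not just a subsequential one. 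In particular the whole path $C_{t}$ would then be nearly determined on the thin part by this bounded data.

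The first step would be to record precisely how the twist parameter interacts with the singular Euclidean picture on the thin part. By the analysis leading to Theorem~\ref{TrivialNeck}\,(\ref{iThickPartConvergence}) and Proposition~\ref{ConvergenceThick}-type arguments, for $t\gg 0$ there is a flat cylinder $A_t$ in $E_t$ homotopic to $m$ with $\Mod A_t\to\infty$ (this is the case $\rho_\infty(m)=I$ of Lemma~\ref{FlatOrExpanding}, giving $\sqrt2\int_m|\sqrt{q_t}|\to 2\pi n i$). The rotation angle of the developing map along the core of $A_t$, in the Epstein/holonomy estimate of Proposition~\ref{CylinderHonomyEstimate}\,(\ref{iHorizontalTotalRotation}), is asymptotically $\sqrt 2\,\mathrm{Im}\int_m\sqrt{q_t}$, and the Fenchel--Nielsen twist of $X_t$ along $m$ is, up to a bounded error coming from the thick parts, exactly this accumulated rotation (this is the content of Lemma~\ref{CuspNbhdInThurstonCoordinates} and the ``spiral direction'' discussion: the twist of the complex structure records how the two boundary round circles of $C_t\minus A_t$ are rotated relative to one another before being glued by the holonomy of $\rho_t(m)$, which is converging to $I$).

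The second step is the contradiction. If $\theta_{t_i}$ stays bounded, then the rotational part of the holonomy along the core of $A_{t_i}$ stays bounded; since $\rho_{t_i}(m)\to I$ (Lemma~\ref{m-neq-I} and the hypothesis $\rho_\infty(m)=I$), and a nontrivial element of $\PSL(2,\mathbb C)$ very close to $I$ that also has small rotational holonomy along a long flat cylinder forces the translation length $\sqrt 2\,\mathrm{Re}\int_m\sqrt{q_{t_i}}$ to be small as well, one concludes that $\sqrt 2\int_m\sqrt{q_{t_i}}\to 0$. But then the flat cylinder $A_{t_i}$ has bounded area and bounded modulus contribution, contradicting $\Mod A_{t_i}\to\infty$ forced by the pinching of $X_t$ along $m$; equivalently, $X_{t_i}$ would then converge inside Teichm\"uller space (the node does not actually form), contradicting Corollary~\ref{DivergenceInT} together with the fact that the only divergence of $X_t$ is along $m$. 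Either packaging yields that $\theta_t$ cannot remain bounded. Since the twist coordinate is a continuous real-valued function of $t$ that is unbounded and (by the same estimate) eventually monotone in the direction dictated by the fixed sign of the spiraling of $L_t$ near the cusps of $C_\infty$, it must tend to $+\infty$ or to $-\infty$ rather than oscillate.

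The main obstacle I expect is the bookkeeping in the first step: making rigorous the identification of the Fenchel--Nielsen twist of the \emph{complex} structure $X_t$ along $m$ with the accumulated rotation of the developing map along the core of $A_t$, up to an error that stays bounded as $t\to\infty$. This requires controlling the contribution of the two thick parts $C_t\minus A_t$ (which converge by Theorem~\ref{TrivialNeck}\,(\ref{iThickPartConvergence})) to the gluing data, and checking that the choice of a continuous family of markings $\iota_t$ does not itself absorb an unbounded amount of twist; once that normalization is pinned down — exactly as in the ``appropriate path of markings'' clause of Theorem~\ref{TrivialNeck} — the divergence and the eventual monotonicity are then a direct consequence of Proposition~\ref{CylinderHonomyEstimate} and the convergence $\rho_t(m)\to I$.
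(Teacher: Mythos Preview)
There is a genuine gap. Your key step identifies the Fenchel--Nielsen twist of $X_t$ along $m$ with the ``accumulated rotation'' $\sqrt{2}\,\Im\int_m\sqrt{q_t}$ of the Epstein map along the core of $A_t$, but these are unrelated quantities. That integral is the total rotation of \emph{one} trip around the circumference of the flat cylinder; by Lemma~\ref{FlatOrExpanding} (the case $\rho_\infty(m)=I$) it converges to a fixed $2\pi n$ with $n\in\Z_{>0}$, completely independently of the twist. The Fenchel--Nielsen twist, by contrast, is a marking datum: it records by how many Dehn twists along $m$ the given marking on one thick component differs from a marking in which both thick components converge, and it cannot be read off from $\int_m\sqrt{q_t}$. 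Lemma~\ref{CuspNbhdInThurstonCoordinates} concerns germs of cusp neighbourhoods and says nothing about this. Consequently your step~5 fails --- in fact $\sqrt{2}\int_m\sqrt{q_{t_i}}\to 2\pi n\,i\neq 0$, not $0$ --- and neither packaging in step~6 works: bounded twist does not bound $\Mod A_{t_i}$, nor does it keep $X_{t_i}$ inside a compact of $\TT$ (it only selects a single limit point on the augmented boundary). Your closing monotonicity remark via ``spiralling of $L_t$'' is also unavailable here, since $\rho_\infty(m)=I$ means the cusps of $C_\infty$ are not of hyperbolic type and no spiralling direction is defined.

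The paper's argument follows the template of Lemma~\ref{m-neq-I}. If the twist were bounded along some subsequence $t_i$, then with the \emph{original} marking on $S$ (no compensating Dehn twists) both components of $C_{t_i}\minus A_{t_i}$ converge to $C_\infty$ minus cusp neighbourhoods bounded by round circles --- exactly the configuration produced in the proof of Lemma~\ref{m-neq-I} from the hypothesis $\rho_{t_i}(m)=I$. One then invokes Proposition~\ref{NonelementaryComplements}, Lemma~\ref{PunctureNotAtAxis} and Proposition~\ref{PunctureElementaryHolonomy} exactly as there to conclude that one of $\rho_{t_i}|\Stab Q$ or $\rho_{t_i}|\Stab R$ must diverge in the character variety, contradicting the convergence of $\rho_t$. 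Hence no bounded subsequence exists, and continuity of $\theta_t$ then forces $\theta_t\to+\infty$ or $\theta_t\to-\infty$.
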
 
\begin{proof}
The proof is similar to that of Lemma \ref{m-neq-I}. 
    Suppose to the contrary that there is a sequence $t_1 < t_2 < t_3 < \dots$ such that the Fenchel-Nielsen twist parameter of $C_{t_i}$ along $m$ converges as $i \to \infty$. 
    We normalize $\rho_{t_i}$ so that $\rho_{t_i} \vert \Stab R$ converges as $i \to \infty$ the developing map $f_{t_i} \vert R$ also converges to a developing map of $C_\infty^R$ as $i \to \infty$.
    Then, similarly to the proof of Lemma \ref{m-neq-I},  one can show that $\rho_{t_i} \vert \Stab \,Q$ diverges to infinity, since the cylinder $A_i$ becomes longer and longer and it pushes  $\rho_{t_i} \vert \Stab \,Q$ farther and farther away;  this contradicts the convergence of $\rho_t$ as $t \to \infty$.
\end{proof}

Then, for each $t > 0$,  let $\iota_t $ be some power of the Dehn twist of $S$ along $m$ such that the twist coordinates of $\iota_t C_t$ along $m$ is bounded from above and below in $\R$ uniformly in $t > 0$.
Then,  by Lemma \ref{IDivergingTwist},  the power must diverge to either $\infty$ or $- \infty$ as $t \to \infi$.

There is a diverging sequence $0 \leq t_1 < t_2 < \dots$ such that $C_{t_i} \to C_\infi$ as $i \to \infi$ uniformly on compact.
Let $F$ be a component of $S \minus m$.
Let $\ti{F}$ be the universal cover of $F$.

First suppose that $\rho_\infi | F$ has a discrete stabilizer (in $\PSL_2\C$). 
Let $F_\infi$ be a component of $C_\infi$ which corresponds to $F$. 
Then $\dev F_\infi$ is the limit of $f_{t_i} | \til{F}$, so that $\lim_{i \to \infi} f_{t_i}$ takes each boundary component of $\ti{F}$ to a single point corresponding to a cusp of $C_\infi$.

   Pick a fundamental domain $D_i$ in $\ti{F}$ with an arc $s_i$ on $\bdr D_i \cap \bdr \ti{F}$ such that 
   $s_i$ descends to a loop $m_i$ isotopic  to $m$, the loop $m_i$ is contained in the $\ep_i$-thin part of $C_{t_i}$ with $\ep_i \searrow 0$ as $i \to \infi$, and the development of $m_i$ is invariant under a one-dimensional subgroup $G_i$ of $\PSL_2\C$ containing $\rho_i (m)$. 
   As $\rho_{t_i} (m) \to I$,  the image of $s_i$ becomes more and more like a round circle $c_i$ as $i \to \infi$. 

Next suppose that $\rho_\infi(F)$ has a continuous stabilizer.
Then $\rho_\infi(F)$ is elementary, and the restriction of $f_{t_i}$ to $\ti{F}$ may not converge to a local homeomorphism, even up to a subsequence. 
Nonetheless, as $C_{t_i}$ converges to $C_\infty$ in $\PP(S \minus m)$,  clearly we can normalize $\rho_{t_i}$ for  the convergence of developing pairs: 
\begin{lemma}\Label{NormalizingAndredegenerating}
 Suppose that there is no subsequence of $t_i$ such that $f_{t_i}| \ti{F}$ converges to a developing map of $F_\infi$.
 Then there is a sequence $\gam_i$ of $\PSL_2\C$  such that, up to a subsequence,  $\gam_i (f_{t_i} | \ti{F}, \rho_{t_i} | \pi_1 F)$ converges to a developing pair $(h_\infi, \zeta_\infi)$ of $F_\infi$. 
 \end{lemma}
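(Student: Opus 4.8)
The plan is to use the boundedness of $C_t$ in $\PP(S\minus m)$ together with the compactness of $\PPP(\hat F)$ modulo $\PSL(2,\C)$ to extract a convergent subsequence after conjugation, and then to identify the limit as a developing pair of $F_\infi$ using the convergence of the holonomy $\rho_t$ in $\RRR$.

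First I would recall the setup: by Theorem \ref{ConvergenceThickPart}(\ref{iPositiveAreaThickPart}) (applied to $F$, in the case $\liminf\Area_{E_t}(F_t^\ep)>0$; in the opposite case $f_{t_i}|\ti F$ does converge by Theorem \ref{HolImmersion} and there is nothing to prove), the restrictions $\hat F_{t_i}$ accumulate to a bounded subset of the deformation space of $\CP^1$-structures on $\hat F$. Since the deformation space of marked $\CP^1$-structures on $\hat F$ is, via the Schwarzian parametrization, a bundle over $\TT(\hat F)$ with fibers the spaces of meromorphic quadratic differentials with poles of order at most two, boundedness means that the pair $(Z_{t_i},w_{t_i})$ lies in a compact set; hence, after passing to a subsequence, $(Z_{t_i},w_{t_i})$ converges to some $(Z_\infi,w_\infi)$, which is the Schwarzian data of $F_\infi$. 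Fix once and for all a developing pair $(h_\infi,\zeta_\infi)$ for $F_\infi$ (that is, for $(Z_\infi,w_\infi)$); solving the Schwarzian equation continuously in the coefficient $w$ and in the complex structure, one obtains, for $i$ large, developing pairs $(h_i,\zeta_i)$ for $\hat F_{t_i}$ with $h_i\to h_\infi$ uniformly on compact subsets of $\ti F$ and $\zeta_i\to\zeta_\infi$ in $\RRR(\hat F)$.

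Now the developing pair $(f_{t_i}|\ti F,\rho_{t_i}|F)$ of $\hat F_{t_i}$ is related to $(h_i,\zeta_i)$ by a unique element $\gam_i\in\PSL(2,\C)$, namely $(f_{t_i}|\ti F,\rho_{t_i}|F)=\gam_i\cdot(h_i,\zeta_i)$, since any two developing pairs of the same $\CP^1$-structure differ by a global Möbius transformation. It then remains to see that $\gam_i$ can be taken to converge along a subsequence; granting that, $\gam_i\cdot(h_i,\zeta_i)\to\gam_\infi\cdot(h_\infi,\zeta_\infi)$, which is again a developing pair of $F_\infi$, and relabelling $(h_\infi,\zeta_\infi):=\gam_\infi\cdot(h_\infi,\zeta_\infi)$ gives the statement. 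To control $\gam_i$, I would use that $\rho_t$ converges in $\RRR(S)$, hence $\rho_{t_i}|F\to\rho_\infi|F$ in $\RRR(F)$, while $\zeta_i\to\zeta_\infi$; therefore $\gam_i$ conjugates $\zeta_i$ to $\rho_{t_i}|F$, two convergent sequences of representations. If $\rho_\infi|F$ were non-elementary this would force $\gam_i$ to converge (the conjugation action on a fixed non-elementary representation is proper), but in the situation of the lemma $\rho_\infi|F$ has a continuous stabilizer; so instead I argue that, since by Proposition \ref{NonelementaryComplements} $\rho_\infi|\pi_1F$ is non-trivial, its stabilizer, though positive-dimensional, is still a closed subgroup whose action on representations near $\rho_\infi|F$ has slices — concretely, one normalizes using Lemma \ref{ContinuousStabilizerFixedPoint}: pick two elements $g,g'\in\pi_1F$ with $\rho_\infi(g)\neq I\neq\rho_\infi(g')$ and not sharing all fixed data, and post-compose $\gam_i$ with the (bounded) correction in the $1$-parameter stabilizer so that the fixed point set $\Lambda_i$ of $\Im(\gam_i\zeta_i)$ and one further marked point converge; this pins down $\gam_i$ up to a relatively compact ambiguity, so a subsequence converges.

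I expect the main obstacle to be exactly this last point: controlling $\gam_i$ when the limiting holonomy $\rho_\infi|F$ is elementary with a one-parameter stabilizer, because then the ``gauge'' $\gam_i$ is only well-defined up to that stabilizer and could a priori run off to infinity inside it. The resolution is the normalization just sketched — using that $\Im(\rho_{t_i}|F)$ is genuinely $\CP^1$-irreducible/non-elementary for each finite $i$ (by strong irreducibility of $\eta_{t_i}$, cf.\ the remark after Proposition \ref{LiftingHolonomyPath}) while the $\ti F$-piece of $C_{t_i}$ converges in $\PP(S\minus m)$, so there is a canonical bounded family of normalizations — and this is precisely the content encapsulated by saying $C_t$ accumulates to a bounded subset of $\PP(S\minus m)$. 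Once $\gam_i$ is normalized this way, both $\gam_i h_i\to h_\infi'$ uniformly on compacts and $\gam_i\zeta_i\gam_i^{-1}\to\zeta_\infi'$ in $\RRR(F)$, and the limit pair $(h_\infi',\zeta_\infi')=:(h_\infi,\zeta_\infi)$ is a developing pair of the limiting $\CP^1$-structure $F_\infi$, completing the proof.
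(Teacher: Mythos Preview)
Your first two paragraphs are essentially the paper's argument, and in fact they already complete the proof. The paper offers no more than the sentence preceding the lemma: ``as $C_t$ accumulates to a bounded subset of $\PP(S\setminus m)$, we can certainly normalize it in a nice way.'' You unpack this correctly: boundedness of the Schwarzian data gives, after a subsequence, $(Z_{t_i},w_{t_i})\to(Z_\infi,w_\infi)$, and solving the Schwarzian equation continuously yields developing pairs $(h_i,\zeta_i)\to(h_\infi,\zeta_\infi)$ with $(f_{t_i}|\ti F,\rho_{t_i}|F)=\gam_i\cdot(h_i,\zeta_i)$ for a unique $\gam_i\in\PSL(2,\C)$.

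But at this point you are done: the sequence $\gam_i^{-1}$ is the normalizing sequence the lemma asks for, since
\[
\gam_i^{-1}\cdot(f_{t_i}|\ti F,\rho_{t_i}|F)=(h_i,\zeta_i)\longrightarrow(h_\infi,\zeta_\infi).
\]
Your remaining paragraph, in which you try to make $\gam_i$ itself converge, is both unnecessary and inconsistent with the hypothesis. If $\gam_i\to\gam_\infi$ along a subsequence, then $f_{t_i}|\ti F=\gam_i h_i\to\gam_\infi h_\infi$, which is a developing map of $F_\infi$; but the lemma explicitly assumes that \emph{no} subsequence of $f_{t_i}|\ti F$ converges to a developing map of $F_\infi$. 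So under the hypothesis, your $\gam_i$ must in fact diverge in $\PSL(2,\C)$ --- this divergence is precisely what is exploited in the next proposition (Proposition~\ref{DevConvergesSubsurfaces}). Delete everything from ``It then remains to see that $\gam_i$ can be taken to converge'' onward, replace it by the displayed line above with $\gam_i^{-1}$, and your proof is correct and matches the paper's.
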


Next, without normalization,  we show a convergence of the developing map as a continuous map. 
\begin{proposition}\Label{DevConvergesSubsurfaces}
Suppose that there is no subsequence such that the restriction $f_{t_i} | \ti{F}$ converges to a developing map of $F_\infi$ as $i \to \infty$.
Then $f_{t_i} | \ti{F}$ converges to a $\rho_\infi |\pi_1 F$-equivariant continuous map $f_{F, \infi}\col \ti{F} \to \CP^1$ uniformly on compact subsets, such that each boundary component of $\ti{F}$ maps to a single point. 
Moreover,  either $f_{F, \infi}$ is a constant map to a fixed point of $\rho_\infi | F$ or there are open disks $D_1, \dots, D_n$ on $F$ such that $f_{F, \infi}$ takes $\til{F} \minus \phi^{-1}(D_1 \sqcup \dots \sqcup D_n)$ to a fixed point $p$ of $\rho_\infi(F)$  and  each lift $\ti{D}_i$ of $D_i$ to $\CP^1 \minus \{p\}$ for all $i = 1, \dots, n$. 
\end{proposition}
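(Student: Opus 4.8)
The plan is to exploit the renormalizing sequence furnished by Lemma \ref{NormalizingAndredegenerating}. After passing to a subsequence we have $\gamma_i \in \PSL(2,\C)$ with $\gamma_i\cdot(f_{t_i}|\ti F,\rho_{t_i}|F)\to(h_\infi,\zeta_\infi)$, a developing pair of $F_\infi$. The first step is to note that $\gamma_i\to\infi$ in $\PSL(2,\C)$: otherwise a subsequence of $\gamma_i$ converges to some $\gamma_\infi\in\PSL(2,\C)$, and then $f_{t_i}|\ti F\to\gamma_\infi^{-1}h_\infi$, which is again a developing map of $F_\infi$, contradicting the standing hypothesis of the proposition. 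Next I would invoke the standard structure of a diverging sequence in $\PSL(2,\C)$: after a further subsequence there are points $p,q\in\CP^1$ such that $\gamma_i^{-1}$ converges to the constant map $p$ uniformly on compact subsets of $\CP^1\minus\{q\}$ (and $\gamma_i\to q$ uniformly on compact subsets of $\CP^1\minus\{p\}$).

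Since $\gamma_i f_{t_i}\to h_\infi$ uniformly on compacts, for every compact $K\subset\ti F\minus h_\infi^{-1}(q)$ the images $\gamma_i f_{t_i}(K)$ eventually lie in a fixed compact subset of $\CP^1\minus\{q\}$; composing with $\gamma_i^{-1}$ gives $f_{t_i}\to p$ uniformly on $K$. Hence $f_{t_i}$ converges to the constant $p$ locally uniformly on $\ti F\minus h_\infi^{-1}(q)$. Using the $\rho_{t_i}$-equivariance $f_{t_i}(\gamma z)=\rho_{t_i}(\gamma)f_{t_i}(z)$ and passing to the limit along a point $z$ with $z,\gamma z\notin h_\infi^{-1}(q)$ (possible, since this set is countable) yields $\rho_\infi(\gamma)p=p$ for every $\gamma\in\pi_1 F$; thus $p$ is a common fixed point of $\rho_\infi(\pi_1 F)$, i.e. $p\in\Lambda$.

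It remains to understand the set $h_\infi^{-1}(q)$, which is discrete in $\ti F$ because $h_\infi$ is a local homeomorphism on the interior. If it is empty, $f_{t_i}\to p$ locally uniformly on all of $\ti F$, so $f_{F,\infi}\equiv p$, which is the first alternative. If it is nonempty, I would first show it is $\pi_1 F$-invariant (equivalently, $q$ is fixed by $\zeta_\infi$): here one uses that $\zeta_\infi$ has the same trace functions as the elementary representation $\rho_\infi|F$, hence is itself elementary, together with the puncture analysis of Proposition \ref{PunctureElementaryHolonomy} and Lemma \ref{PunctureNotAtAxis}. Then $h_\infi^{-1}(q)$ descends to a finite subset $\{x_1,\dots,x_n\}\subset F$ (finiteness because near a puncture $h_\infi$ is a branched cover, so there is no accumulation of preimages there). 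Choose a small round disk $W\ni q$ whose preimage components each contain exactly one point of $h_\infi^{-1}(q)$, let $V_j$ be the component around a lift of $x_j$, and set $D_j=\phi(V_j)$, an open disk in $F$. Off $\phi^{-1}(D_1\sqcup\dots\sqcup D_n)$ we already have $f_{t_i}\to p$. On a component $V$ of $h_\infi^{-1}(W)$, the maps $f_{t_i}|V=\gamma_i^{-1}\circ(\gamma_i f_{t_i}|V)$ are embeddings whose images $\gamma_i^{-1}(\gamma_i f_{t_i}(V))$ converge, in the Hausdorff sense, to $\CP^1\minus\{p\}$ (because $\gamma_i f_{t_i}(V)\to h_\infi(V)=W\ni q$ and $\gamma_i^{-1}$ carries a neighbourhood of $q$ onto all of $\CP^1$ minus a shrinking neighbourhood of $p$); a normal-families argument then shows $f_{t_i}|V$ converges to a homeomorphism $V\to\CP^1\minus\{p\}$ that extends continuously to $\partial V$ by the constant $p$, so that each component of $\phi^{-1}(D_j)$ maps homeomorphically onto $\CP^1\minus\{p\}$. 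Equivariance of the resulting $f_{F,\infi}$ is inherited in the limit from that of the $f_{t_i}$.

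I expect the main obstacle to be the last step: proving that $h_\infi^{-1}(q)$ is $\pi_1 F$-invariant and that on each component $V$ the embeddings $f_{t_i}|V$ genuinely converge to a homeomorphism onto $\CP^1\minus\{p\}$ rather than degenerating further, since near the point of $h_\infi^{-1}(q)$ the renormalizing maps $\gamma_i^{-1}$ are far from equicontinuous and one must control precisely how a shrinking neighbourhood of that point is spread over $\CP^1$. This is where the failure-of-convergence hypothesis, the equality of the characters of $\zeta_\infi$ and $\rho_\infi|F$, and the earlier puncture lemmas have to be combined carefully.
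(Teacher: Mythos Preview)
Your approach is essentially the paper's, built on the same renormalizing sequence $\gamma_i$ from Lemma~\ref{NormalizingAndredegenerating} and the same north--south dynamics of a divergent sequence in $\PSL(2,\C)$. Two organizational differences are worth noting.

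First, instead of proving that $h_\infi^{-1}(q)$ is $\pi_1 F$-invariant (equivalently that $q$ is $\zeta_\infi$-fixed), the paper simply chooses a connected fundamental domain $Q\subset\ti F$ with $h_\infi(\partial Q)\cap\{p,q\}=\emptyset$, takes disjoint disk neighbourhoods of the finitely many points of $h_\infi^{-1}(q)\cap\operatorname{int}Q$, and lets $D_1,\dots,D_n$ be their images in $F$. Convergence to $p$ on $Q$ minus these disks then propagates to all of $\ti F\setminus\phi^{-1}(\sqcup_j D_j)$ by the $\rho_{t_i}$-equivariance of $f_{t_i}$ together with $\rho_\infi(\gamma)p=p$, entirely bypassing the $\zeta_\infi$-invariance question you flagged as the main obstacle. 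This is a genuine simplification of your route.

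Second, rather than invoking north--south dynamics abstractly, the paper splits into cases according to whether $\rho_\infi(F)$ is loxodromic/elliptic, parabolic, or trivial. In each case it identifies $p$ and $q$ explicitly (endpoints of the common axis; the parabolic fixed point with $p=q$; the parabolic fixed point of $\zeta_\infi$ when $\rho_\infi|F$ is trivial) and tailors the disk construction accordingly---in the parabolic case, for instance, the neighbourhoods of the preimages of $p$ are half-plane--shaped rather than round. This case split is what makes the delicate disk-convergence step concrete.

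On that step, your instinct is right that a bare normal-families argument is not enough: without reparametrization, $f_{t_i}|V$ converges pointwise to the constant $p$ away from the single preimage point, not to a homeomorphism onto $\CP^1\setminus\{p\}$. The paper absorbs this by declaring the convergence ``up to an isotopy of $F$'', i.e.\ allowing the disks $D_j$ (equivalently the marking) to vary with $i$, so that the shrinking preimage neighbourhood is continually rescaled to fixed size. Your sketch should make this reparametrization explicit rather than appealing to normal families.
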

\begin{proof}
Let $\gam_i \in \PSL_2\C$ be the sequence and $(h_\infty, \zeta_\infty)$ be the normalized limit obtained by Lemma \ref{NormalizingAndredegenerating}.
By the non-subconvergence hypothesis, $\rho_\infty (\pi_1 F)$ is an elementary representation. 
We divide the proof into cases depending on the types of elementary subgroups.

First suppose that $\rho_\infi(\pi_1 F)$ contains a loxodromic or elliptic element. 
Then, let $\ell$ be the axis of the loxodromic or the elliptic element. 
Then, there is a corresponding loxodromic or elliptic element in $\Im h_\infty$, and let $\ell'$ be its axis. 
By the non-subconvergence hypothesis,  there is $\omega \in \pi_1 F$ such that $h_\infty (\omega)$ is a parabolic element but $\rho_\infty (\omega)$ is the identity in $\PSL_2\C$.
Thus
$\gam_i$ must be a hyperbolic element for sufficiently large $i$ such that as $i \to \infty$, the translation length of $\gam_i$ diverges to infinity. 
 In addition,  $\Axis(\gam_i)$ converges to the  $\ell'$ in $\H^3$. 
Let $p$ be the limit of the repelling fixed point of $\gam_i$, and let $q$ be the limit of the attracting fixed point of $\gam_i$, so that $\{p, q\}$ are the endpoints of $\ell'$. 
Note that as $\rho_\infty (\pi_1(F))$ is elementary, $\rho_\infty \pi_1(F)$ preserves $p$ and $q$ point-wise.

Take a connected compact fundamental domain $Q$ in $\ti{F}$.
We can assume that $Q \cap \bdr \ti{F}$ is disjoint from $q$, by perturbing the loop $m_i$ on $C_{t_i}$ if necessary.
For simplicity, we first suppose that $h_\infi(Q)$ is disjoint from $q$. 
Then, letting $f_i = f_{t_i}$,  the restriction $f_i | Q$ converges to the constant map to $p$ uniformly, as $i \to \infi$,  and thus $f_i\col \ti{F} \to \CP^1$ converges to the constant map to $p$ uniformly on compact subsets. 

Suppose that $h_\infi(Q) \cap \{q\} \neq \emptyset$.
Then, by the compactness of $Q$,  there are finitely many points of $h_\infi^{-1}(q)$ in the interior of $Q$. 
Pick small disjoint open disk neighborhoods of the points in $h_\infi^{-1}(q)$ in $Q$. 
Then, as the disks are contained in a fundamental domain,   their images $D_1, \dots, D_n$ in $F$ are disjoint. 
Then, as $\zeta_\infty$ preserves $q$,  the restriction of $f_i$ to $\ti{F} \minus \phi^{-1} (D_1 \sqcup \dots \sqcup D_n)$ converges to the constant map  to $p$ uniformly on compact subsets.
Moreover, for each lift $\ti{D}_i$ of $D_i$ to $\ti{F}$, $\ti{D}_i$ contains a unique point mapping to $q$. 
Thus up to an isotopy of $S$, we can in addition assume that $f_i |  D_i$ converges to a homeomorphism to $\CP^1 \minus \{p\}$, as desired. 
By Lemma \ref{PunctureAtDomain} and Proposition \ref{PunctureNotfixed}, the boundary components of $\ti{F}$ all map to $p$. 

Next, suppose that $\rho_\infi(F)$ contains a (non-trivial) parabolic element but no hyperbolic and elliptic element. 
Let $\omega \in \pi_1 F$ such that $\rho_\infty (\omega)$ is also a non-trivial parabolic element. 
Therefore $\rho_\infty$ and $\rho_\infty'$ are conjugate to each other, and $(f_{t_i}, \rho_{t_i} \vert \pi_1 F)$ converges to a developing pair of $F_\infty$.
This contradicts the non-subconvergence hypothesis.  
   
Last, suppose that $\rho_\infi(\pi_1 F)$ is the trivial representation. 
This case will be similar to the case when $\rho_\infty (\pi_1 F)$ contains an elliptic or a hyperbolic element. 
Then the normalized holonomy  $\zeta_\infi$ is a parabolic representation. 
Let $p$ be the parabolic fixed point of $\zeta_\infi$. 
We can assume that $\gam_i$ is a hyperbolic element for $i$ large, and the axis of $\gam_i$ converges to a geodesic $\ell$ starting from $p$. 
Let $q$ be the other endpoint of $\ell$.
Pick a  connected fundamental domain $Q$ in $\ti{F}$ so that $h^{-1}(p)$ is disjoint from  $\bdr Q$. 
Suppose in addition that no point of $Q$ maps to $p$. 
Then, up to a subsequence,  $f_i | \ti{F}$ converges to a constant map to $q$ uniformly on compact subsets. 
Suppose there are (finitely many) points of $Q$ which map to $p$. 
Then, similarly to the case of a hyperbolic and an elliptic representation, take disjoint open ball neighborhoods of those points in $Q$, and let $D_1, D_2 \dots D_n$ be disjoint disks on $F$ which lift to those open balls.
Then the desired convergence follows similarly.  
\end{proof}

By Proposition \ref{DevConvergesSubsurfaces}, the restriction of $f_i$ to $\ti{S} \minus \phi^{-1} (A)$ converges to a $\rho_\infi$-equivariant map $f_\infi \col \ti{S} \minus \phi^{-1}(A) \to \CP^1$.
We next prove the convergence of the boundary components to complete the proof of (\ref{iBoundaryComponentAtEndpoints}).
\begin{proposition}\Label{PunctureAtAxis}
For each component $\ti{A}$ of $\phi^{-1}(A)$, let $\gam \in \pi_1(S)$ be the representative of $m$ preserving $\ti{A}$.
 Then, by taking a subsequence so that  $\Axis(\rho_{t_i}(\gam) ) \eqqcolon a_i$ converges to a subset $a_\infi \in \H^3 \cup \CP^1$, which is either a point on $\CP^1$ or a geodesic in $\H^3$, then $f_\infi$ takes the boundary components of $\ti{A}$  onto the ideal points of $a_\infi$.
\end{proposition}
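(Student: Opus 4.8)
First I would reduce the statement to the fixed-point set of $\rho_{t_i}(m)$. The two components of $\partial\tilde A$ are boundary lines of lifts $\tilde F_1,\tilde F_2$ of the components $F_1,F_2$ of $S\minus A$ adjacent to $\tilde A$. Since $\rho_\infi(m)=I$, Proposition \ref{NonelementaryComplements} gives that $\rho_\infi$ is non-trivial on each $\pi_1 F_j$, so Proposition \ref{DevConvergesSubsurfaces} together with the discrete-stabilizer discussion preceding it shows that the limit map $f_\infi$ is constant on each component of $\partial\tilde A$, equal to a point $p_j\in\CP^1$: in the discrete case $p_j$ is the point to which the corresponding cusp $c_j$ of $C_\infi$ develops, and in the elementary case it is the common image of $\partial\tilde F_j$. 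Hence it suffices to prove, along the subsequence on which $a_{t_i}=\Axis(\rho_{t_i}(m))$ converges to $a_\infi$, that $\{p_1,p_2\}$ is exactly the ideal point set of $a_\infi$, with $p_1\neq p_2$ (one for each component of $\partial\tilde A$) when $a_\infi$ is a geodesic and $p_1=p_2=a_\infi$ when $a_\infi$ is a single point on $\CP^1$.

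Next I would set up an exponential model for the neck. Because $\rho_\infi(m)=I$, Lemma \ref{FlatOrExpanding} provides, for $t\gg0$, a flat cylinder $A_t$ in $E_t$ homotopic to $m$ with $\Mod A_t\to\infi$ and circumferences orthogonal to $V_t$ (Claim \ref{HolonomyAlongLoop}), whose period satisfies $\sqrt2\int_m\sqrt{q_t}\to2\pi n i$ for a positive integer $n$. As in the proofs of Proposition \ref{CuspClassification} and Proposition \ref{CylinderHonomyEstimate}, the developing map of a component $\tilde A_t$ of $\phi^{-1}(A_t)$ is, after a $\PSL(2,\C)$-conjugation, the exponential map $w\mapsto e^{w}$ restricted to a vertical strip of $\C$ whose width tends to $\infi$, on which the element $m$ preserving $\tilde A_t$ acts by $w\mapsto w+\tau_t$ with $\tau_t=\sqrt2\int_m\sqrt{q_t}\to2\pi n i$. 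Thus in this normalization $\rho_t(m)$ is conjugate to $z\mapsto e^{\tau_t}z$, which is loxodromic or elliptic --- never parabolic, and $\neq I$ for $t$ large by Lemma \ref{m-neq-I} --- with fixed-point set $\{0,\infi\}$ equal to the ideal endpoints of $a_t$, and the two ends of the strip develop towards $0$ and towards $\infi$ respectively.

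Then I would match the two pictures. After the isotopy of $S$ in $t$ built into Theorem \ref{TrivialNeck}, each $\partial_j A$ lies deep in the conformally $\ep_t$-thin part, hence inside $A_t$ and near its $F_j$-end, and $\partial_j A$ is peripheral to the cusp $c_j$ of $C_\infi$. By the convergence $C_t|(S\minus A)\to C_\infi$ of Theorem \ref{ConvergenceThickPart} and Proposition \ref{ConvergenceThick}, applied to the collar of $\partial_j A$ on the $F_j$-side, $f_t$ there converges to the developing map of $C_\infi$ near $c_j$, i.e. to the exponential model near one of its two ends; and since $\partial_j A$ is pushed into the $\ep_t$-thin part, $f_t(\partial_j\tilde A)$ concentrates, as $t\to\infi$, onto the corresponding local fixed point of $\rho_t(m)$. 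Conjugating back to the fixed $\rho_\infi$-normalization, this reads $p_j=\lim_i z_{t_i}^{(j)}$, where $z_{t_i}^{(1)},z_{t_i}^{(2)}$ are the two fixed points of $\rho_{t_i}(m)$, i.e. the ideal endpoints of $a_{t_i}$, one attached to each end of $A_{t_i}$. Along the chosen subsequence $z_{t_i}^{(1)},z_{t_i}^{(2)}$ converge to the ideal points of $a_\infi$, which is the assertion.

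The hard part will be the rigorous form of the last two steps: showing that the developing map of the degenerating flat neck $A_t$ (and of any adjacent expanding piece) is genuinely $C^1$-close to the exponential model on the relevant fundamental domains, and --- the real subtlety --- that the limit developing map on $\tilde F_j$ pins $p_j$ to the \emph{same} ideal endpoint of $a_t$ that the exponential model assigns to the $F_j$-end. This is the $\rho_\infi(m)=I$ counterpart of Lemma \ref{ConvergenceOfUniformizableParabolicNeck} and Proposition \ref{DevConvergencePararbolicNeck}, now with the ``cusp'' a branch point of trivial holonomy rather than a parabolic fixed point; one also has to check that $A_t$ possibly developing as a finite cover of its $\exp$-image, together with the $2\pi n$-grafting bookkeeping near $m$, leaves the fixed-point set, and hence the conclusion, unchanged.
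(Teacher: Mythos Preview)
Your direct approach via the exponential model is different from the paper's, and it stops precisely at the step you flag as ``the real subtlety.'' The problem is that you are running two exponential pictures in parallel without ever showing they share a centre. On the one hand, the cusp model of $C_\infty$ on the $F_j$-side (Proposition~\ref{CuspClassification}) has circumferences developing to round circles shrinking toward the branch-point image $p_j$; on the other hand, the Epstein estimate for the flat cylinder $A_t$ (Proposition~\ref{CylinderHonomyEstimate}) says circumferences develop to approximate $G_t$-orbits, i.e.\ circles separating the two endpoints of $a_t$. To link them you would have to pick a circumference $c_s$ at bounded but large height $s$ from the $F_j$-boundary so that both descriptions apply simultaneously: then $f_t(c_s)$ is a small circle about $p_j$ which also separates the endpoints of $a_t$, forcing one endpoint of $a_t$ inside that small circle and hence $p_j\in a_\infty$. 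You do not make this argument; your step~3 asserts the conclusion (``$f_t(\partial_j\tilde A)$ concentrates onto the corresponding local fixed point of $\rho_t(m)$'') rather than deriving it. You also do not address the surjectivity part of the statement: when $a_\infty$ is a geodesic, why must the two boundary components go to \emph{different} endpoints?

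The paper bypasses the exponential matching entirely and argues by contradiction on the holonomy. Suppose $f_\infty(\ell)$ (the $P$-side boundary image) is not an ideal point of $a_\infty$. Since $\rho_{t_i}(m)\to I$ tangentially, the one-parameter subgroups $G_i\ni\rho_{t_i}(m)$ induce invariant foliations $\mathcal F_i$ of $\H^3$ converging to a foliation $\mathcal F_\infty$, whose only ``fixed'' ideal points are those of $a_\infty$. The diverging Dehn-twist power $\iota_i$ (Lemma~\ref{IDivergingTwist}) relates the original $Q$-side holonomy $\rho_i(\gamma)$ to a re-marked version $\rho_i(\iota_i\cdot\gamma)$ by conjugation with $\rho_i(m)^{n_i}$, $|n_i|\to\infty$; the re-marked axis lands at $f_\infty(\ell)$ (via Lemma~\ref{PunctureNotAtAxis} and Proposition~\ref{PunctureElementaryHolonomy} when $\rho_\infty|\Stab Q$ is elementary). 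If $f_\infty(\ell)\notin a_\infty$, iterating the $G_i$-flow pushes that axis out of every compact in the leaf space of $\mathcal F_\infty$, so $\rho_i(\gamma)$ diverges in $\PSL(2,\C)$---contradicting convergence of $\rho_t$. The ``different endpoints'' clause (Lemma~\ref{DifferentEndpoints}) is again a twist argument: $\rho_i(m)^{n_i}$ has translation length $\to\infty$ along $a_i\to a_\infty$, so its attracting fixed point is the endpoint opposite to $f_\infty(\ell)$, and that is where $f_\infty(r)$ must land.
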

  
\proof
By Lemma \ref{m-neq-I}, $\rho_{t_i}(m) \neq I$ for sufficiently large $i \in \Z_{> 0}$.
Thus, by taking a subsequence,  we may in addition assume that $\rho_{t_i}(\gam)$ converges to $I$ tangentially to a unit tangent vector of $\PSL_2\C$ at $I$.  
Let $G_i$ be the one-parameter subgroup of $\PSL_2\C$ which contains $\rho_{t_i} (\gam)$, such that the cyclic group generated by $ \rho_{t_i} (\gam)$ is asymptotically  dense in $G_i$  with respect to the intrinsic metric on $G_i$.
Then the trajectories of  $G_i$ yields a unique foliation of $\H^3$ except that,  if $\rho_{t_i} (\gam)$ is elliptic, only  of $\H
^3 \minus a_i$ (Figure \ref{fInvariantFoliationOfH}). 
\begin{figure}
\begin{overpic}[scale=.3
] {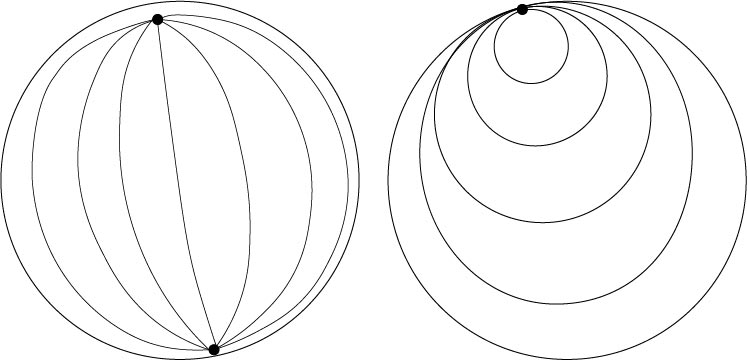} 
      \end{overpic}
\caption{}\Label{fInvariantFoliationOfH}
\end{figure}
We have chosen a subsequence $t_i$ so that  $C_{t_i} \to C_\infi$ uniformly on every thick part and the axis $a_i$ converges to a closed subset $a_\infi$ of $\ol{\H^3}$.
Let $P, Q$ be the components of $\ti{S} \minus \phi^{-1}(A)$ adjacent across $\ti{A}$. 
\begin{claim}\Label{PuncturesAtAxes}
Let $\ell$ be the common boundary component of $P$ and $\ti{A}$. 
Suppose, to the contrary, that $\lim f_{t_i}(\ell)$ is not a point, in $\CP^1$, of the limit axis $a_\infi$.
Then $\rho_{t_i} | Q$ diverges to $\infty$ in $\rchi$.
\end{claim}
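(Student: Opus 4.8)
The plan is to argue by contradiction. Suppose $\rho_{t_i}|Q$ does not diverge in $\chi$; after passing to a further subsequence we may then assume it converges in $\chi$, and the goal is to derive a contradiction with $z_P:=\lim f_{t_i}(\ell)\notin a_\infi$. First I would record the behaviour of the developing maps on the two sides of the neck. By the analysis behind Theorem \ref{ConvergenceThickPart} and Proposition \ref{DevConvergesSubsurfaces} (together with the discrete-stabilizer case discussed just before it), $f_{t_i}|P$ and $f_{t_i}|Q$ both converge, after an isotopy of $S$ in $t$, to $\rho_\infi$-equivariant maps, and each boundary line $\ell$ (resp.\ $\ell'$) of $\ti N$ is carried in the limit to a single point $z_P$ (resp.\ $z_Q$) of $\CP^1$. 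In the degenerate (elementary) cases this is exactly what Lemma \ref{PunctureAtDomain} and Proposition \ref{PunctureNotfixed} provide about boundary lines; in the non-degenerate case $z_P$, $z_Q$ are the developing images of the cusps obtained by pinching $\ell$, $\ell'$.

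Next I would describe $f_{t_i}$ on the neck itself. Since $\rho_\infi(m)=I$, Lemma \ref{FlatOrExpanding} produces, for $i\gg 0$, a flat cylinder $A_{t_i}\subset E_{t_i}$ homotopic to $m$ with $\Mod A_{t_i}\to\infty$; shrinking the fixed neighbourhood $N$ we may take $N\subset A_{t_i}$, hence $\ti N\subset\ti A_{t_i}$. By Proposition \ref{CylinderHonomyEstimate} and the description of the developing map of a long flat cylinder in the proof of Proposition \ref{CuspClassification}, $f_{t_i}|\ti A_{t_i}$ agrees, up to a Möbius map $M_i$ and an error small on compacta, with $z\mapsto\exp(\sqrt2\,z)$. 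Thus $f_{t_i}(\ti N)$ is an annular region $\Omega_i$, round in the $M_i^{-1}$-coordinate, whose modulus tends to $\infty$, whose two boundary curves $f_{t_i}(\ell),f_{t_i}(\ell')$ are concentric round circles in that coordinate, and whose core separates the two fixed points of $\rho_{t_i}(m)=M_i(\,\zeta\mapsto\mu_i\zeta\,)M_i^{-1}$ (with $\mu_i\to 1$); that is, it separates $M_i(0)$ and $M_i(\infty)$, the ideal endpoints of $a_i=\Axis(\rho_{t_i}(m))$, which converge to the point(s) of $a_\infi\cap\CP^1$.

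The contradiction then comes from a linking argument. As $i\to\infty$ the circle $f_{t_i}(\ell)$ shrinks to $z_P$ while still separating $M_i(0)$ from $M_i(\infty)$, and likewise $f_{t_i}(\ell')$ shrinks to $z_Q$ while separating them; since $\Omega_i$ has modulus tending to $\infty$, tracking which complementary disk of each shrinking circle contains $M_i(0)$ and which contains $M_i(\infty)$ forces $\{z_P,z_Q\}\subseteq a_\infi\cap\CP^1$. In particular $z_P\in a_\infi$, contradicting the hypothesis. Here the convergence of $\rho_{t_i}|Q$ is used precisely to guarantee that $f_{t_i}(\ell')$ shrinks to a point at all; without it the far end of the long neck need not be pinned and the argument breaks. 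Hence $\rho_{t_i}|Q$ must diverge.

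I expect the bulk of the work, and the genuinely delicate point, to be the linking argument: the normalizing Möbius maps $M_i$ degenerate (their spherical derivatives blow up somewhere), so one cannot take limits of $M_i$ directly and must instead control the simultaneous Hausdorff limits of $\Omega_i$, of its two boundary circles, and of $M_i(0)$ and $M_i(\infty)$, carefully distinguishing the small from the large complementary component of each degenerating circle. A subsidiary technical matter is to verify that the first step really yields $z_P,z_Q$ as honest points even when the limiting developing maps on $P$ or $Q$ are of the degenerate type in Proposition \ref{DevConvergesSubsurfaces} (constant, or constant off finitely many disks $D_1,\dots,D_n$).
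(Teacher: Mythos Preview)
Your approach is genuinely different from the paper's. The paper picks $\gamma\in\Stab Q$ with $\rho_\infty(\gamma)\neq I$ (Proposition~\ref{NonelementaryComplements}), uses the diverging twist parameter (Lemma~\ref{IDivergingTwist}) to express $\rho_i(\gamma)$ as a conjugate of the untwisted element $\rho_i(\iota_i\gamma)$ by a large power $\rho_i(m)^{\pm n_i}$, and then argues dynamically via the $G_i$-invariant foliation $\mathcal{F}_i$ of $\H^3$: if $z_P$ is not an ideal point of $a_\infty$ then this conjugation carries the axis of $\rho_i(\iota_i\gamma)$ (which lies near $z_P$) off every compact set, forcing $\rho_i(\gamma)\to\infty$.

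Your linking argument has a genuine gap when $\rho_{t_i}(m)$ is hyperbolic, which does occur (see the first construction in \S\ref{sExoticDegeneration}). In that case the $\rho_{t_i}(m)$-invariant curve $f_{t_i}(\ell)$ is, in your $M_i^{-1}$-coordinate, a logarithmic spiral rather than a round circle: it accumulates on both fixed points $0$ and $\infty$, so it neither separates them nor shrinks to a single point, and there is no annulus $\Omega_i$ bounded by concentric round circles whose modulus one can send to infinity. The idea can be repaired by replacing the full line $\ell$ with a compact fundamental arc $s\subset\ell$: since the $\sqrt{2}$-scaled circumference of $A_{t_i}$ converges to a positive multiple of $2\pi$ (Lemma~\ref{FlatOrExpanding}), the image $f_{t_i}(s)$ winds around the origin $n\geq 1$ times with multiplicative radius change $|\mu_i|\to 1$, hence is Hausdorff-close to a genuine round circle that \emph{does} separate the fixed points; once $f_{t_i}(s)\to z_P$ this forces one endpoint of $a_i$ to converge to $z_P$. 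Observe that this repaired argument never invokes convergence of $\rho_{t_i}|Q$, so your contrapositive framing becomes superfluous --- you would in effect be proving Proposition~\ref{PunctureAtAxis} directly rather than the auxiliary Claim~\ref{PuncturesAtAxes}.
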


\begin{proof}
Let $\iota_i$ be some power of the Dehn twist of $S$ along $m$ so that the Fenchel-Nielsen twist parameter of the remarked Riemann surface $\iota_i X_{t_i}$ along $m$ is bounded from above and below uniformly in $i$.

Let $\ell'$ be the common boundary component of $\ti{A}$ and $Q$. 
By Proposition \ref{NonelementaryComplements}, there is $\gamma \in \pi_1 (S)$ belonging to $\Stab Q$ such that $\rho_\infi  (\gamma)$ is {\it not} the identity matrix.
We may in addition assume the axis of  $\rho_{t_i}(\iota_i\cdot \gamma)$ converges to the point $f_{F, \infi}(\ell)$ on $\CP^1$  (if $\rho_\infi(\Stab Q)$  is elementary, use Lemma \ref{PunctureNotAtAxis} and Proposition \ref{PunctureElementaryHolonomy}).
By the tangential convergence of $\rho_i(m) \to I$, the $G_i$-invariant foliation $\mathcal{F}_i$ of $\H^3$converges to a foliation $\mathcal{F}_\infi$ of $\H^3$. 
If $f_{P, \infi}(\ell)$ is not the ideal point of $a_\infi$, $\Axis(\rho_i (\iota_i \cdot \gamma ))$ be eventually disjoint from every compact subset in the space of the leaves of $\FFF_\infi$.
Therefore, since the $G_i$-invariant  foliations $\mathcal{F}_i$ limit to  $\FFF_\infty$, $\Axis(\rho_i(\gamma))$ also leaves every compact subset of the leaf space of $\FFF_\infi$ . 
 Hence $\rho_i(\gamma)$ must diverge to $\infty$ in $\PSL_2\C$, which is a contradiction. 
(Figure \ref{fPunctureAndAxis}.)
\end{proof}
This claim completes the proof.
\Qed{PunctureAtAxis}

 It remains only to prove the surjectivity in \Cref{TrivialNeck}
(\ref{iBoundaryComponentAtEndpoints}):
\begin{lemma}\Label{DifferentEndpoints}
Suppose that $a_\infi$ is a geodesic in $\H^3$. 
Then $f_\infi(\ell)$ and $f_\infi(\ell')$ are the different endpoints of $a_\infi$.
\end{lemma}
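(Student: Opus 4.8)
The plan is to pin down \emph{which} of the two ideal points of $a_\infi$ each of $f_\infi(\ell)$ and $f_\infi(r)$ equals; once this is done, the statement that they are the two \emph{different} endpoints follows at once. Since $a_\infi$ is assumed to be a geodesic, let $p_\infi^{+}\neq p_\infi^{-}$ be its ideal points on $\CP^1$. By Lemma \ref{m-neq-I} we have $\rho_{t_i}(\gam)\neq I$ for $i\gg 0$, where $\gam$ is the representative of $m$ preserving $\ti A$, and since $\Axis(\rho_{t_i}(\gam))=a_i\to a_\infi$ with $a_\infi$ a genuine geodesic, $\rho_{t_i}(\gam)$ is loxodromic or elliptic for $i\gg 0$; write $p_i^{+},p_i^{-}$ for its two fixed points on $\CP^1$, labelled so that $p_i^{\pm}\to p_\infi^{\pm}$ (possible because the endpoints of $a_i$ converge to the two distinct endpoints of $a_\infi$). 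Recall also that, by Proposition \ref{DevConvergesSubsurfaces} and Proposition \ref{PunctureAtAxis}, $f_{t_i}(\ell)$ and $f_{t_i}(r)$ converge to the single points $f_\infi(\ell)$ and $f_\infi(r)$, each of which already lies in $\{p_\infi^{+},p_\infi^{-}\}$.

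First I would set up the exponential model on the neck. As $\rho_\infi(m)=I$, Lemma \ref{FlatOrExpanding} provides, for $i\gg 0$, a flat cylinder $A_{t_i}$ in $E_{t_i}$ homotopic to $m$ with $\Mod A_{t_i}\to\infi$ and $\sqrt{2}\int_m|\sqrt{q_{t_i}}|\to 2\pi n i$ for a fixed positive integer $n$; arguing as in the proof of Theorem \ref{TrivialNeck} (cf.\ Proposition \ref{EllipticNeck}) we may take $A_{t_i}$ to contain the conformally $\ep$-thin part homotopic to $m$ with $C_{t_i}\minus A_{t_i}$ convergent, and, using the freedom to isotope $S$ in $t$, we may take the fixed regular neighbourhood $A$ small enough that $A\subset A_{t_i}$ for $i\gg 0$, so that the lifts $\ell$ and $r$ are curves in the strip $\ti A_{t_i}$ separated by the lift $\ti m_{t_i}$ of the core geodesic $m_{t_i}$ of $A_{t_i}$. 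By Proposition \ref{CuspClassification} together with Proposition \ref{CylinderHonomyEstimate}, after normalizing $\CP^1=\C\cup\{\infi\}$ so that $p_i^{+}=0$ and $p_i^{-}=\infi$, the developing map restricted to $\ti A_{t_i}$ is $C^{1}$-close to a restriction of $z\mapsto\exp(\sqrt{2}\,z)$, with the vertical foliation of $E_{t_i}$ in the real direction of the domain and $\gam$ acting by a translation close to the circumferential one; in particular a circumference of $A_{t_i}$ at signed Euclidean distance $s$ from $m_{t_i}$ develops into (a curve $C^{1}$-close to) the round circle $\{|w|=e^{\sqrt{2}\,s}\}$, and $m_{t_i}$ itself develops near $\{|w|=1\}$, which separates $p_i^{+}$ from $p_i^{-}$.

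Next I would read off the limits. Let $s_i(\ell)$ and $s_i(r)$ be the signed distances from $\ell$ and $r$ to $m_{t_i}$ in $E_{t_i}$; since $\ti m_{t_i}$, being isotopic to the core of $\ti A$, separates the two sides of $\ti A$, these two levels have opposite signs. By the model, $f_{t_i}(\ell)$ lies in a round annulus $\{e^{\sqrt{2}(s_i(\ell)-c)}\le|w|\le e^{\sqrt{2}(s_i(\ell)+c)}\}$ for a fixed constant $c$ (bounding the excursion of $\ell$ transverse to the circumferences), and similarly for $r$. As $f_{t_i}(\ell)$ converges to a single point, $s_i(\ell)$ must tend to $-\infi$ or to $+\infi$, and likewise $s_i(r)$; since the signs are opposite, say $s_i(\ell)\to-\infi$ and $s_i(r)\to+\infi$. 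Then $f_{t_i}(\ell)$ is eventually contained in a round disk about $p_i^{+}=0$ of spherical radius tending to $0$, so $f_\infi(\ell)=\lim_i p_i^{+}=p_\infi^{+}$, and symmetrically $f_\infi(r)=\lim_i p_i^{-}=p_\infi^{-}$. Hence $f_\infi(\ell)=p_\infi^{+}\neq p_\infi^{-}=f_\infi(r)$, as claimed.

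The main obstacle will be the bookkeeping relating the fixed topological neighbourhood $A$ to the metrically defined cylinder $A_{t_i}$: one has to justify that $A$ may be taken inside $A_{t_i}$ after the isotopies already chosen in the proof of Theorem \ref{TrivialNeck}, and that the developing map of $A_{t_i}$ stays uniformly close to the exponential model all the way out to $\partial A_{t_i}$ (so that the levels $s_i(\ell),s_i(r)$, not merely the level of $m_{t_i}$, govern the developing images), and that no ``unwinding'' across the separating circle $f_{t_i}(m_{t_i})$ can happen. I expect all of this to follow, exactly as in the proof of Proposition \ref{PunctureAtAxis}, from $\Mod A_{t_i}\to\infi$, Lemma \ref{DumasLemma}, and the convergence of the thick parts $C_{t_i}\minus A_{t_i}$ to $C_\infi$ minus cusp neighbourhoods. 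A minor extra point: if $\rho_{t_i}(\gam)$ happens to be elliptic rather than loxodromic, the curves $\{|w|=e^{\sqrt{2}\,s}\}$ still make sense and still separate $p_i^{+}$ from $p_i^{-}$, so nothing changes.
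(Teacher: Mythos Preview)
There is a genuine gap, and it is not among the bookkeeping issues you flag at the end. After normalizing so that $p_i^{+}=0$ and $p_i^{-}=\infty$ (which is indeed compatible, up to bounded conjugation, with the normalization in which $\rho_{t_i}$ converges), you assert that a circumference of $A_{t_i}$ at signed distance $s$ from the core $m_{t_i}$ develops near $\{|w|=e^{\sqrt{2}s}\}$, so in particular $m_{t_i}$ itself develops near $\{|w|=1\}$. But the Epstein/exponential estimates only give $f_{t_i}|_{\ti A_{t_i}}\approx \lambda_i\exp(\sqrt{2}z)$ (with the origin of $z$ at the core) for \emph{some} $\lambda_i\in\C^{*}$; fixing the two axis endpoints leaves the $\C^{*}$--scaling free, and nothing you cite bounds $|\lambda_i|$. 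With the correct model, ``$f_{t_i}(\ell)$ converges to a single point'' only yields $\log|\lambda_i|+\sqrt{2}\,s_i(\ell)\to\pm\infty$, and likewise for $r$. So even granting that $s_i(\ell)$ and $s_i(r)$ have opposite signs, an unbounded $|\lambda_i|$ could drive both $|\lambda_i|e^{\sqrt{2}s_i(\ell)}$ and $|\lambda_i|e^{\sqrt{2}s_i(r)}$ to $0$ (or both to $\infty$), and the conclusion does not follow. Your justification for ``opposite signs'' is also too quick: that $m_{t_i}$ is isotopic in $A_{t_i}$ to the core of $A$ does not place it between $\iota_{t_i}(\ell)$ and $\iota_{t_i}(r)$, since the isotopies already used to make the thick parts converge may land $\iota_{t_i}(\ell)$ and $\iota_{t_i}(r)$ anywhere in $A_{t_i}$.

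The paper's proof supplies exactly the missing control, and by a different mechanism: it does not use the core $m_{t_i}$ or any $\{|w|=1\}$ normalization at all. Instead it invokes the diverging Fenchel--Nielsen twist (Lemma~\ref{IDivergingTwist}): the marking correction is a Dehn--twist power $\gam^{n_i}$ with $|n_i|\to\infty$, and $\rho_i(\gam^{n_i})$ has axis $a_i\to a_\infty$ with diverging translation length. The curves $\ell$ and $r$ are related, through this change of marking, by the action of $\rho_i(\gam^{n_i})$, whose attracting fixed point singles out the endpoint of $a_\infty$ opposite to $f_\infty(\ell)$. In the language of your model this is precisely what pins down the growth of $|\lambda_i|$ against $e^{\sqrt{2}h_i/2}$; without that Dehn--twist input, the exponential--model argument does not close.
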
 

\begin{proof}
By Claim \ref{PuncturesAtAxes}.
$f_i  | \ell$ converges to the constant map to an endpoint of $a_\infi$. 

Let $n_i \in \Z$ be the power of the Dehn twist along $m$ which gives  $\iota_i \in \MCG(S)$. 
Thus $\rho_i(\gam^{n_i})$ is a hyperbolic element whose axis $a_i$ converges to $a_\infi$, and its translation length diverges to infinity as $i \to \infty$. 
Then the attracting fixed point of $\rho_i(\gam^{n_i})$ converges to the endpoint of $a_\infi$ which is not $f_\infi(\ell)$. 
Thus $f_\infi(\ell')$ must be at the other endpoint. 
\end{proof}

\begin{figure}
\vspace{5mm}
\begin{overpic}[scale=.35,
] {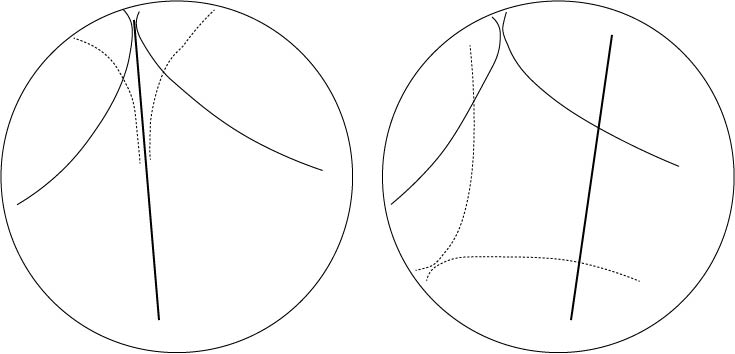} 
   \put(18 ,50 ){\textcolor{black}{\tiny The axis at the puncture.}}  
    \put(65 ,50 ){\textcolor{black}{\tiny The axis off the puncture; diverging.}}  
      \end{overpic}
\caption{}\Label{fPunctureAndAxis}
\end{figure}

\begingroup
 \color{RubineRed}
    \fontsize{12pt}{12pt}\selectfont

   \endgroup
\Qed{PunctureAtAxis}
\subsubsection{Convergence of pleated surfaces when $\rho_\infi(m) = I$}\Label{smIPleatedSurface}
First we compare developing maps of $\CP^1$-structures and the exponential map $\exp \col \C \to \C^\ast$. 
Let $\ell$ be the geodesic in $\H^3$ connecting $0$ to $\infty$ of $\bdr \H^3 = \C \cup \{\infty\}$. 
Let $\Psi \col \C^\ast \to \ell$ be the continuous extension of the nearest point projection $\H^3 \to \ell$.
Then, the composition is $\Psi \cc \exp\col \C \to \H^3$ is the Epstein map of the $\CP^1$-structure on $\C$ given by $\exp$. 

Recall that, given a $\CP^1$-structure $C = (X, q)$,  for $x \in \C$, $~d(x)$ is the Euclidean distance from $x$ to the set of the zeros of the holomorphic differential $q$.
Note that, if $d(x)$ is large, then we can naturally embed a large neighborhood of $x$ into $\C (\cong \E^2)$ by an isometric map onto its image, so that vertical leaves map into horizontal lines, and horizontal leaves map into vertical lines. 
\begin{proposition}\Label{EpsteinMapAndBendingMap} 
For every $\ep > 0$, there is $R > 0$, such that,  if $x \in C$ satisfies $d(x) > R$,  then the Epstein map $\Sigma\col \ti{C} \cong \til{S} \to \H^3$ is  $\ep$-close, in the $C^1$-topology, to the composition of the collapsing map $\ti\kap\col\til{S} \to \H^2$ and  the bending map $\beta \col \H^2 \to \H^3$ at every lift $\ti{x}$ of $x$.  
\end{proposition}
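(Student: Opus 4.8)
The plan is to compare both $\Sigma$ and $\beta\cc\ti\kap$, near $\ti x$, with the explicit model map $\Psi\cc\exp(\sqrt2\,\cdot)\col\C\to\H^3$ introduced just above, and then combine the two comparisons by the triangle inequality. The key point is that for the model $\CP^1$-structure on $\C$ with developing map $\exp(\sqrt2\,\cdot)$ the Epstein map and the Thurston pleated surface \emph{coincide}, both being $\Psi\cc\exp(\sqrt2\,\cdot)$: the Epstein assertion is the one recorded in the paragraph preceding the statement, and the pleated-surface assertion holds because every maximal ball of the model develops to a half-plane whose boundary circle passes through $0$ and $\infty$, so the hyperbolic plane it bounds contains the geodesic $\ell$ and the orthogonal projection $\Psi_z$ from that ball restricts to $\Psi$ near $f(z)$.

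First I would set up the flat chart. Since $d(x)>R$ with $R$ large, the remark preceding the proposition gives an isometric chart embedding a $|\sqrt q|$-ball about $x$ of radius comparable to $d(x)$ into $\C$, sending vertical leaves to horizontal lines and horizontal leaves to vertical lines; normalise so that $x\mapsto 0$. On this chart $q$ is a constant multiple of $d\zeta^2$, so the Schwarzian equation for the developing map $f$ differs from that of the model only by the Schwarzian of the uniformizing hyperbolic metric relative to the flat coordinate, and by Dumas' estimates (the bound $|\nu/\omega|\le 6/d(\cdot)^2$ underlying Lemma \ref{DumasLemma}) this has size $O(d(x)^{-2})$ over a unit window around $0$. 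Hence, after post-composing $f$ with a suitable element of $\PSL(2, \C)$, the map $f$ is $\ep$-close in $C^1$ to $\exp(\sqrt2\,\cdot)$ near $0$ once $R$ is large. As the Epstein map $\Sigma=\Ep$ is determined pointwise by $f$ together with the local geometry of $|\sqrt q|$, this alone gives that $\Sigma$ is $\ep$-close in $C^1$ at $\ti x$ to $\Psi\cc\exp(\sqrt2\,\cdot)$; equivalently, Lemma \ref{DumasLemma} and Lemma \ref{HorizontalTotalCurvature} say directly that near $x$ the Epstein surface has vertical speed $\sqrt2+O(d(x)^{-1})$, vertical normal curvature $O(d(x)^{-2})$, horizontal speed $O(d(x)^{-2})$ and horizontal bending rate $\sqrt2+O(d(x)^{-1})$, that is, exactly the first-order data of the model map at $0$.

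For the pleated-surface side I would use the identity $\beta\cc\ti\kap(z)=\Psi_z f(z)$ from \S\ref{sThurston}, so it is enough to show that the round disk $f(B_{\ti x})$ and its variation with $z$ near $\ti x$ are $C^1$-close to their model counterparts. Because $f$ agrees with the model on the whole flat chart up to $C^1$-error $O(d(x)^{-2})$, and (as for the model) the boundary of the maximal ball of $C$ runs alongside $\ti x$ for $|\sqrt q|$-length comparable to $d(x)$ before it can leave the chart, the developing image of that stretch of $\bdr B_{\ti x}$ is a long arc of the round circle $\bdr_\infi f(B_{\ti x})$ lying within $\ep$ of the model boundary circle; three well-separated points of this arc pin $\bdr_\infi f(B_{\ti x})$, hence the hyperbolic plane it bounds, hence the orthogonal projection $\Psi_{\ti x}$, to within $\ep$ of the model plane and of $\Psi$. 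Combined with the $C^1$-closeness of $f$, and of its dependence on $z$, to the model from the previous paragraph, this yields that $\beta\cc\ti\kap$ is $\ep$-close in $C^1$ at $\ti x$ to $\Psi\cc\exp(\sqrt2\,\cdot)$, and the triangle inequality completes the proof.

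The step I expect to be the main obstacle is the control of $B_{\ti x}$ in the last paragraph: the maximal ball is not a local object, so one must make quantitative, firstly, that the flat-chart estimate (valid only on a ball of radius $\sim d(x)$) already forces $\bdr_\infi f(B_{\ti x})$ to be $C^1$-close to the model circle on an arc exhausting it as $d(x)\to\infi$ --- here one uses the Kulkarni--Pinkall description of maximal balls together with the closeness of $f$ to the model near $\ti x$ --- and, secondly, that the orthogonal projection $\Psi_{\ti x}$, evaluated with its first derivative at $f(\ti x)$, depends on $\bdr_\infi f(B_{\ti x})$ only through that controlled arc. The curvature bounds of Lemma \ref{DumasLemma} are precisely what make both of these effective.
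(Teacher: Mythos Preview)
Your proposal is correct and follows essentially the same route as the paper: both compare $\Sigma$ and $\beta\cc\ti\kap$ separately to the exponential model $\Psi\cc\exp$, using Dumas' curvature/speed estimates (Lemma~\ref{DumasLemma} and Lemma~\ref{HorizontalTotalCurvature}) for the Epstein side and closeness of maximal balls for the Thurston side, then conclude by the triangle inequality. The paper's argument is terser---it deduces closeness of developing maps from closeness of Epstein maps and then passes to maximal balls in two sentences---whereas you give more detail and correctly isolate the non-local control of $B_{\ti x}$ as the delicate point; but the structure and the key inputs are the same. (One cosmetic remark: check your scaling factor---the paper's model is $w\mapsto\Psi\exp(w/\sqrt2)$ rather than $\exp(\sqrt2\,w)$.)
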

\proof[Proof of Proposition \ref{EpsteinMapAndBendingMap}]

\begin{lemma}
For every $\ep > 0$, there is $R > 0$, such that, if  $z \in \til{C}$  satisfies $d(z) > R$, then the maximal ball centered at $z$  is $\ep$-close to the maximal ball of the corresponding exponential map.
\end{lemma}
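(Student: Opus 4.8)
The plan is to reduce the statement to the $C^1$-closeness of \emph{developing} maps near points far from the zeros of $q$, and then to transfer the maximality property from the exponential model. First I would upgrade the previous lemma to a statement about the developing map itself. In the natural $E$-isometric coordinate $\zeta=\int\sqrt{q}$ centred at $z$ (well defined on the Euclidean ball of radius $d(z)/2$ around $z$, which contains no zero of $q$), the Schwarzian derivative of $f$ with respect to $\zeta$ equals $1$ up to an error term which by Lemma~\ref{DumasLemma} is $O(d(z)^{-2})$; hence, by continuous dependence of the solution of the Schwarzian equation on its coefficient, for every $\ep>0$ and every fixed radius $\rho_0$ there is $R>0$ such that if $d(z)>R$ then, after post-composing $f$ with a suitable $g_z\in\PSL(2,\C)$, the normalized developing map $g_z\circ f$ is $\ep$-close in $C^1$ on $\{|\zeta-w|<\rho_0\}$ to the developing map $w\mapsto\exp(w/\sqrt2)$ of the exponential structure (which, by Proposition~\ref{CuspClassification}, is the local model). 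Since the maximal ball is a conformally natural object, post-composing $f$ with $g_z$ only post-composes the bounding round circle of $B_z$ with $g_z$, so it is enough to compare $g_z\circ f$ with this normalized exponential map.

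Next I would record the exponential model. The maximal ball of the $\exp$-structure at $w$ is the strip $\Sigma_w=\{\zeta:|\Im(\zeta-w)|<\pi/\sqrt2\}$, which $\exp(\cdot/\sqrt2)$ maps homeomorphically onto an open round half-plane, with core the central line of the strip. Two features are used: $\Sigma_w$ is transversally bounded (width $\pi\sqrt2$ in the $E$-metric), and its bounding round circle is already pinned down by the developing map on any bounded sub-arc of $\bdr\Sigma_w$; and $\Sigma_w$ is \emph{strictly} maximal, since on any strictly wider strip the image is a sector strictly larger than a half-plane, hence not a round disk. Then, working on $\{|\zeta-w|<\rho_0\}$ with $\rho_0$ a fixed large constant, the $C^1$-closeness makes $g_z\circ f$ restricted to a slightly shrunk copy of $\Sigma_w$ an embedding whose image has nearly round boundary; as round circles form a finite-dimensional family cut out by a transverse condition, an implicit-function argument produces an honest embedded round disk $B_z\subset\ti{C}$, with $z$ in its core and with $g_z(f(B_z))$ within $\ep$ (Hausdorff distance, spherical metric) of the half-plane $\exp(\Sigma_w/\sqrt2)$. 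Finally one checks that $B_z$ is \emph{the} maximal ball at $z$: a round ball at $z$ properly containing $B_z$ and with $z$ in its core would, transported back through the $C^1$-close identification on $\{|\zeta-w|<\rho_0\}$, yield a round ball at $w$ in the $\exp$-structure strictly larger than $\Sigma_w$ with $w$ in its core, contradicting strict maximality of $\Sigma_w$. This gives $B_z$ and $B_w^{\exp}=\Sigma_w$ $\ep$-close, as required.

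The hard part will be this last step, the maximality transfer, together with justifying that a \emph{fixed} $\rho_0$ (hence $R$ depending only on $\ep$) suffices: a priori a maximal ball can be large, even non-compact in the $E$-metric as $\Sigma_w$ is, so to rule out competitors one must control the developing maps on a region large enough to contain the relevant portion of any competing ball. Here I would invoke Theorem~\ref{UpperInjectivityRadiusBound}: the uniform upper bound on the upper injectivity radius of $E_t$, combined with the $\exp$-closeness on a fixed scale and a short bootstrap, gives a uniform a priori bound on the transversal size of maximal balls through points with $d(z)$ large, so that a fixed $\rho_0$ does capture the whole of the maximal-ball structure; with that in hand the contradiction above closes.
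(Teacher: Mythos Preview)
Your proposal is substantially more detailed than the paper's proof, which is literally two sentences: ``As the Epstein maps of $C$ and $\exp$ are close, their developing maps are also close. This implies the closeness of their maximal balls centered at $z$ and their ideal points.'' The paper is content to invoke continuity of the Kulkarni--Pinkall maximal-ball construction under $C^1$-perturbation of the developing map, without further justification. Your attempt to unpack this is reasonable, and you correctly identify the maximality transfer as the nontrivial step.

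However, your proposed resolution via Theorem~\ref{UpperInjectivityRadiusBound} does not work. First, the lemma (and the enclosing Proposition~\ref{AlmostExponentialMap}) is stated for a single $\CP^1$-structure $C$, with $R$ depending only on $\ep$; Theorem~\ref{UpperInjectivityRadiusBound} is specific to the pinched family $C_t$ and is not available here. Second, and more importantly, the upper injectivity radius bound controls embedded Euclidean disks in $E$, not the projective geometry of maximal balls. The situation where $d(z)$ is large typically occurs deep inside a long thin flat cylinder, where the Euclidean injectivity radius is \emph{small}; so the bound tells you nothing there. And the quantity you say you want to control---the ``transversal size'' of the maximal ball---is already fixed at $\pi\sqrt{2}$ in the exponential model, independent of any injectivity radius.

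The maximality transfer can in fact be closed without any global input. The obstruction that makes $\Sigma_w$ maximal for $\exp$ is visible on the fixed transversal scale $\pi\sqrt{2}$: on any strictly wider strip the image fails to be a round disk, and on any region mapping onto a round disk strictly containing the half-plane, $\exp$ fails to be injective already within a $2\pi$-window in the imaginary direction. So take $\rho_0$ a fixed constant larger than, say, $4\pi$. Any competing round ball $B'$ for $f$ with $z$ in its core has $\bdr f(B')$ a round circle through two ideal points, and the portion of $\bdr B'$ near $z$ (at transversal distance at most $\pi\sqrt{2}+\delta$) lies inside the $\rho_0$-chart; transporting this portion through the $C^1$-identification with $\exp$ exhibits the same obstruction, forcing $\bdr f(B')$ within $\ep$ of the model half-plane's boundary circle. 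No appeal to the family $C_t$ is needed.
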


\begin{proof}
As the Epstein map
 of $C$ and $\exp$ are close, their developing maps are also close. 
This implies the closeness of their maximal balls centered at $z$ and their ideal points. 
\end{proof}
The proposition follows from the above lemma, and \Cref{AlmostExponentialMap}.
\Qed{EpsteinMapAndBendingMap}

Recall that we have already proved Theorem \ref{TrivialNeck} (\ref{iNotExactlyI}), (\ref{iTwistDieverges}), (\ref{iSubsequencialConvergence}) regarding the asymptotic behavior of $C_t$ using the decomposition of $C_t$ into the restriction of $C_t$ to the thin part $A$ and its complement. 
We prove additional compatibility of the corresponding bending map. 
\begin{proposition}\Label{TrivialNeckPleatedSurface}
Suppose that $\rho_\infi(m) = I$. 
Then for every diverging sequence $t_1 < t_2 < \dots$, up to taking a subsequence,  there are a sequence of diffeomorphisms $\iota_i \col S \to \tau_{t_i}$ representing the marking of $C_{t_i}$ and a path of cylinders $A_i$  in $C_{t_i}$ homotopy equivalent to $m$, such that 
in addition to Theorem \ref{TrivialNeck} (\ref{iNotExactlyI}), (\ref{iTwistDieverges}), (\ref{iSubsequencialConvergence}), the following holds:
\begin{enumerate}
\item  $A$ maps to $A_i$  by $\iota_i$; \Label{iNPreserved}
\item $\beta_{t_i}\cc \ti{\kap}_{t_i}\col \ti{S} \to \H^3$ converges to a $\rho_\infi$-equivariant continuous map $\ti{S} \to \H^3 \cup \CP^1$ uniformly on compact subsets; 
\item 
for each connected component $F$ of $\ti{S} \minus \phi^{-1}(A)$, the restriction of $\beta_{t_i}\circ \kappa_{t_i}$ to $F$ converges to the pleated surface of the corresponding component of $C_\infi$ or the constant map to an ideal point of  $a_\infi$ (in \Cref{PunctureAtAxis}); \Label{iThickPartPleastedSurface}

\item letting $\til{A}$ be a connected component of  $\phi^{-1}(A)$  in $\til{S}$, then $\beta_{t_i} \cc \ti\kap_{t_i}| \til{A}$ converges  to a map onto $a_\infi$
uniformly on compact subsets in $\til{A}$ with respect to a fixed closed disk metric on $\H^3 \cup \CP^1$. \Label{iCylinderToAxis}
\end{enumerate}
\end{proposition}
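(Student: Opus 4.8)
The plan is to build the limit pleated surface piece by piece — on the pinching cylinder and on its complement — mirroring the decomposition already used for the developing map in Theorem \ref{TrivialNeck}, and to transfer the convergence statements about $f_t$ proved in Proposition \ref{DevConvergesSubsurfaces} and Proposition \ref{PunctureAtAxis} to $\beta_t\circ\ti\kap_t$ via the identity $\beta_t\circ\ti\kap_t(z)=\Psi_z f_t(z)$ from \S\ref{sThurston}. First I would fix the combinatorial data. By Lemma \ref{FlatOrExpanding} in the case $\rho_\infi(m)=I$, for $t\gg0$ the surface $E_t$ carries a flat cylinder homotopic to $m$ of modulus tending to $\infi$ and bounded circumference; I take $A$ to be a fixed annular neighborhood of $m$ in $S$ and choose the markings $\iota_t\col S\to\tau_t$ so that $\iota_t(A)$ is such a flat cylinder $A_t$, which is (\ref{iNPreserved}). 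Passing to a subsequence, I would use Theorem \ref{TrivialNeck} to assume $C_{t_i}$ converges on thick parts to $C_\infi$ and the axes $a_i=\Axis(\rho_{t_i}(m))$ converge to $a_\infi\in\H^3\cup\CP^1$; by Lemma \ref{m-neq-I}, $\rho_{t_i}(m)\neq I$, and after a further subsequence $\rho_{t_i}(m)\to I$ tangentially, with one-parameter subgroups $G_i\ni\rho_{t_i}(m)$ whose invariant foliations $\FFF_i$ of $\H^3$ (of $\H^3\minus a_i$ when $\rho_{t_i}(m)$ is elliptic) converge to $\FFF_\infi$.

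Next I would treat the complement, i.e. (\ref{iThickPartPleastedSurface}). For a component $F$ of $\ti S\minus\phi^{-1}(A)$ on which $\rho_\infi$ has discrete stabilizer, thick-part convergence together with Theorem \ref{HolImmersion} gives that $f_{t_i}|\ti F$ and the maximal balls $B_z$ converge to those of the corresponding component $F_\infi$ of $C_\infi$; since $\beta_{t_i}\circ\ti\kap_{t_i}(z)=\Psi_z f_{t_i}(z)$ depends continuously on this data, $\beta_{t_i}\circ\ti\kap_{t_i}|\ti F$ converges to the pleated surface of $F_\infi$, extended continuously to $\partial\ti F$, which maps to the cusp point on $\CP^1$. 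If $\rho_\infi|\pi_1F$ has a continuous stabilizer I would invoke Proposition \ref{DevConvergesSubsurfaces} and Lemma \ref{NormalizingAndredegenerating}: after renormalizing by $\gamma_i\in\PSL(2,\C)$, the map $\gamma_i f_{t_i}|\ti F$ converges to a developing pair of $F_\infi$ and collapses $\ti F$, off finitely many disks, to a fixed point $p$ of $\rho_\infi(F)$. Writing $\beta_{t_i}\circ\ti\kap_{t_i}(z)=\gamma_i^{-1}\bigl(\Psi^{(i)}_z(\gamma_i f_{t_i}(z))\bigr)$, where $\Psi^{(i)}_z$ is the projection from the $\gamma_i$-image ball, I would argue that the inner point stays in a fixed compact part of $\H^3$ away from the attracting fixed point of $\gamma_i$, while $\gamma_i^{-1}$ is loxodromic with translation length $\to\infi$ and attracting fixed point $\to p$; hence $\beta_{t_i}\circ\ti\kap_{t_i}|\ti F$ converges to the constant map $p$. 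By Proposition \ref{PunctureAtAxis} (using also Proposition \ref{PunctureNotfixed}, Proposition \ref{PunctureElementaryHolonomy}, Lemma \ref{PunctureNotAtAxis}), $p$ is an ideal point of $a_\infi$, as required.

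For the cylinder, (\ref{iCylinderToAxis}): since $A_t$ is flat with $\Mod A_t\to\infi$ and bounded circumference, every point of its core lies Euclidean distance $\to\infi$ from the zeros of $q_t$, so Proposition \ref{AlmostExponentialMap} applies — on a component $\ti A_t$ of $\phi^{-1}(A)$, minus a fixed collar, $\Ep_{t_i}$ is $C^1$-close to $w\mapsto\Psi\circ\exp(w/\sqrt2)$ after a normalization and simultaneously $C^1$-close to $\beta_{t_i}\circ\ti\kap_{t_i}$. As $\Psi$ projects $\C^\ast$ onto the entire geodesic $\ell$ joining $0$ and $\infty$, undoing the normalization shows $\beta_{t_i}\circ\ti\kap_{t_i}(\ti A_t)$ lies in an $\ep$-neighborhood of the $G_i$-invariant geodesic through $a_i$ and, as the height of $A_t$ diverges, covers it including its endpoints; so in the disk metric on $\H^3\cup\CP^1$ the restriction $\beta_{t_i}\circ\ti\kap_{t_i}|\ti A_t$ converges uniformly on compacts to a map onto $a_\infi$, which also handles the parabolic and elliptic subcases of $\rho_{t_i}(m)$. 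Finally the two kinds of pieces agree on the boundary circles of $\ti A_t$ — both send them to ideal points of $a_\infi$, by Proposition \ref{PunctureAtAxis} and Lemma \ref{DifferentEndpoints} — so they glue to a $\rho_\infi$-equivariant continuous map $\ti S\to\H^3\cup\CP^1$, yielding assertion (2) and thereby Theorem \ref{TrivialNeck}(\ref{iIPleatedSurfaceConverging}).

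The hard part will be the continuous-stabilizer case in (\ref{iThickPartPleastedSurface}): one must show that $\Psi^{(i)}_z(\gamma_i f_{t_i}(z))$ does not escape to $\infi$ as the normalizing elements $\gamma_i$ diverge, so that $\gamma_i^{-1}$ genuinely pushes it to the ideal point $p$. This needs control of the geometry of the maximal balls under the degenerating developing maps of Proposition \ref{DevConvergesSubsurfaces}, not merely the convergence of the developing maps themselves; by contrast, the cylinder estimate (\ref{iCylinderToAxis}) is essentially a bookkeeping consequence of Proposition \ref{AlmostExponentialMap} and should be routine.
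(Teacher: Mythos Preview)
Your outline follows the paper's proof closely: the same decomposition into the cylinder $A$ and its complement, the same reliance on Proposition \ref{DevConvergesSubsurfaces} and Proposition \ref{PunctureAtAxis} for the complementary pieces, and Proposition \ref{AlmostExponentialMap} for the cylinder. Two points are worth flagging.

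\emph{On the cylinder (\ref{iCylinderToAxis}).} The paper does not treat this uniformly as you propose; it splits according to whether $a_\infi$ is a geodesic or a single ideal point. In the geodesic case the paper makes your Proposition~\ref{AlmostExponentialMap} argument precise by fixing the marking $\iota_t|A$ to be \emph{linear} in the Euclidean structures, choosing a parallelogram fundamental domain $P_i\subset\C$ for $\ti A_t$, and proving Claim~\ref{NeckCloseToAxis}: each horizontal leaf in $P_i$ has small $\beta_i\circ\kap_i$-diameter and lies near $a_i$. The point is that Proposition~\ref{AlmostExponentialMap} is local, so ``undoing the normalization'' to conclude the image lands near the \emph{global} axis $a_i$ needs exactly this anchoring at the edges $e,h$ (which map near the endpoints of $a_\infi$ by Theorem~\ref{TrivialNeck}(\ref{iBoundaryComponentAtEndpoints})) plus equivariance; that is the content of Claim~\ref{NeckCloseToAxis}. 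In the point case the paper abandons the Epstein comparison entirely and argues indirectly: choose $\rho_{t_i}(m)$-invariant regions $U_i\subset\H^3$ (equidistant tubes or horoballs) converging to a fixed horoball at $a_\infi$, and show that if $\beta_i(\ti N_i)\not\subset U_i$ then the holonomy of an adjacent component would diverge. Your uniform approach via Proposition~\ref{AlmostExponentialMap} is plausible here too, but this is not what the paper does, and your phrase ``the $G_i$-invariant geodesic through $a_i$'' needs care when $\rho_{t_i}(m)$ is parabolic.

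\emph{On the complement (\ref{iThickPartPleastedSurface}).} Your mechanism $\beta_t\circ\ti\kap_t(z)=\Psi_zf_t(z)$ is the right one, and your identification of the continuous-stabilizer case as the delicate step is accurate. Two small corrections: the normalizing elements $\gamma_i$ from Proposition~\ref{DevConvergesSubsurfaces} are not always loxodromic (in the parabolic and trivial subcases they are parabolics with translation $|c_i|\to\infty$), so phrase the collapsing as ``$\gamma_i^{-1}$ pushes any compact of $\H^3$ to $p$'' rather than via translation length. The paper is in fact just as terse here as you feared --- it simply asserts that if $f_{t_i}|F$ fails to converge to a developing map then $\beta_i\circ\ti\kap_i|F$ converges to a constant, citing Proposition~\ref{DevConvergesSubsurfaces} --- so what you call the ``hard part'' is not worked out in the paper either; your sketch (normalized maximal balls converge, hence the inner point stays bounded) is the intended argument. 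Your inversion of difficulty (cylinder ``routine'', complement ``hard'') is thus somewhat the reverse of the paper's emphasis: most of the written proof is spent on the cylinder via Claim~\ref{NeckCloseToAxis} and the horoball lemma.
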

\proof
For $t \gg 0$, there is a one-parameter family of loops homotopic to $m$ such that their developments are invariant under a unique one-dimensional subgroup $G_t$ of $\PSL_2\C$ which contains $\rho_t(m)$ (as in the proof of \Cref{DevConvergesSubsurfaces}). 
Then we can pick a cylinder $A_t$ in $C_t$ homotopy equivalent to $m$, such that 
\begin{itemize}
\item $A_t$ is foliated by loops whose developments are invariant under $G_t$ for each $t \gg 0$, 
\item $C_t \minus A_t$ converges to $C_\infi$ as $t \to \infi$, and 
\item $\Mod A_t \to \infi$ as $t \to \infi$.
\end{itemize}
By the second property, $A_t$ is contained in a thinner and thinner part of $C_t$ as $t \to \infty$. 
Then, the developing map of $A_t$ is the restriction of $\exp\col \C \to \C^\ast$ to a {\it bi-infinite strip} $T_t$, i.e. a region in $\C$ bounded by a pair of parallel lines. 
Then its deck transformation group ($ \cong \Z$) is generated by a translation of $T_t$.
Then  $A_t$ has a natural Euclidean metric by identifying $\C$ with $\E^2$.

Recall that $A$ is a cylinder in $S$ homotopic to $m$, and fix a finite volume Euclidean structure on $A$ with geodesic boundary  (by picking a homeomorphism $A \to \s^1 \times [-1,1]$). 
We can easily pick a marking $\iota_t \col S \to C_t$ such that 
\begin{itemize}
\item $\iota_t$ takes $A$ to $A_t$ ((\ref{iNPreserved}));
\item the restriction of $C_t$ to $\iota_t(S \minus A)$ converges to $C_\infi$;
\item $\iota_t | A $ is linear with respect to the Euclidean structures on $A$ and $A_t$. 
\end{itemize}

Given a component $F$ of $\ti{S} \minus \phi^{-1} (A)$, suppose that $f_{t_i} |F$ converges to a developing map of the component of $C_\infi$. 
Then, clearly $\beta_i \cc \ti\kap_i  | F$ converges to a pleated surface for the corresponding component of $C_\infi$.
By Proposition \ref{DevConvergesSubsurfaces}, if  $f_{t_i} |F$ does not converge to a developing map, then $\beta_i \cc \ti\kap_i | F$ converges to the constant map to an ideal point of the axis limit $a_\infty$.  
Thus we have  (\ref{iThickPartPleastedSurface}).

Last we prove (\ref{iCylinderToAxis}).
As we have already shown the convergence of the developing map in the thick part, we need to show that the convergence extends to the convergence on the neck. 
As the developing map of some components of $S \minus m$ may degenerate as described in \Cref{DevConvergesSubsurfaces}, accordingly one needs to be careful about the behavior of $\beta_i \circ \kap_i$  on the neck.

By Theorem \ref{TrivialNeck}(\ref{iTwistDieverges}), the Fenchel-Nielsen twisting parameter of $C_t$ along $m$ diverges to either $\infty$ or $- \infty$ as $t \to \infty$. 
We can assume that the twisting of $C_t$ along $m$ occurs in $A_t$ by isotopy of $S$.

(Case One) Suppose that $a_\infi :=\lim_{i \to \infi} \Axis \rho_{t_i}(m)$ is a bi-infinite geodesic. 
Then $\rho_{t_i} (m)$ is hyperbolic if $i$ is large enough, and the translation length of $\rho_{t_i} (m)$ times the number of twist goes to infinity as $i \to \infty$.
For $r > 0$, let $U_i(r)$ be the $r$-neighborhood of $a_i$ in $\H^3$. 
Clearly $U_i(r)$ is invariant under $\rho_i(m)$. 
Let $(\tau_i, L_i) \in \TT \times \ML$ be the Thurston parameters of $C_i$ for each $i$. 
Pick $\ep > 0$ less than the Bers' constant, and let $N_i = N_i^{\ep}$ be the $\ep$-thin part of $\tau_i$. 
Let $\ti{N_i}$ be the lift of $N_i$ to the universal cover $\H^2$ of $\tau_i$ invariant under the fixed representative the loop $m$ in $\pi_1(S)$. 
Let $\ell_{i, 1}, \ell_{i, 2}$ denote the boundary components of $\ti{N}_i$, which connect the endpoints of the geodesic $a_i$
\begin{lemma}\Label{ThinPartImageBoundedFromGeodesic}
If $r > 0$ is sufficiently large, then $\beta_i(\ti{N}_i)$ is contained in $U_i(r)$ for sufficiently large $i$.
\end{lemma}
\begin{proof}
Let $\ti{A}$ be the lift of $A$ to $\ti{S}$ which is invariant under $m \in \pi_1(S)$. 
Let $P_1$ and $P_2$ be the components of $\ti{S} \minus \phi^{-1}(A)$ adjacent across a lift $\ti{A}$. 
Suppose, to the contrary, that for every $r > 0$, the image $\beta_i(\ti{N}_i)$ is {\it not} eventually contained in $U_i$ as $i \to \infty$.
Then, either
\begin{enumerate}[(i)]
\item for every $r > 0$, if $i$ is sufficiently large, then $\beta_i(\ell_{i, 1})$ and $\beta_i(\ell_{i, 2})$ are both not contained in $U_i$, or
\item for every large $r > 0$, if $i$ is sufficiently large, then one of $\beta_i(\ell_{i, 1})$ and $\beta_i(\ell_{i, 2})$ is contained in $U_i$ but the other is not. 
\end{enumerate}
First, suppose (i).  
Then, let $\phi_i \col \H^2 \to \tau_i$ be the universal covering map. 
Let $P_{i, 1}'$ and $P'_{i, 2}$ be the component of $\H^2 \minus \phi_i^{-1}(N_i)$.
For each $i = 1, 2, \dots$ and $j = 1, 2$, pick compact fundamental domains $D_{i, j}$ of $\Stab P_j \curvearrowright P_{i, j}'$, such that $D_{i, j}$ converges to a fundamental domain of the $\ep$-thick components of $\tau_\infty$.
Recall that $U_i$ is invariant under $\rho_i(m)$. 
Then,  for every $r > 0$, if $i$ is sufficiently large, both fundamental domains of $P_{i, 1}'$ or $P_{i, 2}'$ map to outside $U_i$ by $\beta_i$. 
Therefore, it follows from \Cref{NonelementaryComplements} and \Cref{PunctureElementaryHolonomy} that  $\rho_i | \Stab P_1$ or $\rho_i | \Stab P_2$ must diverge to $\infty$ up to a subsequence, against to the convergence of $\rho_i$.

Next we suppose (ii).
Without loss of generality, we can assume that $\beta_i(\ell_{i, 1})$,  not contained in $U_i$ but  $\beta_i(\ell_{i, 2})$ is contained in $U_i$ for sufficiently large $i$.
Then, for every $r > 0$, similarly, the fundamental domain $P_{i, 1}$ of $P_1'$ maps to outside $U_i$ by $\beta_i$ if $i$ is sufficiently large.
 Then, by the assumption of $\beta_i(\ell_{i, 2})$ being contained in $U_i$,  one can similarly show  $\rho_i | \Stab P_1$ diverges to $\infi$, up to a subsequence.
  \begin{figure}
\begin{overpic}[scale=.2
] {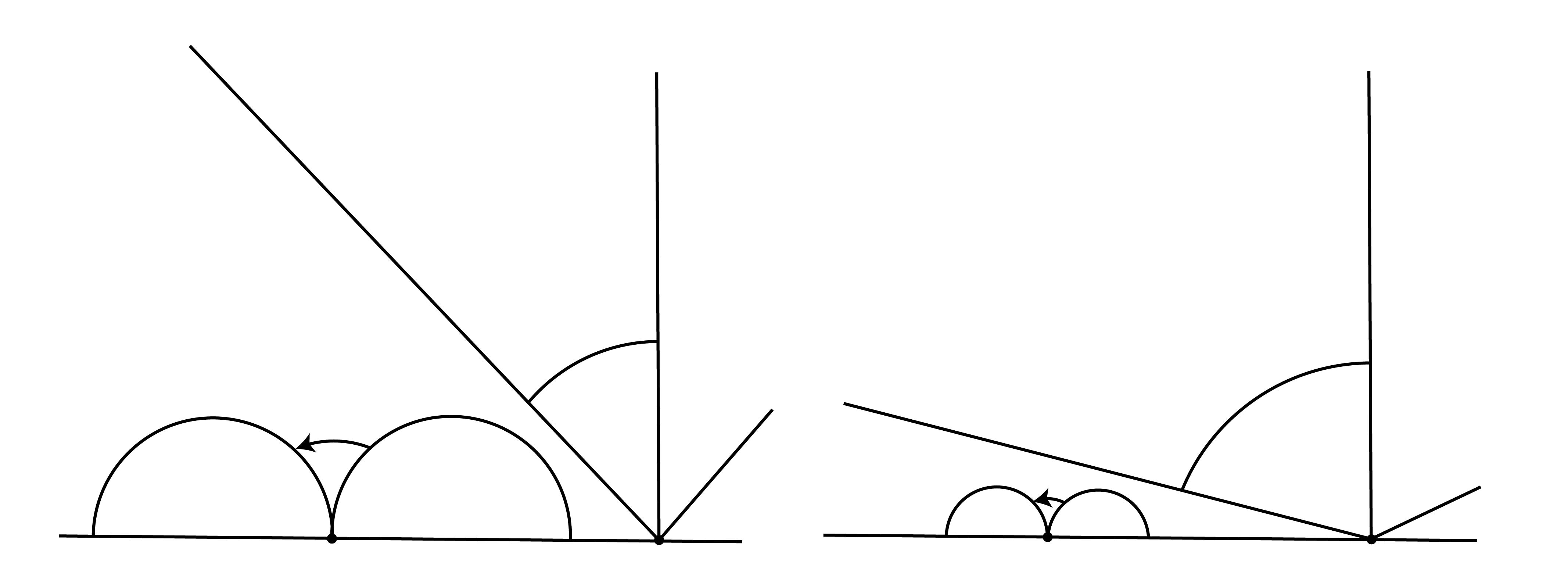} 
\put(70 , 20 ){\textcolor{black}{$U_i'(r')$}}  
\put(80 , 14 ){\textcolor{black}{$r'$}}  
\put(89 , 20 ){\textcolor{black}{$a_i'$}}  
\put(43 , 20 ){\textcolor{black}{$a_i$}}  
\put(37 , 16 ){\textcolor{black}{$r$}}  
\put(30 , 25 ){\textcolor{black}{$U_i(r)$}}  
\put(19 , 12 ){\textcolor{black}{$\rho_i(\omega)$}}  
      \end{overpic}
\caption{This figure illustrates the divergence of   $\rho_i | \Stab P_j$ in the upper half space model of $\H^3$.  The arrows indicate how the action by an element $\omega$ in $\Stab P_j$ changes, and it diverges as $i$ increases in $\PSL_2\C$, where $r' > r$ and $i' >  i$. }\label{}
\end{figure}
\end{proof}

If follows from \Cref{ThinPartImageBoundedFromGeodesic} that,
for every $\ep' > 0$, by taking $\del > 0$ sufficiently smaller than $\ep > 0$ above,  similarly letting $\ti{N}_i^{\del}$ be the $\rho_i(m)$-invariant lift of $N_i^\del$ to the universal cover $\H^2$, the image $\beta_i( \ti{N}_i^{\del})$ is $\ep'$-close to the axis $a_i$ for sufficiently large $i$. 

Recall that we have a convergence of $\beta_i \circ \kap_i$ on $P_1, P_2$ so that, in the limit, the boundary components of $\ti{A}$ map to the endpoints of $a_\infty$. 
Therefore, by taking an appropriate isotopy of $S$,  $\beta_i \circ \kap_i$ converges to a continuous map, up to a subsequence, such that $\ti{N}$ maps to $a_\infty$.

(Case Two)
Suppose that $a_\infi$ is a single point on $\CP^1$. 
  Pick any horoball  $B$ in $\H^3$ tangent at $a_\infi$.
For each $i$, pick a subset $U_i \sub \H^3$ converging to $B$ uniformly on compact subsets as $i \to \infty$,  such that, if $ \rho_i (\gam)$ is either hyperbolic or elliptic, then $U_i$ is an $r_i$-neighborhood of $a_i$ for some $r_i > 0$, and if $\rho_i(\gam)$ is parabolic, then $U_i$ is a horoball centered at the parabolic fixed point of $\rho_i(\gam)$. 

For sufficiently large $i$, Let $N_i$ be the  $\ep$-thin part of $\tau_i$ homotopy equivalent to $m$.  
Let $\ti{N}_i$ be a component of $\psi_i^{-1}(N_i)$.
\begin{lemma}
If $\ep > 0$ is sufficiently small, then $\beta_i (\ti{N}_i)$ is eventually contained in $U_i$ as $i \to \infi$. 
Therefore, $\beta_i \cc \kap_i | \ti{N}$ converges to the constant map to  the point $a_\infi$.
\end{lemma}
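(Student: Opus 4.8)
The plan is to adapt the argument of Claim \ref{NeckCloseToAxis} (which treated the case $a_\infi$ a geodesic) to the present situation, where $a_\infi\in\CP^1$ is a single point, and then to observe that $U_i$ eventually swallows the resulting neighbourhood of $a_i:=\Axis\rho_{t_i}(m)$. First I would use the cylinder $A_{t_i}$ and the fundamental domain $P_i$ for $\ti A$ already fixed in the proof of Proposition \ref{TrivialNeckPleatedSurface}, with edges $e$ and $h$ mapping onto the two boundary circles of $A$. Since $a_\infi$ is a point, Proposition \ref{PunctureAtAxis} gives $f_\infi(\ell)=f_\infi(r)=a_\infi$, so $f_{t_i}(e)$ and $f_{t_i}(h)$ converge uniformly to $a_\infi$; and since $A_{t_i}$ lies far from the zeros of $q_{t_i}$, Proposition \ref{AlmostExponentialMap} lets me compare $\beta_{t_i}\cc\ti\kap_{t_i}$ on $P_i$ with $w\mapsto\Psi\exp(w/\sqrt2)$, whose image is contained in $a_i$ (when $\rho_{t_i}(m)$ is parabolic, $a_i$ is its parabolic fixed point and $\Psi$ the projection onto a horocyclic trajectory; the estimates have the same shape).

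Next, exactly as in Claim \ref{NeckCloseToAxis}, I would decompose $P_i$ by the horizontal foliation $\FFF_{t_i}$: by Lemma \ref{DumasLemma}(\ref{iDumasHorizontal}) the $\beta_{t_i}\cc\ti\kap_{t_i}$-image of each leaf has diameter tending to $0$ as $i\to\infi$, and by the comparison above it lies within $o(1)$ of the geodesic $a_i$; using $\rho_{t_i}(m)$-equivariance together with the degeneration of $P_i$ forced by the diverging Fenchel--Nielsen twist (Theorem \ref{TrivialNeck}(\ref{iTwistDieverges})), this propagates from the leaves meeting $e$ and $h$ to all leaves, so that $\beta_{t_i}\cc\ti\kap_{t_i}(P_i)$ --- hence $\beta_{t_i}(\ti N_i)$ --- lies within a Hausdorff distance $\delta_i\to 0$ of $a_i$ in $\H^3$. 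This step uses that, once $\ep>0$ is small, the thin annulus $\ti N_i\subset\H^2$ is contained in the collapsed neck $\ti\kap_{t_i}(\ti A)$, which is where the distance from the zeros is large.

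Finally I would invoke the choice of $U_i$: it converges to a horoball $B$ based at $a_\infi$ and, for each $i$, is a neighbourhood of $a_i$ --- an $r_i$-tube when $\rho_{t_i}(m)$ is loxodromic or elliptic, a horoball based at the parabolic fixed point when $\rho_{t_i}(m)$ is parabolic. Since $a_i\to a_\infi$ (both ideal endpoints of $a_i$ tend to $a_\infi$), matching $U_i\to B$ forces these neighbourhoods to be wide ($r_i\to\infi$ in the loxodromic/elliptic case), so for large $i$ one has $\delta_i<r_i$ and hence $\beta_{t_i}(\ti N_i)\subset U_i$; this is the first assertion. For the second, the geodesic $a_i$ converges in $\H^3\cup\CP^1$ to the single point $a_\infi$ (a geodesic with both ideal endpoints near $a_\infi$ is uniformly near $a_\infi$ in the disk metric), so, combined with $\delta_i\to 0$ and the already-known convergence of $f_{t_i}$ on $\bdr\ti A$ to $a_\infi$, the map $\beta_{t_i}\cc\ti\kap_{t_i}|\ti A$ converges uniformly on compacts to the constant map onto $a_\infi$; this finishes Case Two and the proof of Proposition \ref{TrivialNeckPleatedSurface}.

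The step I expect to be the main obstacle is making all of this uniform in $i$ across the three isometry types $\rho_{t_i}(m)$ can take along the subsequence (loxodromic, elliptic, parabolic): the auxiliary projection $\Psi$ of Proposition \ref{AlmostExponentialMap} and the model sets $U_i$ change form with the type, so both estimates --- that $\beta_{t_i}(\ti N_i)$ is $\delta_i$-near $a_i$, and that the $\delta_i$-tube of $a_i$ sits inside $U_i$ --- must be arranged uniformly, using only that $\rho_{t_i}(m)\to I$ tangentially to a fixed unit vector and that $U_i\to B$. A secondary technical point is the bookkeeping matching the hyperbolic thin annulus $\ti N_i$ with a bounded modification of the collapsed neck $\ti\kap_{t_i}(\ti A)$, and choosing the markings so that essentially all of the diverging twist lives inside $A_{t_i}$, which is exactly what licenses importing the degeneration of $P_i$ from Claim \ref{NeckCloseToAxis}.
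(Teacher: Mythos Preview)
Your approach is correct but takes a genuinely different route from the paper's. The paper's proof is a three--line contradiction argument: if $\beta_i(\ti N_i)$ were not eventually contained in $U_i$, then along a subsequence the pleated image of the thin neck would meet the complement of a fixed horoball at $a_\infi$; since the boundary of $\ti N_i$ lies in the thick parts $P$ and $R$, this would force $\rho_{t_i}|\Stab P$ or $\rho_{t_i}|\Stab R$ to diverge in $\chi$, contradicting the standing hypothesis that $\rho_t$ converges. No Epstein estimates, no fundamental domain $P_i$, no case analysis on the isometry type of $\rho_{t_i}(m)$ are used.

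Your argument instead transports the constructive geometry of Case One (Claim \ref{NeckCloseToAxis}) into Case Two: you show directly, via Proposition \ref{AlmostExponentialMap} and the horizontal--foliation decomposition of $P_i$, that $\beta_{t_i}\cc\ti\kap_{t_i}(P_i)$ stays $\delta_i$--close to $a_i$, and then use $r_i\to\infty$ to absorb this into $U_i$. This buys a uniform quantitative picture of where the neck actually goes, at the cost of the extra work you correctly flag --- handling the three isometry types of $\rho_{t_i}(m)$ uniformly (in particular, your ``Hausdorff distance $\delta_i$ from $a_i$'' formulation needs to be rephrased when $\rho_{t_i}(m)$ is parabolic and $a_i$ is a single ideal point). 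The paper's indirect argument sidesteps that case split entirely by leveraging the convergence of $\rho_t$ once more, which is why it is so short.
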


\begin{proof}
\begingroup  
 \color{dblue}
    \fontsize{13pt}{12pt}\selectfont

   \endgroup 
   
Let $P_1$ and $P_2$ be the components of $\ti{S} \minus \phi^{-1}(A)$ adjacent across the lift $\ti{A}$ of $A$ invariant by $\rho_i(m)$. 
Suppose, to the contrary, for every $\ep > 0$,  the image $\beta_i(\ti{N}^\ep_i)$ is not eventually contained in $U_i$. 
Then, at least one of $\beta_i(\ell_{i, 1})$ or $\beta_i(\ell_{i, 2})$ is not contained in $U_i$ for sufficiently large $i$. 
 Therefore, it follows from using \Cref{NonelementaryComplements} and \Cref{PunctureElementaryHolonomy} that either $\rho_i | \Stab P_1$ or $\rho_i | \Stab P_2$  diverges to $\infi$, up to a subsequence.
\end{proof}
\Qed{TrivialNeckPleatedSurface}

\subsubsection{Convergence of holomorphic quadratic differentials when $\rho_\infi(m) = I$}\Label{sLimitDifferentialIdentityHolonomy}
We next describe the limit quadratic differential. 
In the case that $\rho_\infi(m) = I$, the singular Euclidean structure $E_{t_i}$ contains a flat cylinder $A_t$ homotopic to $m$, such that $\Mod A_t \to \infi$ and the complex length of its circumference converges to a positive multiple of $\pi/ \sqrt{2}$, by  Proposition \ref{CuspClassification}. 
Therefore
\begin{proposition}\Label{Residue}
Let $C_\infi$ be the limit of $C_t$ in Theorem \ref{TrivialNeck} (\ref{iThickPartConvergence}).
Then,  the Schwarzian parameters of $C_\infi$ consist of a  Riemann surface with two punctures homeomorphic to $S \minus m$ and a holomorphic quadratic differential $q_\infi$, such that both punctures are a pole of order two and their residues are the same non-zero integer multiple of $\sqrt{2} \pi$.
\end{proposition}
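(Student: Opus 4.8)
The plan is to read off the Schwarzian description of $C_\infi$ directly from the long flat cylinder produced by the pinching, using the thick‑part convergence that is already available. First, recall from Theorem \ref{TrivialNeck} that, after passing to the subsequence $t_i$, for every $\ep>0$ the $\ep$‑thick part of $C_{t_i}$ converges uniformly to the $\ep$‑thick part of a $\CP^1$‑structure $C_\infi$ on a closed surface with two punctures homeomorphic to $S\minus m$; by Proposition \ref{OneCylinderEnd} and Theorem \ref{ConvergenceThickPart} (together with Corollary \ref{DifferentialConvertesMultiloop}), the structures $C_{t_i}$ in fact converge to $C_\infi$ in the deformation space of $\CP^1$‑structures on $S\minus m$, with $X_{t_i}\to X_\infi$ and $q_{t_i}\to q_\infi$ locally uniformly. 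Since $C_\infi\in\PPP(S\minus m)$, by the definition of a $\CP^1$‑structure on a punctured surface $q_\infi$ has \emph{at most} a pole of order two at each of the two punctures. So it remains only to show that at each of them the pole has order exactly two, and that the two residues are a common non‑zero integer multiple of $\sqrt2\,\pi$.

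Second, I would feed in the quantitative input of Lemma \ref{FlatOrExpanding} in the case $\rho_\infi(m)=I$: for all large $t$ the singular Euclidean surface $E_t$ contains a flat cylinder $A_t$ homotopic to $m$ with $\Mod A_t\to\infty$ and with $\sqrt2\int_m\sqrt{q_t}$ converging to $2\pi n i$ for a fixed positive integer $n$, so that the Euclidean circumference of $A_t$ converges to $\sqrt2\,\pi n>0$, while by Theorem \ref{UpperInjectivityRadiusBound} its area stays bounded above. Combining this with the end analysis of Proposition \ref{OneCylinderEnd}, the maximal flat cylinders $A_{t_i}$ exhaust, in the limit, a half‑infinite flat cylinder of infinite modulus attached at each of the two $m$‑punctures of $C_\infi$; equivalently, near each such puncture $c$ the singular Euclidean metric $|\sqrt{q_\infi}|$ of $C_\infi$ is a half‑infinite flat cylinder of circumference $\sqrt2\,\pi n$, whose circumferences are orthogonal to the vertical foliation of $C_\infi$.

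Third, a half‑infinite flat cylinder of finite positive circumference is exactly the local model of a pole of order two, so this pins down both the order and the residue at once. Uniformizing the cylinder and writing $z$ for the resulting punctured‑disk coordinate, $q_\infi$ takes the normal form of a double pole near $c$ (cf.\ \cite[Theorem 6.3]{Strebel84QuadraticDifferentials}), hence $q_\infi$ has a pole of order exactly two there, and the residue $\oint\pm\sqrt{q_\infi}$ around $c$ equals the period $\int_m\sqrt{q_\infi}$, which by the preceding step is the limit of the circumference of $A_{t_i}$, namely $\sqrt2\,\pi n$ up to sign; in particular it is non‑zero because $n\ge 1$. This matches Proposition \ref{CuspClassification}, which in the flat‑cylinder case identifies $\sqrt2$ times the transverse measure of the peripheral loop with the $2\pi$‑multiple $2\pi n$ precisely because the limiting holonomy $\rho_\infi(m)$ is the identity rather than a non‑trivial parabolic. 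Finally, the integer $n$ is the \emph{same} at the two $m$‑punctures, since both residues are read off from the single cylinder $A_{t_i}$ whose circumference has a limit independent of which end one approaches; hence the two residues coincide (residues being defined up to sign) and equal the common non‑zero multiple $n\sqrt2\,\pi$ of $\sqrt2\,\pi$ — which is also exactly the compatibility condition in the definition of a regular quadratic differential at the two points over the node of $X_\infi$, as required for the conclusion of Theorem \ref{SchwarzianLimit}.

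The main obstacle is the second step: one has to argue carefully that the cylinders $A_{t_i}$ genuinely converge to half‑infinite flat cylinders sitting \emph{inside} $C_\infi$, rather than having their circumference run off to $0$ or to $\infty$ or being truncated in the Gromov--Hausdorff limit. This is where the upper injectivity‑radius bound (Theorem \ref{UpperInjectivityRadiusBound}), the end analysis of Proposition \ref{OneCylinderEnd}, and the holonomy theorem underlying the convergence of the thick parts (Theorems \ref{HolImmersion} and \ref{ConvergenceThickPart}) all come in; once the cylindrical ends of the limit are identified, the first and third steps are essentially bookkeeping, and the whole argument is the $\rho_\infi(m)=I$ counterpart of the proof of Theorem \ref{LimitiDifferentialParabolicCusp}.
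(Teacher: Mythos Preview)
Your proposal is correct and follows essentially the same approach as the paper: the paper's entire argument is the one sentence preceding the statement, observing that when $\rho_\infi(m)=I$ the singular Euclidean surface $E_{t_i}$ contains a flat cylinder homotopic to $m$ with $\Mod A_t\to\infty$ and circumference converging to a positive multiple of $\pi/\sqrt{2}$ (citing Proposition~\ref{CuspClassification}), from which the pole order and residue are read off. Your version spells out the supporting results (Lemma~\ref{FlatOrExpanding}, Proposition~\ref{OneCylinderEnd}, Theorem~\ref{ConvergenceThickPart}, Theorem~\ref{UpperInjectivityRadiusBound}) more carefully and makes explicit why the two residues agree, but the substance is the same.
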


\subsection{Non-discreteness of holonomy}\Label{sNondiscrete}
We in addition show the non-discreteness of the holonomy representation $\rho_t$ for large $t$.
\begin{theorem}\Label{EventuallyNondiscrete}
Suppose that $\rho_\infi(m) = I$.
Then $\Im \rho_t \sub \PSL_2\C$ is a non-discrete subgroup for sufficiently large $t > 0$.
\end{theorem}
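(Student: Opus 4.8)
The plan is to exploit the structure obtained in Theorem~\ref{TrivialNeck}: for $t \gg 0$ the surface $C_t$ contains a long cylinder $A_t$ homotopic to $m$ whose developing image is invariant under a one-parameter subgroup $G_t \subset \PSL(2,\C)$ containing $\rho_t(m)$, and $\rho_t(m) \neq I$ with $\rho_t(m) \to I$. The key point is that $\rho_t(m)$ is a nontrivial element arbitrarily close to the identity, hence has arbitrarily small translation length (and rotation), so its powers $\rho_t(m)^k$ fill up a dense subset of the closure of the one-parameter subgroup $\overline{\langle \rho_t(m)\rangle} = G_t$ (or of $G_t$ modulo torsion if $\rho_t(m)$ is elliptic of rational rotation). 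Thus $\Im\rho_t \supset \langle \rho_t(m)\rangle$ accumulates onto a one-dimensional subgroup $G_t$, which is itself non-discrete. The only way this could fail to force $\Im\rho_t$ to be non-discrete is if $\langle \rho_t(m)\rangle$ itself were already discrete — but a nontrivial cyclic group is discrete, so I must be more careful: the right statement is that $\Im\rho_t$ contains, for each loop $\gamma$ of $M$ (here just copies of $m$), elements of the form $\rho_t(\gamma)$ near $I$, and combined with the non-elementary holonomy on a complementary component this forces non-discreteness.

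So here is the argument I would carry out. First, by Theorem~\ref{TrivialNeck}(\ref{iNotExactlyI}), $\rho_t(m) \neq I$ for $t \gg 0$, and $\rho_t(m) \to I$. By Proposition~\ref{NonelementaryComplements}, for each component $F$ of $S \minus m$ the restriction $\rho_\infi|\pi_1 F$ is nontrivial; in fact I would first reduce to the case where some component $F$ has $\rho_\infi|\pi_1 F$ \emph{non-elementary}: if every component had elementary limit holonomy, then by Lemma~\ref{ContinuousStabilizerFixedPoint} and Proposition~\ref{PunctureElementaryHolonomy} one derives the same kind of contradiction used in Lemma~\ref{m-neq-I} (the holonomy of a complementary piece would be forced to diverge), unless the geometry is genuinely degenerate. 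Assuming then that $\rho_\infi|\pi_1 F$ is non-elementary, pick $t \gg 0$ so that $\rho_t|\pi_1 F$ is also non-elementary (this is an open condition) and so that $\rho_t(m)$ is within $\ep_t$ of $I$ with $\ep_t \to 0$.

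The heart of the matter: suppose for contradiction that $\Gamma_t := \Im\rho_t$ is discrete for a sequence $t = t_i \to \infi$. Since $\Gamma_{t_i}$ is discrete and contains the non-elementary subgroup $\rho_{t_i}(\pi_1 F)$, it is a non-elementary discrete (Kleinian) group, hence has a Margulis constant $\mu > 0$ independent of $i$: any two elements that both move a common point by less than $\mu$ generate an elementary subgroup. Now $\rho_{t_i}(m)$ commutes with $\rho_{t_i}(\gamma)$ for every $\gamma \in \Stab Q$ where $Q$ is a complementary component adjacent to $m$ (because $m$ is central in $\pi_1$ of a regular neighborhood, and $\pi_1 Q$ meets this neighborhood), so $\rho_{t_i}(m)$ lies in the centralizer of a non-elementary subgroup; but in $\PSL(2,\C)$ the centralizer of a non-elementary subgroup is trivial. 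This already gives a contradiction with $\rho_{t_i}(m) \neq I$, provided $m$ is non-separating so that a single component $Q$ carries non-elementary holonomy and contains $m$ as a non-peripheral element — wait, $m$ is always peripheral in $S \minus m$, so $\rho_{t_i}(m)$ need not lie in $\rho_{t_i}(\pi_1 Q)$'s centralizer in general. Instead I use the Margulis lemma directly: take $\gamma \in \pi_1 F$ with $\rho_\infi(\gamma)$ loxodromic (exists as $\rho_\infi|\pi_1 F$ non-elementary), fix a point $p$ on its axis; then $\rho_{t_i}(\gamma)$ moves $p$ by a bounded amount while $\rho_{t_i}(m)$ moves $p$ by an amount $\to 0$; conjugating $\gamma$ and using that $m$ together with such $\gamma$'s generate a non-elementary group (again by Proposition~\ref{NonelementaryComplements} and the fact that $m$ links the two cusps across which the non-elementary pieces are glued), one produces, for $i \gg 0$, a pair of elements of $\Gamma_{t_i}$ both with translation $< \mu$ at a common point yet generating a non-elementary subgroup — contradicting the Margulis lemma.

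The main obstacle, and the step requiring the most care, is the bookkeeping around \emph{which} elements of $\pi_1(S)$ are moved a small amount: $\rho_{t_i}(m) \to I$ controls only the single loop $m$, and I need a \emph{second}, independent small-translation element at the same basepoint to violate the Margulis lemma, which is not automatic. The clean fix is to use the diverging Fenchel–Nielsen twist from Theorem~\ref{TrivialNeck}(\ref{iTwistDieverges}): writing $\iota_i$ for the Dehn-twist power along $m$ that normalizes the twist coordinate, the element $\rho_{t_i}(\iota_i^{-1}\gamma \iota_i)$ stays bounded for a suitable $\gamma$, while $\rho_{t_i}(m) = \rho_{t_i}(\iota_i)\cdot(\text{twist contribution})$ interacts with the cylinder $A_{t_i}$ so that \emph{both} $\rho_{t_i}(m)$ and a conjugate $\rho_{t_i}(\gamma m \gamma^{-1})$ (with $\gamma$ a short loop crossing $m$) have translation lengths tending to zero at a common point of $\H^3$ near $\beta_{t_i}(\ti\kap_{t_i}(\ti m))$ — which by Theorem~\ref{TrivialNeck}(\ref{iIPleatedSurfaceConverging}) lies in a compact region — while $\langle \rho_{t_i}(m), \rho_{t_i}(\gamma m\gamma^{-1})\rangle$ is non-elementary because $m$ and $\gamma m \gamma^{-1}$ have distinct axes limiting to distinct geodesics. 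This contradicts the uniform Margulis lemma for the (assumed) discrete groups $\Gamma_{t_i}$, completing the proof.
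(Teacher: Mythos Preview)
Your final strategy --- produce $\rho_t(m)$ together with a conjugate $\rho_t(\gamma)\,\rho_t(m)\,\rho_t(\gamma)^{-1}$, observe that both tend to $I$, argue they generate a non-elementary subgroup, and invoke the Margulis lemma --- is exactly the paper's approach. The meandering earlier attempts (density in $G_t$, centralizer of $\rho_t(\pi_1 F)$) are correctly discarded. The remaining issue is the one step you flag as ``the main obstacle'': showing that $\langle \rho_t(m),\,\rho_t(\gamma m\gamma^{-1})\rangle$ is non-elementary. Your justification there has a genuine gap.

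Concretely: you assert non-elementariness ``because $m$ and $\gamma m\gamma^{-1}$ have distinct axes limiting to distinct geodesics''. But by Theorem~\ref{TrivialNeck}(\ref{iIAixsConverging}) the limit $a_\infty$ of $\Axis\rho_t(m)$ may be a single point of $\CP^1$, not a geodesic at all; and even when both limits are geodesics, two elements with distinct axes still generate an elementary group whenever the axes share an ideal endpoint --- precisely the situation you have not excluded. Your choice of $\gamma$ as ``a short loop crossing $m$'' is also awkward when $m$ is separating, and in any case you give no reason why $\rho_\infty(\gamma)$ should fail to fix the ideal points of $a_\infty$; if $\rho_\infty$ happens to be elementary (which \S\ref{sExoticDegeneration} shows can occur, even trivial), this is exactly the delicate point.

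The paper closes this gap as follows. Take $\gamma\in\pi_1(F)$ for a component $F$ of $S\setminus m$ with $\rho_\infty(\gamma)\neq I$ (Proposition~\ref{NonelementaryComplements}). By Proposition~\ref{PunctureAtAxis} the ideal points of $a_\infty$ are the developed images of the punctures of $C_\infty$, and Lemma~\ref{PunctureNotAtAxis} together with Proposition~\ref{PunctureElementaryHolonomy} then guarantee that these puncture images are \emph{not} fixed by $\rho_\infty(\pi_1 F)$. Hence for large $t$ the element $\rho_t(\gamma)$ moves $\Axis\rho_t(m)$ off itself, so $\rho_t(m)$ and $\rho_t(\gamma m\gamma^{-1})$ have no common fixed point on $\CP^1$ and generate a non-elementary group. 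Since both converge to $I$ (the conjugator $\rho_t(\gamma)\to\rho_\infty(\gamma)$ stays bounded), the Margulis lemma gives the contradiction in one line, exactly as you intended.
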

\begin{proof}
Recall that $\rho_t(m) \to  I$ but $\rho_t(m) \neq I$ (Theorem \ref{TrivialNeck}(\ref{iNotExactlyI})).
For each component $F$ of $S \minus m$, $\rho_t( \pi_1(F))$ is nontrivial for sufficiently large $t \gg 0$ (Proposition \ref{NonelementaryComplements}). 
Recall from  \Cref{PunctureAtAxis} that, if $C_{t_i}$ converges to a $\CP^1$-structure of a punctured surface homeomorphic to $F$ for a diverging sequence $t_1 < t_2 < \dots$,  then, in the limit, its cusp point develops to an endpoint of the limit of the axis of $\rho_{t_i}$.
Therefore the subgroup of $\Im \rho_t$ generated by $\{\,\rho_t(m) \gamma \rho_t(m)^{-1} \, |\, \gamma \in \rho_t(F)\,\}$ is non-elementary since the endpoint  in $\CP^1$ is not preserve by some non-trivial element in  $\rho_t( \pi_1(F))$ by (Lemma \ref{PunctureNotAtAxis} and Proposition \ref{PunctureElementaryHolonomy}). 
As $\rho_t(m) \to I$, by the  Margulis lemma, $\Im \rho_t$ cannot be discrete. 
\end{proof}
\section{Examples of exotic degeneration}\Label{sExoticDegeneration}
We construct examples of a path $C_t = (f_t, \rho_t)$ of $\CP^1$-structures on $S$ asymptotically pinched along a loop $m$ as $t \to \infty$ such that $\rho_\infi(m) = I$ and  $[\rho_t]$ converges in $\rchi$ as $t \to \infi$, as in the second case of \Cref{LImitInDevelopingMap}. 
We construct two examples: one with $\rho_t(m)$ hyperbolic and one with $\rho_t(m)$ elliptic for all sufficiently large $t > 0$.
\subsection{Hyperbolic $\rho_t(m)$ converging to $I$.}

Let $E$ be the singular Euclidean surface obtained from an $L$-shaped polygon by identifying the opposite edges (Figure \ref{fLSahped}).
Then $E$ has exactly one cone point, and its cone angle is $6\pi$.
Let $F$ be the underlying topological surface of $E$, which is a closed surface of genus two.   
Let $E'$ denote  $E$ minus the cone point, and let $F'$ denote the underlying topological surface of $E'$. 
\begin{figure}
\begin{overpic}[scale=.05
] {Figure/LShape} 
 \put(37.9 , 30 ){$a_1$}  
\put(60 , 13 ){$a_2$}  
 \put(45 , 27 ){$b_1$}  
 \put(46 , 10 ){$b_2$}   
      \end{overpic}
\caption{}\label{fLSahped}
\end{figure}
Let $\ell_p$ be the (oriented) peripheral loop around the removed cone point. 
Let $\xi \col \pi_1(F') \to \PSL_2\C$ be the holonomy of $E'$.
Then, as $F'$ has a Euclidean structure, the image of $\xi$ consists of parabolic elements, and we can assume that its image consists of upper triangular matrices with 1's on the diagonal. 
In particular $\xi(\ell_p) = I$ (as before, by abuse of notation, we regard $\ell_p$ also as a fixed element of $\pi_1(S)$ by picking a basepoint of $\pi_1(S)$ on $\ell$.)
Notice that there is a point in the universal cover $\ti{E}$ of $E$ corresponding to  $\ell_p \in \pi_1(S)$. 
(Namely, by lifting $\ell_p$ to a loop in the universal cover $E$ starting from the base point, there is a unique cone point of $\ti{E}$ in the disk region bounded by the lift.) 
\begin{proposition}\Label{PuncturedSurfaceWithHyperbolicCuspHolonomy} 
There is a path of $\CP^1$-structures, $D_t = (h_t, \xi_t)$,  on $F'$ converging to $E' = (h, \xi)$ as $t \to \infi$,  such that  $\xi_t(\ell_p)$ is a hyperbolic translation whose axis converges to a geodesic connecting the global (parabolic) fixed point of $\xi$ and the $h$-image of the corresponding singular point of $\til{E}$.
\end{proposition}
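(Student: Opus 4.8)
The plan is to realize $E$ as a $\CP^1$-structure on the punctured surface $F'$ with a (removable) cusp at the deleted cone point, then to deform this cusp so that its peripheral holonomy becomes a hyperbolic element close to $I$, letting the rest of the structure follow by the holonomy theorem for punctured surfaces (Theorem \ref{HolImmersion}); the only real work is to steer the limiting axis onto the prescribed geodesic. First I would fix notation: extend the developing map $h$ of the translation surface $E$ over the cone point. Since the cone angle is $6\pi$, near the lift $\tilde p_0$ of the cone point fixed by $\ell_p$ the map $h$ is a $3$-fold branched cover, and on the universal cover of a punctured-disk neighborhood of $\tilde p_0$, with $w\mapsto w+2\pi i$ representing $\ell_p$, one normalizes so that $h(w)=z_0+e^{3w}$ with $z_0:=h(\tilde p_0)$. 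In particular $\xi(\ell_p)=I$ (the loop bounds a disc in the closed surface $F$), and, normalizing further, $\Im\xi$ is a nontrivial group of translations fixing $\infty$; thus $\infty$ is the parabolic fixed point of $\xi=\xi_\infty$, and the geodesic in the statement is the vertical ray of $\H^3$ over $z_0$. Recall that $\pi_1(F')$ is free, generated by $a_1,b_1,a_2,b_2$ with $\ell_p=[a_1,b_1][a_2,b_2]$, and we may arrange $\xi(a_1)\neq I$.

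The key step is to produce a path $\xi_t\in\RRR(F')$ with $\xi_t\to\xi$ as $t\to\infty$ and $\xi_t(\ell_p)$ hyperbolic with fixed points converging to $z_0$ and $\infty$. Keep $\xi_t=\xi$ on $a_1,a_2,b_2$ and set $\xi_t(b_1)=\xi(b_1)\exp(\epsilon_t Y_0)$ with $\epsilon_t\searrow 0$; since $\pi_1(F')$ is free this defines a homomorphism for any $Y_0\in\mathfrak{sl}_2(\C)$, and $\xi_t(\ell_p)=[\xi(a_1),\xi_t(b_1)]$. Because $\xi(a_1)$ is a nontrivial unipotent fixing $\infty$, a direct computation shows that the differential of $b\mapsto[\xi(a_1),b]$ at $b=\xi(b_1)$ carries $\mathfrak{sl}_2(\C)$ onto the Borel subalgebra $\mathfrak{b}_\infty$ of trace-free upper-triangular matrices; in the basis $e=\begin{pmatrix}0&1\\0&0\end{pmatrix}$, $h=\begin{pmatrix}1&0\\0&-1\end{pmatrix}$ this is $\mathrm{span}\{e,h\}$, which contains the matrix $h-2z_0e$ whose fixed points on $\CP^1$ are exactly $\infty$ and $z_0$. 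Choosing $Y_0$ so that $\xi_t(\ell_p)=I+\epsilon_t(h-2z_0e)+O(\epsilon_t^2)$, the identity $\det\xi_t(\ell_p)=1$ forces $\tr\xi_t(\ell_p)=2+\epsilon_t^2+O(\epsilon_t^3)$, so $\tr^2\xi_t(\ell_p)\in\C\setminus[0,4]$ and $\xi_t(\ell_p)$ is hyperbolic for $t$ large; its two fixed points converge to the eigenpoints $\infty,z_0$ of $h-2z_0e$, so $\Axis(\xi_t(\ell_p))$ converges to the vertical ray over $z_0$. (If one insists that $\xi_t(\ell_p)$ be a hyperbolic translation with real multiplier, correct $Y_0$ by an $o(\epsilon_t)$ term using that the holomorphic map $Y\mapsto\tr[\xi(a_1),\xi(b_1)\exp Y]$ is open near $0$; this leaves the leading term, hence the limiting axis, unchanged.)

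Finally, since $\xi_t\to\xi$ in $\RRR(F')$ and $(h,\xi)\in\PPP(F')$, the path-lifting part of Theorem \ref{HolImmersion} yields, for $t\gg 0$, a path $D_t=(h_t,\xi_t)\in\PPP(F')$ with $h_t\to h$ uniformly on compacts; by Proposition \ref{PerturbingCusp} and Corollary \ref{UniqunessOfCuspDeformation} the cusp of $D_t$ is the unique small deformation of that of $E'$ realizing the peripheral element $\xi_t(\ell_p)$, so $D_t\to E$ near the cusp as well. Hence $D_t\to E$ and $\xi_t(\ell_p)$ is a hyperbolic translation whose axis converges to the geodesic joining the parabolic fixed point $\infty$ of $\xi_\infty$ and $z_0=h(\tilde p_0)$, as required.

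The main obstacle is the second step: a priori a perturbation of $\xi(\ell_p)=I$ could be parabolic or elliptic, and it must in addition limit onto the prescribed axis. Both points are handled by the same mechanism: because $\xi$ fixes $\infty$, the commutator $\xi_t(\ell_p)$ retains a fixed point near $\infty$, the constraint $\det=1$ then pins $\tr$ to the value $2+\epsilon_t^2+\cdots>2$ that characterizes hyperbolicity, and the remaining freedom inside $\mathfrak{b}_\infty$ is exactly what is needed to drive the second fixed point to $z_0$.
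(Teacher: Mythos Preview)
Your argument is correct and takes a genuinely different route from the paper's. The paper proceeds geometrically: it cuts $F$ along a separating loop $c$ into two once--punctured tori $F_1,F_2$, and for each $F_i$ it builds a path of representations $\zeta_{i,t}$ by writing the pants holonomy as a product of $\pi$--rotations about carefully chosen geodesics in $\H^3$ (Lemma~\ref{DeformPuncturedTorus}); it then shows (Proposition~\ref{AxisOfCompsitionOfSmallHyperbolics}) that one can steer the axes of $h_{1,t},h_{2,t}$ so that their product is hyperbolic with the prescribed axis $q$, and finally glues and lifts to $\PPP(F')$. Your approach is instead algebraic and infinitesimal: exploiting that $\pi_1(F')$ is free and that $[\xi(a_2),\xi(b_2)]=I$, you perturb a single generator and compute the image of the differential of $b\mapsto[\xi(a_1),b]$ to be the Borel subalgebra $\mathfrak b_\infty$; the choice $W=h-2z_0e$ then fixes both the limiting axis and (via $\tr W^2=2$) the second--order trace $2+\epsilon_t^2+O(\epsilon_t^3)$ that forces hyperbolicity. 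The lift through Theorem~\ref{HolImmersion} is legitimate here since the puncture of $E'$ is a pole of order two of the Schwarzian differential (the flat cylinder end of case~(2b) in Proposition~\ref{CuspClassification} with $\sqrt{2}\,V(\ell_p)=6\pi$), and the associated point $(\xi(\ell_p),\Lambda)=(I,(z_0,z_0))$ in $\reallywidehat{\PSL(2,\C)}$ is exactly the limit of your $(\xi_t(\ell_p),\Lambda_t)$.

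What each approach buys: yours is shorter and requires no auxiliary decomposition of the surface, relying only on an elementary $\mathfrak{sl}_2$ computation; it would work verbatim for any genus with a single cone point. The paper's construction, by contrast, produces the representation as a gluing of pieces with explicitly controlled boundary holonomies $h_{i,t}$ expressed as products of $\pi$--rotations---machinery that is reused immediately afterwards when two copies of $D_t$ are glued along their boundaries (with the prescribed twist) to build the closed genus--four example. Your perturbation does not directly provide that geometric handle on the cusp neighborhood, though for the proposition as stated this is not needed.
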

\proof
Note that elements of $\Im \xi$ are translations of $\C$.
Pick  non-separating simple closed curves  $a_1, b_1, a_2, b_2$ on $E$ as in Figure \ref{fLSahped} forming a standard generating set of $\pi_1(F)$ so that 
\begin{itemize}
\item for each $i = 1, 2,$  $a_i$ and $b_i$ intersect in a single point, and $[a_1, b_1][a_2, b_2] = I$,
\item the translation directions of $a_1$ and $a_2$ are the same and the translation direction of $b_1$ and $b_2$ are the same, and 
\item  the translation directions of $a_i$ and $b_i$ are orthogonal for each $i = 1, 2$.
\end{itemize}

Let $c$ be a separating loop on $E$ which separates $\{a_1, b_1\}$ and $\{a_2, b_2\}$. 
Then, let $F_1$ and  $F_2$ be the components of $F \minus c$ which are homeomorphic to a torus minus a disk. 
\begin{lemma}\Label{DeformPuncturedTorus}
Let $q_i$ be any geodesic in $\H^3$ starting from the global fixed point $p \in \CP^1$ of $\Hol E$, and let $H_i$ be the hyperbolic plane, in $\H^3$, containing an $\langle a_i \rangle$-orbit of $q_i$. 
For each $i = 1, 2$, 
given any path $h_{i, t}\, (t \geq 0)$ of hyperbolic elements in $\PSL_2\C$ such that 
\begin{enumerate}
\item the axis of $h_{i, t}$ is orthogonal to $H_i$ at a point in $q_i$ for all $t \geq 0$, and
\item $h_{i, t} \to I$ as $t \to \infi$.
\end{enumerate} 
Then, there is a path $\zeta_{i, t} \col \pi_1(F_i) \to \PSL_2\C$ of homomorphisms which converges to the restriction of $\Hol(E)$ to $\pi_1(F_i)$  as $t \to \infty$ such that $\zeta_{i, t}(c)= h_{i, t}$.
\end{lemma}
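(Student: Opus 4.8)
The plan is to exhibit the deformation $\zeta_{i,t}$ explicitly as a bending deformation of the Euclidean holonomy $\xi|\pi_1(F_i)$. Since $F_i$ is a once-punctured torus, $\pi_1(F_i)$ is free on two generators, say $a_i$ and $b_i$, with the peripheral loop $c$ freely homotopic to the commutator $[a_i,b_i]$. So a homomorphism $\zeta_{i,t}$ is uniquely specified by a choice of the two matrices $\zeta_{i,t}(a_i), \zeta_{i,t}(b_i) \in \PSL(2,\C)$, and the only constraint we must meet is $[\zeta_{i,t}(a_i),\zeta_{i,t}(b_i)] = h_{i,t}$ together with convergence $\zeta_{i,t}\to \xi|\pi_1(F_i)$. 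First I would normalize $\CP^1 = \C\cup\{\infty\}$ so that the parabolic fixed point $p$ of $\Hol E$ is at $\infty$; then $\xi(a_i)$ and $\xi(b_i)$ are translations $z\mapsto z+\alpha_i$ and $z\mapsto z+\beta_i$ with $\alpha_i,\beta_i$ linearly independent over $\R$ (by the orthogonality hypothesis on the translation directions), and $\xi(c)=\xi([a_i,b_i])=I$.

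The key step is a normal-form reduction: up to conjugating the path $h_{i,t}$ by a fixed element of $\PSL(2,\C)$ (allowed because conjugating $\zeta_{i,t}$ by a fixed element does not change whether it converges to $\xi|\pi_1(F_i)$, as long as we conjugate $\xi$ too — but here I want to keep $\xi$ fixed, so I instead track the conjugating element), I claim the hypotheses (1) and (2) force $h_{i,t}$ to be, after an appropriate conjugation by $\omega_t\to I$, of the form $z\mapsto \lambda_t z$ with $\lambda_t\to 1$, where moreover the axis condition pins down the position of the fixed points of $h_{i,t}$ relative to the lines $\R\alpha_i$ and $\R\beta_i$. More precisely: the hyperbolic plane $H_i$ spanned by an $\langle\xi(a_i)\rangle$-orbit of a geodesic $q_i$ from $\infty$ is, in the upper half-space model with $\infty$ at the top, a vertical half-plane over a line in $\C$ parallel to $\alpha_i$; requiring $\Axis(h_{i,t})$ orthogonal to $H_i$ at a point of $q_i$ and requiring $h_{i,t}\to I$ means the fixed points of $h_{i,t}$ converge to a single point of $\CP^1$ lying on the trace of $q_i$, and the axis approaches $q_i$. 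This is exactly the geometric configuration in which one can write $h_{i,t}$ as a commutator of two elements each close to a translation: solve $[\gamma,\delta]=h_{i,t}$ with $\gamma\to\xi(a_i)$, $\delta\to\xi(b_i)$.

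For the existence of such a commutator solution near the degenerate (parabolic-quotient) configuration, I would use the standard fact that for a once-punctured torus the commutator map $\mu\colon \PSL(2,\C)\times\PSL(2,\C)\to\PSL(2,\C)$, $(\gamma,\delta)\mapsto[\gamma,\delta]$, is a submersion at $(\xi(a_i),\xi(b_i))$ provided $\langle\xi(a_i),\xi(b_i)\rangle$ is non-abelian — which it is, since $\alpha_i,\beta_i$ are $\R$-independent. (If one prefers to avoid the submersion computation, one can instead bend the Euclidean structure: cut $F_i$ along an arc dual to $c$ and reglue with a hyperbolic screw motion along a geodesic $q_i$; this manifestly produces $\zeta_{i,t}$ with $\zeta_{i,t}(c)$ the prescribed hyperbolic element and $\zeta_{i,t}\to\xi$.) Then $\mu^{-1}(h_{i,t})$ is, for each $t$, a nonempty smooth submanifold varying continuously with $t$, and by the implicit function theorem there is a continuous path $(\gamma_t,\delta_t)$ in $\mu^{-1}(h_{i,t})$ with $(\gamma_0,\delta_0)$ near $(\xi(a_i),\xi(b_i))$ and $(\gamma_t,\delta_t)\to(\xi(a_i),\xi(b_i))$ as $t\to\infty$ (using $h_{i,t}\to I=\mu(\xi(a_i),\xi(b_i))$ and properness of $\mu$ restricted to a slice). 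Set $\zeta_{i,t}(a_i)=\gamma_t$, $\zeta_{i,t}(b_i)=\delta_t$; this defines the desired homomorphism.

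The main obstacle I anticipate is matching the axis condition (1) precisely rather than just qualitatively: the implicit function theorem gives some solution path converging to $\xi|\pi_1(F_i)$, but one must check that the freedom in choosing $(\gamma_t,\delta_t)$ within the fiber $\mu^{-1}(h_{i,t})$ is enough to also realize the prescribed geodesic $q_i$ as the limiting axis and the prescribed plane $H_i$. I would handle this by choosing, once and for all, a reference geodesic $q_i$ and defining $H_i$ from it, then noting that the two-real-parameter family of screw-motion bendings along $q_i$ (screw angle and translation length) already surjects onto a neighborhood of $I$ in the set of hyperbolic elements with axis meeting $H_i$ orthogonally along $q_i$; composing such bendings with the Euclidean holonomy gives $\zeta_{i,t}$ directly, sidestepping the need to analyze the full commutator fiber. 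The remaining routine check is continuity of the resulting path in $t$ and the convergence $\zeta_{i,t}\to\Hol(E)|\pi_1(F_i)$, which follows from $h_{i,t}\to I$ and continuity of the bending construction.
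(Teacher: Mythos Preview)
Your main argument contains a genuine error: the group $\langle\xi(a_i),\xi(b_i)\rangle$ is \emph{abelian}, not non-abelian. Both $\xi(a_i)$ and $\xi(b_i)$ are translations $z\mapsto z+\alpha_i$ and $z\mapsto z+\beta_i$ of $\C$ (upper-triangular unipotent matrices), and translations always commute regardless of whether $\alpha_i,\beta_i$ are $\R$-linearly independent. Consequently the commutator map $\mu$ is \emph{not} a submersion at $(\xi(a_i),\xi(b_i))$: a direct computation shows that both $(\operatorname{Ad}(\xi(a_i))-1)$ and $(1-\operatorname{Ad}(\xi(b_i)))$ have image contained in the $2$-dimensional upper-triangular subalgebra of $\mathfrak{sl}_2(\C)$, so $d\mu$ has complex rank at most $2$ there. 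The implicit function theorem therefore does not apply, and you cannot conclude that $\mu^{-1}(h_{i,t})$ meets every neighbourhood of $(\xi(a_i),\xi(b_i))$ without further analysis.

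Your fallback bending sketch is closer in spirit, but as written it does not make contact with the specific axis hypothesis on $h_{i,t}$, and ``cut along an arc dual to $c$ and reglue by a screw motion along $q_i$'' would produce a peripheral holonomy with axis along $q_i$, not orthogonal to $H_i$ at a point of $q_i$. The paper takes a different and explicit route: it cuts $F_i$ along the curve $a_i$ to obtain a pair of pants and uses the description of pants-group representations as products of $\pi$-rotations about geodesics. One writes $\xi(a_i)=R(r_i)R(q_i)$, then chooses geodesics $q_{i,t},q'_{i,t}$ in the plane $H_i^\perp$ through $q_i$, symmetric about $q_i$ and converging to $q_i$, with $R(q_{i,t})R(q'_{i,t})=h_{i,t}$; the orthogonality hypothesis on $\operatorname{Axis}(h_{i,t})$ is exactly what allows this factorisation inside $H_i^\perp$. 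A suitable path $r_{i,t}\to r_i$ makes the two $a_i$-boundary holonomies $R(q_{i,t})R(r_{i,t})$ and $R(r_{i,t})R(q'_{i,t})$ conjugate, so the pants representation glues back to a representation $\zeta_{i,t}$ of $\pi_1(F_i)$ with $\zeta_{i,t}(c)=h_{i,t}$ and $\zeta_{i,t}\to\xi|\pi_1(F_i)$. This construction is tailored to the hypothesis rather than relying on a transversality that fails at the abelian limit.
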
 

\begin{proof}
The point $p$ is contained in the ideal boundary of $H_i$. 
Let $r_i$ be a geodesic in $H_i$, such that $R(r_i) R(q_i) = \xi(a_i)$, where  $R(r_i)$ and $R(q_i)$ are the $\pi$-rotations of $\H^3$ about $r_i$ and $q_i$, respectively (Figure \ref{fPantsHolonomy}, Right). 

 \begin{figure}
\begin{overpic}[scale=.5,
] {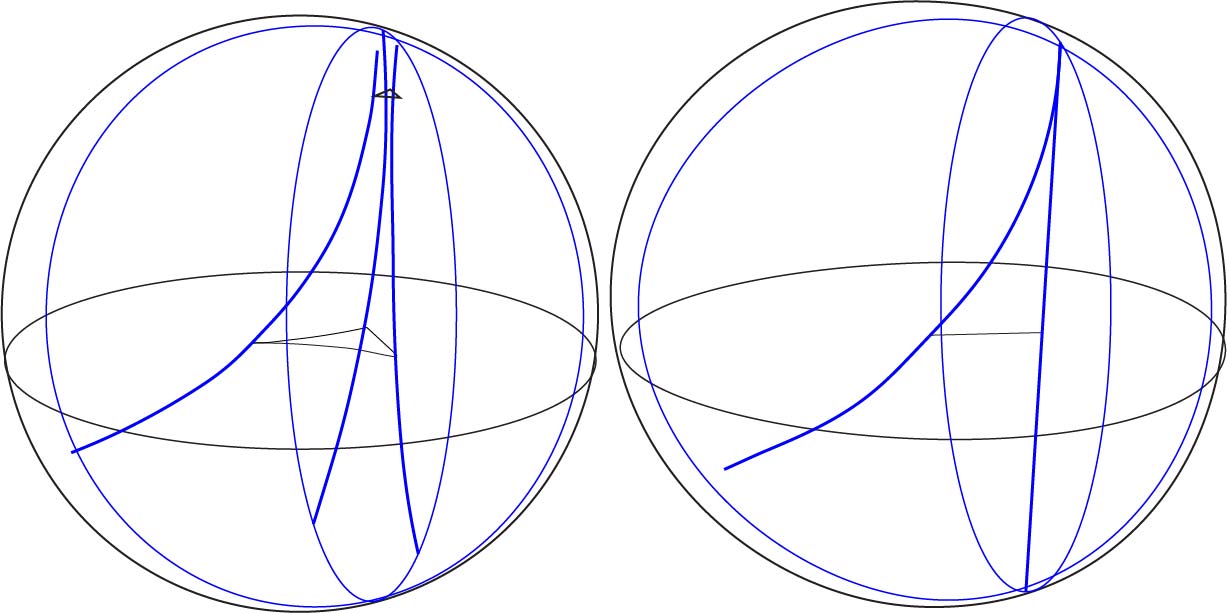} 
 \put(22 ,15 ){\textcolor{Blue}{$q_{t, i}$}}  
\put(33,16 ){\textcolor{Blue}{$q_{t, i}'$}} 
  \put(12 , 20 ){\textcolor{Blue}{$r_{t, i}$}}  
 \put(86 ,24 ){\textcolor{Blue}{$q_i$}}  
  \put(69 , 20 ){\textcolor{Blue}{$r_i$}}  

 \put(32 , 50 ){\textcolor{black}{$p$}}  
  \put(60 , 30 ){\textcolor{Blue}{$H_i$}}  
\put(78 , 16){\textcolor{Blue}{$H_i^\perp$}}  
      \end{overpic}
\caption{}\label{fPantsHolonomy}
\end{figure}

Let $H_i^\perp$ be the hyperbolic plane in $\H^3$ orthogonal to the hyperbolic plane $H_i$ along the geodesic $q_i$. 
As $\Axis(h_{i, t})$ is in $H_i^\perp$ and orthogonal to $q_i$,  we let $q_{i, t}$ and $q_{i, t}'$ be continuous paths of  geodesics in $H_i^\perp$ such that 
$R(q_{i, t}) R(q_{i, t}') = h_{i,t}$, the geodesics  $q_{i, t}$ and $q_{i, t}'$ converge to $q_i$ as $t \to \infi$ uniformly on compact subsets, and  the $\pi$-rotation $R(q_i)$ exchanges $q_{i, t}$ and $q_{i, t}'$. 
By this symmetry, there is a path of geodesics $r_{i, t}$ in $H_i$ such that, for all $t \gg 0$, 
\begin{itemize}
\item there is a hyperbolic plane intersecting $r_{t, i}, q_{t, i}, q_{i, t}'$ orthogonally, and
\item $d_{\H^3}(r_{i, t}, q_{i, t}) = d_{\H^3}(r_{i, t}, q_{i, t}')$.
\end{itemize}
Thus  by the symmetry, $\tr R(q_{i, t}) R(r_i) = \tr R(q_{i, t}')R(r_i) \in \R \minus [-2,2]$. 

The surface $F_i \minus a_i$ is a pair of pants, and two of its boundary components correspond to $a_i$.  
Consider the  path of homomorphisms $\zeta_{i, t} \col \pi_1 (F_i \minus a_i) \to \PSL_2\C$ for $t > 0$, such that the two boundary components corresponding to $a_i$ map to  $R(q_{i, t}) R(r_i)$ and $ R(r_i)R(q_{i, t}')$--- thus the other boundary corresponding to  $\bdr F_i $ maps to $R(q_{i, t})R(q_{i, t}') = h_{i, t}$ (see \cite{Goldman09FrickeSpaces}).
Then, by Theorem \ref{HolImmersion},  there is a path of $\CP^1$-structures on $F_i \minus a_i$ with holonomy $\zeta_{i, t}$ which converges to the component of $E \minus (c \cup a_i)$ as $t \to \infi$ corresponding to $F_i \minus a_i$.
As the holonomies along the two boundary components are conjugate, for large enough $t > 0$, there is a path of $\CP^1$-structures $\Sigma_{i, t}$ on $F_i$ which converges to the component of $E \minus c$, so that $\Hol \Sigma_{i, t} | \pi_1 F_i  = \zeta_{i, t}$.
In particular, the holonomy  of $\Sigma_{i, t}$ around the puncture is the hyperbolic element $R(q_{i, t}) R(q_{i, t}') = h_{i, t}$.
\end{proof}

\begin{figure}

\vspace{5mm}
\begin{overpic}[scale=.07
] {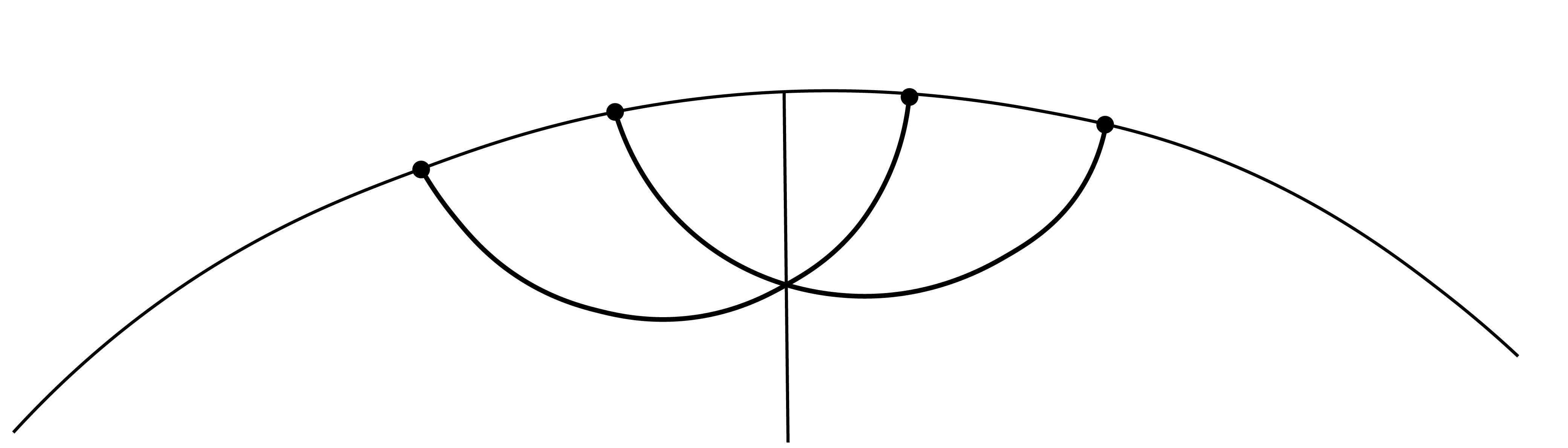} 
   \put(37 , 25 ){\contour{white}{$v_{1, t}$}}
   \put(71 , 22 ){$u_{1, t}$}
   \put(57 ,   25 ){$v_{2, t}$}
   \put(22 , 21 ){\contour{white}{$u_{2, t}$}}
    \put( 51, 2){$q$}
      \end{overpic}
\caption{}\label{fAxesOfh}
\end{figure}

Notice that $H_1$ and $H_2$ are totally geodesic hyperbolic planes in $\H^3$ tangent at $p$. 
Therefore we can, in addition, assume that $H_1$ and $H_2$ are different and $H_1^\perp = H_2^\perp \eqqcolon H$. 
Pick a geodesic $q$  in $H$ initiating from $p$ contained in the region bounded by the geodesics $q_1 = H \cap H_1$ and $q_2 = H \cap H_2$. 
\begin{proposition}\Label{AxisOfCompsitionOfSmallHyperbolics}
 We can choose, the path of the hyperbolic isometries  $h_{1, t}, h_{2, t}$ (given by Lemma \ref{DeformPuncturedTorus})  so that their composition  $h_{1, t} h_{2, t}$ is eventually a hyperbolic element whose axis converges to $q$ as $t \to \infi$. 

\end{proposition}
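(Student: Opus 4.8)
The plan is to reduce the statement to two–dimensional hyperbolic geometry inside the common orthogonal plane $H := H_1^\perp = H_2^\perp$, and then to solve a small finite–dimensional system by an implicit–function argument anchored at the fully degenerate configuration $h_{1,t}=h_{2,t}=I$.

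First I would record what the construction in Lemma \ref{DeformPuncturedTorus} produces. Each $h_{i,t}$ is a product of two half–turns of $\H^3$ about geodesics $q_{i,t},q_{i,t}'\subset H_i^\perp=H$; hence $h_{i,t}$ preserves $H$ and acts on $H\cong\H^2$ as a hyperbolic translation along $\Axis(h_{i,t})$, and the latter, being orthogonal to $H_i$ at a point of $q_i:=H\cap H_i$, is a geodesic of $H$ perpendicular to $q_i$ — moreover it may be taken to be any such geodesic, with any positive translation length $\lambda_{i,t}$, subject only to $\lambda_{i,t}\to 0$. Consequently $h_{1,t}h_{2,t}$ preserves $H$, and it suffices to choose these data so that $h_{1,t}h_{2,t}|_H$ fixes both ideal endpoints $p,p'$ of $q$: a nontrivial element of $\PSL(2,\C)$ fixing two points of $\partial H$ is automatically a zero–rotation hyperbolic element whose axis is the geodesic joining them, so this forces the axis to be exactly $q$, provided $h_{1,t}h_{2,t}\neq I$.

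So the problem becomes the following. The free data is $(A_{1,t},\lambda_{1,t},A_{2,t},\lambda_{2,t})$, four real parameters (each axis $A_{i,t}\perp q_i$ being a one–parameter choice), on which we impose the two scalar conditions that $h_{1,t}h_{2,t}$ fix $p$ and fix $p'$. I would anchor a solution branch at a degenerate point: fix an interior point $x_0$ of $q$, and let $A_i$ be the geodesic of $H$ through $x_0$ perpendicular to $q_i$; these exist and are distinct since $q_1\neq q_2$ meet only at the ideal point $p$. The generators of hyperbolic one–parameter subgroups of $\PSL(2,\R)$ whose axes pass through $x_0$ form, together with $0$, a two–dimensional subspace $\mathfrak p\subset\mathfrak{sl}(2,\R)$; it contains the generators $X_1,X_2$ of translation along $A_1,A_2$ and the generator $Y_q$ of translation along $q$, and since $X_1,X_2$ are linearly independent they span $\mathfrak p$, so $Y_q=s_1X_1+s_2X_2$, and after choosing the orientations of $A_1,A_2$ we may take $s_1,s_2>0$. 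Setting $\lambda_{i,t}=\ep_t s_i$ for a chosen $\ep_t\searrow 0$ makes $h_{1,t}h_{2,t}$ equal, in a chart near the identity, to $I+\ep_t Y_q+O(\ep_t^2)$, so (as $Y_q$ annihilates $p$ and $p'$) the two endpoint conditions hold to first order in $\ep_t$. I would then rescale $\lambda_{i,t}=\ep\mu_i$, divide the two defining equations by $\ep$, and apply the implicit function theorem in the variables $(r_1,r_2,\mu_1,\mu_2)$ with parameter $\ep$, correcting $(A_{1,t},A_{2,t})$ and $(\mu_{1,t},\mu_{2,t})$ to an exact solution with $A_{i,t}\to A_i$ and $\mu_{i,t}\to s_i$, continuous in $t$. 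Since $h_{1,t}h_{2,t}\neq I$ (its first–order term $\ep_t Y_q$ is loxodromic), it is then a hyperbolic element with axis exactly $q$ for all large $t$, and Lemma \ref{DeformPuncturedTorus} realizes each $h_{i,t}$ by the required path $\zeta_{i,t}$.

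The main obstacle is the nondegeneracy needed to run the implicit function theorem at the degenerate configuration. Writing $\phi_i(r_i)\in\R^2$ for the infinitesimal action of the translation along the radius–$r_i$ geodesic perpendicular to $q_i$ on the pair $(p,p')$, the vectors $\phi_1(A_1)$ and $\phi_2(A_2)$ are forced to be parallel (their combination $s_1\phi_1+s_2\phi_2$ vanishes), so the $\mu$–block of the linearization is only rank one; one must instead use a block involving $\partial_{r_i}$, which is invertible provided the curve $r_i\mapsto\phi_i(r_i)$ is not momentarily radial at $A_i$. I expect this to hold for a generic choice of the anchor point $x_0\in q$ — if it fails for one $x_0$, one perturbs along $q$ — and I would make it precise with a short explicit computation in $\PSL(2,\R)$.
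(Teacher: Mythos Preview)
Your reduction to two–dimensional hyperbolic geometry in the plane $H=H_1^\perp=H_2^\perp$ is exactly the right move, and is the same reduction the paper makes implicitly. From there, however, your route diverges substantially from the paper's.

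The paper argues much more directly, and only for the weaker (but sufficient) conclusion that the axis of $h_{1,t}h_{2,t}$ \emph{converges} to $q$. It lets the axes of $h_{1,t}$ and $h_{2,t}$ slide off to the ideal point $p$ (rather than anchoring them at a fixed interior point $x_0\in q$ as you do), arranges the four fixed points on $\partial H$ in the cyclic order $u_{2,t},v_{1,t},v_{2,t},u_{1,t}$, and observes by an intermediate–value argument that the product then has a fixed point on the short arc $[v_{1,t},v_{2,t}]$; since this arc shrinks to $p$, that fixed point converges to $p$. Then, holding $h_{1,t}$ fixed, one continuously adjusts the translation length of $h_{2,t}$ so that the product also fixes the \emph{other} endpoint of $q$ exactly. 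This is a one–variable intermediate–value argument rather than a four–variable implicit–function argument, and it sidesteps entirely the Jacobian nondegeneracy issue you identify.

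Your approach is correct in outline and proves a stronger statement (axis exactly equal to $q$), but it is considerably heavier: you need the Cartan decomposition at $x_0$, the Baker--Campbell--Hausdorff expansion, and a rescaled IFT whose nondegeneracy you yourself flag as the main obstacle and leave to a genericity argument in the choice of $x_0$. For the application (Proposition~\ref{PuncturedSurfaceWithHyperbolicCuspHolonomy}) only convergence of the axis is needed, so this extra precision buys nothing. If you want to keep your method, the nondegeneracy check is the one genuine gap to close; alternatively, the paper's argument shows that a direct IVT in a single parameter already suffices.
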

\begin{proof}
Pick $h_{1, t}$ and $h_{2, t}$ such that their axes converge to the parabolic fixed point $p$.
Since $h_{1, t}$ and $h_{2, t}$ converge to $I$, their product $h_{1, t} h_{2, t}$ also converges to $I$ in $\PSL_2\C$.

For each $i = 1, 2$, let  $u_{i, t}$ be the attracting fixed point,  and let $v_{i, t}$ be the repelling fixed point of  $h_{i, t}$. 
We may first assume that the endpoints of $\Axis h_{1, t}, \Axis h_{2, t}$ lie on $\bdr H$ in this cyclic order $u_{2, t}, v_{1, t}, v_{2, t}, u_{1, t}$  (Figure \ref{fAxesOfh}).
The composition $h_{1, t} h_{2, t}$ fixes a point on the arc in $\bdr H$ between $v_{1, t}$ and $v_{2, t}$ for each $t > 0$.
Note that the segment contains $p$.
Then as $\Axis(h_{1, t}), \Axis(h_{2, t})$ converge to the parabolic fixed point $p$, there is a fixed point of   $h_{1, t} h_{2, t}$ converging to $p$. 
Moreover,  as $h_{2, t} \to I$, one can continuously adjust the translation length of $h_{1, t}$ so that  $h_{1, t} h_{2, t}$ also fixes  the other endpoint of $q$  for sufficiently large $t > 0$. 
Let $s$ be the endpoint of the geodesic $q$ which is {\it not} $p$.
Then, after this adjustment, clearly $h_{1, t} h_{2, t} (s) = s$  holds for all large $t > 0$ and $h_{2, t} (s) \to s$  as $t \to \infty$.
Since the axis of the hyperbolic element $h_{1, t}$ converges to the ideal point $p (\neq s)$, the translation length of $h_{1, t}$ must converge to zero; thus $h_{1, t}$ converges to the identity (so that the condition (2) in \Cref{DeformPuncturedTorus} remains satisfied after the modification). 

Clearly the composition $h_{1, t} h_{2, t}$ does {\it not} fix the endpoints of the axes of the hyperbolic elements $h_{1, t}$ and  $h_{2, t}$ for all large $t > 0$.
Therefore $h_{1, t} h_{2, t}$ is a hyperbolic element with the axis $q$ for sufficiently large $t > 0$, which is not the identity. 
\end{proof}

Let $h_{1, t}, h_{2, t} \in \PSL_2\C$ be the paths given by Proposition \ref{AxisOfCompsitionOfSmallHyperbolics}.
Then,  by Lemma \ref{DeformPuncturedTorus}, for each $i = 1, 2$,  we have a path of homomorphisms $\zeta_{i, t}\col \pi_1(F_i)  \to \PSL_2\C$ such that $\zeta_{i, t}(\ell) = h_{i, t}$ for $t \gg 0$.
Then there is a unique path $\zeta_t \col \pi(F') \to \PSL_2\C$ so that $\zeta_t | \pi_1(F_i) = \zeta_{i, t}$ for $i =1, 2$; thus $\zeta_t (\ell_p) = h_{1, t}h_{2, t}$.
Then, by the holonomy theorem (\Cref{HolImmersion}),  there is a path $D_t$ of $\CP^1$-structures on $F'$ with holonomy $\zeta_t$ for $t \gg 0$ such that $D_t$ converges to $E'$ as $t \to \infi$.  \Qed{PuncturedSurfaceWithHyperbolicCuspHolonomy}
    \begin{remark}\Label{rConveringToTrivialOne}
    Since $\xi_t$ converges to the parabolic representation $\xi$ and the axis of the hyperbolic element $\rho_t(\ell_p)$ converges to a geodesic starting from the parabolic fixed point of $\xi$ as $t \to \infty$, 
by normalizing by an appropriate power $r_t$ of isometries $\xi_t(\ell_p)$, the conjugation $\xi_t(\ell_p)^{r_t}\cdot \xi_t \cdot \xi_t(\ell_p)^{-r_t}$ converges to the trivial representation, and the developing map $\xi_t(\ell_p)^{r_t} h_t$ converges to the constant map to the endpoint of $q$ which is not $p$. 
\end{remark}   
\subsubsection{Constructing a closed surface from punctured surfaces}
To make a desired example of exotic degeneration, 
we take two copies $D_t$ of $\CP^1$-surfaces with a single puncture from \Cref{PuncturedSurfaceWithHyperbolicCuspHolonomy}, and glue them together with many twists. 
\begin{theorem}\Label{ExoticDegenerationHyperbolic}
There is a path of $\CP^1$-structures  $C_t = (f_t, \rho_t)$ on a closed surface $S$ of genus four with following properties:
\begin{itemize}
\item The conformal structure $X_t$ is pinched along a separating loop $m$ as $t \to \infty$; let $F_1$ and $F_2$ be the connected components of $S \minus m$. 
\item $\rho_t\col \pi_1(S) \to \PSL_2\C$ converges in the representation variety as $t \to \infi$. 
\item Pick an element $\gam \in \pi_1(S)$ whose free homotopy class is $m$. Then  $\rho_\infi(\gam) = I$, and, for all  $t > 0$, the holonomy  $\rho_t(\gam)$ is a hyperbolic element such that its axis $a_t$ converges to a geodesic $a_\infi$ in $\H^3$ as  $t \to \infi$. 
\item Let $\ti{F}_1, \til{F}_2$ be the connected components of $\ti{S} \minus \phi^{-1}(m)$ which are adjacent across the lift $\ti{m}$ of $m$ preserved by $\gam \in \pi_1(S)$ .
Then 
$C_t  | \til{F}_1$ converges to the developing map of a $\CP^1$-structure on a genus two surface minus a point such that the cusp maps to an endpoint of $a_\infi$ as $t \to \infi$. 
\item $f_t | \til{F}_2$ converges to the constant map to the other endpoint of $a_\infi$ uniformly on compact subsets,  and $\rho_\infi | \pi_1(F_2)$ is the trivial representation.
\end{itemize}
\end{theorem}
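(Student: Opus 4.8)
The plan is to assemble $C_t$ from two copies of the once-punctured $\CP^1$-surface $D_t=(h_t,\xi_t)$ of Proposition \ref{PuncturedSurfaceWithHyperbolicCuspHolonomy}, glued along their puncture-ends with an unbounded twist along the seam, where the second copy is first moved by a sequence in $\PSL(2,\C)$ that diverges to $\infi$; the asymmetry of that diverging conjugation is exactly what will force $\rho_\infi$ to be trivial on one of the two sides while remaining nontrivial on the other.

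First I would fix normalisations. For the first copy, identify $\CP^1=\C\cup\{\infi\}$ so that $\Im\xi_\infi=\Hol(E')$ consists of the translations $z\mapsto z+c$ (hence fixes $\infi$) and the removed cone point of $E$ develops to $0$; recall $\xi_\infi(\ell_p)=I$ since the cone angle is $6\pi$. Using the freedom in Lemma \ref{DeformPuncturedTorus} and Proposition \ref{AxisOfCompsitionOfSmallHyperbolics} (the geodesics $q_i,r_i$ and the auxiliary paths may be chosen so that, for all large $t$, the axis of the composed neck element is the prescribed geodesic), I would arrange $\Axis\xi_t(\ell_p)=a_\infi$ for all $t$, where $a_\infi$ is the geodesic of $\H^3$ with endpoints $0$ and $\infi$, and prescribe the complex translation length of the hyperbolic element $\xi_t(\ell_p)$ to be a path tending to $0$. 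By Proposition \ref{CuspClassification} the puncture of $D_t$ has a half-infinite flat-cylinder neighbourhood whose leaves develop onto curves invariant under the one-parameter subgroup of $\PSL(2,\C)$ through $\xi_t(\ell_p)$, whose axis is $a_\infi$. For the second copy $D_t'=(h_t',\xi_t')$, normalise instead so that $\Im\xi_\infi'$ consists of parabolics fixing $0$ and the limit puncture develops to $\infi$, again with $\Axis\xi_t'(\ell_p)=a_\infi$ and with translation length opposite to that of $\xi_t(\ell_p)$. Now let $\omega_t=\mathrm{diag}(\lambda_t,\lambda_t^{-1})$ with $|\lambda_t|\to\infi$; then $\omega_t$ fixes $0$ and $\infi$, so it commutes with $\xi_t'(\ell_p)$ and fixes $a_\infi$ together with its endpoints, while $\omega_t\,\Im\xi_\infi'\,\omega_t^{-1}\to\{I\}$ because $\Im\xi_\infi'$ is unipotent and fixes $0$. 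Choosing $|\lambda_t|$ to grow slowly enough that $|\lambda_t|^2$ times the distance from $\xi_t'$ to $\xi_\infi'$ still tends to $0$, one obtains $\omega_t\,\xi_t'\,\omega_t^{-1}\to I$ in $\RRR$, with $\Axis(\omega_t\xi_t'(\ell_p)\omega_t^{-1})=a_\infi$ and unchanged translation length. I would then remove the cusp neighbourhoods from $D_t$ and from $\omega_t\cdot D_t'$ and glue the resulting smooth boundary circles by a projective isomorphism composed with an $n_t$-fold twist along the core, $n_t\to\infi$; since both ends develop into neighbourhoods of the same invariant curve for the same subgroup and the two boundary holonomies are inverse, this yields a $\CP^1$-structure $C_t=(f_t,\rho_t)$ on a closed surface $S$ of genus $4$, with $m$ the separating gluing curve and $F_1,F_2$ the two genus-two once-holed pieces. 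Fixing the conjugation of $f_t$ so that $f_t|\ti{F}_1=\dev(D_t)$, we also get $f_t|\ti{F}_2=\omega_t\cdot\dev(D_t')$.

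It then remains to verify the asserted properties, most of which follow from the earlier theorems once the hypotheses are in place. By construction $\rho_t|\pi_1F_1=\xi_t\to\Hol(E')$, which is nontrivial, and $\rho_t|\pi_1F_2=\omega_t\xi_t'\omega_t^{-1}\to I$; these are amalgamated over $\langle m\rangle$ with $\rho_t(m)=\xi_t(\ell_p)\to I$, so the amalgam $\rho_t$ converges in $\RRR$, hence $\eta_t=[\rho_t]$ converges in $\chi$, $\rho_\infi(m)=I$, $\rho_\infi|\pi_1F_2$ is trivial, and $\rho_\infi|\pi_1F_1$ is nontrivial. The surface $X_t$ is pinched along $m$: the two pieces converge (so stay in compact parts of the respective once-punctured Teichm\"uller spaces), while the neck cylinder has modulus tending to $\infi$ (it is obtained by gluing two long flat cylinders with an $n_t$-fold twist, $n_t\to\infi$); thus $X_t$ degenerates to the one-node nodal surface and leaves every compact subset of $\TT$, whence $C_t$ leaves every compact subset of $\PP$ since the projection $\PP\to\TT$ is continuous. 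Finally $\rho_t(m)=\xi_t(\ell_p)$ is hyperbolic for every $t$ with axis $a_\infi$; applying Theorem \ref{TrivialNeck} and Proposition \ref{DevConvergesSubsurfaces} to $C_t$: on $F_1$ the developing maps $f_t|\ti{F}_1=\dev(D_t)$ already converge to $\dev(E')$, so we are in the branch where $C_t|\ti{F}_1$ converges to the developing map of a once-punctured genus-two $\CP^1$-surface with the puncture at the endpoint $0$ of $a_\infi$; on $F_2$ the limiting (normalised) holonomy is parabolic with fixed point at the cusp direction of $E'$, which the developing map of the flat surface never attains, so the exceptional disks of Proposition \ref{DevConvergesSubsurfaces} do not occur and $f_t|\ti{F}_2=\omega_t\cdot\dev(D_t')$ converges uniformly on compacts to the constant map to the endpoint $\infi$ of $a_\infi$. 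All the claims follow.

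The step I expect to be the main obstacle is the compatibility of the two simultaneous requirements on the second copy: $\omega_t$ must diverge fast enough to kill $\Im\xi_\infi'$ in the limit, yet slowly enough not to magnify the error $\xi_t'-\xi_\infi'$, and at the same time $\Axis\xi_t'(\ell_p)$ must be pinned to the fixed geodesic $a_\infi$ for \emph{every} $t$, since a diagonal $\omega_t$ conjugates the neck holonomy correctly only when its axis is already $a_\infi$. Establishing that Lemma \ref{DeformPuncturedTorus} and Proposition \ref{AxisOfCompsitionOfSmallHyperbolics} leave enough room to pin the axis while keeping the translation-length paths free parameters is where the real effort goes; the choice of $n_t$ and the verification that it does not interfere with holonomy convergence are then routine given the preceding sections (and, for the local existence of the $\CP^1$-structures on the pieces, Theorem \ref{HolImmersion}, together with Proposition \ref{NonelementaryComplements} and Proposition \ref{PunctureElementaryHolonomy}).
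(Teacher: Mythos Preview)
Your strategy coincides with the paper's: take two copies of $D_t$, remove cusp neighbourhoods, and glue so that one copy's developing pair survives in the limit while the other collapses to a constant map with trivial holonomy. The difference lies in how the collapse is engineered, and the paper's route is simpler in a way that dissolves precisely the obstacle you singled out.

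In the paper, the gluing twist itself does all the work. Twisting by a real amount $r_t$ post-composes the second copy's developing pair by $\rho_t(m)^{r_t}\in G_t$, and one chooses $r_t\to\infty$ so that $\rho_t(m)^{r_t}\cdot(\dev D_t,\Hol D_t)$ tends to the pair (constant map to an endpoint of $a_t$, trivial representation). The point is that $\rho_t(m)^{r_t}$ automatically has axis $a_t=\Axis\rho_t(m)$, so no pinning is needed: $a_t$ is only required to converge to $a_\infty$, which is all the theorem asserts and all Proposition~\ref{AxisOfCompsitionOfSmallHyperbolics} provides.

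Your decomposition into a separate diagonal $\omega_t$ plus an integer twist $n_t$ manufactures the pinning problem: $\omega_t$ commutes with $\xi_t'(\ell_p)$ only if $\Axis\xi_t'(\ell_p)=a_\infty$ exactly, and Proposition~\ref{AxisOfCompsitionOfSmallHyperbolics} as written yields an axis that merely converges (its proof pins one endpoint of the axis and lets the other drift toward $p$). You correctly flag this as the crux, but the resolution is not to extract more from that proposition; it is to take $\omega_t$ to be a real power of $\rho_t(m)$, i.e.\ to absorb it into the twist. Once that is done the separate $n_t$ is superfluous, and two secondary inaccuracies also disappear: a twist along the core does not by itself increase the conformal modulus of the neck (the pinching comes from the circumference of the flat cylinder tending to $0$ as $\xi_t(\ell_p)\to I$, with the removed neighbourhoods $N_t$ shrinking into the cusp of $E'$), and your formula $f_t|\ti F_2=\omega_t\cdot\dev(D_t')$ omits the additional conjugation by $\rho_t(m)^{n_t}$ that the twist would contribute.
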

\begin{remark}
In fact, $\Im \rho_\infty$ consists of parabolic elements with a global fixed point on $\CP^1$, and
therefore the limit representation $\rho_\infty$ is identified with the trivial representation in the character variety $\rchi$.
In other words, the frontier of  $\PSL_2\C$-orbit of $\rho_\infty$  contains the trivial representation. 
Thus, there is a path $\alpha_t \,(t > 0)$ in $\PSL_2\C$ such that $\alpha_t \rho_t \alpha_t^{-1}$ converges to the trivial representation. 
\end{remark}
\begin{proof}
For sufficiently large $t > 0$, the $\CP^1$-structure $D_t$ with a single puncture from \Cref{PuncturedSurfaceWithHyperbolicCuspHolonomy} has a cusp neighborhood $N_t$ foliated by admissible loops whose developments are invariant under the one-dimensional subgroup $G_t$ of $\PSL_2\C$ containing $\xi_t(\ell_p)$. 
We can assume that $N_t$ changes continuously in $t$ and is asymptotically the empty set on $E'$ as $t \to \infi$. 
Note that as $G_t$ is a one-dimensional subgroup $\rho_t(m)$ of $\PSL_2\C$,  integer powers $\rho_t(m)^n$ for $n \in \Z$ continuously extends to real powers. 

First take two copies $\Sigma_{1, t}, \Sigma_{2, t}$ of $D_t \minus N_t$ and,   since the boundary of $N_t$ are invariant by the one-parameter subgroup $G_t$, 
 glue them together along their boundary components without adding a twist.
Let $C_t' = (f_t', \rho_t')$ be the resulting developing pair. 
Then we can normalize by $\PSL_2\C$ so that the axis of the hyperbolic element $\rho_t'(m)$ is the geodesic $q$ for all $t$.
 In addition, we can renormalize the developing pair by $\PSL_2\C$ so that the restriction of $f_t'$ to $\ti{F}_1$  and the restriction of $\rho_t'$ to the stabilizer $\Stab \ti{F}_1$ of $\ti{F}_1$ in $\pi_1(S)$ converges to a developing pair of $E'$ as $t \to \infty$.
Then, as $N_t$ converges to the empty set, the restriction of $\rho_t'$ to $\Stab \ti{F_2}$ leaves every compact in the representation variety, and the restriction of $f_t'$ to $\ti{F}_2$ does not converge to a continuous map as $t \to \infty$. 

Recall that the holonomy $\rho_t'$ along $m$ is a hyperbolic element with axis $q$, and the translation length of $\rho_t'(m)$ goes to zero as $t \to \infty$.
Therefore, when we glue $\Sigma_{1, t}, \Sigma_{2, t}$ of $D_t \minus N_t$, we can continuously add more and more twists along $m$,  which conjugates the structure on $F_2$ by $\rho_t'(m)$ raised to the power of the amount of twist along $q$, so that

\begin{itemize}
\item the restriction of $f_t'$ to $\ti{F}_1$  and the restriction of $\rho_t'$ to $\Stab \ti{F}_1$ still converges to a developing pair for $E'$, and 
\item the restriction of $\rho_t'$ to $\Stab \ti{F}_2$ converges to the trivial representation, and the restriction of $f_t'$ to $\ti{F}_2$ converges to the constant map to the other endpoint of $q$ (by \Cref{rConveringToTrivialOne}) as $t \to \infty$.
\end{itemize}
We obtained a desired path $C_t'$. 
   \end{proof}
\subsection{Elliptic $\rho_t(m)$ converging to the identity}\Label{sExoticDegenerationSmallElliptic}
In this section, we construct an example of  $C_t = (f, \rho_t)$ in Theorem \ref{LImitInDevelopingMap} (ii) such that  $\rho_t(m)$ is an elliptic element for all sufficiently large $t > 0$ and it converges to $I$ as $t \to \infi$.

Given an elliptic element $e \in \PSL_2\C$, normalize the unit disk model $\D^3 \sub \R^3$ of $\H^3$ centered at the origin, so that $\Axis(e)$ is contained in the axis of the third coordinate. 
Let $\zeta \in (0, 2\pi)$ be the rotation angle of $e$. 
Then, define $b_e \col \R \to \bdr \H^3$ by $x \mapsto (\cos (\zeta x )\sin x, \sin (\zeta x) \sin x, \cos x)$ which is equivariant under $\Z \to \langle e \rangle$.

\begin{lemma}\Label{EllipticPath}
Let $r$ be a geodesic in $\H^3$. 
Pick a parallel vector field $V \sub T \H^3$ along $r$ such that $V$ is orthogonal to $r$. 
Then,  there are a path of (nontrivial) elliptic elements $e_t \in \PSL_2\C$ and a continuous function $\theta_t \in \R_{\geq 0}$ in $t > 0$ which satisfies the following: 
\begin{itemize}
\item $e_t \to I$ as $t \to \infi$.
\item $\Axis (e_t)$ orthogonally intersects $r$, and $\Axis(e_t)$ converges to an endpoint of $r$ on $\CP^1$ as $t \to \infi$.
\item Letting $\theta_t \in \R$ be a continuous function such that the angle between $\Axis e_t$ and $V$ is $\theta_t  \mod 2 \pi$, when an orientation of $Axis(e_t)$ is fixed continuously in $t$.
\item Let $u_t = 2 \theta_t$. Then the rotation angle of $e_t^{u_t}$ is $\pi$ for all $t \geq 0$, so that $e_t^{u_t}$ takes $r$ to itself, reversing the orientation. 
\end{itemize} 
\end{lemma}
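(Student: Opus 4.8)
The plan is to build the path $e_t$ by hand in the upper half-space model, using a convenient normalization of the geodesic $r$, and then to verify each of the four bullet points by direct computation. First I would normalize so that $r$ is the geodesic from $0$ to $\infty$ in $\CP^1 = \C \cup \{\infty\}$, so that the parallel vector field $V$ orthogonal to $r$ corresponds to a fixed direction, say the real direction, at each point of $r$; this is possible because parallel transport along $r$ is rotation-free when measured against the endpoints. An elliptic element whose axis meets $r$ orthogonally then has its two fixed points symmetric with respect to the unit circle $|z| = 1$ (the boundary of the hyperbolic plane through the basepoint orthogonal to $r$), and the axis endpoints can be written as $\lambda$ and $-1/\bar\lambda$ for $\lambda \in \C^\ast$; the argument of $\lambda$ records exactly the angle $\theta_t$ between $\Axis(e_t)$ and $V$.

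Next I would choose a one-parameter family: fix a small rotation angle $\zeta_t \to 0$ and let $\Axis(e_t)$ converge to the endpoint $\infty$ of $r$ by letting $|\lambda_t| \to \infty$, while choosing $\arg \lambda_t$ to vary in a controlled way (say $\arg\lambda_t = \theta_t$ for a prescribed continuous $\theta_t$, which we are free to pick — for instance $\theta_t$ increasing to $\infty$ or bounded, as needed downstream). Writing $e_t$ explicitly as a product of the conjugating Möbius map sending $\{0,\infty\}$ to $\{\lambda_t, -1/\bar\lambda_t\}$ with the standard rotation $z \mapsto e^{i\zeta_t} z$, one checks directly that $e_t \to I$ in $\PSL(2,\C)$ provided $\zeta_t \to 0$ (the conjugation is by matrices that blow up, so here I must be careful: I want $\zeta_t \to 0$ fast enough that the conjugated matrix still tends to $I$; concretely, track the entries and impose $\zeta_t |\lambda_t| \to 0$, which is the real constraint). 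That $\Axis(e_t)$ is orthogonal to $r$ and converges to $\infty$ is built into the construction. For the last bullet, $e_t^{u_t}$ with $u_t = 2\theta_t$ is again elliptic with the same axis but rotation angle $u_t \zeta_t \bmod 2\pi$; I want this to equal $\pi$. Here $\theta_t$ is not yet pinned down — rather, the statement should be read as: given the freedom to choose the path, one can arrange $\theta_t$ and $\zeta_t$ simultaneously so that $u_t\zeta_t \equiv \pi$. So I would instead prescribe $\zeta_t \to 0$ first, then set $\theta_t := \pi/(2\zeta_t)$, giving $u_t = 2\theta_t = \pi/\zeta_t$ and hence $e_t^{u_t}$ has rotation angle exactly $\pi$; then check the angle between $\Axis(e_t)$ and $V$ is indeed $\theta_t \bmod 2\pi$ by choosing $\arg\lambda_t \equiv \theta_t \pmod{2\pi}$, and check $e_t \to I$, which needs $\zeta_t|\lambda_t| \to 0$, a condition on how fast $|\lambda_t|\to\infty$ relative to $\zeta_t$.

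The geometric meaning of $e_t^{u_t}$ having rotation angle $\pi$ is that it is an involution whose axis is perpendicular to $r$, hence it interchanges the two endpoints of $r$ and reverses $r$'s orientation; that this is equivalent to "takes $r$ to itself with the opposite orientation" is the standard fact that a $\pi$-rotation about a geodesic meeting $r$ orthogonally acts on $r$ as the reflection through the intersection point. I would include a one-line justification of this.

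The main obstacle is the tension between the two limiting requirements: $\Axis(e_t)$ must escape to the ideal point $\infty$ (forcing the conjugating element to degenerate) while $e_t$ itself must converge to $I$ (forcing the rotation angle to shrink), and simultaneously $u_t = 2\theta_t$ must be the precise real power making the rotation angle $\pi$ (which ties $\theta_t$ rigidly to $1/\zeta_t$, so $u_t \to \infty$). Checking that all three can hold at once is a matter of choosing the rates: pick any $\zeta_t \searrow 0$, set $\theta_t = \pi/(2\zeta_t)$, and choose $|\lambda_t| \to \infty$ slowly enough that $\zeta_t |\lambda_t| \to 0$ — then a short matrix computation confirms $e_t \to I$. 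The rest is bookkeeping about angles of axes in $\H^3$, which is routine once the normalization is fixed.
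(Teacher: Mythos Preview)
Your approach is correct and is essentially the same as the paper's, which merely says ``It is easy to construct an example satisfying the first three conditions. Then adjust the rotation angle of $e_t$ so that it also satisfies the last condition.'' You have filled in exactly this sketch: construct the axes explicitly, then tune $\zeta_t$ (equivalently $\theta_t = \pi/(2\zeta_t)$) to force the fourth bullet, checking the rate condition $\zeta_t|\lambda_t|\to 0$ for $e_t\to I$. One small bookkeeping slip: in the upper half-space model with $r$ the vertical geodesic, a geodesic orthogonal to $r$ has endpoints $\pm h\,e^{i\phi}$ (antipodal on a circle of radius $h$ about $0$), not $\lambda$ and $-1/\bar\lambda$; with that corrected your matrix computation and rate analysis go through unchanged.
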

\begin{proof}
It is easy to construct an example satisfying the first three conditions. 
Then adjust the rotation angle of $e_t$ so that it also satisfies the last condition. 
\begingroup  
 \color{dblue}
    \fontsize{13pt}{12pt}\selectfont
   \endgroup 
\end{proof}

\begin{lemma}\Label{EllipticCylinder}
Let $e_t$ be as in Lemma \ref{EllipticPath}.
Let $p$ be the endpoint of $r$ to which $\Axis(e_t)$ converges. 
Pick a round disk $D$ in $\CP^1$ containing $p$ such that the hyperbolic plane in bounded by the boundary of $D$ is orthogonal to the geodesic $r$.
Then, there is a path $A_t$ of $\CP^1$-structures on an  annulus $A$ with smooth boundary for sufficiently large $t \gg 0$, such that 
\begin{itemize}
\item $A_t$ converges to the once-punctured disk $D \minus \{p\}$ as $t \to \infty$ as a $\CP^1$-structure, and
\item the developments of the both boundary components of $A_t$ are curves equivalent to $b_{e_t}$ by elements of $\PSL_2\C$. 
\end{itemize}
\end{lemma}
\begin{proof}
For sufficiently large $t > 0$, one can easily construct the fundamental membrane for $A_t$ for sufficiently large $t > 0$ (Figure \ref{fMenbrance}). 
\begin{figure}
\begin{overpic}[scale=.18
] {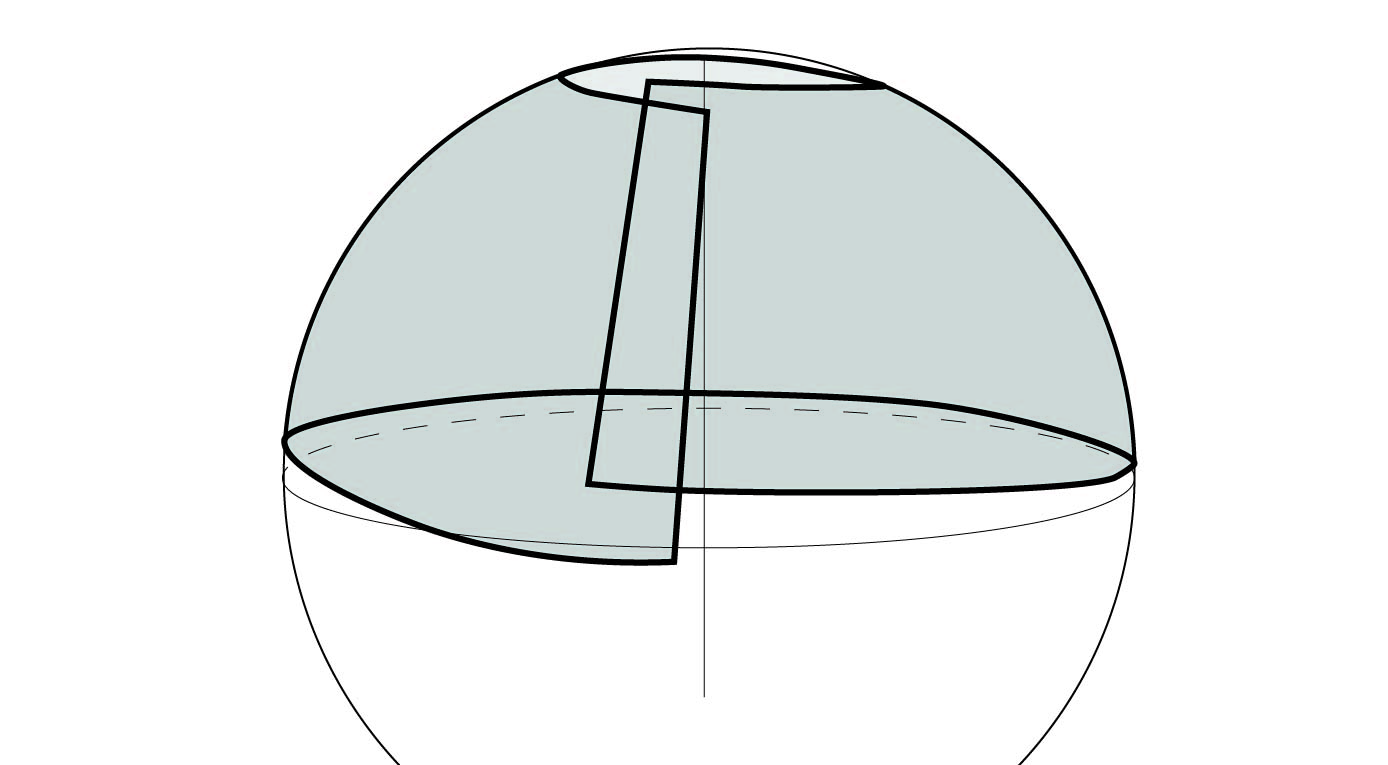} 
      \end{overpic}
\caption{}\Label{fMenbrance}
\end{figure}
\end{proof}

\begin{proposition}\Label{CylinderWithParabolicPuncture}
Let $P$ be a pair of pants, and pick a boundary component $\ell$ of $P$.
Let $\ti{\ell}$ be a lift of $\ell$ to the universal cover of $P$.
Consider a (flat) Euclidean cylinder with geodesic boundary, and let $P_\infty$ be the surface obtained by removing an interior point $p$ of $P_\infi$; regard $P_\infi$ as a $\CP^1$-structure  on $P$, and let $(h, \xi)$ be its developing pair, so that $h$ takes $\ti\ell$ to a single point $v$ on $\CP^1$.

Let $r$ be the geodesic  in $\H^3$ connecting $v$ and the parabolic fixed point of $h$, and 
let $e_t \in \PSL_2\C$ be a path of (non-trivial) elliptic elements given by Lemma \ref{EllipticPath} for $r$.

Then, there is a path of $\CP^1$-structures $P_t = (h_t, \xi_t)$ on $P$ satisfying the following:
\begin{enumerate}
\item  For all $t > 0$,
$\xi_t(\ell) = e_t$.
\item $P_t$ converges to $P_\infi$ as $t \to \infi$.
Let $\gam_t \in \PSL_2\C$ be a path of hyperbolic elements with the axis $r$, such that $\gam_t \Axis(e_t)$ converges to a geodesic $g_\infi$ in $\H^3$ orthogonal to $r$ as $t \to \infi$ (so that $\gam_t$ is a large hyperbolic translation towards $v$ for $t \gg 0$). 
Let $H \sub \H^3$ be the totally geodesic hyperbolic plane orthogonal to $r$ and containing $g_\infi$.
Then, the developing pair $\gam_t (h_t, \xi_t)$ normalized by $\gam_t$  converges to a developing pair for a round disk minus a point, where the removed point is $v$ and the disk is the component of $\CP^1 \minus \bdr H$ containing $v$.
\Label{iNearAxisAssympotiticallyCylinder}
\item Let $\ell_t$ be the boundary component of $P_t$ corresponding to $\ell$. 
Then $\dev P_t$ along a lift of $\ell_t$ is  $b_{\ell_t}$ (up to $\PSL_2\C$).  \Label{iNiceBoundary}
\item Let $\alpha$ be a boundary component of $P$ not equal to $\ell$.
Then $\xi_t(\alpha)$ is a hyperbolic element for all $t \gg 0$ (converging to a parabolic element as $t \to \infi$).
\end{enumerate}
\end{proposition}

\begin{proof}
First we construct an appropriate path of representations $\xi_t\col \pi_1(P) \to \PSL_2\C$.
Let $a_t$ denote  $\Axis(e_t)$.
Pick a pair of geodesics $q_t, q_t'$ in $\H^3$ for each $t > 0$ such that 
\begin{itemize}
\item  $R(q_t) R(q_t') = e_t$, where $R(q_t), R(q_t') \in \PSL_2\C$ are the $\pi$-rotations of $\H^3$ about $q_t, q_t'$, respectively;
\item $q_t$ and $q_t'$ change continuously in $t > 0$; 
\item $q_t$ and $q_t'$ intersect at the intersection $a_t \cap r$;
\item $q_t$ and $q_t'$ are symmetric about $r$;
\item $q_t$ and $q_t'$ are orthogonal to $a_t$;
\item $q_t$ and $q_t'$ converge to $r$ as $t \to \infi$ (see Figure \ref{fAlmostEuclideanCylinderMinustPoinst}).
\end{itemize}
There is a path of geodesics  $h_t \,(t \geq 0)$ in $\H^3$ such that 
\begin{itemize}
\item $h_t$ is disjoint from $q_t$ and $q_t'$ for all $t \geq 0$, and  
\item $h_t$ converges to a geodesic $h$ in $\H^3$ sharing an endpoint with $r$ as $t \to \infi$, such that the composition $R(r) R(h)$ is the parabolic holonomy along a boundary geodesic of $P_\infi$.
\end{itemize} 
Indeed one can first find the limit geodesic $h$ which satisfies the second condition, then as $q_t, q_t'$ converges to $r$, one can take a desired path $h_t$.

Let $\xi_t\col \pi_1(P) \to \PSL_2\C$ be such that the holonomy along boundary components are $R(h_t) R(q_t), R(q_t) R(q_t'), R(q_t') R(h_t)$. 
Note that  $R(h_t) R(q_t), R(q_t') R(h_t)$ are hyperbolic elements, as the rotation axes are disjoint, and they converge to the parabolic holonomy along the boundary geodesics of $P_\infi$.

Pick a round disk $D$ on $P_\infi$ containing $p$ such that $\bdr D$ on $\CP^1$ bounds a hyperbolic plane in $\H^3$ orthogonal to $r$.
Then, apply Lemma \ref{EllipticCylinder} to $D$, let $D_t$ be a path of $\CP^1$-structures on an annulus converging to $D \minus \{p\}$, so that it gives the desired path only near the punctured of $P_\infi$. 

Pick a smaller closed regular neighborhood $D'$ of the puncture $p$ of $P_\infi$ such that $\bdr D'$ bounds a hyperbolic plane orthogonal to $r$ and that $D'$ is contained in the interior of $D$. 
 Clearly its complement $K$ in $P_\infi$ and the interior of $D \minus \{p\}$ form an open cover of  $P_\infi$. 
 Then $K$ is topologically a pair of pants. 
Similarly to the proof of \Cref{HolImmersion} using the stability of transversal sections for the Thurston-Ehresmann principle  (\cite{Goldman22GeometricStructuresOnManifolds}), we can prove that there is a path of $\CP^1$-structures on a pair of pants $K_t$ for sufficiently large $t > 0$ such that   $K_t$ converges to $K$ and $e_t$ is the holonomy of $K_t$ around the boundary component corresponding to $\bdr D'$. 
Moreover, by deformation nearly the boundary, we can in addition assume that the boundary of $K_t$ is equivalent to $b_{\ell_t}$.

Then,  since $K$ and $D \minus \{p\}$ form an open cover of $P_\infty$, for sufficiently large $t$.  by gluing  $K_t$ and $A_t$ in the overlapping region, we obtained a desired path of $\CP^1$-structures $P_t$. 
\begin{figure}
\begin{overpic}[scale=.2, 
] {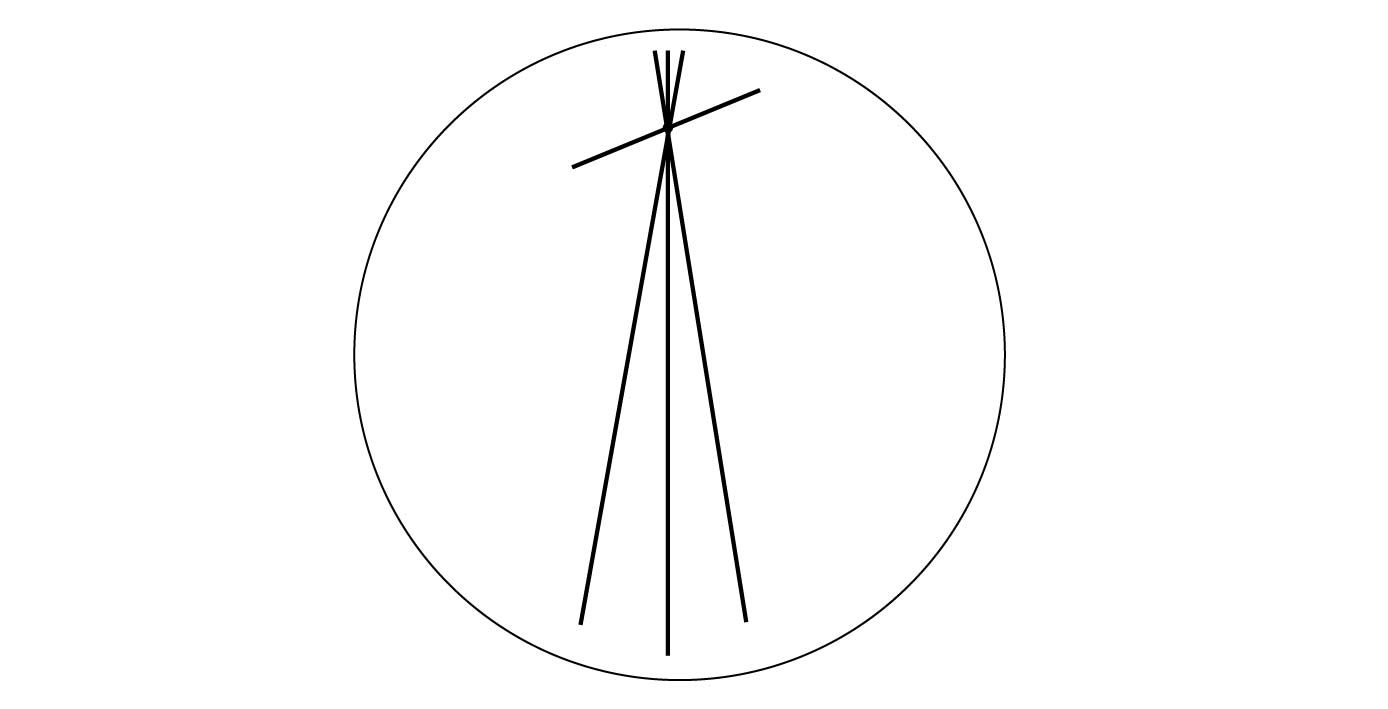} 
   \put(41 , 25 ){$q_t$}  
  \put(51 , 29 ){$q_t'$}
  \put(48.5 ,16 ){\contour{white}{$r$}}  
  \put(32 ,72 ){\textcolor{Blue}{$a_t$}}  
   \put(60 , 20 ){\textcolor{Black}{$\H^3$}}  
      \end{overpic}
\caption{Realize $e_t$ as the compositions of the $\pi$-rotations about $q_t$ and $q_t'$.}\label{fAlmostEuclideanCylinderMinustPoinst}
\end{figure}
\end{proof}
\begin{proposition}\Label{ASingleSmallEllipticPuncture}
Let $P_t = (h_t, \xi_t)$ be a path of $\CP^1$-structures on a pair of pants from \Cref{CylinderWithParabolicPuncture}.
Then, there is a path $\Sigma_t$ of $\CP^1$-structures on a closed surface $F$ minus a point which satisfies the following: 
 \begin{itemize}
\item There is a subsurface $A$ of $F$ whose interior contains $p$, such that  $A$ is homeomorphic to a pair of pants, and  $\Sigma_t|A = P_t$ for all large enough $t > 0$.
\item $\Sigma_t$  converges to a $\CP^1$-structure $\Sigma_\infty$ on $F$ as $t \to \infi$. 
\end{itemize}
\end{proposition}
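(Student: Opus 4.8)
The plan is to build $\Sigma_t$ by the same cut-and-glue strategy already used in the hyperbolic case: take the pair-of-pants family $P_t=(h_t,\xi_t)$ provided by Lemma \ref{CylinderWithParabolicPuncture}, observe that its two boundary components other than $\ell$ carry hyperbolic holonomy converging to the parabolic holonomy of the boundary geodesics of $P_\infi$, and cap them off with $\CP^1$-structures on the complementary pieces of a fixed closed surface $F$. Concretely, fix a closed surface $F$ (of genus two, say) containing an embedded pair of pants $A$ whose interior contains $p$, so that $F\minus A$ is a one-holed torus (two of them if we want $F$ of higher genus). First I would fix a reference $\CP^1$-structure $F_\infi$ on $F\minus p$ whose restriction to $A$ is $P_\infi$ — here $P_\infi$ is the flat cylinder with geodesic boundary minus the interior point $p$, so $F\minus A$ gets whatever $\CP^1$-structure $F_\infi$ restricts to, with two boundary loops carrying the parabolic holonomy $R(r)R(h)$ of $P_\infi$.

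Next I would deform the complementary piece. The two boundary components $\alpha_1,\alpha_2$ of $A$ distinct from $\ell$ satisfy $\xi_t(\alpha_j)$ hyperbolic for $t\gg 0$ and $\xi_t(\alpha_j)\to R(r)R(h)$ parabolic. By Theorem \ref{HolImmersion}, the $\CP^1$-structure $(F_\infi)|(F\minus A)$, viewed as a developing pair with cusps on a surface with two punctures, admits a path of small deformations realizing any nearby peripheral data in $\reallywidehat{\PSL(2,\C)}$; in particular we can find a path of $\CP^1$-structures on $F\minus A$ whose peripheral holonomies are exactly $\xi_t(\alpha_1),\xi_t(\alpha_2)$ and whose $\ep$-thick part converges to that of $(F_\infi)|(F\minus A)$. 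Since along a pair of matching boundary loops the two structures have conjugate holonomy and close developing maps near the seam, I would glue $P_t$ to this deformed complementary structure along $\alpha_1,\alpha_2$ — after a bounded correction making the seam identifications isometric in the Euclidean metric on a collar — to obtain $\Sigma_t=(f_t,\rho_t)$ on $F\minus p$ with $\Sigma_t|A=P_t$ for $t\gg0$. The convergence $\Sigma_t\to F_\infi$ follows because each piece converges and the gluing data (the conjugating elements and the collar identifications) converge; here it is essential that in this construction we do not twist along $\alpha_1,\alpha_2$ (unlike the neck $\ell$, whose twisting is what will later produce the degeneration), so no subsequence is needed.

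The main obstacle I expect is the gluing step: to form a genuine $\CP^1$-structure by pasting $P_t$ and the complementary structure along $\alpha_j$, I need the two collar neighborhoods of $\alpha_j$ to be isomorphic $\CP^1$-structures, which is stronger than having conjugate holonomy — I must control the developing maps near the seam uniformly in $t$. This is handled by Corollary \ref{UniqunessOfCuspDeformation} and the construction in Proposition \ref{PerturbingCusp}: the deformed cusp neighborhoods are essentially unique near the puncture, so the two collars agree up to a transversal isotopy, and the gluing is well defined for $t$ large. A secondary point to check is that the holonomy $\rho_t$ of the glued structure is exactly the amalgam $\xi_t\ast(\text{complementary holonomy})$ over $\langle\alpha_1\rangle\ast\langle\alpha_2\rangle$, so that $\rho_t$ converges in $\RRR(F\minus p)$ and $\rho_\infi$ is the holonomy of $F_\infi$; this is immediate from van Kampen once the pieces are glued, since the matching boundary holonomies are literally equal by construction. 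With these in hand, both bulleted conclusions hold: $\Sigma_t|A=P_t$ by fiat, and $\Sigma_t\to F_\infi$ by the piecewise convergence together with convergence of the gluing data.
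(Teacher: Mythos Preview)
Your strategy is the paper's strategy: build the limit $F_\infi$ by capping the two non-$\ell$ boundary components of $P_\infi$ with fixed pieces, deform those pieces so their peripheral holonomies match $\xi_t(\alpha_1),\xi_t(\alpha_2)$, and glue. The paper is just more concrete at two points.

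First, the paper does not leave the existence of $F_\infi$ abstract: it takes for each boundary component of $P_\infi$ a complete hyperbolic surface $\tau$ with one cusp, removes a horoball neighborhood $N$, and glues $\tau\setminus N$ along the horocyclic boundaries of $P_\infi$. This pins down the complementary piece and makes the next step tractable.

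Second, and this is where your write-up has a small gap, your appeal to Theorem~\ref{HolImmersion} does not by itself produce ``a path of small deformations realizing any nearby peripheral data''. That theorem says $\hol$ is a local homeomorphism into $(\reallywidehat{\PSL(2,\C)})^n\times\RRR$ and lifts paths \emph{in $\RRR$}; the peripheral entries are determined by the representation, so you must first exhibit a path of representations of $\pi_1$ of the complementary piece whose boundary values are exactly $\xi_t(\alpha_j)$. The paper does this by hand: since the complementary piece is $\tau$ (a hyperbolic surface with one cusp), it writes down $\zeta_{1,t},\zeta_{2,t}\colon\pi_1(\tau)\to\PSL(2,\C)$ with peripheral values $R(r_t)R(q_t')$ and $R(q_t)R(r_t)$ converging to $\Hol(\tau)$ (citing \cite{Goldman09FrickeSpaces}), and only then lifts to $\CP^1$-structures $\tau_{1,t},\tau_{2,t}$. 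Once you fill this in, your argument is complete.

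For the gluing, the paper avoids the uniqueness-of-cusp-neighborhood machinery you invoke: it simply arranges the boundary loops of $\tau_{i,t}$ to be invariant under the one-parameter subgroup of $\PSL(2,\C)$ containing the hyperbolic element $\xi_t(\alpha_j)$, so the seams are honest admissible curves and the identification is geometric. Your route through Corollary~\ref{UniqunessOfCuspDeformation} also works, but is heavier than necessary here.
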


\begin{proof}
First we construct the limit structure  $\Sigma_\infi$.
Take any complete hyperbolic surface $\tau$ with a single cusp, such that $\tau$ is homeomorphic to a closed surface minus a point, denoted by $F'$. 
Pick a cusp neighborhood $N$ of $\tau$, a horodisk quotient.
The pair of pants $P_\infi$ has two boundary components and one puncture. 
As the two boundary components of $P_\infi$ lift to horocycles, 
we can glue a copy of $\tau \minus N$ along each boundary component of $P_\infi$. 
We thus obtained a $\CP^1$-structure on a closed surface with a single puncture so that $P_\infi$ is its subsurface. 

There are paths $\zeta_{1, t}$ and $\zeta_{2, t}$ of representations $\pi_1(\tau) \to \PSL_2\C$ which converge to the holonomy of $\tau$ as $t \to \infi$, such that their images of the peripheral loop are $R(r_t)R(q_t') $ and $R(q_t)R(r_t)$, respectively, which are hyperbolic elements  (c.f.  \cite{Goldman09FrickeSpaces}).
Let  $\tau_{1, t}, \tau_{2, t}$ be paths of $\CP^1$-structures homeomorphic to $\tau \minus N$ for $t \gg 0$ such that 
$\Hol(\tau_{1, t}) = \zeta_{1, t}$, and $\Hol(\tau_{2, t}) = \zeta_{2, t}$ and  $\tau_{1, t}, \tau_{2, t}$  converge to $\tau \minus N$.
 We may in addition assume that the boundary components of  $\tau_{1, t}, \tau_{2, t}$ are invariant under one-dimensional subgroups of $\PSL_2\C$ containing  $R(r)R(q_t') $ and $R(q_t)R(r)$, respectively.
 
 Then by gluing $\tau_{t, 1}, \tau_{t, 2}, P_t$ along their boundary, we obtain a desired path $\Sigma_t$ of $\CP^1$-structures.
\end{proof}

Let $\Sigma_t$ be the path of $\CP^1$ -structures, obtained from  Proposition \ref{ASingleSmallEllipticPuncture},  on a compact surface with one boundary component.
Let $R_t$ be the $\pi$-rotation of $\H^3$ around the axis $a_t$ of the elliptic $e_t$.
By \Cref{CylinderWithParabolicPuncture}(\ref{iNearAxisAssympotiticallyCylinder}, \ref{iNiceBoundary}), we can glue two copies of $\Sigma_t$ by the involution $R_t$, and we obtain a path of $\CP^1$-structures $C_t$  on a closed surface, so that two copies of  $\Sigma_t$ are embedded in $C_t$ disjointly up to an isotopy.
    Let $m$ be the loop along which the two copies are glued. 
Then, to obtain a marked projective structure, we need to specify the twisting along $m$. 
We glue then so that the Fenchel-Nielsen twisting parameter matches to be $u_t$ so that, by the $\pi$-rotation along $a_t$, the developing maps of adjacent components of $\ti{S} \minus \ti{m}$  are identical.
Let $\Sigma_t^1 = (h_t^1, \rho_t^1), \Sigma^2_t = (h_t^2, \rho_t^2)$ are the subsurfaces of $C_t$ corresponding to $\Sigma_t$.
\begin{theorem}\Label{ExoticConvertenceEllipticNeck}
Let $C_t = (f_t, \rho_t)$ be the path of $\CP^1$-structures as above,  and let $m$ be the loop on $C_t$ corresponding to the boundary components of $\Sigma_t^1$ and $\Sigma_t^2$.
 Let $N$ be the regular neighborhood of $m$. 
 Then, by taking an appropriate isotopy of $S$, $C_t$ satisfies the following.
\begin{enumerate}
\item $\rho_t(m)$ converges to $I$ as $t \to \infi$, and $\rho_t(m)$ is an elliptic element for all  $t > 0$;
\item the axis of $\rho_t(m)$ converges to the point $p$ of $\CP^1$; \Label{iAxisLimit}
\item $f_t\col \til{S} \minus \phi^{-1}(N) \to \CP^1$ converges to a $\rho_\infi$-equivariant continuous map $f_\infi\col \til{S} \minus \phi^{-1}(N)\to \CP^1$, such that $f_\infi$ is a local homeomorphism in the interior; \Label{ConvergenceOfDevElliptic}
\item for each connected component $\ti{N}$ of $ \phi^{-1}(N)$, the boundary components of $\ti{N}$ map to its corresponding limit given by (\ref{iAxisLimit}).\end{enumerate}
\end{theorem}
\begin{proof}
Let $F_1, F_2$ be the connected components of $S \minus N$.
We normalize the developing pair of $C_t$ by a path of $\PSL_2\C$ so that the restriction to $\ti{F}_1$ converges to a developing pair for $\Sigma_\infty$.
Then (1) and (2) clearly hold. 
Moreover, we can take an appropriate isotopy of $S$ so that each boundary component of $\ti{F}_1$ converges to the corresponding limit point of its corresponding axis. 
Since the rotation angle of $e_t^{u_t}$ is $\pi$ by \Cref{EllipticPath}, the restriction of $f_t$  to $\ti{F}_2$ is the same as that to to $\ti{F}_1$ (Figure \ref{fPiRoration}). 
Therefore, the restriction of $f_t$ to $\ti{F}_2$ converges to a developing map of $\Sigma_\infty$ as well.  
Thus we have (\ref{ConvergenceOfDevElliptic}).
Then,  by the equivariant property, we also have (4).
\end{proof}
\begin{figure}
\begin{overpic}[scale=.2
] {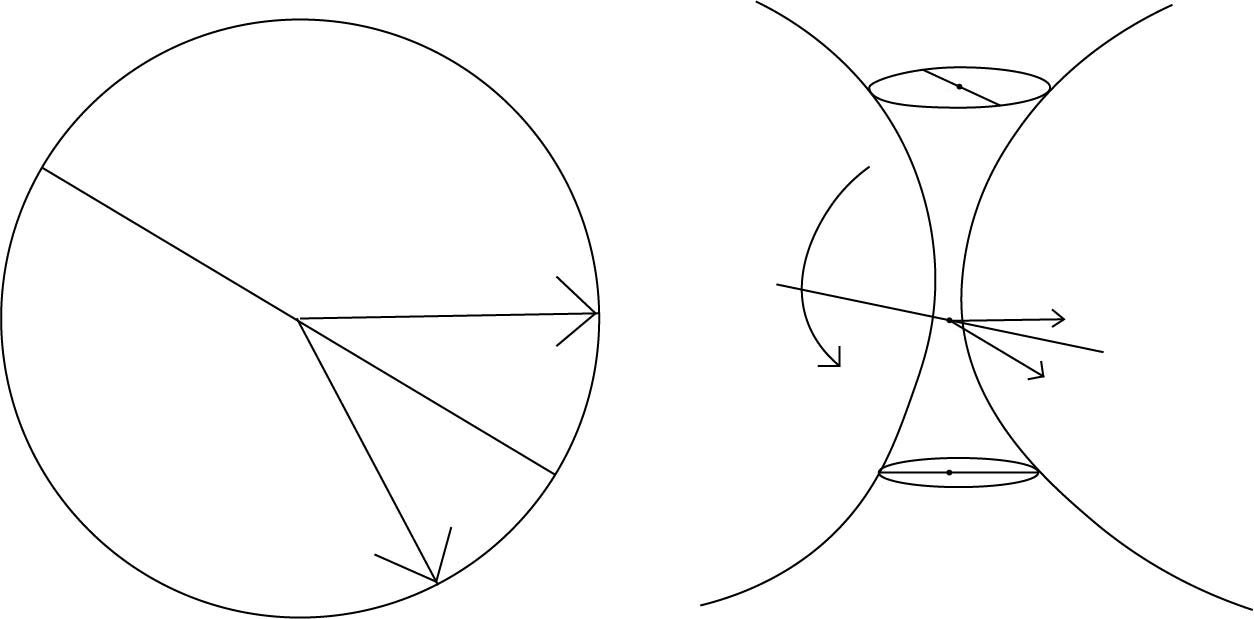} 
  \put(90 , 20){\textcolor{Black}{$a_t$}}  
  \put(58 , 30){    \contour{white}{$e_t^{u_t} = \pi$}    } 
  \put(15 , 30){\textcolor{Black}{$a_t$}}  
\put(75 , 1){\textcolor{Black}{$\Sigma_t^1$}}  
\put(69 , 46){\textcolor{Black}{$e_t^{-u_t}\Sigma_t^2$}}  
      \end{overpic}
\caption{
The left figure is a section of the right figure by a horizontal plane containing $a_t$.
It illustrates the rotation about $a_t$ by $\pi$, and it makes the restriction of $f_t$ on $F_1$ coincide with that to $F_2$ coincide.}\Label{fPiRoration}
\end{figure}
\Qed{ExoticConvertenceEllipticNeck}

\bibliography{DegenerationAlongAloop2024.11.bbl}
\bibliographystyle{alpha}

\end{document}